\providecommand{\U}[1]{\protect\rule{.1in}{.1in}}
\newtheorem{theorem}{Theorem}[section]
\newtheorem{corollary}[theorem]{Corollary}
\newtheorem{definition}[theorem]{Definition}
\newtheorem{lemma}[theorem]{Lemma}
\newtheorem{remark}[theorem]{Remark}
\makeatletter\@addtoreset{equation}{section}\makeatother
\newdimen\dummy
\newcommand{\hatPicurlcom}{\widehat \Pi^{\operatorname*{curl},3d}_p}
\newcommand{\hatPicurlcomtwod}{\widehat \Pi^{\operatorname*{curl},2d}_p}
\newcommand{\hatPigradcom}{\widehat\Pi^{\operatorname*{grad},3d}_{p+1}}
\newcommand{\hatPigradcomtwod}{\widehat\Pi^{\operatorname*{grad},2d}_{p+1}}
\newcommand{\hatPidivcom}{\widehat\Pi^{\operatorname*{div},3d}_{p}}
\def\XXint#1#2#3{{\setbox0=\hbox{$#1{#2#3}{\int}$ }
\vcenter{\hbox{$#2#3$ }}\kern-.6\wd0}}
\newcommand{\eremk}{\hbox{}\hfill\rule{0.8ex}{0.8ex}}
\begin{document}
\title{On commuting $p$-version projection-based interpolation on tetrahedra}
\author{J.M. Melenk\thanks{\{(melenk@tuwien.ac.at, claudio.rojik@tuwien.ac.at)\} Institut f\"{u}r Analysis und
Scientific Computing, Technische Universit\"{a}t Wien, Wiedner Hauptstrasse
8-10, A-1040 Wien, Austria.}
\and C. Rojik\footnotemark[1]}
\maketitle

\begin{abstract}
On the reference tetrahedron $\widehat K$, we define three projection-based interpolation operators 
on $H^2(\widehat K)$, ${\mathbf H}^1(\widehat K,\operatorname{\mathbf{curl}})$, 
and ${\mathbf H}^1(\widehat K,\operatorname{div})$.  
These operators are projections onto spaces of polynomials, they have the commuting diagram
property and feature the optimal convergence rate as the polynomial degree increases
in $H^{1-s}(\widehat K)$, $\widetilde{\mathbf{H}}^{-s}(\widehat K,\operatorname{\mathbf{curl}})$, 
$\widetilde{\mathbf{H}}^{-s}(\widehat K,\operatorname{div})$ for $0 \leq s \leq 1$. 
\end{abstract}

\maketitle

\section{Introduction} 
Operators that approximate a given function by a (piecewise) polynomial are 
fundamental tools in numerical analysis. The case of scalar functions is 
rather well-understood and many such approximation operators exist both for 
fixed order approximation where accuracy is achieved by refining the mesh, 
the so-called $h$-version, and the $p$-version, 
where accuracy is obtained by increasing the polynomial degree $p$; for the $p$-version
in an $H^1$-conforming setting 
we refer to \cite{babuska-suri94,SchwabhpBook,apel-melenk17} and references therein. 
For the approximation of vector-valued functions, specifically, the approximation 
in the spaces ${\mathbf H}(\operatorname{\mathbf{curl}})$ and 
${\mathbf H}(\operatorname{div})$, the situation is less mature since the approximation 
operators should satisfy, in addition to having certain approximation properties, 
also the requirement to be projections and to have a commuting diagram property. 
While various operators with all these desirable properties were developed for the $h$-version, 
optimal results in the $p$-version are missing in the literature. The present paper is devoted to the 
analysis of a $p$-version projection-based interpolation operator that has the optimal 
polynomial approximation properties under suitable regularity assumptions. 

High order polynomial projection-based interpolation operators --- a phrase that first appeared in \cite[Sec.~{3}]{hiptmair-acta} --- with the projection 
and commuting diagram properties have been developed by L.~Demkowicz and several coworkers,  
\cite{demkowicz-babuska03,demkowicz-buffa05,demkowicz-cao05,demkowicz-monk-vardapetyan-rachowicz00};
a very nice and comprehensive presentation of these results can be found in \cite{demkowicz08}, which will also 
be the basis for the present work. 
The projection-based interpolation operators presented in \cite{demkowicz08} are a) projections, b) have the commuting diagram
property, and c) admit element-by-element construction. The last point means that 
the operators are defined elementwise by specifying them on the reference element and that the appropriate
interelement continuity is ensured by defining the interpolant in terms of pertinent traces: 
for scalar functions, the projection-based interpolant interpolates at the vertices and its restriction to 
an edge or a face is fully  determined by the restriction of the function to that edge or face; 
for the ${\mathbf H}(\operatorname{\mathbf{curl}})$-conforming interpolant, its tangential component on an edge or face 
is completely determined by the tangential trace of the function on that edge or face; 
for the ${\mathbf H}(\operatorname{div})$-conforming interpolant, 
the normal component on a face is fully dictated by the normal component of the function on that face. 
Such a construction is only possible under additional regularity assumptions beyond the minimal ones
(which would be $H^1$, ${\mathbf H}(\operatorname{\mathbf{curl}})$ 
or ${\mathbf H}(\operatorname{div})$). Indeed, in 3D, the 
construction described in \cite{demkowicz08} requires the regularity $H^{1+s}$ with $s > 1/2$ for scalar functions, 
${\mathbf H}^s(\operatorname{\mathbf{curl}})$ with $s > 1/2$ 
and ${\mathbf H}^s(\operatorname{div})$ with $s > 0$ for the vectorial ones. 
Under these regularity assumptions, it is shown in \cite[Thm.~{5.3}]{demkowicz08} that the projection-based
interpolation operator has, up to logarithmic factors, the optimal algebraic convergence properties 
(as $p \rightarrow \infty$), for functions with finite Sobolev regularity as measured by $s$. 
In the present work, we remove the logarithmic
factors, i.e., we show optimal rates of convergence, under the more stringent regularity assumption $s \ge 1$
(cf.~Theorem~\ref{thm:projection-based-interpolation} for the case of tetrahedra and Theorem~\ref{thm:projection-based-interpolation-2d} for the case of triangles). 

The projection-based interpolation operator analyzed in the present work is of the type studied 
in \cite{demkowicz08}. Correspondingly, many tools used in \cite{demkowicz08} are also employed here, 
most notably, the polynomial lifting operators developed for tetrahedra in 
\cite{demkowicz-gopalakrishnan-schoeberl-I,demkowicz-gopalakrishnan-schoeberl-II,demkowicz-gopalakrishnan-schoeberl-III}
and for the simpler case of triangles in 
\cite{ainsworth-demkowicz09}; we mention in passing that suitable polynomial lifting operators are also 
available for the case of the cube \cite{costabel-dauge-demkowicz07}. 
Another tool that \cite{demkowicz08} uses are right inverses of the gradient, curl and div operators 
(``Poincar\'e maps''). Here, we utilize a more recent and powerful variant, namely, the regularized right inverses 
of \cite{costabel-mcintosh10}. This breakthrough paper \cite{costabel-mcintosh10} allows for stable decompositions of functions 
in ${\mathbf H}(\operatorname{\mathbf{curl}})$ 
and ${\mathbf H}(\operatorname{div})$ with appropriate mapping properties 
in scales of Sobolev spaces and is an essential component in the analysis of the $p$-version in ${\mathbf H}(\operatorname{\mathbf{curl}})$, 
\cite{bespalov-heuer09, hiptmair08,boffi-costabel-dauge-demkowicz-hiptmair11}. The distinguishing
technical difference between \cite{demkowicz08} and the present work, which is responsible for the removal 
of the logarithmic factor, is the treatment of the non-local norms on the boundary. 
Non-local norms on the boundary are written in \cite{demkowicz08} (following \cite{demkowicz-buffa05}) 
as a sum of contributions over the boundary parts (that is, faces in 3D and edges in 2D); 
in finite-dimensional spaces of piecewise polynomials, this localization procedure is possible at the price 
of logarithmic factors. Instead of localizing a non-local norm, the approach taken here is to 
realize the non-local norm by interpolating between two norms related to integer order Sobolev norms, which 
both can be localized, i.e., written as sums of contributions over boundary parts. In turn, this requires to analyze the 
error of the projection-based interpolation in two norms instead of a single one. 
The estimate in the stronger norm is obtained by a best approximation argument as done in \cite{demkowicz08}, 
the estimate in the weaker norm is obtained by a duality argument. 

We close the introduction with some notation. 
The gradient operator $\nabla$ for scalar functions $u$ and the divergence operator $\operatorname{div}$ for $\mathbb{R}^d$-valued functions $\mathbf{u}$ are defined in the usual way: $\nabla u = (\partial_{x_1} u,\ldots,\partial_{x_d} u)^\top$ and $\operatorname{div} {\mathbf u} = \sum_{i=1}^d \partial_{x_i} {\mathbf u}_i$. 
For $d = 3$ and ${\mathbb R}^3$-valued functions ${\mathbf u}$
the $\operatorname{\mathbf{curl}}$-operator is defined by 
$\operatorname{\mathbf{curl}} {\mathbf u} := 
(\partial_{x_2} {\mathbf u}_3 - \partial_{x_3} {\mathbf u}_2, 
-(\partial_{x_1} {\mathbf u}_3 - \partial_{x_3} {\mathbf u}_1), 
\partial_{x_1} {\mathbf u}_2 - \partial_{x_2} {\mathbf u}_1)^\top$. 
For $d = 2$ we distinguish between the scalar-valued and vector-valued curl operator: 
for a scalar function $u$, we define 
$\operatorname{\mathbf{curl}} u := (\partial_{x_2} u,-\partial_{x_1} u)^\top$
and for an ${\mathbb R}^2$-valued function ${\mathbf u}$ 
we set $\operatorname{{curl}} {\mathbf u} := \partial_{x_1} {\mathbf u}_2-\partial_{x_2} {\mathbf u}_1$. 
For Lipschitz domains $\omega \subset {\mathbb R}^d$ ($d \in \{1,2,3\}$) and 
scalar functions, we employ the usual Sobolev spaces $H^s(\omega)$, $s \ge 0$, as defined, e.g., in
\cite{Mclean00}. 
The $H^s(\omega)$-seminorm, $s \ge 0$, will be denoted 
$|\cdot|_{H^s(\omega)}$ and the full norm 
$\|\cdot\|_{H^s(\omega)}$. The dual space of $H^s(\omega)$, $s \ge 0$, 
is denoted $\widetilde H^{-s}(\omega)$ and equipped with the norm 
\begin{equation}
\label{eq:def-negative-norm}
\|u\|_{\widetilde H^{-s}(\omega)}:= \sup_{v \in H^s(\omega)} \frac{(u,v)_{L^2(\omega)}}{\|v\|_{H^s(\omega)}},
\end{equation}
where $(\cdot,\cdot)_{L^2(\omega)}$ denotes the (extended) $L^2(\omega)$-scalar product. 
Vector-valued analogs ${\mathbf H}^s(\omega)$ are defined to be elements of 
$H^s(\omega)$ componentwise and also the dual norm 
$\|\cdot\|_{\widetilde {\mathbf H}^{-s}(\omega)}$ is defined analogously to 
(\ref{eq:def-negative-norm}). 
We will make use of the fact that the Sobolev scales $H^s(\omega)$ 
and $\widetilde H^{-s}(\omega)$ are interpolation spaces, \cite{Mclean00}.
For $s \ge 0$ and $d = 3$, we set 
${\mathbf H}^s(\omega,\operatorname{\mathbf{curl}}) = \{{\mathbf u} \in {\mathbf H}^s(\omega)\,|\, 
\operatorname{\mathbf{curl}} {\mathbf u} \in {\mathbf H}^s(\omega)\}$ and 
${\mathbf H}^s(\omega,\operatorname{div}) = \{{\mathbf u} \in {\mathbf H}^s(\omega)\,|\, 
\operatorname{div} {\mathbf u} \in H^s(\omega)\}$; for $d = 2$ we have 
${\mathbf H}^s(\omega,\operatorname{curl}) = \{{\mathbf u} \in {\mathbf H}^s(\omega)\,|\, 
\operatorname{curl} {\mathbf u} \in H^s(\omega)\}$. For $s \ge 0$, we define 
$$
\|{\mathbf u}\|^2_{\widetilde {\mathbf H}^{-s}(\omega,\operatorname{\mathbf{curl}})} := 
\|{\mathbf u}\|^2_{\widetilde {\mathbf H}^{-s}(\omega)} + 
\|\operatorname{\mathbf{curl}} {\mathbf u}\|^2_{\widetilde{\mathbf{H}}^{-s}(\omega)}
$$ 
and analogously the norms 
$\|{\mathbf u}\|^2_{\widetilde {\mathbf H}^{-s}(\omega,\operatorname{div})}$ and 
$\|{\mathbf u}\|^2_{\widetilde {\mathbf H}^{-s}(\omega,\operatorname{curl})}$.
The space $H^{1/2}(\partial\omega)$ will be understood as the trace space of $H^1(\omega)$
and $H^{-1/2}(\partial\omega)$ denotes its dual. The spaces 
${\mathbf H}_0(\omega,\operatorname{\mathbf{curl}})$ and 
${\mathbf H}_0(\omega,\operatorname{div})$ are the subspaces of 
${\mathbf H}(\omega,\operatorname{\mathbf{curl}}):={\mathbf H}^0(\omega,\operatorname{\mathbf{curl}})$ and 
${\mathbf H}(\omega,\operatorname{div}) := {\mathbf H}^0(\omega,\operatorname{div})$ with vanishing tangential or normal trace, defined as the 
closure of $(C^\infty_0(\omega))^d$ under the norms 
$\|\cdot\|_{{\mathbf H}(\omega,\operatorname{\mathbf{curl}})}$ and 
$\|\cdot\|_{{\mathbf H}(\omega,\operatorname{div})}$. 
$H^1_0(\omega)$ is the subspace of $H^1(\omega)$ of functions with vanishing
trace. 

\section{Projection based interpolation}

\label{sec:thm:projection-based-interpolation}
Throughout the paper $\widehat{K} \subset {\mathbb R}^3$ denotes a reference tetrahedron.
The sets $\mathcal{F}(\widehat{K})$, $\mathcal{E}(\widehat{K})$ and $\mathcal{V}(\widehat{K})$ denote 
the sets of faces, edges, and vertices of $\widehat{K}$, respectively. In the two-dimensional case, 
we use the notation $\widehat{f}\subset {\mathbb R}^2$ for a reference triangle, and $\mathcal{E}(\widehat{f})$ 
and $\mathcal{V}(\widehat{f})$ for the sets of edges and vertices of $\widehat{f}$. We set
\begin{align}
\label{eq:maximal-angle}
\widehat{s}&:=\frac{\pi}{\omega_{max}(\widehat f)}, \quad \pi > \omega_{max}(\widehat f):=\text{maximal interior angle of } \widehat{f}. 
\end{align}
$\widehat{e} = (-1,1) \subset \mathbb{R}$ denotes the reference edge.
We also need the tangential trace and tangential component operators: 
For a sufficiently smooth 
function ${\mathbf u}$ on $\widehat K$ we set 
$\Pi_{\tau} {\mathbf u} := ({\mathbf n} \times {\mathbf u}|_{\partial \widehat K}) \times {\mathbf n}$ and 
$\gamma_\tau {\mathbf u} := {\mathbf n} \times {\mathbf u}|_{\partial\widehat K}$, 
where ${\mathbf n}$ denotes the outer normal
vector of $\widehat K$. We fix the orientation of each face 
$f \in {\mathcal F}(\widehat K)$ by defining its normal vector ${\mathbf n}_f$ to coincide with 
the (outer) normal vector of $\widehat K$ on $f$. In turn, this fixes the orientation of 
the boundary $\partial f$ and for each face 
$f \in {\mathcal F}(\widehat K)$ we will write 
$\Pi_{\tau,f}$ for the (in-plane) tangential trace on $\partial f$, i.e., for each edge 
$e \in {\mathcal E}(f)$ and its unit tangential vector ${\mathbf t}_e$ (with orientation matching the orientation
of $f$) we set $(\Pi_{\tau,f} {\mathbf u})|_{e} =  {\mathbf u}|_e \cdot {\mathbf t}_e$
for sufficiently smooth tangential fields ${\mathbf u}$.

We have the integration by parts formula
\begin{equation}
\label{eq:integration-by-parts} 
(\operatorname{\mathbf{curl}} {\mathbf u},{\mathbf v})_{L^2(\widehat K)} = 
(\operatorname{\mathbf{curl}} {\mathbf v},{\mathbf u})_{L^2(\widehat K)} 
- (\Pi_\tau {\mathbf u},\gamma_\tau {\mathbf v})_{L^2(\partial \widehat K)} 
\qquad \forall {\mathbf u}, {\mathbf v} \in {\mathbf H}^1(\widehat K), 
\end{equation}
which actually extends to 
${\mathbf u}$, ${\mathbf v} \in {\mathbf H}(\widehat K,\operatorname{\mathbf{curl}})$, 
\cite[Thm.~{3.29}]{Monkbook}. (In this case, the $L^2(\partial\widehat K)$-inner product is
replaced with a duality pairing, which we will still denote by $(\cdot,\cdot)_{L^2(\partial\widehat K)}$.)
In 2D, we have the integration by parts formula (Stokes formula)
\begin{equation}
\label{eq:2d-stokes}\int_{\widehat f} \operatorname{\mathbf{curl}} v
\cdot{\mathbf{F}} = \int_{\widehat f} v \operatorname{curl}{\mathbf{F}} -
\int_{\partial \widehat f} v {\mathbf{F}} \cdot{\mathbf{t}},%
\end{equation}
where the boundary $\partial\widehat f$ and, therefore, tangential vectors, are oriented counterclockwise.

For each face $f \in {\mathcal F}(\widehat K)$ and $s \ge 0$ we define the Sobolev spaces $H^s(f)$
as well as ${\mathbf H}_T^s(f,\operatorname{curl})$
by identifying the face $f$ with a subset of 
${\mathbb R}^2$ via an affine congruence map. The subscript $T$ indicates that \emph{tangential} fields are considered. 
Also the spaces $H^s(e)$
on an edge $e \in {\mathcal E}(\widehat K)$ are defined by such
an identification.

\subsection{Function spaces on the reference element}
For 
$\nu  \in \{\widehat K, \widehat f, \widehat e, \mathbb{R}^3, \mathbb{R}^2, \mathbb{R}\}$, we denote by 
${\mathcal P}_p(\nu)$ the space of polynomials of (total) degree $p$ 
in $d$ variables, where $d$ is the dimension of the manifold $\nu$. For faces $f \in {\mathcal F}(\widehat K)$ or 
edges $e \in {\mathcal E}(\widehat K)$, we define ${\mathcal P}_p(f)$ or 
${\mathcal P}_p(e)$ by identifying $f$ or $e$ with $\widehat f$ or $\widehat e$
via an affine map. Further polynomial spaces are defined in the following
subsection. 

\subsubsection{Spaces on the reference tetrahedron and reference triangle}
On $\widehat{K}$ we introduce the classical
N\'{e}d\'{e}lec type I and Raviart-Thomas elements of degree $p\geq0$ (see,
e.g., \cite{Monkbook,hiptmair-acta,nedelec80}):
\begin{align}
W_{p}(\widehat{K}) &  :=\operatorname*{span}\{x^{\alpha
}\,|\,|\alpha|\leq p\},\label{eq:RTp}\\
\mathbf{Q}_{p}(\widehat{K}) &
:=\{\mathbf{p}(\mathbf{x})+\mathbf{x}\times\mathbf{q}(\mathbf{x})\,|\,\mathbf{p},\mathbf{q}%
\in({\mathcal{P}}_{p}(\widehat{K}))^{3}\},\\
\mathbf{V}_{p}(\widehat{K}) &  :=\{\mathbf{p}(\mathbf{x})+{q}(\mathbf{x})\mathbf{x}\,|\,\mathbf{p}%
\in({\mathcal{P}}_{p}(\widehat{K}))^{3},{q}\in{\mathcal{P}}_{p}(\widehat{K}%
)\}.
\end{align}
Recall the exact sequences on the continuous level
(cf., e.g., \cite{costabel-mcintosh10} and Lemma~\ref{lemma:mcintosh})
\begin{align}
\minCDarrowwidth15pt
\begin{CD} \mathbb{R} @> \operatorname*{id} >> H^2(\widehat K) @> \nabla >> \mathbf{H}^1(\widehat K, \operatorname*{\mathbf{curl}}) @> \operatorname*{\mathbf{curl}} >> \mathbf{H}^1(\widehat K, \operatorname*{div}) @> \operatorname*{div} >> H^1(\widehat K) @> \operatorname*{0} >> \{0\} \end{CD}
\end{align}
and on the discrete level (see, e.g., \cite[(57)]{demkowicz08})
\begin{align}
\minCDarrowwidth15pt
\begin{CD} \mathbb{R} @> \operatorname*{id} >> W_{p+1}(\widehat{K}) @> \nabla >> \mathbf{Q}_{p}(\widehat{K}) @> \operatorname*{\mathbf{curl}} >> \mathbf{V}_{p}(\widehat{K}) @> \operatorname*{div} >> W_{p}(\widehat{K}) @> \operatorname*{0} >> \{0\} \end{CD}.
\end{align}
In this paper, we present projection operators $\hatPigradcom$, $\hatPicurlcom$, $\hatPidivcom$,
$\widehat{\Pi}_{p}^{L^{2}}$ that enjoy the commuting diagram property
\begin{equation}
\minCDarrowwidth14pt
\begin{CD} \mathbb{R} @> \operatorname*{id} >> H^2(\widehat K) @> \nabla >> \mathbf{H}^1(\widehat K, \operatorname*{\mathbf{curl}}) @> \operatorname*{\mathbf{curl}} >> \mathbf{H}^1(\widehat K, \operatorname*{div}) @> \operatorname*{div} >> H^1(\widehat K) @> \operatorname*{0} >> \{0\}\\ @. @VV \hatPigradcom V @VV \hatPicurlcom V @VV \hatPidivcom V @VV \widehat \Pi^{L^2}_p V \\ \mathbb{R} @> \operatorname*{id} >> W_{p+1}(\widehat K) @> \nabla >> {\mathbf Q}_p(\widehat K) @> \operatorname*{\mathbf{curl}} >> {\mathbf V}_p(\widehat K) @> \operatorname*{div} >> W_p(\widehat K) @> \operatorname*{0} >>\{0\}\end{CD} \label{eq:commuting-diagram-hinten}
\end{equation}
In the two-dimensional setting, the N\'{e}d\'{e}lec type I elements are defined by
\begin{align}
\label{eq:2d-nedelec}
\mathbf{Q}_p(\widehat{f}) := \{\mathbf{p}(\mathbf{x}) + q({\mathbf x}) (y,-x)^T \, | \, \mathbf{p} \in (\mathcal{P}_p(\widehat{f}))^2, q\in \widetilde{\mathcal{P}}_p(\widehat{f})\},
\end{align}
where $\widetilde{\mathcal{P}}_p(\widehat{f})$ denotes the homogeneous polynomials of degree $p$. Here we have shorter exact sequences of the forms
(cf., e.g., \cite{costabel-mcintosh10} and Lemma~\ref{lemma:mcintosh-2d})
\begin{align}
\begin{CD} \mathbb{R} @> \operatorname*{id} >> H^{3/2}(\widehat f) @> \nabla >> \mathbf{H}^{1/2}(\widehat f, \operatorname*{curl}) @> \operatorname*{curl} >> H^{1/2}(\widehat f) @> \operatorname*{0} >> \{0\} \end{CD}
\end{align}
on the continuous level and
\begin{align}
\begin{CD} \mathbb{R} @> \operatorname*{id} >> W_{p+1}(\widehat f) @> \nabla >> \mathbf{Q}_p(\widehat{f}) @> \operatorname*{{curl}} >> W_p(\widehat f) @> \operatorname*{0} >> \{0\} \end{CD}
\end{align}
on the discrete level (see, e.g., \cite[(30)]{demkowicz08}). We present projection operators $\hatPigradcomtwod$, $\hatPicurlcomtwod$, $\widehat{\Pi}_p^{L^2}$ that satisfy the commuting diagram property
\begin{equation}
\begin{CD} \mathbb{R} @> \operatorname*{id} >> H^{3/2}(\widehat f) @> \nabla >> \mathbf{H}^{1/2}(\widehat f, \operatorname*{curl}) @> \operatorname*{curl} >> H^{1/2}(\widehat f) @> \operatorname*{0} >> \{0\}\\ @. @VV \hatPigradcomtwod V @VV \hatPicurlcomtwod V @VV \widehat \Pi^{L^2}_p V \\ \mathbb{R} @> \operatorname*{id} >> W_{p+1}(\widehat f) @> \nabla >> {\mathbf Q}_p(\widehat f) @> \operatorname*{curl} >> W_p(\widehat f) @> \operatorname*{0} >>\{0\}\end{CD} \label{eq:commuting-diagram-2d}
\end{equation}

\subsubsection{Trace spaces on the boundary}
\label{sec:trace_spaces}


We will need the traces of the spaces $W_{p+1}(\widehat{K})$, $\mathbf{Q}_p(\widehat{K})$ and $\mathbf{V}_p(\widehat{K})$ on various parts of the boundary. For faces $f\in{\mathcal{F}}(\widehat{K})$ the corresponding
spaces are defined by trace operations:
\begin{equation*}
W_{p+1}(f):=W_{p+1}(\widehat{K})|_{f},\quad{\mathbf{Q}}_{p}(f):=(\Pi_{\tau
}{\mathbf{Q}}_{p}(\widehat{K}))|_{f},\quad{V}_{p}(f):=\mathbf{V}%
_{p}(\widehat{K})\cdot\mathbf{n}_{f},
\end{equation*}
where $\Pi_{\tau}$ is the tangential component and $\mathbf{n}_{f}$ the outer normal
vector of $f$. These trace spaces are well-known objects: Identifying a face $f$
with the reference triangle $\widehat f$ via an affine bijection, the space $W_{p+1}(f)$
is isomorphic to the space ${\mathcal{P}}_{p+1}({\mathbb{R}}^{2})$ of bivariate
polynomials of (total) degree $p+1$; the space ${\mathbf{Q}}_{p}(f)$ turns out
to be the space of type-I
N\'{e}d\'{e}lec elements on triangles; ${V}_{p}(f)$ is isomorphic to the space
${\mathcal{P}}_{p}({\mathbb{R}}^{2})$. Lowering the dimension even further, we
introduce for each edge $e\in{\mathcal{E}}(\widehat{K})$ the spaces
\[
W_{p+1}(e):=W_{p+1}(\widehat{K})|_{e},\quad{Q}_{p}(e):=\mathbf{Q}%
_{p}(\widehat{K})\cdot\mathbf{t}_{e},
\]
where $\mathbf{t}_{e}$ is the tangential vector of the edge $e$. 
Similar to the case of the faces, the space $W_{p+1}(e)$ can be identified with the
univariate polynomials of degree $p+1$ and ${Q}_{p}(e)$ with the univariate
polynomials of degree $p$.
We will require spaces of functions vanishing on the boundary in the
appropriate sense and set
\begin{align*}
\mathring{W}_{p+1}(\widehat{K})& :=W_{p+1}(\widehat{K})\cap H_{0}^{1}%
(\widehat{K}), & 
\mathring{\mathbf{Q}}_{p}(\widehat{K})&:=\{\mathbf{u}%
\in{\mathbf{Q}}_{p}(\widehat{K})\,|\,\Pi_{\tau}\mathbf{u}=0\},\\
\mathring{\mathbf{V}}_{p}(\widehat{K})&:=\{\mathbf{u}\in{\mathbf{V}}_{p}(\widehat{K}%
)\,|\,\mathbf{n}\cdot\mathbf{u}=0\},
& W^{aver}_p(\widehat K)&:= \{u \in W_{p+1}(\widehat K)\,|\, \int_{\widehat K} u = 0\}.
\end{align*}
Corresponding spaces on lower-dimensional manifolds are defined as follows:
\begin{align*}
\mathring{W}_{p+1}(f) &:=W_{p+1}(f)\cap H_{0}^{1}(f),
& \mathring{\mathbf{Q}%
}_{p}(f)& :=\{\mathbf{u}\in{\mathbf{Q}}_{p}(f)\,|\,\Pi_{\tau,f}\mathbf{u}%
=0\},\\
\mathring{V}_{p}(f) &:=\{u \in V_{p}(f)\,|\, \int_{f} u = 0\}.
\end{align*}
Finally, we set for edges $e\in{\mathcal{E}%
}(\widehat{K})$
\begin{align*}
\mathring{W}_{p+1}(e):=W_{p+1}(e)\cap H_{0}^{1}(e),
\qquad\mathring{Q}_{p}(e):=\{u \in Q_{p}(e)\,|\, \int_e u = 0\}. 
\end{align*}
By
\cite[eq.~(4.16)]{hiptmair08} (actually, \cite{hiptmair08} uses the tangential trace
operator $\gamma_{\tau}$ instead of $\Pi_{\tau}$ in the definition of the
spaces ${\mathbf{Q}}_{p}(f)$ and correspondingly identifies the space
${\mathbf{Q}}_{p}(f)$ with a Raviart-Thomas space instead of a N\'{e}d\'{e}lec
space) or \cite{demkowicz-buffa05} we have the following diagrams for $\widehat K$, 
its faces $f\in{\mathcal{F}}(\widehat{K})$, and edges $e\in{\mathcal{E}}(\widehat{K})$
\begin{equation}
\minCDarrowwidth15pt
\begin{CD} \{0\} @>\operatorname{Id} >> \mathring{W}_{p+1}(\widehat K) @> \nabla >> \mathring{\mathbf{Q}}_p(\widehat K) @> \operatorname*{\mathbf{curl}} >> \mathring{\mathbf{V}}_p(\widehat K) @> \operatorname*{div} >> W^{aver}_p(\widehat K)  @> 0 >> \{0\} \\ \{0\} @>\operatorname{Id} >> \mathring{W}_{p+1}(f) @> \nabla_f >> \mathring{\mathbf{Q}}_p(f) @> \operatorname*{curl}_f >> \mathring{V}_p(f)  @> 0 >> \{0\} \\ \{0\} @> \operatorname{Id}>> \mathring{W}_{p+1}(e) @> \nabla_e >> \mathring{Q}_p(e) @> 0 >> \{0\} \end{CD} \label{eq:commuting-diagram-bc}%
\end{equation}
In this diagram (and in what follows), the operators $\nabla_{f}$, $\nabla
_{e}$ represent surface gradients on a face $f$ and tangential differentiation
on an edge $e$, respectively. The operator $\operatorname*{curl}_{f}$ is the
surface curl on face $f$. (Recall that the orientation of $f$ has been fixed to match that of 
$\partial \widehat K$.)
For the reference triangle $\widehat f$ we set
\begin{align*}
\mathring{W}_{p+1}(\widehat{f}) &:= W_{p+1}(\widehat{f}) \cap H_0^1(\widehat{f}), &\mathring{\mathbf{Q}}_p(\widehat f) &:= \{\mathbf{u} \in \mathbf{Q}_p(\widehat f) \, | \, \mathbf{u} \cdot \mathbf{t}_e = 0 \, \forall e\in\mathcal{E}(\widehat{f})\}, \\
\mathring{V}_{p}(\widehat{f}) &:=\{u \in V_{p}(\widehat{f})\,|\, \int_{\widehat{f}} u = 0\}.
\end{align*}
One again looks at shortened sequences, namely, \cite[(33)]{demkowicz08}, 
\begin{equation}
\begin{CD} \{0\} @>\operatorname{Id} >> \mathring{W}_{p+1}(\widehat f) @> \nabla >> \mathring{\mathbf{Q}}_p(\widehat f) @> \operatorname*{{curl}} >> \mathring{V}_p(\widehat f) @> 0 >> \{0\} \\ \{0\} @>\operatorname{Id} >> \mathring{W}_{p+1}(e) @> \nabla_e >> \mathring{Q}_p(e) @> 0 >> \{0\} \end{CD} \label{eq:commuting-diagram-bc-2d}%
\end{equation}
\subsection{Definition of the operators $\hatPigradcom$, $\hatPicurlcom$, $\hatPidivcom$}
\label{sec:def-operators}
The definition of the operators $\hatPigradcom$, $\hatPicurlcom$, $\hatPidivcom$ is very similar
to the definition in \cite{demkowicz-buffa05,demkowicz08}, the difference being that we 
perform all projections using $L^2$-based inner products whereas \cite{demkowicz-buffa05,demkowicz08} 
employs inner products for fractional Sobolev spaces. 

The interpolants may be thought of as being defined 
through a sequence of constrained optimizations in which the value on a 
$j+1$-dimensional subsimplex $S$ 
is determined as the  solution of a minimization problem where the values on the $j$-dimensional boundary 
subsimplices of $S$ have already been fixed. For example, the operator $\hatPigradcom$ starts with
fixing the vertex values, then determines the value on all edges, then on all faces, and finally in the 
interior of $\widehat K$. 

We refer the reader to Section~\ref{sec:commuting} for the proof that
the regularity requirements on the functions $u$, ${\mathbf u}$ 
in the following definitions are indeed sufficient to render 
Definitions~\ref{def:hatPigradcom}--\ref{def:hatPidivcom} meaningful.
\subsubsection{The operators in 3D}
\label{sec:3d-operators}
In the following definition of $\hatPigradcom$ we can interpret the sequence of conditions 
as first fixing the values in the vertices in (\ref{eq:Pi_grad-d}), 
then on the edges by (\ref{eq:Pi_grad-c}), 
then on the faces by (\ref{eq:Pi_grad-b}), 
and finally in the interior by (\ref{eq:Pi_grad-a}):
\begin{definition}[$\hatPigradcom$]
\label{def:hatPigradcom}
$\hatPigradcom%
:H^{2}(\widehat{K})\rightarrow W_{p+1}(\widehat{K})$ is given by
\begin{subequations}
\label{eq:Pi_grad}
\begin{align}
u(V)-\hatPigradcom u(V)  &  =0\quad \forall
V\in{\mathcal{V}}(\widehat{K}), 
\label{eq:Pi_grad-d}\\
(\nabla_{e}(u-\hatPigradcom u),\nabla
_{e}v)_{L^{2}(e)}  &  =0\quad\forall v\in\mathring{W}_{p+1}(e)\quad\forall
e\in{\mathcal{E}}(\widehat{K}),
\label{eq:Pi_grad-c}\\
(\nabla_{f}(u-\hatPigradcom u),\nabla
_{f}v)_{L^{2}(f)}  &  =0\quad\forall v\in\mathring{W}_{p+1}(f)\quad\forall
f\in{\mathcal{F}}(\widehat{K}),
\label{eq:Pi_grad-b}\\
(\nabla(u-\hatPigradcom u),\nabla v)_{L^{2}%
(\widehat{K})}  &  =0\quad\forall v\in\mathring{W}_{p+1}(\widehat{K}).
\label{eq:Pi_grad-a}
\end{align}
\end{subequations}
\end{definition}

In the following definition of $\hatPicurlcom$ we can interpret the sequence of conditions 
as fixing 
the interpolant first on the edges by (\ref{eq:Pi_curl-f}), (\ref{eq:Pi_curl-e}); 
then on the faces by (\ref{eq:Pi_curl-d}), (\ref{eq:Pi_curl-c}); 
and finally in the interior by (\ref{eq:Pi_curl-b}), (\ref{eq:Pi_curl-a}). 
\begin{definition}[$\hatPicurlcom$]
\label{def:hatPicurlcom}
$\hatPicurlcom:\mathbf{H}%
^{1}(\widehat{K},\operatorname*{\mathbf{curl}})\rightarrow\mathbf{Q}_{p}(\widehat{K})$
is given by
\begin{subequations}
\label{eq:Pi_curl}
\begin{align}
({\mathbf{t}}_{e}\cdot(\mathbf{u}-\hatPicurlcom\mathbf{u}),1)_{L^{2}(e)}  &  =0\quad\forall e\in{\mathcal{E}}%
(\widehat{K}), 
\label{eq:Pi_curl-f}\\
(\mathbf{t}_{e}\cdot(\mathbf{u}-\hatPicurlcom\mathbf{u}),\nabla_{e}v)_{L^{2}(e)}  &  =0\quad\forall v\in\mathring
{W}_{p+1}(e)\quad\forall e\in{\mathcal{E}}(\widehat{K}),
\label{eq:Pi_curl-e}\\
(\Pi_{\tau}(\mathbf{u}-\hatPicurlcom\mathbf{u}),\nabla_{f}v)_{L^{2}(f)}  &  =0\quad\forall v\in\mathring{W}%
_{p+1}(f)\quad\forall f\in{\mathcal{F}}(\widehat{K}),
\label{eq:Pi_curl-d}\\
(\operatorname{curl}_{f}\Pi_{\tau}(\mathbf{u}-\hatPicurlcom\mathbf{u}),\operatorname{curl}_{f}\mathbf{v}%
)_{L^{2}(f)}  &  =0\quad\forall\mathbf{v}\in\mathring{\mathbf{Q}}%
_{p}(f)\quad\forall f\in{\mathcal{F}}(\widehat{K}),
\label{eq:Pi_curl-c}\\
((\mathbf{u}-\hatPicurlcom\mathbf{u}),\nabla
v)_{L^{2}(\widehat{K})}  &  =0\quad\forall v\in\mathring{W}_{p+1}(\widehat{K}),
\label{eq:Pi_curl-b}\\
(\operatorname*{\mathbf{curl}}(\mathbf{u}-\hatPicurlcom \mathbf{u}),\operatorname*{\mathbf{curl}}\mathbf{v})_{L^{2}(\widehat{K})}  &  =0\quad
\forall\mathbf{v}\in\mathring{\mathbf{Q}}_{p}(\widehat{K}).
\label{eq:Pi_curl-a}
\end{align}
\end{subequations}
\end{definition}

In the following definition of $\hatPidivcom$ we can interpret the sequence of conditions 
as fixing 
the interpolant first on the faces by (\ref{eq:Pi_div-d}), (\ref{eq:Pi_div-c}) 
and then in the interior by (\ref{eq:Pi_div-b}), (\ref{eq:Pi_div-a}). 
\begin{definition}[$\hatPidivcom$]
\label{def:hatPidivcom}
$\hatPidivcom:\mathbf{H}%
^{1/2}(\widehat{K},\operatorname*{div})\rightarrow\mathbf{V}_{p}(\widehat{K})$
is given by
\begin{subequations}
\label{eq:Pi_div}
\begin{align}
(\mathbf{n}_{f}\cdot(\mathbf{u}-\hatPidivcom\mathbf{u}),1)_{L^{2}(f)}  &  =0\quad\forall f\in{\mathcal{F}}%
(\widehat{K}), 
\label{eq:Pi_div-d}\\
(\mathbf{n}_{f}\cdot(\mathbf{u}-\hatPidivcom\mathbf{u}),{v})_{L^{2}(f)}  &  =0\quad\forall v\in\mathring{V}_{p}%
(f)\quad\forall f\in{\mathcal{F}}(\widehat{K}),
\label{eq:Pi_div-c}\\
((\mathbf{u}-\hatPidivcom\mathbf{u}%
),\operatorname*{\mathbf{curl}}\mathbf{v})_{L^{2}(\widehat{K})}  &  =0\quad
\forall\mathbf{v}\in\mathring{\mathbf{Q}}_{p}(\widehat{K}),
\label{eq:Pi_div-b}\\
(\operatorname*{div}(\mathbf{u}-\hatPidivcom\mathbf{u}),\operatorname*{div}\mathbf{v})_{L^{2}(\widehat{K})}  &
=0\quad\forall\mathbf{v}\in\mathring{\mathbf{V}}_{p}(\widehat{K}).
\label{eq:Pi_div-a}
\end{align}
\end{subequations}
\end{definition}

\begin{definition}[$\widehat{\Pi}_p^{L^2}$]
$\widehat{\Pi}_p^{L^{2}}:L^{2}(\widehat{K}) \rightarrow
W_{p}(\widehat{K})$ is given by 
\begin{equation}
\label{eq:Pi_L^2}(u - \widehat{\Pi}_p^{L^{2}} u,v)_{L^{2}(\widehat{K})} = 0
\qquad\forall v \in W_{p}(\widehat{K}).
\end{equation}
\end{definition}

\subsubsection{The operators in 2D}
For the reference triangle $\widehat f$, the 2D-operators are defined 
as follows: 
\begin{definition}[$\hatPigradcomtwod$]
$\hatPigradcomtwod:H^{3/2}(\widehat{f}) \rightarrow W_{p+1}(\widehat{f})$ is given by
\begin{subequations}
\label{eq:Pi_grad-2d}
\begin{align}
u(V)-\hatPigradcomtwod u(V)  &  =0\quad\forall
V\in{\mathcal{V}}(\widehat{f}), 
\label{eq:Pi_grad-2d-c}\\
(\nabla_{e}(u-\hatPigradcomtwod u),\nabla
_{e}v)_{L^{2}(e)}  &  =0\quad\forall v\in\mathring{W}_{p+1}(e)\quad\forall
e\in{\mathcal{E}}(\widehat{f}),
\label{eq:Pi_grad-2d-b}\\
(\nabla(u-\hatPigradcomtwod u),\nabla v)_{L^{2}%
(\widehat{f})}  &  =0\quad\forall v\in\mathring{W}_{p+1}(\widehat{f}).
\label{eq:Pi_grad-2d-a}
\end{align}
\end{subequations}
\end{definition}

\begin{definition}[$\hatPicurlcomtwod$]
$\hatPicurlcomtwod:\mathbf{H}^{1/2}(\widehat{f},\operatorname*{curl}) \rightarrow \mathbf{Q}_p(\widehat{f})$ is given by
\begin{subequations}
\label{eq:Pi_curl-2d}
\begin{align}
({\mathbf{t}}_{e}\cdot(\mathbf{u}-\hatPicurlcomtwod\mathbf{u}),1)_{L^{2}(e)}  &  =0\quad\forall e\in{\mathcal{E}}%
(\widehat{f}), 
\label{eq:Pi_curl-2d-d}\\
(\mathbf{t}_{e}\cdot(\mathbf{u}-\hatPicurlcomtwod\mathbf{u}),\nabla_{e}v)_{L^{2}(e)}  &  =0\quad\forall v\in\mathring
{W}_{p+1}(e)\quad\forall e\in{\mathcal{E}}(\widehat{f}),
\label{eq:Pi_curl-2d-c}\\
((\mathbf{u}-\hatPicurlcomtwod\mathbf{u}),\nabla
v)_{L^{2}(\widehat{f})}  &  =0\quad\forall v\in\mathring{W}_{p+1}(\widehat{f}),
\label{eq:Pi_curl-2d-b}\\
(\operatorname*{curl}(\mathbf{u}-\hatPicurlcomtwod\mathbf{u}),\operatorname*{curl}\mathbf{v})_{L^{2}(\widehat{f})}  &  =0\quad
\forall\mathbf{v}\in\mathring{\mathbf{Q}}_{p}(\widehat{f}).
\label{eq:Pi_curl-2d-a}
\end{align}
\end{subequations}
\end{definition}

\begin{definition}[$\widehat{\Pi}_p^{L^2}$]
The operator $\widehat{\Pi}_p^{L^2}:L^{2}(\widehat{f}) \rightarrow W_p(\widehat{f})$ is defined by
\begin{align}
(u-\widehat{\Pi}_p^{L^2}u,v)_{L^2(\widehat{f})} = 0 \qquad \forall v\in W_p(\widehat{f}).
\end{align}
\end{definition}

\begin{remark}
[relation of 2D and 3D]
Up to identifying a face $f \in {\mathcal F}(\widehat K)$ with a reference
triangle $\widehat f \subset {\mathbb R}^2$ via an affine \emph{congruence} map, 
the 2D operators $\hatPigradcomtwod$, $\hatPicurlcomtwod$ coincide with the 
restrictions to the face $f$ of $\hatPigradcom$, $\hatPicurlcom$.
\eremk
\end{remark}

The above defined projection-based interpolation operators have the special
feature that the trace of the interpolant on a $j$-dimensional subsimplex of $\widehat K$
is fully determined by the trace on the subsimplex of the function ${\bf u}$. This observation
implies that the above operators admit so-called ``element-by-element''
constructions: 

\begin{remark}
\label{item:thm:projection-based-interpolation-ii}
As described in more detail in \cite[Sec.~{8}]{melenk-sauter18}, the operators 
$\hatPigradcom$, $\hatPicurlcom$, $\hatPidivcom$,
$\widehat{\Pi}_{p}^{L^{2}}$ admit element-by-element constructions. That is,
given a regular triangulation ${\mathcal T}$, one can construct 
operators $\Pi
_{p+1}^{\operatorname*{grad}}$, $\Pi_{p}^{\operatorname*{curl}}$, $\Pi
_{p}^{\operatorname*{div}}$, $\Pi_{p}^{L^{2}}$, which are defined elementwise 
using the operators
$\hatPigradcom$, $\hatPicurlcom$, $\hatPidivcom$,
$\widehat{\Pi}_{p}^{L^{2}}$, that map into the standard spaces of piecewise 
polynomials $W_{p+1}({\mathcal T})$, N\'ed\'elec spaces ${\mathbf Q}_p({\mathcal T})$, or Raviart-Thomas spaces ${\mathbf V}_p({\mathcal T})$. 
These operators are linear projection operators and, for 
smooth $\partial\Omega$, enjoy  the commuting diagram property 
\begin{equation*}
\minCDarrowwidth15pt
\begin{CD} \mathbb{R} @> \operatorname*{id} >> H^2(\Omega) @> \nabla >> \mathbf{H}^1(\Omega, \operatorname*{\mathbf{curl}}) @> \operatorname*{\mathbf{curl}} >> \mathbf{H}^1(\Omega, \operatorname*{div}) @> \operatorname*{div} >> H^1(\Omega) @> \operatorname*{0} >> \{0\}\\ @. @VV \Pi^{\operatorname*{grad}}_{p+1} V @VV \Pi^{\operatorname*{curl}}_p V @VV \Pi^{\operatorname*{div}}_{p} V @VV \Pi^{L^2}_p V \\ \mathbb{R} @> \operatorname*{id} >> W_{p+1}(\mathcal{T}) @> \nabla >> {\mathbf Q}_p(\mathcal{T}) @> \operatorname*{\mathbf{curl}} >> {\mathbf V}_p(\mathcal{T}) @> \operatorname*{div} >> W_p(\mathcal{T}) @> \operatorname*{0} >>\{0\}\end{CD}  
\label{eq:commuting-diagram-global}%
\end{equation*}
This is a direct consequence of Theorem~\ref{thm:projection-based-interpolation}, (\ref{item:thm:projection-based-interpolation-i}) and the fact that the operators are constructed element by element.
An analogous statement about element-by-element constructions holds for meshes in 2d. 
\eremk
\end{remark}
\subsection{Main results}


The following Theorems~\ref{thm:projection-based-interpolation}, 
\ref{thm:projection-based-interpolation-2d} are our main results. 
\begin{theorem}[Projection-based interpolation in 3D]
\label{thm:projection-based-interpolation} 
Assume that all interior angles of the 4 faces of the reference tetrahedron
$\widehat K$ are smaller than $2\pi/3$. Then  there are constants $C_s$, $C_{s,k}$ 
(depending only on $s$, $k$, and $\widehat K$) such that: 

\begin{enumerate}
[(i)]
\item 
\label{item:thm:projection-based-interpolation-i} 
The operators $\hatPigradcom$, $\hatPicurlcom$, $\hatPidivcom$, $\widehat{\Pi}^{L^2}_p$ are well-defined, 
projections, and the diagram (\ref{eq:commuting-diagram-hinten}) commutes.

\item 
\label{item:thm:projection-based-interpolation-iii} 
For all $\varphi\in H^{2}(\widehat{K})$ there holds
\begin{align*}
\Vert\varphi-\hatPigradcom\varphi\Vert
_{H^{1-s}(\widehat{K})}&\leq C_{s}p^{-(1+s)}\inf_{v\in W_{p+1}(\widehat{K})}\Vert\varphi-v\Vert_{H^{2}(\widehat{K})},\qquad
s\in\lbrack0,1], \\
\Vert \nabla(\varphi-\hatPigradcom \varphi)\Vert_{\widetilde{\mathbf{H}}^{-s}(\widehat{K})}&\leq
C_{s}p^{-(1+s)} \inf_{v \in W_{p+1}(\widehat K)} \Vert \varphi -v\Vert_{H^{2}(\widehat{K})}, \qquad s\in [0,1].
\end{align*}

\item 
\label{item:thm:projection-based-interpolation-iv} 
For all ${\mathbf{u}}\in{\mathbf{H}}^{1}(\widehat{K},\operatorname{\mathbf{curl}})$ there holds
\[
\Vert{\mathbf{u}}-\hatPicurlcom{\mathbf{u}}%
\Vert_{\widetilde{\mathbf{H}}^{-s}(\widehat{K},\operatorname{\mathbf{curl}})}\leq C_s p^{-(1+s)}%
\inf_{\mathbf{v}\in\mathbf{Q}_p(\widehat{K})}\Vert{\mathbf{u}}-{\mathbf{v}}\Vert_{\mathbf{H}^{1}(\widehat{K},\operatorname{\mathbf{curl}})}, \qquad s\in [0,1].
\]

\item 
\label{item:thm:projection-based-interpolation-v} 
For all $k\geq1$ and
all ${\mathbf{u}}\in{\mathbf{H}}^{k}(\widehat{K})$ with 
$\operatorname*{\mathbf{curl}}%
{\mathbf{u}}\in {\mathbf V}_p(\widehat K) \supset
({\mathcal{P}}_{p}(\widehat{K}))^{3}$ there holds
\begin{equation}
\Vert{\mathbf{u}}-\hatPicurlcom{\mathbf{u}}%
\Vert_{\widetilde{\mathbf{H}}^{-s}(\widehat{K},\operatorname{\mathbf{curl}})}\leq C_{s,k}p^{-(k+s)}\Vert{\mathbf{u}}\Vert
_{\mathbf{H}^{k}(\widehat{K})}, \qquad s\in [0,1].
\label{eq:lemma:projection-based-interpolation-approximation-10}%
\end{equation}
If $p\geq k-1$, then $\Vert{\mathbf{u}}\Vert_{{\mathbf{H}}%
^{k}(\widehat{K})}$ can be replaced with the seminorm $|{\mathbf{u}%
}|_{{\mathbf{H}}^{k}(\widehat{K})}$.

\item 
\label{item:thm:projection-based-interpolation-vi} 
For all ${\mathbf{u}%
}\in{\mathbf{H}}^{1/2}(\widehat{K},\operatorname{div})$ there holds
\[
\Vert{\mathbf{u}}-\hatPidivcom{\mathbf{u}}%
\Vert_{\widetilde{\mathbf{H}}^{-s}(\widehat{K},\operatorname{div})}\leq C_s p^{-(1/2+s)}%
\inf_{{\mathbf{v}}\in\boldsymbol{V}_{p}(\widehat{K})}\Vert{\mathbf{u}}-{\mathbf{v}}\Vert_{{\mathbf{H}}%
^{1/2}(\widehat{K},\operatorname{div})}, \qquad s\in [0,1].
\]

\item 
\label{item:thm:projection-based-interpolation-vii} 
For all $k\geq1$ and
all ${\mathbf{u}}\in{\mathbf{H}}^{k}(\widehat{K})$ with $\operatorname*{div}%
{\mathbf{u}}\in {\mathcal{P}}_{p}(\widehat{K})$ there holds
\begin{equation}
\Vert{\mathbf{u}}-\hatPidivcom{\mathbf{u}}%
\Vert_{\widetilde{\mathbf{H}}^{-s}(\widehat{K},\operatorname{div})}\leq C_{s,k}p^{-(k+s)}\Vert{\mathbf{u}}\Vert
_{\mathbf{H}^{k}(\widehat{K})}, \qquad s\in [0,1].
\label{eq:lemma:projection-based-interpolation-approximation-15}%
\end{equation}
If $p\geq k-1$, then $\Vert{\mathbf{u}}\Vert_{{\mathbf{H}}%
^{k}(\widehat{K})}$ can be replaced with the seminorm $|{\mathbf{u}%
}|_{{\mathbf{H}}^{k}(\widehat{K})}$.

\end{enumerate}
\end{theorem}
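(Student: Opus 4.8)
\emph{Proof plan.}

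For part~(\ref{item:thm:projection-based-interpolation-i}): once the lower-dimensional traces have been fixed, each line of Definitions~\ref{def:hatPigradcom}--\ref{def:hatPidivcom} is a square linear system for an interior correction on one of the spaces $\mathring W_{p+1}$, $\mathring{\mathbf Q}_p$, $\mathring{\mathbf V}_p$ over $\widehat K$, a face, or an edge, and its unique solvability is equivalent to coercivity (resp.\ inf-sup stability) of the associated bilinear form on that space. This stability follows from a Friedrichs/Poincar\'e inequality together with the exactness of the sequences~(\ref{eq:commuting-diagram-bc}), (\ref{eq:commuting-diagram-bc-2d}); for instance a field in $\mathring{\mathbf Q}_p(\widehat K)$ with vanishing curl is the gradient of an element of $\mathring W_{p+1}(\widehat K)$, so that the interior conditions defining $\hatPicurlcom$ do control the full $\mathbf H(\widehat K,\operatorname{\mathbf{curl}})$-norm. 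That the traces in the definitions make sense under the stated regularity is exactly the content of Section~\ref{sec:commuting}, which I would invoke. The projection property is then immediate from uniqueness: a member of the target polynomial space satisfies all of its own defining equations. The commuting-diagram property I would prove level by level --- vertices, then edges, faces, and interior --- checking that $\nabla$, $\operatorname{\mathbf{curl}}$, $\operatorname{div}$ carry the defining equations of one operator onto those of the next; the algebraic inputs are the exact sequences~(\ref{eq:commuting-diagram-bc}), (\ref{eq:commuting-diagram-bc-2d}) and the surjectivity of $\operatorname{\mathbf{curl}}$ and $\operatorname{div}$ on the ``circle'' spaces, as in \cite{demkowicz08}.

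For the approximation bounds~(\ref{item:thm:projection-based-interpolation-iii})--(\ref{item:thm:projection-based-interpolation-vii}), the plan is to establish, for each operator, the two endpoint cases $s=0$ and $s=1$, and then to obtain $0<s<1$ by interpolation, using that $\{H^{1-s}\}$, $\{\widetilde{\mathbf H}^{-s}(\cdot,\operatorname{\mathbf{curl}})\}$ and $\{\widetilde{\mathbf H}^{-s}(\cdot,\operatorname{div})\}$ are interpolation scales. The endpoint $s=0$ --- the estimates in $H^1(\widehat K)$, $\mathbf H(\widehat K,\operatorname{\mathbf{curl}})$, $\mathbf H(\widehat K,\operatorname{div})$ --- I would obtain by a best-approximation/stability argument in the spirit of \cite{demkowicz08}: after subtracting an arbitrary polynomial from the target space, bound the hierarchical corrections (edge, then face, then interior) one at a time, using the $p$-stable polynomial extension operators of \cite{demkowicz-gopalakrishnan-schoeberl-I,demkowicz-gopalakrishnan-schoeberl-II,demkowicz-gopalakrishnan-schoeberl-III,ainsworth-demkowicz09}, the stability from part~(\ref{item:thm:projection-based-interpolation-i}), and $p$-robust trace and polynomial inverse inequalities; the factor $p^{-1}$ already present at $s=0$ reflects the regularity gap between the right-hand side norm ($H^2$, resp.\ $\mathbf H^1(\operatorname{\mathbf{curl}})$, $\mathbf H^{1/2}(\operatorname{div})$) and the weaker target norm and is produced by $p$-explicit polynomial approximation. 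The endpoint $s=1$ --- the $\widetilde{\mathbf H}^{-1}$ and $H^{-1}$ bounds --- I would obtain by an Aubin--Nitsche duality argument: write the negative-order norm as a supremum over a test function $\psi$ in the relevant $H^1$-space of the $L^2(\widehat K)$-pairing of the error with $\psi$, use the commuting diagram together with the Galerkin orthogonalities built into the definitions to replace $\psi$ by $\psi$ minus a polynomial approximation (an $L^2$-projection, or one of the present interpolants applied to a potential of $\psi$ produced by the regularized right inverses of \cite{costabel-mcintosh10} (Lemma~\ref{lemma:mcintosh})), and then combine the $s=0$ estimate for the primal error with the $p^{-1}$ approximation of the dual solution; the boundary terms arising in this step are controlled through the edge/face error estimates. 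Crucially, the non-local boundary norms occurring along the way (on $\partial\widehat K$ and on $\partial f$) are \emph{not} localized directly; instead each is realized by interpolation between an $L^2$-type and an $H^1$-type norm, both of which split without loss into contributions over faces, resp.\ edges --- this is what removes the logarithmic factors of \cite{demkowicz08}, at the price of carrying the whole analysis in two norms on every dimensional level.

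For the higher-regularity statements~(\ref{item:thm:projection-based-interpolation-v}) and (\ref{item:thm:projection-based-interpolation-vii}) I would argue from~(\ref{item:thm:projection-based-interpolation-iv}), resp.~(\ref{item:thm:projection-based-interpolation-vi}). When $\operatorname{\mathbf{curl}}\mathbf u\in\mathbf V_p(\widehat K)$ (resp.\ $\operatorname{div}\mathbf u\in{\mathcal P}_p(\widehat K)$), the commuting diagram and the projection property force $\operatorname{\mathbf{curl}}(\mathbf u-\hatPicurlcom\mathbf u)=0$ (resp.\ $\operatorname{div}(\mathbf u-\hatPidivcom\mathbf u)=0$), so only the $\widetilde{\mathbf H}^{-s}(\widehat K)$-part of the error survives; equivalently, the infimum in~(\ref{item:thm:projection-based-interpolation-iv}) (resp.~(\ref{item:thm:projection-based-interpolation-vi})) may be taken over competitors $\mathbf v$ with prescribed curl (resp.\ divergence). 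A $p$-explicit constrained polynomial approximation result --- an approximant of an $\mathbf H^k$-field having the prescribed polynomial curl (resp.\ divergence) and error $O(p^{-(k-1)}\|\mathbf u\|_{\mathbf H^k})$ in the appropriate norm, itself a consequence of the discrete exact sequences and the polynomial liftings --- then turns the $p^{-(1+s)}$ of~(\ref{item:thm:projection-based-interpolation-iv}), (\ref{item:thm:projection-based-interpolation-vi}) into $p^{-(k+s)}$. The replacement of the full norm by the seminorm for $p\ge k-1$ is then automatic, since ${\mathcal P}_{k-1}(\widehat K)$ is reproduced and the estimate is invariant under adding such polynomials.

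The step I expect to be the main obstacle is the weaker-norm (duality) estimate, and within it the face contributions: the restriction of the interpolant to a face $f$ is again a 2D projection-based interpolant, so the duality argument has to be run recursively on faces, where the dual problem is a second-order elliptic problem on a triangle whose solution enjoys only the limited regularity $H^{1+\widehat s-\varepsilon}$ dictated by the largest interior angle (with $\widehat s$ as in~(\ref{eq:maximal-angle})). The hypothesis that all face angles are smaller than $2\pi/3$, i.e.\ $\widehat s>3/2$, is exactly what makes this face regularity high enough both to close the $s=1$ estimate and to keep the chain of interpolated, non-localized boundary norms bounded. Fitting together these nested two-norm estimates on edges, faces, and the interior --- and verifying that the polynomial liftings, the regularized right inverses of \cite{costabel-mcintosh10}, and the elliptic regularity all have the $p$-explicit, Sobolev-scale mapping properties the argument needs --- is the technical heart of the proof.
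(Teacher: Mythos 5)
Your plan is correct and follows essentially the same route as the paper: dimension counting plus exact-sequence arguments for well-definedness and the commuting diagram; best approximation combined with the polynomial liftings and discrete Friedrichs inequalities for the $s=0$ endpoint; duality with Helmholtz-type decompositions (and the $2\pi/3$ angle condition entering through the $H^{3/2}$-regularity of the face dual problems) for $s=1$; interpolation in between; and the polynomial-preservation of the Costabel--McIntosh right inverses to upgrade (\ref{item:thm:projection-based-interpolation-iv}), (\ref{item:thm:projection-based-interpolation-vi}) to (\ref{item:thm:projection-based-interpolation-v}), (\ref{item:thm:projection-based-interpolation-vii}). Your packaging of the last step as a constrained best approximation fed into (\ref{item:thm:projection-based-interpolation-iv}) and (\ref{item:thm:projection-based-interpolation-vi}) is equivalent to the paper's reduction (via $\mathbf{v}-\hatPicurlcom\mathbf{v}=0$) to the gradient and curl estimates, resting on the same decomposition.
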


\begin{remark}[arbitrary reference tetrahedra $\widehat K$]
Inspection of the proof of Theorem~\ref{thm:projection-based-interpolation} shows that 
the following holds, if the condition on the angles 
of the faces of $\widehat K$are dropped:  
Items 
(\ref{item:thm:projection-based-interpolation-i}),
(\ref{item:thm:projection-based-interpolation-vi})
still hold and 
items 
(\ref{item:thm:projection-based-interpolation-iii}), 
(\ref{item:thm:projection-based-interpolation-iv}), 
(\ref{item:thm:projection-based-interpolation-v}), (\ref{item:thm:projection-based-interpolation-vii}) hold for 
$s \in [0, s')$ where $s' > 0$ depends on the angles of the faces of $\widehat K$. 
In particular, the choice $s = 0$ is always admissible. 
\eremk
\end{remark}
\begin{proof}[Proof (of Theorem~\ref{thm:projection-based-interpolation})]
Item (\ref{item:thm:projection-based-interpolation-i}) is shown 
in Lemmas~\ref{lemma:Pi_grad-well-defined}, 
          \ref{lemma:Pi_curl-well-defined}, 
         \ref{lemma:Pi_div-well-defined} and Theorem~\ref{thm:diagram-commutes}.
For 
(\ref{item:thm:projection-based-interpolation-iii})
see Theorem~\ref{lemma:demkowicz-grad-3D}.
Item 
(\ref{item:thm:projection-based-interpolation-iv})
is shown in Theorem~\ref{thm:duality-again} and 
(\ref{item:thm:projection-based-interpolation-v}) in 
Lemma~\ref{lemma:better-regularity}.
Statement 
(\ref{item:thm:projection-based-interpolation-vi})
is given in Theorem~\ref{thm:duality-again-div}, and 
(\ref{item:thm:projection-based-interpolation-vii}) in
Lemma~\ref{lemma:better-regularity-div}.
\end{proof}

The projection property of the operators $\hatPigradcom$, $\hatPicurlcom$, $\hatPidivcom$ 
together with the best approximation property of Lemma~\ref{lemma:Pgrad1d} implies:
 
\begin{corollary}
\label{cor:thm:projection-based-interpolation}
Assume that all interior angles of the $4$ faces of $\widehat  K$ are smaller than $2\pi/3$.
For $k \ge 1$ and $s\in [0,1]$ there are constants $C_{s,k}$ depending only on $k$, $s$, and the 
choice of $\widehat K$ such that 
\begin{align}
\label{eq:cor:thm:projection-based-interpolation-1}
\|\varphi - \hatPigradcom \varphi\|_{H^{1-s}(\widehat K)} &\leq C_{s,k} p^{-(k+s)}\|\varphi\|_{H^{k+1}(\widehat K)}, \\
\label{eq:cor:thm:projection-based-interpolation-2}
\|{\mathbf u} - \hatPicurlcom {\mathbf u}\|_{\widetilde{\mathbf H}^{-s}(\widehat K,\operatorname{\mathbf{curl}})} 
&\leq C_{s,k} p^{-(k+s)}\|{\mathbf u}\|_{{\mathbf H}^{k}(\widehat K,\operatorname{\mathbf{curl}})}, \\
\label{eq:cor:thm:projection-based-interpolation-3}
\|{\mathbf u} - \hatPidivcom {\mathbf u}\|_{\widetilde{\mathbf H}^{-s}(\widehat K,\operatorname{div})} 
&\leq C_{s,k} p^{-(k+s)}\|{\mathbf u}\|_{{\mathbf H}^{k}(\widehat K,\operatorname{div})}.  
\end{align}
\end{corollary}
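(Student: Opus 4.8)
The plan is to combine Theorem~\ref{thm:projection-based-interpolation} with classical scalar/componentwise $p$-version approximation. For \eqref{eq:cor:thm:projection-based-interpolation-1} nothing more is needed: Lemma~\ref{lemma:Pgrad1d}, transported to $\widehat K$ by a collapsed-coordinate (tensor product) argument, yields $\inf_{v\in W_{p+1}(\widehat K)}\|\varphi-v\|_{H^2(\widehat K)}\le C_k\,p^{-(k-1)}\|\varphi\|_{H^{k+1}(\widehat K)}$ for $\varphi\in H^{k+1}(\widehat K)$, $k\ge1$, and inserting this into item~(\ref{item:thm:projection-based-interpolation-iii}) gives $p^{-(1+s)}p^{-(k-1)}=p^{-(k+s)}$. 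The same argument provides the componentwise bound $\inf\{\|\mathbf{w}-\mathbf{v}\|_{\mathbf{H}^m(\widehat K)}:\mathbf{v}\in(\mathcal P_q(\widehat K))^3\}\le C\,q^{-(r-m)}\|\mathbf{w}\|_{\mathbf{H}^r(\widehat K)}$ for $0\le m\le r$, which will be used below; recall that $(\mathcal P_q(\widehat K))^3\subset\mathbf{Q}_q(\widehat K)$ and $(\mathcal P_q(\widehat K))^3\subset\mathbf{V}_q(\widehat K)$.

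For \eqref{eq:cor:thm:projection-based-interpolation-2} I would use the regularized right inverse of $\operatorname{\mathbf{curl}}$ of Lemma~\ref{lemma:mcintosh} (Costabel--McIntosh) to decompose $\mathbf{u}=\nabla\phi+\mathbf{z}$ with $\operatorname{\mathbf{curl}}\mathbf{z}=\operatorname{\mathbf{curl}}\mathbf{u}$, $\mathbf{z}\in\mathbf{H}^{k+1}(\widehat K)$, $\phi\in H^{k+1}(\widehat K)$, and $\|\mathbf{z}\|_{\mathbf{H}^{k+1}(\widehat K)}+\|\phi\|_{H^{k+1}(\widehat K)}\le C\|\mathbf{u}\|_{\mathbf{H}^k(\widehat K,\operatorname{\mathbf{curl}})}$. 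Since $\hatPicurlcom\nabla\phi=\nabla\hatPigradcom\phi$ by the commuting diagram~\eqref{eq:commuting-diagram-hinten}, one has $\mathbf{u}-\hatPicurlcom\mathbf{u}=\nabla(\phi-\hatPigradcom\phi)+(\mathbf{z}-\hatPicurlcom\mathbf{z})$, hence $\|\mathbf{u}-\hatPicurlcom\mathbf{u}\|_{\widetilde{\mathbf H}^{-s}(\widehat K,\operatorname{\mathbf{curl}})}\le\|\nabla(\phi-\hatPigradcom\phi)\|_{\widetilde{\mathbf H}^{-s}(\widehat K)}+\|\mathbf{z}-\hatPicurlcom\mathbf{z}\|_{\widetilde{\mathbf H}^{-s}(\widehat K,\operatorname{\mathbf{curl}})}$, where the first term has lost its $\operatorname{\mathbf{curl}}$-contribution because $\operatorname{\mathbf{curl}}\nabla=0$. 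The first term is bounded by the second estimate of item~(\ref{item:thm:projection-based-interpolation-iii}) together with the scalar approximation bound above; the second term by item~(\ref{item:thm:projection-based-interpolation-iv}), its best-approximation infimum being estimated by the componentwise $\mathbf{H}^2(\widehat K)$-best approximation $\mathbf{z}_p\in(\mathcal P_p(\widehat K))^3\subset\mathbf{Q}_p(\widehat K)$, whose $\mathbf{H}^2(\widehat K)$-accuracy $p^{-(k-1)}$ simultaneously controls $\mathbf{z}-\mathbf{z}_p$ in $\mathbf{H}^1(\widehat K)$ and $\operatorname{\mathbf{curl}}(\mathbf{z}-\mathbf{z}_p)$ in $\mathbf{H}^1(\widehat K)$. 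Both terms are thus $\le C_{s,k}p^{-(k+s)}\|\mathbf{u}\|_{\mathbf{H}^k(\widehat K,\operatorname{\mathbf{curl}})}$, which is \eqref{eq:cor:thm:projection-based-interpolation-2}.

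Estimate~\eqref{eq:cor:thm:projection-based-interpolation-3} is obtained symmetrically from the Costabel--McIntosh decomposition $\mathbf{u}=\mathbf{w}+\operatorname{\mathbf{curl}}\boldsymbol\psi$ with $\operatorname{div}\mathbf{w}=\operatorname{div}\mathbf{u}$, $\mathbf{w},\boldsymbol\psi\in\mathbf{H}^{k+1}(\widehat K)$, and $\|\mathbf{w}\|_{\mathbf{H}^{k+1}(\widehat K)}+\|\boldsymbol\psi\|_{\mathbf{H}^{k+1}(\widehat K)}\le C\|\mathbf{u}\|_{\mathbf{H}^k(\widehat K,\operatorname{div})}$: using $\hatPidivcom\operatorname{\mathbf{curl}}\boldsymbol\psi=\operatorname{\mathbf{curl}}\hatPicurlcom\boldsymbol\psi$ one splits $\mathbf{u}-\hatPidivcom\mathbf{u}=(\mathbf{w}-\hatPidivcom\mathbf{w})+\operatorname{\mathbf{curl}}(\boldsymbol\psi-\hatPicurlcom\boldsymbol\psi)$, the first part controlled by item~(\ref{item:thm:projection-based-interpolation-vi}) (its infimum by componentwise $\mathbf{H}^{3/2}(\widehat K)$-approximation, which also settles the $\operatorname{div}$-component, at rate $p^{-(k-1/2)}$), and the second by item~(\ref{item:thm:projection-based-interpolation-iv}) applied to $\boldsymbol\psi$, since $\operatorname{div}\operatorname{\mathbf{curl}}(\cdot)=0$ removes the $\operatorname{div}$-contribution and $\|\operatorname{\mathbf{curl}}(\boldsymbol\psi-\hatPicurlcom\boldsymbol\psi)\|_{\widetilde{\mathbf H}^{-s}(\widehat K)}\le\|\boldsymbol\psi-\hatPicurlcom\boldsymbol\psi\|_{\widetilde{\mathbf H}^{-s}(\widehat K,\operatorname{\mathbf{curl}})}$. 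The one genuinely delicate point — common to \eqref{eq:cor:thm:projection-based-interpolation-2} and~\eqref{eq:cor:thm:projection-based-interpolation-3} — is that a plain componentwise polynomial approximation of $\mathbf{u}$ is a full power of $p$ too inaccurate in the $\operatorname{\mathbf{curl}}$- (resp.\ $\operatorname{div}$-) part of the graph norm; it is precisely the additional derivative gained from the regularized potentials of Lemma~\ref{lemma:mcintosh}, in combination with the exactness of the polynomial de Rham sequence and the commuting diagram~\eqref{eq:commuting-diagram-hinten}, that repairs this loss.
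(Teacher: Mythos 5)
Your proposal is correct, and for \eqref{eq:cor:thm:projection-based-interpolation-1} it coincides with the paper's argument (Theorem~\ref{thm:projection-based-interpolation}, (\ref{item:thm:projection-based-interpolation-iii}) plus Lemma~\ref{lemma:Pgrad1d}; note that Lemma~\ref{lemma:Pgrad1d} is already stated on a fixed simplex in $\mathbb{R}^d$, $d\le3$, so no collapsed-coordinate transport is needed). For \eqref{eq:cor:thm:projection-based-interpolation-2} and \eqref{eq:cor:thm:projection-based-interpolation-3} you use the same regular decompositions as the paper (Lemmas~\ref{lemma:helmholtz-like-decomp} and \ref{lemma:helmholtz-decomposition-div}, built from Lemma~\ref{lemma:mcintosh}), but deploy them differently: you split the interpolation error itself, writing ${\mathbf u}-\hatPicurlcom{\mathbf u}=\nabla(\phi-\hatPigradcom\phi)+({\mathbf z}-\hatPicurlcom{\mathbf z})$, resp.\ ${\mathbf u}-\hatPidivcom{\mathbf u}=({\mathbf w}-\hatPidivcom{\mathbf w})+\operatorname{\mathbf{curl}}(\boldsymbol\psi-\hatPicurlcom\boldsymbol\psi)$, via linearity and the commuting diagram, and then invoke the second estimate of item~(\ref{item:thm:projection-based-interpolation-iii}) together with item~(\ref{item:thm:projection-based-interpolation-iv}) (resp.\ items~(\ref{item:thm:projection-based-interpolation-vi}) and (\ref{item:thm:projection-based-interpolation-iv})) on the two pieces. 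The paper instead applies item~(\ref{item:thm:projection-based-interpolation-iv}) (resp.\ (\ref{item:thm:projection-based-interpolation-vi})) once to ${\mathbf u}$ and splits only inside the best-approximation infimum, taking $\nabla v+{\mathbf q}$ (resp.\ $\operatorname{\mathbf{curl}}{\mathbf q}+{\mathbf v}$) as competitor; that version is slightly leaner, since it needs neither the commuting diagram property nor the $\widetilde{\mathbf H}^{-s}$-bound for $\nabla(\varphi-\hatPigradcom\varphi)$, only the observations $\operatorname{\mathbf{curl}}\nabla=0$ and $\operatorname{div}\operatorname{\mathbf{curl}}=0$ inside the graph norm. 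Your route is closer in spirit to the proofs of Lemmas~\ref{lemma:better-regularity} and \ref{lemma:better-regularity-div} and works equally well; the exponent bookkeeping ($p^{-(1+s)}\cdot p^{-(k-1)}$ and $p^{-(1/2+s)}\cdot p^{-(k-1/2)}$) is right, and $(\mathcal P_p(\widehat K))^3\subset{\mathbf Q}_p(\widehat K)$, $(\mathcal P_p(\widehat K))^3\subset{\mathbf V}_p(\widehat K)$ as you use. You should, however, make explicit that the pieces lie in the domains of the operators, i.e.\ ${\mathbf z},\boldsymbol\psi\in{\mathbf H}^1(\widehat K,\operatorname{\mathbf{curl}})$ and ${\mathbf w},\operatorname{\mathbf{curl}}\boldsymbol\psi\in{\mathbf H}^{1/2}(\widehat K,\operatorname{div})$, which follows from $\operatorname{\mathbf{curl}}{\mathbf z}=\operatorname{\mathbf{curl}}{\mathbf u}\in{\mathbf H}^k(\widehat K)$ and $\operatorname{\mathbf{curl}}\boldsymbol\psi={\mathbf u}-{\mathbf w}\in{\mathbf H}^k(\widehat K)$ with $k\ge1$, so that the commuting relations and items of Theorem~\ref{thm:projection-based-interpolation} are indeed applicable to each piece.
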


\begin{proof}
The estimate (\ref{eq:cor:thm:projection-based-interpolation-1})
follows directly from Theorem~\ref{thm:projection-based-interpolation}, (\ref{item:thm:projection-based-interpolation-iii}) and the best approximation result Lemma~\ref{lemma:Pgrad1d}. 
For the proof of the estimate (\ref{eq:cor:thm:projection-based-interpolation-2})
we write, with Lemma~\ref{lemma:helmholtz-like-decomp}, ${\mathbf u} = \nabla \varphi + {\mathbf z}$ with 
$\|\varphi\|_{H^{k+1}(\widehat K)} \lesssim \|{\mathbf u}\|_{{\mathbf H}^{k}(\widehat K,{\mathbf{curl}})}$ 
and $\|{\mathbf z}\|_{{\mathbf H}^{k+1}(\widehat K)} \lesssim 
\|\operatorname{\mathbf{curl}} {\mathbf u}\|_{{\mathbf H}^{k}(\widehat K)}$. From 
Theorem~\ref{thm:projection-based-interpolation}, (\ref{item:thm:projection-based-interpolation-iv}) 
and Lemma~\ref{lemma:Pgrad1d} we infer 
\begin{align*}
\|{\mathbf u} &- \hatPicurlcom{\mathbf u}\|_{\widetilde {\mathbf H}^{-s}(\widehat K,\operatorname{\mathbf{curl}}) }
 \lesssim p^{-(1+s)}\!\!\!\!\!\! \inf_{\substack{v \in W_{p+1}(\widehat K), \\{\mathbf q} \in {\mathbf Q}_p(\widehat K)} }
\! \|\nabla \varphi + {\mathbf z} - (\nabla v +{\mathbf q})\|_{{\mathbf H}^{1}(\widehat K,\operatorname{\mathbf{curl}})}
\\
& \quad \lesssim p^{-(1+s)} \left[\inf_{v \in W_{p+1}(\widehat K)} \|\varphi - v\|_{H^{2}(\widehat K)} + 
\inf_{{\mathbf q} \in {\mathbf Q}_p(\widehat K)} 
 \|{\mathbf z} - {\mathbf q}\|_{{\mathbf H}^{2}(\widehat K)}\right]
\\
&\stackrel{\text{Lem.~\ref{lemma:Pgrad1d}}}{\lesssim} 
 p^{-(1+s)-(k+1-2)} \left[ \|\varphi\|_{H^{k+1}(\widehat K)} + \|{\mathbf z}\|_{{\mathbf H}^{k+1}(\widehat K)}\right]
\lesssim p^{-(s+k)} \|{\mathbf u}\|_{{\mathbf H}^{k}(\widehat K,\operatorname{\mathbf{curl}})}.
\end{align*}
The bound (\ref{eq:cor:thm:projection-based-interpolation-3}) is shown 
similarly using for 
${\mathbf u} \in {\mathbf H}^k(\widehat K,\operatorname{div})$ the  
decomposition ${\mathbf u} = \operatorname{\mathbf{curl}} {\boldsymbol \varphi} + {\mathbf z}$ 
with 
$\|{\boldsymbol \varphi} \|_{{\mathbf H}^{k+1}(\widehat K)} 
\lesssim \|{\mathbf u}\|_{{\mathbf H}^{k}(\widehat K,\operatorname{div})}$
and $ \| {\mathbf z}\|_{{\mathbf H}^{k+1}(\widehat K)} 
\lesssim \|\operatorname{div} {\mathbf u} \|_{H^{k}(\widehat K)}$ of 
Lemma~\ref{lemma:helmholtz-decomposition-div} and arguing with 
Theorem~\ref{thm:projection-based-interpolation}, (\ref{item:thm:projection-based-interpolation-vi}) 
and Lemma~\ref{lemma:Pgrad1d} to get
\begin{align*}
& \|{\mathbf u} - \hatPidivcom{\mathbf u}\|_{\widetilde {\mathbf H}^{-s}(\widehat K,\operatorname{div})} \\
& \quad \lesssim p^{-(1/2+s)} \inf_{{\mathbf q} \in {\mathbf Q}_p(\widehat K) , {\mathbf v} \in {\mathbf V}_p(\widehat K)}
\|\operatorname{\mathbf{curl}} {\boldsymbol\varphi} + {\mathbf z} - (\operatorname{\mathbf{curl}} {\mathbf q} +{\mathbf v})\|_{{\mathbf H}^{1/2}(\widehat K,\operatorname{div})}
\\
&\quad \lesssim p^{-(1/2+s)} \left[\inf_{{\mathbf q} \in {\mathbf Q}_p(\widehat K)} 
\|{\boldsymbol\varphi} - {\mathbf q}\|_{{\mathbf H}^{3/2}(\widehat K)} + 
 \inf_{{\mathbf v} \in {\mathbf V}_p(\widehat K)} 
 \|{\mathbf z} - {\mathbf v}\|_{{\mathbf H}^{3/2}(\widehat K)}\right] \\
&\quad \stackrel{\text{Lem.~\ref{lemma:Pgrad1d}}}{\lesssim} 
p^{-(s+k)} \|{\mathbf u}\|_{{\mathbf H}^{k}(\widehat K,\operatorname{div})}. 
\qedhere
\end{align*}
\end{proof}

\begin{theorem}[Projection-based interpolation in 2D]
\label{thm:projection-based-interpolation-2d} 
For a reference triangle $\widehat{f} \subset {\mathbb R}^2$ 
with $\widehat s$ given by \eqref{eq:maximal-angle}
there are constants $C_{s,k}$ depending only on 
$s$, $k$, and the  choice of $\widehat f$ such that the following holds: 

\begin{enumerate}
[(i)]

\item 
\label{item:thm:projection-based-interpolation-i-2d} 
The operators $\hatPigradcomtwod$, $\hatPicurlcomtwod$, $\widehat{\Pi}^{L^2}_p$ are well-defined, projections, 
and the diagram (\ref{eq:commuting-diagram-2d}) commutes.

\item 
\label{item:thm:projection-based-interpolation-ii-2d} 
For all $\varphi\in
H^{3/2}(\widehat{f})$ there holds
\begin{align*}
\Vert\varphi-\hatPigradcomtwod\varphi\Vert
_{H^{1-s}(\widehat{f})}& \leq 
C_{s,k}p^{-(1/2+s)}\inf_{v\in W_{p+1}(\widehat{f})}\Vert\varphi-v\Vert_{H^{3/2}(\widehat{f})},\qquad
s\in [0,1], \\
\Vert\varphi-\hatPigradcomtwod\varphi\Vert
_{\widetilde H^{1-s}(\widehat{f})}& \leq 
C_{s,k}p^{-(1/2+s)}\inf_{v\in W_{p+1}(\widehat{f})}\Vert\varphi-v\Vert_{H^{3/2}(\widehat{f})},\qquad
s\in [1,\widehat{s}), \\
\Vert \nabla(\varphi-\hatPigradcomtwod \varphi)\Vert_{\widetilde{\mathbf{H}}^{-s}(\widehat{f})}& \leq
C_{s,k}p^{-(1/2+s)}\inf_{v \in W_{p+1}(\widehat f)} \Vert \varphi -v \Vert_{H^{3/2}(\widehat{f})} ,
\qquad s \in [0,\widehat{s}).
\end{align*}

\item 
\label{item:thm:projection-based-interpolation-iii-2d} 
For all ${\mathbf{u}}\in{\mathbf{H}}^{1/2}(\widehat{f},\operatorname{curl})$ there holds
\[
\Vert{\mathbf{u}}-\hatPicurlcomtwod{\mathbf{u}}%
\Vert_{\widetilde{\mathbf{H}}^{-s}(\widehat{f},\operatorname{curl})}\leq C_{s,k} p^{-(1/2+s)}%
\inf_{\mathbf{v}\in\mathbf{Q}_p(\widehat{f})}\Vert{\mathbf{u}}-{\mathbf{v}}\Vert_{\mathbf{H}^{1/2}(\widehat{f},\operatorname{curl})}, \qquad s\in [0,\widehat{s}).
\]

\item 
\label{item:thm:projection-based-interpolation-iv-2d} 
For all $k\geq1$ and all ${\mathbf{u}}\in{\mathbf{H}}^{k}(\widehat{f})$ with $\operatorname*{curl}{\mathbf{u}}\in{\mathcal{P}}_{p}(\widehat{f})$ there holds
\begin{equation}
\Vert{\mathbf{u}}-\hatPicurlcomtwod{\mathbf{u}}%
\Vert_{\widetilde{\mathbf{H}}^{-s}(\widehat{f},\operatorname{curl})}\leq C_{s,k}p^{-(k+s)}\Vert{\mathbf{u}}\Vert
_{\mathbf{H}^{k}(\widehat{f})}, \qquad s\in [0,\widehat{s}).
\label{eq:lemma:projection-based-interpolation-approximation-2d-10}%
\end{equation}
If $p\geq k-1$, then $\Vert{\mathbf{u}}\Vert_{{\mathbf{H}}%
^{k}(\widehat{f})}$ can be replaced with the seminorm $|{\mathbf{u}%
}|_{{\mathbf{H}}^{k}(\widehat{f})}$.
\end{enumerate}
\end{theorem}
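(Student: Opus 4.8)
The plan is to prove item (\ref{item:thm:projection-based-interpolation-iv-2d}) of Theorem~\ref{thm:projection-based-interpolation-2d} by deriving it as a consequence of item (\ref{item:thm:projection-based-interpolation-iii-2d}), mimicking the strategy that is already sketched for the analogous 3D statements in Lemma~\ref{lemma:better-regularity} and Lemma~\ref{lemma:better-regularity-div}. The hypothesis is that $\mathbf{u} \in \mathbf{H}^k(\widehat f)$ with $\operatorname{curl}\mathbf{u} \in \mathcal{P}_p(\widehat f) = W_p(\widehat f)$; the key structural fact to exploit is the commuting diagram (\ref{eq:commuting-diagram-2d}), which gives $\operatorname{curl}(\hatPicurlcomtwod \mathbf{u}) = \widehat\Pi^{L^2}_p(\operatorname{curl}\mathbf{u}) = \operatorname{curl}\mathbf{u}$, since $\operatorname{curl}\mathbf{u}$ is already in $W_p(\widehat f)$ and $\widehat\Pi^{L^2}_p$ is a projection. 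Hence $\operatorname{curl}(\mathbf{u} - \hatPicurlcomtwod\mathbf{u}) = 0$, which immediately collapses the $\operatorname{curl}$-part of the $\widetilde{\mathbf H}^{-s}(\widehat f,\operatorname{curl})$-norm and reduces the task to estimating $\|\mathbf{u} - \hatPicurlcomtwod\mathbf{u}\|_{\widetilde{\mathbf H}^{-s}(\widehat f)}$.

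Next I would invoke item (\ref{item:thm:projection-based-interpolation-iii-2d}), which applies because $\mathbf{H}^k(\widehat f,\operatorname{curl}) \subset \mathbf{H}^{1/2}(\widehat f,\operatorname{curl})$ for $k \ge 1$, to get
\begin{equation*}
\|\mathbf{u} - \hatPicurlcomtwod\mathbf{u}\|_{\widetilde{\mathbf H}^{-s}(\widehat f,\operatorname{curl})} \leq C_{s}\, p^{-(1/2+s)} \inf_{\mathbf{v}\in\mathbf{Q}_p(\widehat f)} \|\mathbf{u}-\mathbf{v}\|_{\mathbf{H}^{1/2}(\widehat f,\operatorname{curl})}.
\end{equation*}
To turn the factor $p^{-(1/2+s)}$ into the claimed $p^{-(k+s)}$ I need a polynomial best-approximation estimate that exploits the full $\mathbf{H}^k$-regularity of $\mathbf{u}$ and, crucially, the fact that $\operatorname{curl}\mathbf{u}$ is already a polynomial of degree $p$. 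The natural approach: choose $\mathbf{v}$ to be (a good approximation compatible with) a Helmholtz-type splitting $\mathbf{u} = \nabla\psi + \mathbf{z}$, where the lifting/decomposition results cited in the paper — in the 2D context, the analog of Lemma~\ref{lemma:helmholtz-like-decomp} — give $\psi \in H^{k+1}$ with norm controlled by $\|\mathbf{u}\|_{\mathbf{H}^k}$ and $\mathbf{z}$ controlled by $\|\operatorname{curl}\mathbf{u}\|$. Since $\operatorname{curl}\mathbf{u} \in W_p$, the component $\mathbf{z}$ can be chosen to already lie in $\mathbf{Q}_p(\widehat f)$ (using the exact-sequence surjectivity $\operatorname{curl}: \mathbf{Q}_p \twoheadrightarrow W_p$), so only $\nabla\psi$ needs to be approximated, and there we approximate $\psi$ by $v \in W_{p+1}(\widehat f)$ using the scalar best-approximation Lemma~\ref{lemma:Pgrad1d} to gain $p^{-(k+1 - 3/2)} = p^{-(k-1/2)}$ in the $H^{3/2}$-norm, while $\operatorname{curl}(\mathbf{u}-\mathbf{v}) = 0$ makes the $\operatorname{curl}$-contribution to the $\mathbf{H}^{1/2}(\widehat f,\operatorname{curl})$-norm vanish. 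Multiplying $p^{-(1/2+s)}$ by $p^{-(k-1/2)}$ gives exactly $p^{-(k+s)}$.

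The seminorm statement for $p \ge k-1$ follows by the standard Bramble–Hilbert / scaling argument: the left-hand side is invariant under adding polynomials of degree $\le k-1$ to $\mathbf{u}$ (since such additions are reproduced by $\hatPicurlcomtwod$, as it is a projection onto $\mathbf{Q}_p \supset (\mathcal{P}_{k-1})^2$ and the relevant polynomial has $\operatorname{curl}$ still in $W_p$), so the full norm on the right may be replaced by the quotient norm, which is equivalent to the seminorm $|\mathbf{u}|_{\mathbf{H}^k(\widehat f)}$.

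The main obstacle I anticipate is the bookkeeping in the decomposition step: one must verify that the regularized Helmholtz-type splitting on the triangle (the 2D counterpart of Costabel–McIntosh, resp.~Lemma~\ref{lemma:helmholtz-like-decomp}) can be arranged so that the "curl-free remainder" piece $\mathbf{z}$ is not merely smooth but genuinely a polynomial in $\mathbf{Q}_p(\widehat f)$ when $\operatorname{curl}\mathbf{u}\in W_p$ — i.e., one needs a polynomial-preserving right inverse of $\operatorname{curl}$ on the triangle that is simultaneously bounded $H^k \to H^{k+1}$. If a clean polynomial-preserving version is unavailable, the alternative is to approximate $\mathbf{z}$ by $\mathbf{q}\in\mathbf{Q}_p$ via Lemma~\ref{lemma:Pgrad1d} as well, accepting a (harmless, since $\mathbf{z}$ has extra regularity $k+1$) loss, and to control the resulting nonzero $\operatorname{curl}(\mathbf{z}-\mathbf{q})$ in $\widetilde{H}^{-s}$ — but this reintroduces a curl-term that must be shown to be higher order, which is where the degree-$p$ polynomiality of $\operatorname{curl}\mathbf{u}$ must again be used decisively. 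Everything else (the commuting-diagram collapse of the curl error, the interpolation-space argument packaging $s\in[0,\widehat s)$, the seminorm reduction) is routine and parallels the already-established 3D proofs.
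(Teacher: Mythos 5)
Your proposal covers only item (\ref{item:thm:projection-based-interpolation-iv-2d}) of the four-part theorem; items (\ref{item:thm:projection-based-interpolation-i-2d})--(\ref{item:thm:projection-based-interpolation-iii-2d}) (well-definedness and the commuting diagram, the scalar estimates of Theorem~\ref{lemma:demkowicz-grad-2D}, and the estimate of Lemma~\ref{lemma:Picurl-face}) are presupposed rather than proven, whereas the paper's proof of the theorem consists precisely in assembling these separately established results. For item (\ref{item:thm:projection-based-interpolation-iv-2d}) itself your argument is correct but takes a mildly different route from the paper's Lemma~\ref{lemma:better-regularity-2d}. The paper writes ${\mathbf u}=\nabla\varphi+{\mathbf v}$ with ${\mathbf v}={\mathbf R}^{\operatorname{curl}}(\operatorname{curl}{\mathbf u})\in\mathbf{Q}_p(\widehat f)$, so that ${\mathbf v}$ is reproduced exactly by the projector, and then uses the commuting diagram $\nabla\hatPigradcomtwod=\hatPicurlcomtwod\nabla$ to reduce the entire error to $\Vert\nabla(\operatorname{I}-\hatPigradcomtwod)\varphi\Vert_{\widetilde{\mathbf H}^{-s}(\widehat f)}$, which is handled by the scalar bound \eqref{eq:lemma:demkowicz-grad-2D-10-c}. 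You instead stay in the vector-valued setting, feed the same decomposition into the infimum of item (\ref{item:thm:projection-based-interpolation-iii-2d}), and gain the extra factor $p^{-(k-1/2)}$ from the best approximation of $\varphi$ in $H^{3/2}(\widehat f)$ via Lemma~\ref{lemma:Pgrad1d}; both chains multiply out to $p^{-(k+s)}$, and there is no circularity since the proof of Lemma~\ref{lemma:Picurl-face} does not use item (\ref{item:thm:projection-based-interpolation-iv-2d}). The ``main obstacle'' you anticipate --- a right inverse of $\operatorname{curl}$ that is simultaneously polynomial-preserving and bounded $H^k(\widehat f)\rightarrow\mathbf{H}^{k+1}(\widehat f)$ --- is a non-issue: Lemma~\ref{lemma:mcintosh-2d}, items (\ref{item:lemma:mcintosh-2d-iv}) and (\ref{item:lemma:mcintosh-2d-v}), provides exactly this object, and it is the same device the paper employs, so your fallback discussion is unnecessary. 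Your preliminary observation that the commuting diagram forces $\operatorname{curl}({\mathbf u}-\hatPicurlcomtwod{\mathbf u})=0$ is correct but redundant once you invoke item (\ref{item:thm:projection-based-interpolation-iii-2d}), whose left-hand side already contains the curl contribution. The seminorm reduction for $p\geq k-1$ is as in the paper.
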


\begin{proof}
The proof of (\ref{item:thm:projection-based-interpolation-i-2d}) follows by arguments
very similar to those given in \cite{demkowicz08} and the arguments 
for the 3D case that are  worked out in Section~\ref{sec:commuting} below. 
Item 
(\ref{item:thm:projection-based-interpolation-ii-2d})
is shown in Theorem~\ref{lemma:demkowicz-grad-2D} and item 
(\ref{item:thm:projection-based-interpolation-iii-2d}) 
in Lemma~\ref{lemma:Picurl-face}. 
For 
(\ref{item:thm:projection-based-interpolation-iv-2d}), see
Lemma~\ref{lemma:better-regularity-2d}.
\end{proof}

The following corollary is the two-dimensional analog of 
Corollary~\ref{cor:thm:projection-based-interpolation}:  
\begin{corollary}
\label{cor:thm:projection-based-interpolation-2d}  
For $k \ge 1$,
\begin{align}
\label{eq:cor:thm:projection-based-interpolation-2d-10}  
\|\varphi - \hatPigradcomtwod \varphi\|_{H^{1-s}(\widehat f)} &\leq C_{s,k} p^{-(k+s)}\|\varphi\|_{H^{k+1}(\widehat f)}, \qquad s\in [0,1],\\
\label{eq:cor:thm:projection-based-interpolation-2d-12}  
\|\varphi - \hatPigradcomtwod \varphi\|_{\widetilde H^{1-s}(\widehat f)} &\leq C_{s,k} p^{-(k+s)}\|\varphi\|_{H^{k+1}(\widehat f)}, \qquad s\in [1,\widehat{s}),\\
\label{eq:cor:thm:projection-based-interpolation-2d-20}  
\|{\mathbf u} - \hatPicurlcomtwod {\mathbf u}\|_{\widetilde{\mathbf H}^{-s}(\widehat f,\operatorname{curl})} 
&\leq C_{s,k} p^{-(k+s)}\|{\mathbf u}\|_{{\mathbf H}^{k}(\widehat f,\operatorname{curl})}, \qquad s\in [0,\widehat{s}).
\end{align}
\end{corollary}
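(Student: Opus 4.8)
The plan is to derive Corollary~\ref{cor:thm:projection-based-interpolation-2d} from Theorem~\ref{thm:projection-based-interpolation-2d} in exactly the same way Corollary~\ref{cor:thm:projection-based-interpolation} is derived from Theorem~\ref{thm:projection-based-interpolation}, simply replacing the exponent $p^{-(1+s)}$ (resp.\ $p^{-(1/2+s)}$) by the 2D rate $p^{-(1/2+s)}$ throughout and using the 2D Helmholtz-type decomposition. The estimates \eqref{eq:cor:thm:projection-based-interpolation-2d-10} and \eqref{eq:cor:thm:projection-based-interpolation-2d-12} follow immediately: by Theorem~\ref{thm:projection-based-interpolation-2d}, (\ref{item:thm:projection-based-interpolation-ii-2d}) the interpolation error $\|\varphi - \hatPigradcomtwod \varphi\|$ in either the $H^{1-s}(\widehat f)$-norm ($s \in [0,1]$) or the $\widetilde H^{1-s}(\widehat f)$-norm ($s \in [1,\widehat s)$) is bounded by $C_{s,k}\, p^{-(1/2+s)} \inf_{v \in W_{p+1}(\widehat f)}\|\varphi - v\|_{H^{3/2}(\widehat f)}$, and the best-approximation result Lemma~\ref{lemma:Pgrad1d} gives $\inf_{v}\|\varphi - v\|_{H^{3/2}(\widehat f)} \lesssim p^{-(k+1-3/2)}\|\varphi\|_{H^{k+1}(\widehat f)} = p^{-(k-1/2)}\|\varphi\|_{H^{k+1}(\widehat f)}$; multiplying the two rates yields $p^{-(1/2+s)-(k-1/2)} = p^{-(k+s)}$, as claimed.

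For \eqref{eq:cor:thm:projection-based-interpolation-2d-20} I would mimic the curl-estimate in the proof of Corollary~\ref{cor:thm:projection-based-interpolation}. Given ${\mathbf u} \in {\mathbf H}^k(\widehat f,\operatorname{curl})$, use the 2D analog of Lemma~\ref{lemma:helmholtz-like-decomp} (a regularized Helmholtz-type splitting, ultimately resting on Lemma~\ref{lemma:mcintosh-2d} / \cite{costabel-mcintosh10}) to write ${\mathbf u} = \nabla\varphi + {\mathbf z}$ with $\|\varphi\|_{H^{k+1}(\widehat f)} \lesssim \|{\mathbf u}\|_{{\mathbf H}^k(\widehat f,\operatorname{curl})}$ and $\|{\mathbf z}\|_{{\mathbf H}^{k+1}(\widehat f)} \lesssim \|\operatorname{curl}{\mathbf u}\|_{H^k(\widehat f)}$. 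Then, using that $\hatPicurlcomtwod$ is a projection onto $\mathbf{Q}_p(\widehat f)$ and that $\nabla W_{p+1}(\widehat f) \subset \mathbf{Q}_p(\widehat f)$ together with Theorem~\ref{thm:projection-based-interpolation-2d}, (\ref{item:thm:projection-based-interpolation-iii-2d}), bound the interpolation error by $p^{-(1/2+s)}\inf_{v \in W_{p+1}, {\mathbf q}\in\mathbf{Q}_p}\|\nabla\varphi + {\mathbf z} - (\nabla v + {\mathbf q})\|_{{\mathbf H}^{1/2}(\widehat f,\operatorname{curl})}$, which splits into $\inf_v \|\varphi - v\|_{H^{3/2}(\widehat f)} + \inf_{\mathbf q}\|{\mathbf z} - {\mathbf q}\|_{{\mathbf H}^{3/2}(\widehat f)}$ (the curl of $\nabla v$ vanishes). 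Applying Lemma~\ref{lemma:Pgrad1d} to both infima costs a further factor $p^{-(k+1-3/2)} = p^{-(k-1/2)}$, and combining with $p^{-(1/2+s)}$ gives $p^{-(k+s)}\big(\|\varphi\|_{H^{k+1}(\widehat f)} + \|{\mathbf z}\|_{{\mathbf H}^{k+1}(\widehat f)}\big) \lesssim p^{-(k+s)}\|{\mathbf u}\|_{{\mathbf H}^k(\widehat f,\operatorname{curl})}$, valid for $s\in[0,\widehat s)$ since that is the range in which (\ref{item:thm:projection-based-interpolation-iii-2d}) holds.

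The only point requiring a little care is availability of the 2D regularized Helmholtz-type decomposition with the stated Sobolev mapping properties; this is the analog of Lemma~\ref{lemma:helmholtz-like-decomp} and follows from the 2D regularized Poincaré/Bogovskii operators of \cite{costabel-mcintosh10} (cf.\ Lemma~\ref{lemma:mcintosh-2d}), exactly as in the 3D case, so it is not a genuine obstacle. I therefore expect the proof to be essentially a transcription of the corresponding part of the proof of Corollary~\ref{cor:thm:projection-based-interpolation}, with the 3D rates $p^{-(1+s)}$, $p^{-(1/2+s)}$ replaced by the single 2D rate $p^{-(1/2+s)}$ and the range of admissible $s$ enlarged to $[0,\widehat s)$ where appropriate.

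\begin{proof}
The estimates \eqref{eq:cor:thm:projection-based-interpolation-2d-10} and \eqref{eq:cor:thm:projection-based-interpolation-2d-12}
follow directly from Theorem~\ref{thm:projection-based-interpolation-2d}, (\ref{item:thm:projection-based-interpolation-ii-2d})
and the best approximation result Lemma~\ref{lemma:Pgrad1d}, since
$\inf_{v \in W_{p+1}(\widehat f)}\|\varphi - v\|_{H^{3/2}(\widehat f)} \lesssim p^{-(k-1/2)}\|\varphi\|_{H^{k+1}(\widehat f)}$
and $p^{-(1/2+s)} p^{-(k-1/2)} = p^{-(k+s)}$.
For \eqref{eq:cor:thm:projection-based-interpolation-2d-20}, write ${\mathbf u} = \nabla\varphi + {\mathbf z}$
using the 2D Helmholtz-type decomposition, with
$\|\varphi\|_{H^{k+1}(\widehat f)} \lesssim \|{\mathbf u}\|_{{\mathbf H}^k(\widehat f,\operatorname{curl})}$
and $\|{\mathbf z}\|_{{\mathbf H}^{k+1}(\widehat f)} \lesssim \|\operatorname{curl}{\mathbf u}\|_{H^k(\widehat f)}$.
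Since $\nabla W_{p+1}(\widehat f) \subset {\mathbf Q}_p(\widehat f)$, Theorem~\ref{thm:projection-based-interpolation-2d}, (\ref{item:thm:projection-based-interpolation-iii-2d}) and Lemma~\ref{lemma:Pgrad1d} give, for $s \in [0,\widehat s)$,
\begin{align*}
\|{\mathbf u} - \hatPicurlcomtwod {\mathbf u}\|_{\widetilde{\mathbf H}^{-s}(\widehat f,\operatorname{curl})}
&\lesssim p^{-(1/2+s)} \inf_{\substack{v \in W_{p+1}(\widehat f), \\ {\mathbf q} \in {\mathbf Q}_p(\widehat f)}}
\|\nabla\varphi + {\mathbf z} - (\nabla v + {\mathbf q})\|_{{\mathbf H}^{1/2}(\widehat f,\operatorname{curl})} \\
&\lesssim p^{-(1/2+s)} \left[ \inf_{v \in W_{p+1}(\widehat f)}\|\varphi - v\|_{H^{3/2}(\widehat f)}
+ \inf_{{\mathbf q} \in {\mathbf Q}_p(\widehat f)}\|{\mathbf z} - {\mathbf q}\|_{{\mathbf H}^{3/2}(\widehat f)} \right] \\
&\lesssim p^{-(1/2+s)-(k-1/2)} \left[ \|\varphi\|_{H^{k+1}(\widehat f)} + \|{\mathbf z}\|_{{\mathbf H}^{k+1}(\widehat f)} \right]
\lesssim p^{-(k+s)} \|{\mathbf u}\|_{{\mathbf H}^k(\widehat f,\operatorname{curl})}. \qedhere
\end{align*}
\end{proof}
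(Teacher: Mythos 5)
Your proposal is correct and follows essentially the same route as the paper: the paper's proof of Corollary~\ref{cor:thm:projection-based-interpolation-2d} simply refers back to the argument of Corollary~\ref{cor:thm:projection-based-interpolation}, using Lemma~\ref{lemma:helmholtz-like-decomp-2d} for the decomposition ${\mathbf u} = \nabla\varphi + {\mathbf z}$ in \eqref{eq:cor:thm:projection-based-interpolation-2d-20}, exactly as you do. Your bookkeeping of the rates ($p^{-(1/2+s)}$ times $p^{-(k-1/2)}$ from Lemma~\ref{lemma:Pgrad1d}) and of the admissible ranges of $s$ matches the intended argument.
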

\begin{proof}
The proof follows as in 
Corollary~\ref{cor:thm:projection-based-interpolation}, 
relying on Lemma~\ref{lemma:helmholtz-like-decomp-2d}
for 
(\ref{eq:cor:thm:projection-based-interpolation-2d-20}). 
\end{proof}


\section{Well-definedness of the projection operators and commuting diagram property}
\label{sec:commuting}
We show that the operators introduced in 
Section~\ref{sec:def-operators} are well-defined. The arguments are mostly
well-established and included for completeness' sake.

\begin{lemma} 
\label{lemma:Pi_grad-well-defined}
For $u \in H^2(\widehat K)$ there holds 
$u|_e \in H^1(e)$ for each $e \in {\mathcal E}(\widehat K)$
and $\|u\|_{H^1(e)} \lesssim \|u\|_{H^2(\widehat K)}$. 
The operator $\hatPigradcom$ is well-defined. 
\end{lemma}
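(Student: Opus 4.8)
The statement has two parts: a trace inequality $\|u\|_{H^1(e)} \lesssim \|u\|_{H^2(\widehat K)}$ for each edge $e$, and the well-definedness of $\hatPigradcom$. The plan is to dispatch the trace part first, then use it to make sense of each defining condition in Definition~\ref{def:hatPigradcom}, and finally argue that the resulting finite-dimensional linear system has a unique solution.

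\textbf{Step 1 (edge trace).} For $u \in H^2(\widehat K)$ the gradient $\nabla u$ lies in $\mathbf{H}^1(\widehat K)$. First I would use the standard trace theorem $H^1(\widehat K) \to H^{1/2}(f) \hookrightarrow L^2(f)$ for a face $f \supset e$ to conclude $u|_f \in H^{3/2}(f)$ (equivalently $u|_f$ and $\nabla u|_f$ are in $H^{1/2}(f)$, with the in-plane gradient controlled) and $\|u\|_{H^{3/2}(f)} \lesssim \|u\|_{H^2(\widehat K)}$. Then a second application of the trace theorem, now from the two-dimensional face to its edge $e$, namely $H^{3/2}(f)\to H^1(e)$, gives $u|_e \in H^1(e)$ with $\|u\|_{H^1(e)} \lesssim \|u\|_{H^{3/2}(f)} \lesssim \|u\|_{H^2(\widehat K)}$. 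Both trace inequalities are classical on Lipschitz (indeed polygonal/polyhedral) domains and can be cited from \cite{Mclean00}.

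\textbf{Step 2 (the conditions are meaningful).} With Step~1 in hand I would check that each line of \eqref{eq:Pi_grad} makes sense for $u \in H^2(\widehat K)$ and a polynomial iterate: the vertex values in \eqref{eq:Pi_grad-d} are well-defined because $H^1(e)\hookrightarrow C(\overline e)$ in one dimension; the edge integral in \eqref{eq:Pi_grad-c} is well-defined since $\nabla_e u|_e \in L^2(e)$ by Step~1; the face integral in \eqref{eq:Pi_grad-b} is well-defined since $\nabla_f u|_f \in L^2(f)$ (from $u|_f \in H^{3/2}(f) \subset H^1(f)$); and \eqref{eq:Pi_grad-a} is well-defined because $\nabla u \in L^2(\widehat K)$.

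\textbf{Step 3 (existence and uniqueness).} I would argue hierarchically, following the ``sequence of constrained optimizations'' picture described before the definition. The vertex values are fixed by \eqref{eq:Pi_grad-d}. On each edge $e$, \eqref{eq:Pi_grad-c} is the variational form of a one-dimensional Laplace problem in the affine space of polynomials in $W_{p+1}(e)$ with the prescribed endpoint values; since $(\nabla_e\cdot,\nabla_e\cdot)_{L^2(e)}$ is an inner product on $\mathring W_{p+1}(e)$ (Poincaré/Friedrichs on the interval), this determines $\hatPigradcom u|_e$ uniquely, and these edge values are mutually compatible at shared vertices. Likewise, on each face $f$, \eqref{eq:Pi_grad-b} is the variational form of a two-dimensional Poisson problem for the correction in $\mathring W_{p+1}(f)$ with Dirichlet data already fixed on $\partial f$, uniquely solvable because $(\nabla_f\cdot,\nabla_f\cdot)_{L^2(f)}$ is coercive on $\mathring W_{p+1}(f) \subset H_0^1(f)$; and the face values agree on shared edges. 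Finally \eqref{eq:Pi_grad-a} is a Poisson problem for the interior correction in $\mathring W_{p+1}(\widehat K) \subset H_0^1(\widehat K)$, again uniquely solvable by coercivity of the Dirichlet form. Piecing the vertex, edge, face, and interior contributions together via the decomposition $W_{p+1}(\widehat K) = (\text{vertex part}) \oplus \mathring W_{p+1}(e)\text{'s} \oplus \mathring W_{p+1}(f)\text{'s} \oplus \mathring W_{p+1}(\widehat K)$ yields a unique $\hatPigradcom u \in W_{p+1}(\widehat K)$.

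\textbf{Main obstacle.} The only genuinely delicate point is the regularity bookkeeping in Step~1: one must be sure that the \emph{iterated} trace $H^2(\widehat K) \to H^{3/2}(f) \to H^1(e)$ is valid and that the $H^1(e)$-norm (not just $L^2(e)$) is controlled, since the edge condition \eqref{eq:Pi_grad-c} tests against $\nabla_e v$; everything else (Poincaré on intervals, coercivity of Dirichlet forms, the direct-sum decomposition of $W_{p+1}$) is routine. A clean way to phrase Step~1 uniformly is to note that $H^2(\widehat K)$ embeds into the space of functions whose restriction to every $k$-dimensional subsimplex lies in $H^{k/2}(\cdot)$ via repeated application of the trace theorem, which is exactly the regularity needed to evaluate all of \eqref{eq:Pi_grad}.
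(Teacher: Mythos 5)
Your overall architecture matches the paper's: establish the edge trace $H^2(\widehat K)\to H^1(e)$, observe that this makes every condition in \eqref{eq:Pi_grad} meaningful, and then prove unique solvability of the resulting linear system. Your Step~3 is an equivalent repackaging of the paper's argument: the paper counts conditions to see that \eqref{eq:Pi_grad} is a square system and then shows injectivity by the same vertex--edge--face--interior cascade you describe; your direct-sum decomposition of $W_{p+1}(\widehat K)$ encodes exactly that dimension count (and is indeed routine once one writes the edge and face components as bubble functions $\lambda_i\lambda_j q$, resp.\ $\lambda_i\lambda_j\lambda_k q$, extended off the subsimplex). Either formulation is fine.

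The one place where your write-up is not quite supported by what you cite is Step~1. The chain $H^2(\widehat K)\to H^{3/2}(f)\to H^1(e)$ passes through the \emph{endpoint} $s=3/2$ of the Lipschitz-domain trace theorem: the statement in \cite{Mclean00} covers $1/2<s<3/2$ only, and the trace of $H^{3/2}$ onto the boundary of a polygon genuinely fails to land in $H^1$ of the whole boundary. Your first trace is safe because you take it of $\nabla u\in\mathbf H^1(\widehat K)$ (an interior value of $s$), but the same trick is unavailable for the second step, since $\nabla_f(u|_f)\in\mathbf H^{1/2}(f)$ admits no trace on $e$ at all. The statement $H^{3/2}(f)\to H^1(e)$ for a \emph{single flat edge} is nevertheless true — extend to $H^{3/2}(\mathbb R^2)$ and use the hyperplane trace theorem, which has no upper restriction on $s$ — but that extra argument needs to be said. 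The paper sidesteps the endpoint altogether by running the two-fold trace at regularities $2+\varepsilon$ and $2-\varepsilon$ and then interpolating, which is the cleanest way to keep the proof within the scope of the standard references; I would adopt that device (or spell out the extension-to-a-line argument) rather than attribute the endpoint case to the classical trace theorem.
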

\begin{proof}
We claim that for $u \in H^2(\widehat K)$ one has $u|_e \in H^1(e)$ for 
each edge $e \in {\mathcal E}(\widehat K)$. 
This follows from the trace theorem: 
a two-fold trace estimate (from $\widehat K$ to the faces and then 
from the faces to the edges) shows that the trace operator maps
$H^{2+\varepsilon}(\widehat K)\rightarrow H^{1+\varepsilon}(e)$ 
for sufficiently small $\varepsilon > 0$ and $\varepsilon < 0$. 
Interpolation then asserts $H^2(\widehat K) \rightarrow H^1(e)$. 
For the well-definedness of $\hatPigradcom$, we first see that 
(\ref{eq:Pi_grad}) represents a square linear system: 
We have 
$\operatorname*{dim}W_{p+1}(\widehat{K})   =\frac{1}{6}(p+4)(p+3)(p+2)$
and the number of conditions is
$\frac{1}{6}p(p-1)(p-2)+4\frac{(p-1)p}
{2}+6p+4=\operatorname*{dim}W_{p+1}(\widehat{K}).$
Next, we show uniqueness. 
For $u = 0$ condition 
(\ref{eq:Pi_grad-d}) shows $\hatPigradcom u(V) = 0$ for all vertices $V \in {\mathcal V}(\widehat K)$. 
The conditions (\ref{eq:Pi_grad-c}) then imply that $\hatPigradcom u = 0$ on all edges of $\widehat K$; 
next (\ref{eq:Pi_grad-b}) leads to $\hatPigradcom u = 0$ vanishing on all faces of $\widehat K$ and finally 
(\ref{eq:Pi_grad-a}) shows $\hatPigradcom u = 0$. Thus, $\hatPigradcom$ is well-defined.
\end{proof}

\begin{lemma} 
\label{lemma:Pi_curl-well-defined}
For ${\mathbf u} \in {\mathbf H}^1(\widehat K,\operatorname{\mathbf{curl}})$ 
there holds ${\mathbf u} \cdot {\mathbf t}_e \in L^2(e)$ for each edge 
$e \in {\mathcal E}(\widehat K)$ and $\|{\mathbf u} \cdot {\mathbf t}_e\|_{L^2(e)} 
\lesssim \|{\mathbf u}\|_{{\mathbf H}^1(\widehat K,\operatorname{\mathbf{curl}})}$.  
The operator $\hatPicurlcom$ is well-defined. 
\end{lemma}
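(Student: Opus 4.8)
The plan is to mirror the structure of the proof of Lemma~\ref{lemma:Pi_grad-well-defined}: first establish the trace regularity statement, then verify that (\ref{eq:Pi_curl}) is a square linear system, and finally prove uniqueness by sweeping from the edges to the faces to the interior. For the trace statement, I would first note that ${\mathbf u} \in {\mathbf H}^1(\widehat K)$ has, by the trace theorem applied twice (from $\widehat K$ to a face $f$ and then from $f$ to an edge $e \subset \partial f$), a trace in $H^{\varepsilon}(e)$ for small $\varepsilon > 0$; in particular each Cartesian component of ${\mathbf u}$ restricted to $e$ lies in $L^2(e)$, hence so does the tangential component ${\mathbf u}\cdot{\mathbf t}_e$, with the asserted bound $\|{\mathbf u}\cdot{\mathbf t}_e\|_{L^2(e)} \lesssim \|{\mathbf u}\|_{{\mathbf H}^1(\widehat K)} \leq \|{\mathbf u}\|_{{\mathbf H}^1(\widehat K,\operatorname{\mathbf{curl}})}$. (The $\operatorname{\mathbf{curl}}$-regularity is not even needed for this particular trace claim, but it is needed to make the face conditions (\ref{eq:Pi_curl-d}), (\ref{eq:Pi_curl-c}) meaningful, since $\Pi_\tau {\mathbf u}$ and $\operatorname{curl}_f \Pi_\tau {\mathbf u}$ must live in the appropriate trace spaces on $f$ — this is the content of the forward reference to Section~\ref{sec:commuting} made just before Definition~\ref{def:hatPigradcom}.)

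Next I would check the dimension count. The conditions in (\ref{eq:Pi_curl}) are: one condition per edge from (\ref{eq:Pi_curl-f}), $\dim \mathring W_{p+1}(e) = p-1$ conditions per edge from (\ref{eq:Pi_curl-e}), $\dim \mathring W_{p+1}(f)$ conditions per face from (\ref{eq:Pi_curl-d}), $\dim \mathring{\mathbf Q}_p(f)$ conditions per face from (\ref{eq:Pi_curl-c}), $\dim \mathring W_{p+1}(\widehat K)$ conditions from (\ref{eq:Pi_curl-b}), and $\dim \mathring{\mathbf Q}_p(\widehat K)$ conditions from (\ref{eq:Pi_curl-a}). Using the exact sequences (\ref{eq:commuting-diagram-bc}) one has $\dim \mathring{\mathbf Q}_p(f) = \dim \mathring W_{p+1}(f) + \dim \mathring V_p(f)$ on each face, and $\dim \mathring{\mathbf Q}_p(\widehat K) = \dim \mathring W_{p+1}(\widehat K) + \dim \mathring{\mathbf V}_p(\widehat K) - \dim W^{aver}_p(\widehat K)$ plus the analogous face identity, so the total count on the boundary parts telescopes against the known dimension $\dim {\mathbf Q}_p(\widehat K)$ in exactly the same bookkeeping manner as in the scalar case; alternatively one simply invokes \cite[(57)]{demkowicz08} or \cite{demkowicz-buffa05}, where this count is carried out. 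The cleanest route is to remark that the degrees of freedom in (\ref{eq:Pi_curl}) are precisely the standard N\'ed\'elec degrees of freedom on $\widehat K$ (edge, face, and interior moments), so the system is square by construction.

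For uniqueness, set ${\mathbf u} = 0$ and show $\hatPicurlcom {\mathbf u} = 0$. Write ${\mathbf q} := \hatPicurlcom {\mathbf u} \in {\mathbf Q}_p(\widehat K)$. On each edge $e$, ${\mathbf q}\cdot{\mathbf t}_e \in Q_p(e)$ is a univariate polynomial of degree $p$; conditions (\ref{eq:Pi_curl-f}) and (\ref{eq:Pi_curl-e}) say that it has zero mean and is $L^2(e)$-orthogonal to $\nabla_e \mathring W_{p+1}(e)$, i.e. orthogonal to all of $\mathring Q_p(e)$ by (\ref{eq:commuting-diagram-bc}), hence ${\mathbf q}\cdot{\mathbf t}_e = 0$ on every edge. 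Thus $\Pi_{\tau,f}(\Pi_\tau {\mathbf q}) = 0$ on $\partial f$ for each face $f$, so $\Pi_\tau {\mathbf q}|_f \in \mathring{\mathbf Q}_p(f)$. Now (\ref{eq:Pi_curl-c}) with ${\mathbf v} = \Pi_\tau {\mathbf q}|_f$ gives $\operatorname{curl}_f \Pi_\tau {\mathbf q} = 0$ on $f$, so by the face sequence $\Pi_\tau {\mathbf q}|_f = \nabla_f w$ for some $w \in \mathring W_{p+1}(f)$; then (\ref{eq:Pi_curl-d}) with $v = w$ yields $\|\nabla_f w\|_{L^2(f)}^2 = 0$, hence $\Pi_\tau {\mathbf q} = 0$ on every face, i.e. ${\mathbf q} \in \mathring{\mathbf Q}_p(\widehat K)$. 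Finally (\ref{eq:Pi_curl-a}) with ${\mathbf v} = {\mathbf q}$ gives $\operatorname{\mathbf{curl}} {\mathbf q} = 0$, so ${\mathbf q} = \nabla w$ for some $w \in \mathring W_{p+1}(\widehat K)$ by (\ref{eq:commuting-diagram-bc}), and (\ref{eq:Pi_curl-b}) with $v = w$ forces $\|\nabla w\|_{L^2(\widehat K)}^2 = 0$, whence ${\mathbf q} = 0$. I expect no serious obstacle here; the only mild subtlety is making sure each quantity appearing in the defining equations lies in the space against which we test (so that the orthogonality can be applied to the interpolation error itself), which is exactly what the trace bounds and the exact sequences (\ref{eq:commuting-diagram-bc}) guarantee. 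The dimension count is tedious but routine, so I would either cite \cite{demkowicz08,demkowicz-buffa05} for it or present it compactly via the exact-sequence dimension identities.
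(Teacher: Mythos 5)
Your uniqueness argument and your dimension count follow the paper's proof essentially verbatim (modulo the minor slip that $\dim\mathring W_{p+1}(e)=p$, not $p-1$: polynomials of degree $p+1$ on an interval vanishing at both endpoints form a $p$-dimensional space). The genuine problem is your proof of the trace statement. You claim that ${\mathbf u}\in{\mathbf H}^1(\widehat K)$ alone yields ${\mathbf u}|_e\in H^\varepsilon(e)$ by a two-fold application of the trace theorem, and that the $\operatorname{\mathbf{curl}}$-regularity ``is not even needed'' for this. This is false: the first trace lands only in ${\mathbf H}^{1/2}(f)$, which is exactly the borderline exponent, so no further trace onto $e\subset\partial f$ exists. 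Equivalently, an edge has codimension $2$ in $\widehat K$, and $H^1$-functions on a three-dimensional domain do not possess $L^2$-traces on one-dimensional sets --- this is the same reason the scalar operator $\hatPigradcom$ is only defined on $H^2(\widehat K)$ and Lemma~\ref{lemma:Pi_grad-well-defined} needs a separate interpolation argument to reach $H^1(e)$.

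The curl regularity is precisely what rescues the claim, and this is how the paper argues: the trace theorem gives $\Pi_\tau{\mathbf u}\in{\mathbf H}^{1/2}(f,\operatorname{curl}_f)$ for each face $f$, and the regular decomposition of Lemma~\ref{lemma:helmholtz-like-decomp-2d} (invoked in Lemma~\ref{lemma:traces-2d} and at the outset of the proof of Lemma~\ref{lemma:Picurl-edge}) writes $\Pi_\tau{\mathbf u}|_f=\nabla_f\varphi+{\mathbf z}$ with $\varphi\in H^{3/2}(f)$ and ${\mathbf z}\in{\mathbf H}^{3/2}(f)$; then $\nabla_e\varphi|_e\in L^2(e)$ by the interpolation argument of Lemma~\ref{lemma:traces-2d} and ${\mathbf z}|_e\in{\mathbf H}^1(e)$, so ${\mathbf u}\cdot{\mathbf t}_e=\nabla_e\varphi+{\mathbf z}\cdot{\mathbf t}_e\in L^2(e)$ with the asserted bound. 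Replace your trace paragraph by this argument; the remainder of your proposal then stands.
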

\begin{proof}
For ${\mathbf u} \in {\mathbf H}^1(\widehat K,\operatorname{\mathbf{curl}})$ 
the trace theorem gives, for each face $f$, 
$\Pi_\tau {\mathbf u} \in {\mathbf H}^{1/2}(f,\operatorname{curl}_f)$. 
The argument at the outset
of the proof of Lemma~\ref{lemma:Picurl-edge} shows 
${\mathbf t}_e \cdot {\mathbf  u} \in L^2(e)$. To see that 
$\hatPicurlcom$ is well-defined, we assert that (\ref{eq:Pi_curl}) represents
a square linear system and that ${\mathbf u} = 0$ implies $\hatPicurlcom {\mathbf u} = 0$. In order to count the number of conditions in (\ref{eq:Pi_curl}) 
we introduce the notation
$\displaystyle 
\operatorname{ker}\operatorname{\mathbf{curl}}=\{\mathbf{q}\in\mathring{\mathbf{Q}}_p(\widehat{K}):\operatorname{\mathbf{curl}}\mathbf{q}=\mathbf{0}\},
$
and observe 
in view of the exactness of the sequence (\ref{eq:commuting-diagram-bc}) that
\begin{align*}
\operatorname*{dim}\mathring{\mathbf{Q}}_{p}(\widehat{K})&=\operatorname*{dim}%
\operatorname*{\mathbf{curl}}\mathring{\mathbf{Q}}_{p}(\widehat{K})\!+\!\operatorname*{dim}%
\operatorname*{ker}\operatorname*{\mathbf{curl}}=\operatorname*{dim}\operatorname*{\mathbf{curl}}\mathring{\mathbf{Q}}%
_{p}(\widehat{K})+\operatorname*{dim}\nabla\mathring{W}_{p+1}(\widehat{K}).
\end{align*}
Therefore, 
the number of conditions in 
(\ref{eq:Pi_curl-b}), (\ref{eq:Pi_curl-a})  equals $\operatorname*{dim}%
\mathring{\mathbf{Q}}_{p}(\widehat{K})$.
Analogously, we argue with the exactness of the second sequence in
(\ref{eq:commuting-diagram-bc}) that for each face 
$f \in {\mathcal F}(\widehat K)$ 
the number of conditions in (\ref{eq:Pi_curl-d}), (\ref{eq:Pi_curl-c}) 
equals $\operatorname*{dim}%
\mathring{\mathbf{Q}}_{p}(f)$.
Finally, we check that for each edge $e \in {\mathcal E}(\widehat K)$
the number of conditions in (\ref{eq:Pi_curl-e}) is $p$
and that the number of conditions in (\ref{eq:Pi_curl-f}) equals $6$.
In total, the number of conditions in (\ref{eq:Pi_curl}) coincides with
$\operatorname*{dim}\mathbf{Q}_{p}(\widehat{K})$. We conclude that (\ref{eq:Pi_curl}) represents a  
square system of equations. As in the case of Lemma~\ref{lemma:Pi_grad-well-defined}, 
see that ${\mathbf u} = 0$ implies $\hatPicurlcom {\mathbf u} = 0$ in the following way: 
(\ref{eq:Pi_curl-f}), (\ref{eq:Pi_curl-e}) imply that the tangential component of 
$\hatPicurlcom {\mathbf u}$ vanishes on all edges of $\widehat K$. From that, 
(\ref{eq:Pi_curl-d}), (\ref{eq:Pi_curl-c}) together with the exact sequence property 
(\ref{eq:commuting-diagram-bc-2d}) gives that the tangential component 
$\Pi_\tau \hatPicurlcom {\mathbf u}$ vanishes on all faces of $\widehat K$. Finally, 
(\ref{eq:Pi_curl-b}), (\ref{eq:Pi_curl-a}) together with again the exact sequence property 
(\ref{eq:commuting-diagram-bc}) yields $\hatPicurlcom {\mathbf u} = 0$. 
\end{proof}

\begin{lemma} 
\label{lemma:Pi_div-well-defined}
For ${\mathbf u} \in {\mathbf H}^{1/2}(\widehat K,\operatorname{div})$ 
there holds ${\mathbf u} \cdot {\mathbf n}_f \in L^2(f)$ for each 
face $f \in {\mathcal F}(\widehat K)$  and 
$\|{\mathbf u}\cdot {\mathbf n}_f\|_{L^2(f)} \lesssim \|{\mathbf u}\|_{{\mathbf H}^{1/2}(\widehat K,\operatorname{div})}$. 
The operator $\hatPidivcom$ is well-defined. 
\end{lemma}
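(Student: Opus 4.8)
The plan is to mirror the structure of the two preceding lemmas: first establish the trace regularity claim for $\mathbf{u}\in\mathbf{H}^{1/2}(\widehat K,\operatorname{div})$, then verify that \eqref{eq:Pi_div} is a square system, and finally prove injectivity.

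First I would prove the trace bound $\mathbf{u}\cdot\mathbf{n}_f\in L^2(f)$ with $\|\mathbf{u}\cdot\mathbf{n}_f\|_{L^2(f)}\lesssim\|\mathbf{u}\|_{\mathbf{H}^{1/2}(\widehat K,\operatorname{div})}$. This is the only genuinely analytic point. The normal trace $\mathbf{u}\cdot\mathbf{n}$ is well-defined on $\partial\widehat K$ as an element of $H^{-1/2}(\partial\widehat K)$ merely for $\mathbf{u}\in\mathbf{H}(\widehat K,\operatorname{div})$, but with the extra half-derivative of smoothness we gain $L^2$-regularity on each face. One clean way: since $\mathbf{u}\in\mathbf{H}^{1/2}(\widehat K)$ componentwise, the full trace $\mathbf{u}|_{\partial\widehat K}$ is \emph{not} automatically in $L^2(\partial\widehat K)$ (that would need $s>1/2$), so the naive componentwise trace theorem is not quite enough. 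Instead I would argue as in \cite{costabel-mcintosh10} / standard $\mathbf{H}(\operatorname{div})$ trace theory: the pairing $\langle\mathbf{u}\cdot\mathbf{n},\varphi\rangle_{\partial\widehat K}=(\mathbf{u},\nabla\Phi)_{L^2(\widehat K)}+(\operatorname{div}\mathbf{u},\Phi)_{L^2(\widehat K)}$ for any lifting $\Phi$ of $\varphi$, combined with the fact that for $\mathbf{u}\in\mathbf{H}^{1/2}(\widehat K,\operatorname{div})$ one may choose $\Phi\in H^{1/2}(\widehat K)$ with $\|\Phi\|_{H^{1/2}(\widehat K)}\lesssim\|\varphi\|_{L^2(\partial\widehat K)}$, yields $|\langle\mathbf{u}\cdot\mathbf{n},\varphi\rangle|\lesssim\|\mathbf{u}\|_{\mathbf{H}^{1/2}(\widehat K,\operatorname{div})}\|\varphi\|_{L^2(\partial\widehat K)}$, i.e.\ $\mathbf{u}\cdot\mathbf{n}\in L^2(\partial\widehat K)$. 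Restricting to a face $f$ gives the claim. (Alternatively one can invoke a trace result from the literature for $\mathbf{H}^s(\operatorname{div})$ with $s\ge 1/2$; I would cite rather than reprove.) In particular the terms in \eqref{eq:Pi_div-d}, \eqref{eq:Pi_div-c} are then well-defined $L^2(f)$-pairings.

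Next, for the square-system count I would proceed exactly as in Lemma~\ref{lemma:Pi_curl-well-defined}, using the exact sequence \eqref{eq:commuting-diagram-bc}. The number of conditions in \eqref{eq:Pi_div-b}, \eqref{eq:Pi_div-a} together equals $\dim\mathring{\mathbf{Q}}_p(\widehat K)+\dim\mathring{\mathbf V}_p(\widehat K)$; by exactness of the top row, $\dim\operatorname{\mathbf{curl}}\mathring{\mathbf Q}_p(\widehat K)=\dim\mathring{\mathbf Q}_p(\widehat K)-\dim\nabla\mathring W_{p+1}(\widehat K)$ and $\dim\mathring{\mathbf V}_p(\widehat K)=\dim\operatorname{\mathbf{curl}}\mathring{\mathbf Q}_p(\widehat K)+\dim W^{aver}_p(\widehat K)$, and the face conditions \eqref{eq:Pi_div-d}, \eqref{eq:Pi_div-c} supply $\dim V_p(f)$ per face (one mean-value condition plus $\dim\mathring V_p(f)$); summing, everything telescopes to $\dim\mathbf V_p(\widehat K)$. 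I would either carry out this bookkeeping or, more briefly, note that the conditions are precisely dual to a spanning set of degrees of freedom for $\mathbf V_p(\widehat K)$ arranged by subsimplex, so the count is automatic.

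Finally, injectivity: assume $\mathbf{u}=0$ and write $\mathbf{w}:=\hatPidivcom\mathbf{u}\in\mathbf V_p(\widehat K)$. From \eqref{eq:Pi_div-d}, \eqref{eq:Pi_div-c} the normal component $\mathbf{w}\cdot\mathbf{n}_f$ has vanishing $L^2(f)$-projection onto $\{1\}\oplus\mathring V_p(f)=V_p(f)$, hence $\mathbf{w}\cdot\mathbf{n}_f=0$ on every face, so $\mathbf{w}\in\mathring{\mathbf V}_p(\widehat K)$. Then \eqref{eq:Pi_div-a} with $\mathbf v=\mathbf w$ gives $\operatorname{div}\mathbf w=0$, and by exactness of the top row of \eqref{eq:commuting-diagram-bc} we may write $\mathbf w=\operatorname{\mathbf{curl}}\mathbf q$ for some $\mathbf q\in\mathring{\mathbf Q}_p(\widehat K)$; inserting this $\mathbf v=\mathbf q$ into \eqref{eq:Pi_div-b} yields $\|\mathbf w\|_{L^2(\widehat K)}^2=(\mathbf w,\operatorname{\mathbf{curl}}\mathbf q)_{L^2(\widehat K)}=0$, so $\mathbf w=0$. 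Thus $\hatPidivcom$ is well-defined. The main obstacle is the first paragraph — pinning down the $L^2(f)$ normal-trace regularity from only $\mathbf{H}^{1/2}(\operatorname{div})$; everything after that is the same linear-algebra-plus-exact-sequence template already used twice above.
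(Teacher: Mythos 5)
The square-system count and the injectivity argument are sound and essentially the paper's: your injectivity chain ($\mathbf w\cdot\mathbf n_f=0$ on all faces $\Rightarrow\mathbf w\in\mathring{\mathbf V}_p(\widehat K)$, then \eqref{eq:Pi_div-a} kills $\operatorname{div}\mathbf w$, then exactness of \eqref{eq:commuting-diagram-bc} and \eqref{eq:Pi_div-b} kill $\mathbf w$) is just a fleshed-out version of the paper's two-line argument. One cosmetic remark on the count: the effective number of conditions supplied by \eqref{eq:Pi_div-b} is $\dim\operatorname{\mathbf{curl}}\mathring{\mathbf Q}_p(\widehat K)$, not $\dim\mathring{\mathbf Q}_p(\widehat K)$ (test functions with vanishing curl impose nothing); with that correction your telescoping via the exact sequence gives exactly the paper's identity $\dim\mathring{\mathbf V}_p(\widehat K)+4\dim W_p(f)=\dim\mathbf V_p(\widehat K)$.

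The genuine gap is in the trace bound, which you correctly identify as the only analytic point. Your proposed duality argument sits exactly at the critical Sobolev index and does not close. First, the pairing $(\mathbf u,\nabla\Phi)_{L^2(\widehat K)}$ with $\mathbf u\in\mathbf H^{1/2}(\widehat K)$ and $\Phi\in H^{1/2}(\widehat K)$ requires $\nabla\Phi\in\widetilde{\mathbf H}^{-1/2}(\widehat K)=(\mathbf H^{1/2}(\widehat K))'$; but $\nabla$ maps $H^{1/2}(\widehat K)$ only into $(\widetilde{\mathbf H}^{1/2}(\widehat K))'$, and since $H^{1/2}\neq\widetilde H^{1/2}$ on a bounded domain this is precisely the index where the two dual scales differ --- the obstruction is the uncontrolled boundary term in $(\nabla\Phi,\mathbf v)=-(\Phi,\operatorname{div}\mathbf v)+\int_{\partial\widehat K}\Phi\,\mathbf v\cdot\mathbf n$. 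Second, a ``lifting'' $\Phi\in H^{1/2}(\widehat K)$ of $\varphi\in L^2(\partial\widehat K)$ cannot be a right inverse of the trace operator in the usual sense, because the trace theorem itself fails at $s=1/2$; the harmonic extension attains $\varphi$ only as a non-tangential limit, and the Green's formula you invoke is not established for such a pair. So the estimate $|\langle\mathbf u\cdot\mathbf n,\varphi\rangle|\lesssim\|\mathbf u\|_{\mathbf H^{1/2}(\widehat K,\operatorname{div})}\|\varphi\|_{L^2(\partial\widehat K)}$ is not justified by the argument as written. The paper avoids all of this: by Lemma~\ref{lemma:helmholtz-decomposition-div} (built from the Costabel--McIntosh operators $\mathbf R^{\operatorname{curl}}$, $\mathbf R^{\operatorname{div}}$ of Lemma~\ref{lemma:mcintosh}) one writes $\mathbf u=\operatorname{\mathbf{curl}}\boldsymbol\varphi+\mathbf z$ with $\boldsymbol\varphi,\mathbf z\in\mathbf H^{3/2}(\widehat K)$; then $\mathbf n_f\cdot\mathbf z\in H^1(f)$ by the (interpolated) trace theorem, and the key identity $(\mathbf n_f\cdot\operatorname{\mathbf{curl}}\boldsymbol\varphi)|_f=\operatorname{curl}_f(\Pi_\tau\boldsymbol\varphi)|_f$ together with $\Pi_\tau\boldsymbol\varphi|_f\in\mathbf H^1(f)$ gives $\mathbf n_f\cdot\operatorname{\mathbf{curl}}\boldsymbol\varphi\in L^2(f)$. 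Your fallback of citing an endpoint trace theorem for $\mathbf H^{1/2}(\operatorname{div})$ would be acceptable with a precise reference, but the decomposition argument is short and self-contained, and it is the route the paper takes.
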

\begin{proof}
We first show that for ${\mathbf u} \in {\mathbf H}^{1/2}(\widehat K,\operatorname{div})$ 
the normal trace ${\mathbf n}_f \cdot {\mathbf u} \in L^2(f)$ 
for each face $f$. To that end, we write with the aid of 
Lemma~\ref{lemma:helmholtz-decomposition-div} 
${\mathbf u}=\operatorname{\mathbf{curl}} {\boldsymbol \varphi} + {\mathbf z}$ 
with ${\boldsymbol \varphi}$, ${\mathbf z} \in {\mathbf H}^{3/2}(\widehat K)$. 
We have ${\mathbf n}_f \cdot {\mathbf z} \in {\mathbf H}^1(f)$. Noting 
${\boldsymbol \varphi}|_f  \in {\mathbf H}^1(f)$ 
(cf.\ also the proof of Lemma~\ref{lemma:traces-2d})
and 
$({\mathbf n}_f \cdot \operatorname{\mathbf{curl}} {\boldsymbol \varphi})|_f  
= \operatorname{curl}_f (\Pi_\tau {\boldsymbol \varphi})|_f$, we conclude that 
$({\mathbf n}_f \cdot \operatorname{\mathbf{curl}} {\boldsymbol \varphi})|_f 
\in L^2(f)$.  To see that $\hatPidivcom$ is well-defined, we 
check the number of conditions in (\ref{eq:Pi_div}) and show uniqueness. 
In view of the
exactness of the sequence in (\ref{eq:commuting-diagram-bc}) we get, 
using the notation
$\displaystyle \operatorname{ker}\operatorname{div}=\{\mathbf{v}\in\mathring{\mathbf{V}}_p(\widehat{K}):\operatorname{div}\mathbf{v}=0\},
$
the equality
\begin{align*}
\operatorname*{dim}\mathring{\mathbf{V}}_{p}(\widehat{K})=\operatorname*{dim}%
\operatorname*{div}\mathring{\mathbf{V}}_{p}(\widehat{K})+\operatorname*{dim}%
\operatorname*{ker}\operatorname*{div}=\operatorname*{dim}\operatorname*{div}\mathring{\mathbf{V}}%
_{p}(\widehat{K})+\operatorname*{dim}\operatorname*{\mathbf{curl}}\mathring{\mathbf{Q}}_{p}(\widehat{K}),
\end{align*}
so that the number of conditions in
(\ref{eq:Pi_div-b}), (\ref{eq:Pi_div-a})  equals 
$\operatorname*{dim}\mathring{\mathbf{V}}_{p}(\widehat{K})$.
Furthermore, the number of conditions in 
(\ref{eq:Pi_div-d}), (\ref{eq:Pi_div-c}) is $4\operatorname*{dim}W_{p}(f)$
so that 
\[
\operatorname*{dim}\mathring{\mathbf{V}}_{p}(\widehat{K})+4\operatorname*{dim}%
W_{p}(f)=\frac{1}{2}(p+2)(p+1)p+4\frac{(p+1)(p+2)}{2}=\operatorname*{dim}%
\mathbf{V}_{p}(\widehat{K}).
\]
To see that ${\mathbf u}= 0$ implies $\hatPidivcom {\mathbf u} = 0$, we note
that conditions (\ref{eq:Pi_div-d}), (\ref{eq:Pi_div-c}) produce 
${\mathbf n}_f \cdot \hatPidivcom {\mathbf u} = 0$ for all faces $f \in {\mathcal F}(\widehat K)$. 
The exact sequence property (\ref{eq:commuting-diagram-bc}) and 
conditions (\ref{eq:Pi_div-b}), (\ref{eq:Pi_div-a}) then imply $\hatPidivcom {\mathbf u} = 0$. 
\end{proof}

\begin{lemma}
\label{lemma:traces-2d}
For $u\in H^{3/2}(\widehat f)$ there holds $\nabla_e u \in L^2(e)$ for 
each edge $e \in {\mathcal E}(\widehat f)$ and 
$\|\nabla_e u\|_{L^2(e)} \lesssim \|u\|_{H^{3/2}(\widehat f)}$. 
For ${\mathbf u} \in {\mathbf H}^{1/2}(\widehat f,\operatorname*{curl})$ there holds 
${\mathbf u} \cdot {\mathbf t}_e \in L^2(e)$ for 
each edge $e \in {\mathcal E}(\widehat f)$ 
and 
$\|{\mathbf u} \cdot {\mathbf t}_e\|_{L^2(e)} \lesssim \|{\mathbf u}\|_{{\mathbf H}^{1/2}(\widehat f,\operatorname*{curl})}$. 
The operators $\hatPigradcomtwod$ and $\hatPicurlcomtwod$ are well-defined. 
\end{lemma}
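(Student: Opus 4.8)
The statement has three parts: (1) a trace bound $\|\nabla_e u\|_{L^2(e)} \lesssim \|u\|_{H^{3/2}(\widehat f)}$ for $u \in H^{3/2}(\widehat f)$; (2) a trace bound $\|{\mathbf u}\cdot{\mathbf t}_e\|_{L^2(e)} \lesssim \|{\mathbf u}\|_{{\mathbf H}^{1/2}(\widehat f,\operatorname{curl})}$ for tangential fields in ${\mathbf H}^{1/2}(\widehat f,\operatorname{curl})$; and (3) well-definedness of the two 2D operators. I would treat each in turn, mirroring the 3D arguments in Lemmas~\ref{lemma:Pi_grad-well-defined}--\ref{lemma:Pi_div-well-defined}.

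For part (1), the plan is the standard interpolation-of-trace-operators trick already used in Lemma~\ref{lemma:Pi_grad-well-defined}: for $\varepsilon>0$ small, the edge trace maps $H^{3/2+\varepsilon}(\widehat f) \to H^{1+\varepsilon}(e)$, and for $\varepsilon<0$ small it maps $H^{3/2+\varepsilon}(\widehat f) \to H^{1+\varepsilon}(e)$ as well (here one must be careful that $3/2+\varepsilon>1/2$ so the trace is defined and $1+\varepsilon$ stays below the threshold where no further regularity is lost); interpolating at $\varepsilon = 0$ gives $H^{3/2}(\widehat f)\to H^1(e)$, hence $\nabla_e u \in L^2(e)$ with the claimed bound. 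For part (2), I would use the Helmholtz-type decomposition of Lemma~\ref{lemma:helmholtz-like-decomp-2d} (the 2D analogue), writing ${\mathbf u} = \nabla\varphi + {\mathbf z}$ with $\varphi \in H^{3/2}(\widehat f)$ and ${\mathbf z}\in{\mathbf H}^{3/2}(\widehat f)$, both controlled by $\|{\mathbf u}\|_{{\mathbf H}^{1/2}(\widehat f,\operatorname{curl})}$ (the curl of ${\mathbf u}$ enters the bound on ${\mathbf z}$); then ${\mathbf z}\cdot{\mathbf t}_e\in H^1(e)\hookrightarrow L^2(e)$ by the ordinary trace theorem, while $\nabla\varphi\cdot{\mathbf t}_e = \nabla_e(\varphi|_e) \in L^2(e)$ by part (1). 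Summing gives the bound. (Alternatively, one could invoke the argument ``at the outset of the proof of Lemma~\ref{lemma:Picurl-edge}'' exactly as the 3D proof does, but the decomposition route is self-contained given the lemmas in the excerpt.)

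For part (3), well-definedness of $\hatPigradcomtwod$ and $\hatPicurlcomtwod$: parts (1) and (2) guarantee the point evaluations and edge integrals appearing in the defining conditions \eqref{eq:Pi_grad-2d} and \eqref{eq:Pi_curl-2d} make sense. It then remains to check that each defining system is square and has only the trivial solution for zero data. For $\hatPigradcomtwod$: $\dim W_{p+1}(\widehat f) = \tfrac12(p+3)(p+2)$, matched by $3$ vertex conditions, $3(p-1)$ edge conditions, and $\dim \mathring W_{p+1}(\widehat f) = \tfrac12(p-1)(p-2)$ interior conditions; uniqueness follows hierarchically (vertices $\Rightarrow$ edges $\Rightarrow$ interior) exactly as in Lemma~\ref{lemma:Pi_grad-well-defined}. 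For $\hatPicurlcomtwod$: using the exactness of \eqref{eq:commuting-diagram-bc-2d} one splits $\dim\mathring{\mathbf Q}_p(\widehat f) = \dim\operatorname{curl}\mathring{\mathbf Q}_p(\widehat f) + \dim\nabla\mathring W_{p+1}(\widehat f)$, so the interior conditions \eqref{eq:Pi_curl-2d-b}, \eqref{eq:Pi_curl-2d-a} number $\dim\mathring{\mathbf Q}_p(\widehat f)$; together with $3$ conditions from \eqref{eq:Pi_curl-2d-d} and $3(p-1)$ from \eqref{eq:Pi_curl-2d-c} this totals $\dim{\mathbf Q}_p(\widehat f)$. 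Uniqueness again proceeds edge-first (via \eqref{eq:Pi_curl-2d-d}, \eqref{eq:Pi_curl-2d-c}), then interior (via \eqref{eq:Pi_curl-2d-b}, \eqref{eq:Pi_curl-2d-a} and the exact sequence \eqref{eq:commuting-diagram-bc-2d}).

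**Main obstacle.** The only genuinely delicate point is the interpolation argument in part (1): one needs the two endpoint trace estimates to hold with a \emph{uniform} constant and the interpolation space identities $[H^{3/2-\varepsilon}, H^{3/2+\varepsilon}]_{1/2} = H^{3/2}$ and $[H^{1-\varepsilon}(e),H^{1+\varepsilon}(e)]_{1/2}=H^1(e)$, which are standard for Lipschitz domains and intervals but should be cited (e.g.\ via \cite{Mclean00}). The rest is bookkeeping analogous to the 3D case.
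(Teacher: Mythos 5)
Your proof takes essentially the same route as the paper: the edge trace bound by interpolating the trace operator between $H^{3/2\pm\varepsilon}(\widehat f)\to H^{1\pm\varepsilon}(e)$, and the tangential-trace bound via the decomposition ${\mathbf u}=\nabla\varphi+{\mathbf z}$ of Lemma~\ref{lemma:helmholtz-like-decomp-2d} with $\varphi,{\mathbf z}$ of regularity $3/2$; the paper in fact proves only these trace assertions and leaves the well-definedness bookkeeping implicit. One correction to your added counting for $\hatPigradcomtwod$: each edge contributes $\dim\mathring{W}_{p+1}(e)=p$ conditions (not $p-1$) and the interior contributes $\dim\mathring{W}_{p+1}(\widehat f)=\tfrac12 p(p-1)$ (not $\tfrac12(p-1)(p-2)$), so the total is $3+3p+\tfrac12 p(p-1)=\tfrac12(p+2)(p+3)=\dim W_{p+1}(\widehat f)$; with your numbers the sum is $\tfrac12(p+1)(p+2)$ and the system would not be square.
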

\begin{proof} We only show the assertions for the traces. For $\varepsilon > 0$ sufficiently small, 
the trace theorem implies $u|_e \in H^{1+\varepsilon}(e)$ for $u \in H^{3/2+\varepsilon}(\widehat f)$
and $u|_e \in H^{1-\varepsilon}(e)$ for $u \in H^{3/2-\varepsilon}(\widehat f)$. By interpolation, 
$u|_e \in H^1(e)$ for $u \in H^{3/2}(\widehat f)$. 
Any ${\mathbf u} \in {\mathbf H}^{1/2}(\widehat f,\operatorname*{curl})$ can be written 
as ${\mathbf u} = \nabla \varphi + {\mathbf z}$ with $\varphi \in H^{3/2}(\widehat f)$ and 
${\mathbf z} \in {\mathbf H}^{3/2}(\widehat f)$ by Lemma~\ref{lemma:helmholtz-like-decomp-2d}. Hence, 
${\mathbf u} \cdot {\mathbf t}_e = \nabla_e \varphi|_e + {\mathbf z}|_e \cdot {\mathbf t}_e \in L^2(e)$. 
\end{proof}
\begin{theorem}
\label{thm:diagram-commutes} The diagrams
\eqref{eq:commuting-diagram-hinten} and \eqref{eq:commuting-diagram-2d} commute.
\end{theorem}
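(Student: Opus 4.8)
The plan is to verify the commuting relations square by square, exploiting the fact that each projection operator has been defined through a telescoping sequence of $L^2$-orthogonality conditions that mirror the exact-sequence structure in \eqref{eq:commuting-diagram-bc} and \eqref{eq:commuting-diagram-bc-2d}. The key observation is that it suffices to check, for each pair of consecutive operators $(\Pi, \Pi')$ in the diagram with connecting differential operator $D \in \{\nabla, \operatorname{\mathbf{curl}}, \operatorname{div}\}$, that $D \Pi u - \Pi' D u = 0$; since both $D\Pi u$ and $\Pi' D u$ lie in the target polynomial space (e.g.\ $\nabla W_{p+1}(\widehat K) \subset {\mathbf Q}_p(\widehat K)$ by the discrete exact sequence), their difference is a polynomial in that space, and I will show it is $L^2$-orthogonal to a spanning set of that space — in fact to the full space — hence zero. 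The regularity hypotheses ($H^2$, ${\mathbf H}^1(\operatorname{\mathbf{curl}})$, ${\mathbf H}^{1/2}(\operatorname{div})$) guarantee, via Lemmas~\ref{lemma:Pi_grad-well-defined}--\ref{lemma:traces-2d}, that all the traces and duality pairings appearing below are well-defined, and that $Du$ lies in the domain of $\Pi'$ (e.g.\ $\operatorname{\mathbf{curl}} {\mathbf u} \in {\mathbf H}^{1/2}(\widehat K,\operatorname{div})$ when ${\mathbf u} \in {\mathbf H}^1(\widehat K,\operatorname{\mathbf{curl}})$, which I would check using the definitions of the spaces).

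Concretely, for the leftmost square ($\nabla \hatPigradcom = \hatPicurlcom \nabla$): set $e := \nabla \hatPigradcom u - \hatPicurlcom \nabla u \in {\mathbf Q}_p(\widehat K)$. Writing $\nabla u - \hatPicurlcom \nabla u = \nabla(u - \hatPigradcom u) + e$, I test against $\nabla v$, $v \in \mathring W_{p+1}(\widehat K)$: the left side vanishes by \eqref{eq:Pi_curl-b}, and $(\nabla(u-\hatPigradcom u),\nabla v)_{L^2(\widehat K)} = 0$ by \eqref{eq:Pi_grad-a}, so $(e,\nabla v)_{L^2(\widehat K)}=0$; then since $\operatorname{\mathbf{curl}} e = 0$ (both $\operatorname{\mathbf{curl}}\nabla\hatPigradcom u = 0$ and $\operatorname{\mathbf{curl}}\hatPicurlcom\nabla u = \hatPidivcom \operatorname{\mathbf{curl}} \nabla u = 0$, or directly from the exact sequence), \eqref{eq:Pi_curl-a} is trivially satisfied, so $e$ satisfies all the interior conditions of $\hatPicurlcom$ applied to the zero function. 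To conclude $e = 0$ I descend through the boundary conditions: using the commuting diagram \eqref{eq:commuting-diagram-bc} on faces and edges (the surface gradient/curl versions of the same argument, together with \eqref{eq:Pi_curl-f}--\eqref{eq:Pi_curl-d} matched against \eqref{eq:Pi_grad-d}--\eqref{eq:Pi_grad-b}), I show $\Pi_{\tau,f} e = 0$ on every face and ${\mathbf t}_e\cdot e = 0$ on every edge; the uniqueness argument from Lemma~\ref{lemma:Pi_curl-well-defined} then forces $e=0$. The square $\operatorname{\mathbf{curl}}\hatPicurlcom = \hatPidivcom\operatorname{\mathbf{curl}}$ is handled identically: set $e := \operatorname{\mathbf{curl}}\hatPicurlcom{\mathbf u} - \hatPidivcom\operatorname{\mathbf{curl}}{\mathbf u} \in {\mathbf V}_p(\widehat K)$, note $\operatorname{div} e = 0$, test the identity $\operatorname{\mathbf{curl}}{\mathbf u} - \hatPidivcom\operatorname{\mathbf{curl}}{\mathbf u} = \operatorname{\mathbf{curl}}({\mathbf u}-\hatPicurlcom{\mathbf u}) + e$ against $\operatorname{\mathbf{curl}}{\mathbf v}$, $\mathbf v \in \mathring{\mathbf Q}_p(\widehat K)$, using \eqref{eq:Pi_div-b} and \eqref{eq:Pi_curl-a}; and on the faces use the normal-trace conditions \eqref{eq:Pi_div-d}, \eqref{eq:Pi_div-c} against \eqref{eq:Pi_curl-d}, \eqref{eq:Pi_curl-c} together with $({\mathbf n}_f\cdot\operatorname{\mathbf{curl}}{\mathbf w})|_f = \operatorname{curl}_f(\Pi_\tau{\mathbf w})|_f$ and the face exact sequence in \eqref{eq:commuting-diagram-bc}. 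The rightmost square $\operatorname{div}\hatPidivcom = \widehat\Pi^{L^2}_p\operatorname{div}$ is the easiest: $\operatorname{div}\hatPidivcom{\mathbf u} - \widehat\Pi^{L^2}_p\operatorname{div}{\mathbf u} \in W_p(\widehat K)$, and testing $\operatorname{div}{\mathbf u} - \widehat\Pi^{L^2}_p\operatorname{div}{\mathbf u} = \operatorname{div}({\mathbf u}-\hatPidivcom{\mathbf u}) + (\operatorname{div}\hatPidivcom{\mathbf u}-\widehat\Pi^{L^2}_p\operatorname{div}{\mathbf u})$ against an arbitrary $q \in W_p(\widehat K)$, the first term vanishes by \eqref{eq:Pi_L^2}; for the second I split $q = q_0 + c$ with $\int_{\widehat K} q_0 = 0$, so $q_0 = \operatorname{div}{\mathbf v}$ for some ${\mathbf v} \in \mathring{\mathbf V}_p(\widehat K)$ by \eqref{eq:commuting-diagram-bc}, making $(\operatorname{div}({\mathbf u}-\hatPidivcom{\mathbf u}),q_0)_{L^2(\widehat K)} = (\operatorname{div}({\mathbf u}-\hatPidivcom{\mathbf u}),\operatorname{div}{\mathbf v})_{L^2(\widehat K)}=0$ by \eqref{eq:Pi_div-a}, while the constant part is handled by the divergence theorem $\int_{\widehat K}\operatorname{div}({\mathbf u}-\hatPidivcom{\mathbf u}) = \sum_{f}\int_f {\mathbf n}_f\cdot({\mathbf u}-\hatPidivcom{\mathbf u}) = 0$ by \eqref{eq:Pi_div-d}. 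The 2D diagram \eqref{eq:commuting-diagram-2d} is proved by the same three-step scheme, using \eqref{eq:commuting-diagram-bc-2d} and the 2D Stokes formula \eqref{eq:2d-stokes} in place of the 3D ingredients.

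The main obstacle I anticipate is not any single algebraic step but the careful bookkeeping in the \emph{descent through the boundary conditions}: in the first two squares one cannot simply test against interior functions, because the difference $e$ need not have vanishing trace a priori. The clean way is to prove inductively, from edges up to faces to the interior, that $e$ satisfies all the defining conditions of the relevant operator applied to $0$ — which requires, on each subsimplex $S$, reconciling the connecting-operator identity with the exact-sequence structure \eqref{eq:commuting-diagram-bc} on $S$ (e.g.\ on a face $f$, using that $\operatorname{curl}_f\Pi_{\tau,f} e = \Pi_{\tau,f}(\text{something}) $ via the surface commuting diagram, and that $\Pi_{\tau}\nabla w|_f = \nabla_f(w|_f)$). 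One also has to verify the compatibility of the lower-dimensional projections with restriction (the "relation of 2D and 3D" remark), so that the face-level conditions of, say, $\hatPicurlcom$ really do coincide with the conditions defining $\hatPicurlcomtwod$ on that face. Once this matching is set up, each square collapses to the uniqueness statement already established in Lemmas~\ref{lemma:Pi_grad-well-defined}--\ref{lemma:Pi_div-well-defined}. I would present the argument for one square in full detail and indicate that the others follow mutatis mutandis.
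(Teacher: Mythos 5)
Your proposal is correct in substance, and for the second and third squares it coincides with the paper's proof: there, too, one verifies that $\operatorname{\mathbf{curl}}\hatPicurlcom\mathbf{u}$ (resp.\ $\operatorname{div}\hatPidivcom\mathbf{u}$) satisfies the defining system of $\hatPidivcom\operatorname{\mathbf{curl}}\mathbf{u}$ (resp.\ of $\widehat{\Pi}_p^{L^2}\operatorname{div}\mathbf{u}$) and invokes uniqueness, with exactly the ingredients you list ($\mathbf{n}_f\cdot\operatorname{\mathbf{curl}}=\operatorname{curl}_f\Pi_\tau$, the face exact sequence, Stokes on each face for \eqref{eq:Pi_div-d}, and the divergence theorem plus surjectivity of $\operatorname{div}\colon\mathring{\mathbf{V}}_p(\widehat K)\to W^{aver}_p(\widehat K)$ for the last square). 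For the first square your route is genuinely different and in fact leaner: the paper first proves that $\hatPicurlcom\nabla\varphi$ is itself a discrete gradient $\nabla\varphi_p$ --- which requires showing $\operatorname{curl}_f\Pi_\tau\hatPicurlcom\nabla\varphi=0$ on each face, gluing the face potentials continuously, and invoking the polynomial trace lifting of \cite{munoz-sola97,demkowicz-gopalakrishnan-schoeberl-I} to extend $\varphi_p$ into $\widehat K$ --- and only then compares $\varphi_p$ with $\hatPigradcom\varphi$ condition by condition. Your direct verification that $e=\nabla\hatPigradcom u-\hatPicurlcom\nabla u\in\mathbf{Q}_p(\widehat K)$ satisfies the homogeneous system \eqref{eq:Pi_curl} bypasses the lifting entirely; what it buys less of is the explicit structural statement that $\hatPicurlcom$ maps gradients to gradients, which the paper obtains along the way.

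One justification in your first square needs repair. You assert $\operatorname{\mathbf{curl}}e=0$ via $\operatorname{\mathbf{curl}}\hatPicurlcom\nabla u=\hatPidivcom\operatorname{\mathbf{curl}}\nabla u=0$; that is circular (it presupposes the second square), and ``directly from the exact sequence'' does not work either: the vanishing of $\operatorname{\mathbf{curl}}\hatPicurlcom\nabla u$ is precisely the nontrivial content of \eqref{eq:thm:diagram-commutes-50} and requires the boundary analysis plus the polynomial lifting. Fortunately you do not need $\operatorname{\mathbf{curl}}e=0$: the homogeneous condition $(\operatorname{\mathbf{curl}}e,\operatorname{\mathbf{curl}}\mathbf{v})_{L^2(\widehat K)}=0$ for all $\mathbf{v}\in\mathring{\mathbf{Q}}_p(\widehat K)$ follows in one line from \eqref{eq:Pi_curl-a} applied to $\nabla u$ together with $\operatorname{\mathbf{curl}}\nabla=0$. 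The same caveat applies to ``$\operatorname{div}e=0$'' in the second square: only $(\operatorname{div}e,\operatorname{div}\mathbf{v})_{L^2(\widehat K)}=0$ for $\mathbf{v}\in\mathring{\mathbf{V}}_p(\widehat K)$ is needed, and that is immediate from \eqref{eq:Pi_div-a}, whereas the full statement $\operatorname{div}\hatPidivcom\operatorname{\mathbf{curl}}\mathbf{u}=0$ requires the extra surjectivity argument of \eqref{eq:thm:commuting-diagram-90}. With these two justifications replaced by the weaker (and sufficient) orthogonality statements, your argument closes.
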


\begin{proof}
We will only show
the arguments for the 3D case \eqref{eq:commuting-diagram-hinten} as the 2D case 
is shown with similar arguments.
The proof follows the arguments given in \cite{demkowicz08}.  

\emph{Proof of 
$ \hatPicurlcom\nabla= \nabla\hatPigradcom $:} 
Let $\mathbf{u}=\nabla\varphi$ for
some $\varphi\in H^{2}(\widehat{K})$. We first claim 
\begin{equation}
\hatPicurlcom\nabla\varphi=\nabla\varphi_{p}%
\quad\text{for some }\varphi_{p}\in W_{p+1}(\widehat{K}).
\label{eq:thm:diagram-commutes-10}%
\end{equation}
For each edge $e$ with endpoints $V_{1}$, $V_{2}$, we compute $\int%
_{e}\mathbf{u}\cdot\mathbf{t}_{e}=\varphi(V_{1})-\varphi(V_{2})$, so that we
get from (\ref{eq:Pi_curl-f}) for each face $f$ 
(and orienting the tangential vectors 
of the edges $e \in {\mathcal{E}}(f)$ so that $f$ is always ``on the left'')
\begin{equation}
\int_{\partial f}\Pi_{\tau,f}\hatPicurlcom\mathbf{u}=\sum_{e\subset\partial f}\int_{e}\mathbf{u}\cdot\mathbf{t}%
_{e}=0. \label{eq:thm:diagram-commutes-20}%
\end{equation}
We conclude with integration by parts in view of $\operatorname{curl}_{f}%
\Pi_{\tau}\mathbf{u}=\operatorname*{curl}_{f}\Pi_{\tau}\nabla\varphi=0$
\begin{equation}
\int_{f}\operatorname{curl}_{f}\Pi_{\tau}\hatPicurlcom\mathbf{u}=\int_{\partial f}\Pi_{\tau,f}\hatPicurlcom\mathbf{u}%
\overset{\text{(\ref{eq:thm:diagram-commutes-20})}}{=}0.
\label{eq:thm:diagram-commutes-30}%
\end{equation}
Furthermore, the exact sequence property (\ref{eq:commuting-diagram-bc}) gives
us $\operatorname{curl}_{f}\mathring{\mathbf{Q}}_{p}(f)=\mathring{V}_{p}(f)$
so that (\ref{eq:Pi_curl-c}) leads to 
\begin{equation}
\operatorname{curl}_{f}\Pi_{\tau}\hatPicurlcom\mathbf{u}=\operatorname*{const}. \label{eq:thm:diagram-commutes-40}%
\end{equation}
(\ref{eq:thm:diagram-commutes-30}), (\ref{eq:thm:diagram-commutes-40})
together imply $\operatorname*{curl}_{f}\Pi_{\tau}\hatPicurlcom\mathbf{u}=0$ so that on each face $(\Pi_{\tau
}\hatPicurlcom\mathbf{u})|_{f}$ is a gradient of
a polynomial: $(\Pi_{\tau}\hatPicurlcom\mathbf{u})|_{f}=\nabla\varphi_{p,f}$ for some $\varphi_{p,f}\in W_{p+1}(f)$
for each face $f\in{\mathcal{F}}(\widehat{K})$.

We claim that this piecewise polynomial can be chosen to be continuous on
$\partial\widehat{K}$. Fix a vertex $V\in{\mathcal{V}}(\widehat{K})$. By
fixing the constant of the polynomials $\varphi_{p,f}$ we may assume that
$\varphi_{p,f}(V)=0$ for each face $f$ that has $V$ as a vertex. From
(\ref{eq:Pi_curl-f}), (\ref{eq:Pi_curl-e}) we conclude that $\varphi_{p,f}$ is
continuous across all edges $e$ that have $V$ as an endpoint. Hence, the
piecewise polynomial $\varphi_{p}$ given by $\varphi_{p}|_{f}=\varphi_{p,f}$
is continuous in all vertices of $\widehat{K}$. We conclude that $\varphi_{p}$
is continuous on $\partial\widehat{K}$. This continuous, piecewise polynomial
$\varphi_{p}$ has, by \cite{munoz-sola97,demkowicz-gopalakrishnan-schoeberl-I}, a polynomial lifting to
$\widehat{K}$ (again denoted $\varphi_{p}\in W_{p+1}(\widehat{K})$). We note
$
\hatPicurlcom\mathbf{u}-\nabla\varphi_{p}%
\in\mathring{\mathbf{Q}}_{p}(\widehat{K})
$
so that (\ref{eq:Pi_curl-a}) with test function $\mathbf{v}=\hatPicurlcom\mathbf{u}-\nabla\varphi_{p}\in\mathring{\mathbf{Q}}_{p}(\widehat{K})$ implies%
\begin{equation}
\operatorname*{\mathbf{curl}}\hatPicurlcom\mathbf{u}=0.
\label{eq:thm:diagram-commutes-50}%
\end{equation}
Since the second line of (\ref{eq:commuting-diagram-hinten}) expresses an
exact sequence property, we conclude that (\ref{eq:thm:diagram-commutes-10}) holds.

We now show that $\hatPicurlcom\nabla
\varphi=\nabla\hatPigradcom\varphi$. From
(\ref{eq:thm:diagram-commutes-10}) we get $\hatPicurlcom\nabla\varphi=\nabla\varphi_{p}$ for some
$\varphi_{p}\in W_{p+1}(\widehat{K})$. We fix the constant in the function
$\varphi_{p}$ by stipulating $\varphi_{p}(V)=\varphi(V)$ for one selected
vertex $V\in{\mathcal{V}}(\widehat{K})$. From (\ref{eq:Pi_curl-f}), we then
get $\varphi(V^{\prime})=\varphi_{p}(V^{\prime})$ for all vertices $V^{\prime
}\in{\mathcal{V}}(\widehat{K})$. Next, (\ref{eq:Pi_grad-c}) and
(\ref{eq:Pi_curl-e}) imply $\hatPigradcom%
\varphi=\varphi_{p}$ on all edges $e\in{\mathcal{E}}(\widehat{K})$. Comparing
(\ref{eq:Pi_grad-b}) and (\ref{eq:Pi_curl-d}) reveals $\nabla_{f}\hatPigradcom\varphi=\Pi_{\tau}\hatPicurlcom\nabla\varphi$ on each face $f\in{\mathcal{F}%
}(\widehat{K})$. Finally, comparing (\ref{eq:Pi_grad-a}) with
(\ref{eq:Pi_curl-b}) shows $\hatPicurlcom%
\nabla\varphi=\nabla\hatPigradcom\varphi$.

\medskip\emph{Proof of $\operatorname*{\mathbf{curl}}\hatPicurlcom=\hatPidivcom\operatorname*{\mathbf{curl}}$}: 
First, we show
\begin{equation}
\operatorname*{div}\hatPidivcom%
\operatorname*{\mathbf{curl}}\mathbf{u}=0. \label{eq:thm:commuting-diagram-90}%
\end{equation}
To see this, we note from the second line of
(\ref{eq:commuting-diagram-hinten}) that $\operatorname*{div}\hatPidivcom\operatorname*{\mathbf{curl}}{\mathbf{u}}\in W_{p}%
(\widehat{K})$. Additionally,
\begin{equation}
\int_{\partial\widehat{K}}{\mathbf{n}}\cdot\hatPidivcom\operatorname*{\mathbf{curl}}\mathbf{u}%
\overset{\text{(\ref{eq:Pi_div-d})}}{=}\int_{\partial\widehat{K}}{\mathbf{n}%
}\cdot\operatorname*{\mathbf{curl}}\mathbf{u}=\int_{\widehat{K}}\operatorname*{div}%
\operatorname*{\mathbf{curl}}\mathbf{u}=0. \label{eq:thm:commuting-diagram-110}%
\end{equation}
Finally, the exact sequence property of the first line of diagram
(\ref{eq:commuting-diagram-bc}) informs us that $\operatorname*{div}:%
\mathring{\mathbf{V}}_{p}(\widehat{K})\rightarrow W^{aver}_p(\widehat K)$ is surjective. 
Hence, we get from (\ref{eq:Pi_div-a}) that
$\operatorname*{div}\hatPidivcom\operatorname*{\mathbf{curl}}\mathbf{u}=0$, 
i.e., indeed the claim (\ref{eq:thm:commuting-diagram-90}) holds. 
Next, 
(\ref{eq:thm:commuting-diagram-90}) and the exact sequence property of
(\ref{eq:commuting-diagram-hinten}) imply, 
for some $\mathbf{u}_{p}\in\mathbf{Q}_{p}(\widehat{K})$,
\begin{equation}
\hatPidivcom\operatorname*{\mathbf{curl}}\mathbf{u}%
=\operatorname*{\mathbf{curl}}\mathbf{u}_{p}. \label{eq:thm:commuting-diagram-100}%
\end{equation}
We next claim $\hatPidivcom\operatorname*{\mathbf{curl}}%
\mathbf{u}=\operatorname*{\mathbf{curl}}\hatPicurlcom\mathbf{u}$. 
To that end, we ascertain that $\operatorname*{\mathbf{curl}}%
\hatPicurlcom\mathbf{u}\in{\mathbf{V}}%
_{p}(\widehat{K})$ satisfies the equations (\ref{eq:Pi_div}) for
$\hatPidivcom\operatorname*{\mathbf{curl}}\mathbf{u}$, i.e., that 
\begin{subequations}
\label{eq:Pi_div-10}
\begin{align}
\label{eq:Pi_div-d-10}%
(\mathbf{n}_{f}\cdot(\operatorname*{\mathbf{curl}}\mathbf{u}-\operatorname*{\mathbf{curl}}%
\hatPicurlcom\mathbf{u}),1)_{L^{2}(f)}  &
=0\quad\forall f\in{\mathcal{F}}(\widehat{K}), \\
(\mathbf{n}_{f}\cdot(\operatorname*{\mathbf{curl}}\mathbf{u}-\operatorname*{\mathbf{curl}}%
\hatPicurlcom\mathbf{u}),{v})_{L^{2}(f)}  &
=0\quad\forall v\in\mathring{V}_{p}(f)\quad\forall f\in{\mathcal{F}%
}(\widehat{K}),
\label{eq:Pi_div-c-10}\\
(\operatorname*{\mathbf{curl}}\mathbf{u}-\operatorname*{\mathbf{curl}}\hatPicurlcom\mathbf{u},\operatorname*{\mathbf{curl}}\mathbf{v})_{L^{2}%
(\widehat{K})}  &  =0\quad\forall\mathbf{v}\in\mathring{\mathbf{Q}}%
_{p}(\widehat{K}),
\label{eq:Pi_div-b-10}\\
(\operatorname*{div}(\operatorname*{\mathbf{curl}}\mathbf{u}-\operatorname*{\mathbf{curl}}%
\hatPicurlcom\mathbf{u}),\operatorname*{div}%
\mathbf{v})_{L^{2}(\widehat{K})}  &  =0\quad\forall\mathbf{v}\in
\mathring{\mathbf{Q}}_{p}(\widehat{K}).
\label{eq:Pi_div-a-10}
\end{align}
(\ref{eq:Pi_div-a-10}) is obviously satisfied and (\ref{eq:Pi_div-b-10}) is a
rephrasing of (\ref{eq:Pi_curl-a}). Noting $\mathbf{n}_{f}\cdot
\operatorname*{\mathbf{curl}}=\operatorname{curl}_{f}\Pi_{\tau}$, we rephrase
(\ref{eq:Pi_div-c-10}) as
\end{subequations}
\begin{equation}
(\operatorname{curl}_{f}\Pi_{\tau}(\mathbf{u}-\hatPicurlcom\mathbf{u}),v)_{L^{2}(f)}=0\quad\forall v\in
\mathring{V}_{p}(f)
\quad \forall f \in {\mathcal F}(\widehat{K}).
\label{eq:Pi_div-b-10-10}%
\end{equation}
In view of the exact sequence property of (\ref{eq:commuting-diagram-bc}), we 
have $\mathring{V}_{p}(f) = \operatorname{curl}%
_{f}\mathring{\mathbf{Q}}_{p}(f)$ so that (\ref{eq:Pi_curl-c}) implies
(\ref{eq:Pi_div-b-10-10}). Finally, (\ref{eq:Pi_div-d-10}) is seen by an 
integration by parts:  
\[
(\operatorname{curl}_{f}\Pi_{\tau}(\mathbf{u}-\hatPicurlcom\mathbf{u}),1)_{L^{2}(f)}=\sum_{e\subset\partial
f}(\Pi_{\tau}(\mathbf{u}-\hatPicurlcom%
\mathbf{u}),\mathbf{t}_{e})_{L^{2}(e)}\overset{\text{(\ref{eq:Pi_curl-f})}%
}{=}0.
\]

\emph{Proof of $\operatorname*{div}\hatPidivcom=\widehat{\Pi}_{p}^{L^{2}}\operatorname*{div}$:} Again, this 
follows from the exact sequence property (\ref{eq:commuting-diagram-bc}). We check that
$\operatorname*{div}\hatPidivcom\mathbf{u}$
satisfies (\ref{eq:Pi_L^2}). To that end, we note
\begin{equation*}
(\operatorname*{div}{\mathbf{u}}-\operatorname*{div}\hatPidivcom{\mathbf{u}},1)_{L^{2}(\widehat{K})}%
=\int_{\partial\widehat{K}}\mathbf{n}\cdot({\mathbf{u}}-\hatPidivcom{\mathbf{u}})\overset{\text{(\ref{eq:Pi_div-d}%
)}}{=}0.
\end{equation*}
Furthermore, the exact sequence property (\ref{eq:commuting-diagram-bc}) implies 
that every $w\in W^{aver}_{p}(\widehat{K})$ has the form
$w=\operatorname*{div}{\mathbf{w}}$ for some $\mathbf{w}\in\mathring{\mathbf{V}}_{p}(\widehat{K})$. 
Hence,  for any $w \in W^{aver}_p(\widehat{K})$
\[
(\operatorname*{div}{\mathbf{u}}-\operatorname*{div}\hatPidivcom\mathbf{u},w)_{L^{2}(\widehat{K})}%
=(\operatorname*{div}{\mathbf{u}}-\operatorname*{div}\hatPidivcom\mathbf{u},\operatorname*{div}\mathbf{w}%
)_{L^{2}(\widehat{K})}\overset{\text{(\ref{eq:Pi_div-a})}}{=}0. 
\]
for all $w \in W^{aver}_p(\widehat{K})$. 
This proves (\ref{eq:commuting-diagram-hinten}) 
in the three-dimensional setting. 
\end{proof}

\section{Stability of the projection operators in one space dimension}

In one dimension, the following result holds true.

\begin{lemma}
\label{lemma:demkowicz-grad-1D} Let $\widehat{e}=(-1,1)$. 
Let $\widehat{\Pi}_p^{\operatorname*{grad},1d}:H^{1}(\widehat{e})\rightarrow{\mathcal{P}}_{p}(\widehat e)$ 
be defined by
\begin{subequations}
\label{eq:lemma:demkowicz-grad-1D-10}
\begin{align}
\label{eq:lemma:demkowicz-grad-1D-10-a}
((u-\widehat{\Pi}_p^{\operatorname*{grad},1d}u)^{\prime},v^{\prime})_{L^{2}(\widehat{e})}  &
=0\qquad\forall v\in{\mathcal{P}_{p}(\widehat e)}\cap H_{0}^{1}(\widehat{e}),\\
\label{eq:lemma:demkowicz-grad-1D-10-b}
u(\pm1)  &  =(\widehat{\Pi}_p^{\operatorname*{grad},1d}u)(\pm1).
\end{align}
\end{subequations}
Then for every $s\ge 0$ there is a constant $C_{s}>0$ such that 
\begin{subequations}
\begin{align}
\label{eq:lemma:demkowicz-grad-1D-20-a}
\Vert u-\widehat{\Pi}_p^{\operatorname*{grad},1d}u\Vert_{H^{1-s}(\widehat{e})}
&\leq C_{s} p^{-s} \inf_{v \in {\mathcal P}_p(\widehat e)} \Vert u - v\Vert_{H^{1}(\widehat{e})}, \qquad 
\mbox{if $s \in [0,1]$}, \\
\label{eq:lemma:demkowicz-grad-1D-20-b}
\Vert u-\widehat{\Pi}_p^{\operatorname*{grad},1d}u\Vert_{\widetilde H^{1-s}(\widehat{e})}
&\leq C_{s} p^{-s} \inf_{v \in {\mathcal P}_p(\widehat e)} \Vert u - v\Vert_{H^{1}(\widehat{e})}, \qquad 
\mbox{if $s \ge 1$}, \\
\label{eq:lemma:demkowicz-grad-1D-20-c}
\Vert (u-\widehat{\Pi}_p^{\operatorname*{grad},1d}u)^\prime\Vert_{\widetilde H^{-s}(\widehat{e})}
&\leq C_{s} p^{-s} \inf_{v \in {\mathcal P}_p(\widehat e)} \Vert u - v\Vert_{H^{1}(\widehat{e})}, \qquad 
\mbox{if $s \ge 0$}.
\end{align}
\end{subequations} 
\end{lemma}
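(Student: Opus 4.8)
The plan is to first establish the result in the fundamental case $s=0$, which is simply the statement that $\widehat{\Pi}_p^{\operatorname*{grad},1d}$ is $H^1$-stable (in fact, the $H^1$-seminorm of the error equals that of the best approximation), together with a Poincaré-type control of the $L^2$-part coming from the boundary interpolation condition \eqref{eq:lemma:demkowicz-grad-1D-10-b}. Indeed, \eqref{eq:lemma:demkowicz-grad-1D-10-a} is exactly the Galerkin orthogonality for the bilinear form $(u',v')_{L^2(\widehat e)}$ on $\mathcal{P}_p(\widehat e)\cap H^1_0(\widehat e)$, so $\widehat{\Pi}_p^{\operatorname*{grad},1d}u$ is the $H^1$-projection after matching endpoint values; subtracting any $v\in\mathcal P_p(\widehat e)$ and using that $\widehat{\Pi}_p^{\operatorname*{grad},1d}$ reproduces polynomials, one gets $|u-\widehat{\Pi}_p^{\operatorname*{grad},1d}u|_{H^1(\widehat e)}\le |u-v|_{H^1(\widehat e)}$, and then the full $H^1$-norm is controlled since the error has controlled endpoint behaviour. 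This handles \eqref{eq:lemma:demkowicz-grad-1D-20-a}, \eqref{eq:lemma:demkowicz-grad-1D-20-b}, \eqref{eq:lemma:demkowicz-grad-1D-20-c} all at $s=0$ with no negative power of $p$.

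Next I would treat the two extreme exponents in each of the three scales so that the intermediate values follow by interpolation. For \eqref{eq:lemma:demkowicz-grad-1D-20-c}, the case $s=0$ is the $H^1$-stability just discussed (with $\|w'\|_{\widetilde H^0}=\|w'\|_{L^2}\le \|w\|_{H^1}$), and I would obtain an improved estimate at, say, $s=1$ by a duality argument: for $\psi\in H^1(\widehat e)$ write $((u-\widehat{\Pi}_p^{\operatorname*{grad},1d}u)',\psi)_{L^2}$, insert the one-dimensional $H^1$-projection $\psi_p$ of $\psi$ into $\mathcal P_p(\widehat e)\cap H^1_0$ plus a correction handling the endpoints, and use \eqref{eq:lemma:demkowicz-grad-1D-10-a} to kill the main term; the remaining terms are bounded using the one-dimensional polynomial approximation rate $\|\psi-\psi_p\|_{L^2}\lesssim p^{-1}\|\psi\|_{H^1}$. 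This yields the extra factor $p^{-1}$ at $s=1$; for general integer $s=m\ge 1$ one iterates (testing against functions that are themselves well-approximated to order $p^{-m}$, which on the reference interval is classical — e.g.\ via Legendre expansions and the explicit decay of Legendre coefficients of $H^m$ functions), and the remaining real $s$ follow by interpolation between consecutive integers, using that the spaces $\widetilde H^{-s}(\widehat e)$ form an interpolation scale as noted in the introduction. The estimates \eqref{eq:lemma:demkowicz-grad-1D-20-a} and \eqref{eq:lemma:demkowicz-grad-1D-20-b} are then deduced from \eqref{eq:lemma:demkowicz-grad-1D-20-c} by one integration: since $w:=u-\widehat{\Pi}_p^{\operatorname*{grad},1d}u$ vanishes at the endpoints, $w$ is the antiderivative of $w'$ with zero mean of a specific kind, and the map $w'\mapsto w$ gains one order of regularity (it maps $\widetilde H^{-s}$ into $H^{1-s}$ for $s\le 1$ and into $\widetilde H^{1-s}$ for $s\ge 1$, on the interval with the homogeneous endpoint conditions); composing with \eqref{eq:lemma:demkowicz-grad-1D-20-c} gives the claimed bounds with the same power $p^{-s}$.

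The main obstacle I anticipate is making the duality/integration step clean across the full range $s\ge 0$ while keeping the constant independent of $p$: one must be careful that the correction handling the non-homogeneous endpoint condition \eqref{eq:lemma:demkowicz-grad-1D-10-b} is itself a low-degree polynomial with $p$-independent norm bounds (a fixed linear function matching two nodal values suffices, so this is harmless), and that the "gain of one derivative" map between the negative-order trace-type spaces and the $\widetilde H^{1-s}$ spaces is bounded uniformly — this is a statement purely about the reference interval and the definitions in \eqref{eq:def-negative-norm}, so it reduces to an elementary but slightly technical Sobolev-space lemma on $(-1,1)$. Once that mapping property and the one-dimensional polynomial approximation rates in $H^m$ are in hand, the rest is routine: $s=0$ by $H^1$-stability, integer $s$ by iterated duality, non-integer $s$ by interpolation, and the $H^{1-s}$/$\widetilde H^{1-s}$ bounds by integrating the flux estimate.
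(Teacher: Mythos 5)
Your proposal is correct in substance but organizes the argument differently from the paper. The paper handles the three scales by three separate duality arguments: after the $s=0$ stability step (which you reproduce essentially verbatim), it proves \eqref{eq:lemma:demkowicz-grad-1D-20-b} for $s\ge 1$ by testing against $v$, solving the Dirichlet problem $-z''=v$, $z(\pm 1)=0$, and using \eqref{eq:lemma:demkowicz-grad-1D-10-a} to insert the best polynomial approximation of $z$; it proves \eqref{eq:lemma:demkowicz-grad-1D-20-c} the same way with the Neumann dual problem $-z''=v-\overline v$; intermediate $s$ follow by interpolation. You instead prove \eqref{eq:lemma:demkowicz-grad-1D-20-c} first and deduce \eqref{eq:lemma:demkowicz-grad-1D-20-a}, \eqref{eq:lemma:demkowicz-grad-1D-20-b} from it via the antiderivative map $w'\mapsto w$ on functions vanishing at $\pm 1$; that mapping lemma ($\widetilde H^{-s}\to H^{1-s}$ for $s\le 1$, $\widetilde H^{-s}\to\widetilde H^{1-s}$ for $s\ge 1$) is exactly the elementary dual/interpolation computation you anticipate and does hold, so the reduction is legitimate and buys you a single duality step instead of three. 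Two imprecisions are worth fixing. First, in the duality step for \eqref{eq:lemma:demkowicz-grad-1D-20-c} the object to insert is not the ``$H^1$-projection of $\psi$ into ${\mathcal P}_p(\widehat e)\cap H^1_0(\widehat e)$'': the functional $\psi\mapsto (w',\psi)_{L^2(\widehat e)}$ annihilates the space $\{v'\colon v\in{\mathcal P}_p(\widehat e)\cap H^1_0(\widehat e)\}\oplus{\mathbb R}={\mathcal P}_{p-1}(\widehat e)$ (constants because $(w',1)=w(1)-w(-1)=0$), so you should subtract the $L^2$-best approximation of $\psi$ from ${\mathcal P}_{p-1}(\widehat e)$, which gives the full rate $p^{-s}\|\psi\|_{H^s(\widehat e)}$ in one step for every $s\ge 0$ --- no iteration over integer $s$ is needed, and the iteration you sketch would in any case have to be replaced by exactly this single approximation estimate. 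Second, the parenthetical claim that the $H^1$-seminorm of the error \emph{equals} that of the unconstrained best approximation is not correct ($\widehat\Pi_p^{\operatorname{grad},1d}u$ minimizes the seminorm only among polynomials matching the endpoint values); what your stability-plus-projection argument actually yields is $\|u-\widehat\Pi_p^{\operatorname{grad},1d}u\|_{H^1(\widehat e)}\le C\inf_{v}\|u-v\|_{H^1(\widehat e)}$ with a constant, which is all that \eqref{eq:lemma:demkowicz-grad-1D-20-a} at $s=0$ requires and is how the paper argues.
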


\begin{proof}
The case $s = 0$ in (\ref{eq:lemma:demkowicz-grad-1D-20-a}) reflects the $p$-uniform $H^1$-stability 
of $\widehat{\Pi}_p^{\operatorname*{grad},1d}$ and the projection property of 
$\widehat{\Pi}_p^{\operatorname*{grad},1d}$. More specifically, to see the stability, i.e., 
\begin{equation}
\label{eq:pigrad-H1-stable}
\|\widehat{\Pi}_p^{\operatorname{grad},1d} u\|_{H^1(\widehat e)} \leq C \|u\|_{H^1(\widehat e)} 
\qquad \forall u \in H^1(\widehat e), 
\end{equation}
let ${\mathcal L} u \in {\mathcal P}_1(\widehat e)$
interpolate $u$ in the endpoints $\pm 1$ and note 
$\|{\mathcal L} u\|_{H^1(\widehat e)} \lesssim \|u\|_{H^1(\widehat e)}$ by Sobolev's embedding theorem. 
Since $u - {\mathcal L}u \in H^1_0(\widehat e)$ 
(and thus $\widehat \Pi_p^{\operatorname{grad},1d}(u - {\mathcal L}u) \in H^1_0(\widehat e)$) the orthogonality 
\eqref{eq:lemma:demkowicz-grad-1D-10-a} implies 
$|\widehat{\Pi}_p^{\operatorname*{grad},1d} (u  - {\mathcal L} u)|_{H^1(\widehat e)}
\leq |u  - {\mathcal L} u|_{H^1(\widehat e)}$. A Poincar\'e inequality yields 
$\|\widehat{\Pi}_p^{\operatorname*{grad},1d} (u  - {\mathcal L} u)\|_{H^1(\widehat e)}
\lesssim \|u - {\mathcal L} u\|_{H^1(\widehat e)}$. 
Writing $\widehat{\Pi}_p^{\operatorname*{grad},1d} u = 
\widehat{\Pi}_p^{\operatorname*{grad},1d} (u  - {\mathcal L} u) + 
\widehat{\Pi}_p^{\operatorname*{grad},1d} {\mathcal L} u =  
\widehat{\Pi}_p^{\operatorname*{grad},1d} (u  - {\mathcal L} u) + 
{\mathcal L} u$ finishes the proof of 
\eqref{eq:pigrad-H1-stable}. The projection property of $\widehat \Pi_p^{\operatorname{grad},1d}$ 
yields for any $v \in {\mathcal P}_p(\widehat e)$ that 
$u - \widehat \Pi_p^{\operatorname{grad},1d} u = 
u -v - \widehat \Pi_p^{\operatorname{grad},1d} (u-v)$. From this and 
\eqref{eq:pigrad-H1-stable} follows the estimate 
(\ref{eq:lemma:demkowicz-grad-1D-20-a}) for $s = 0$. 
To see 
\eqref{eq:lemma:demkowicz-grad-1D-20-b} for 
$s\ge 1$, one proceeds by a duality argument. 
We 
set $\widetilde{e}:=u-\widehat{\Pi}_p^{\operatorname*{grad},1d}u$ and $t=-(1-s)\ge 0$ and estimate 
\begin{align*}
\Vert\widetilde{e}\Vert_{\widetilde{H}^{-t}(\widehat{e})} = \operatorname*{sup}_{v\in H^t(\widehat{e})} \frac{(\widetilde{e},v)_{L^2(\widehat{e})}}{\Vert v\Vert_{H^t(\widehat{e})}}.
\end{align*}
For every $v\in H^t(\widehat{e})$, there exists a unique solution $z\in H^{t+2}(\widehat{e}) \cap H_0^1(\widehat{e})$ of 
\begin{align*}
-z^{\prime\prime} = v \text{ in } \widehat{e}, \qquad z=0 \text{ on }\partial\widehat{e},
\end{align*}
which satisfies $\Vert z\Vert_{H^{t+2}(\widehat{e})} \lesssim \Vert v\Vert_{H^t(\widehat{e})}$. 
Using integration by parts, the orthogonality condition \eqref{eq:lemma:demkowicz-grad-1D-10-a}, 
and the estimate for $s=0$, we obtain 
\begin{align*}
&|(\widetilde{e},v)_{L^2(\widehat{e})}| = |(\widetilde{e}^\prime,z^\prime)_{L^2(\widehat{e})}| 
\stackrel{\eqref{eq:lemma:demkowicz-grad-1D-10-a}}{ \leq }
\Vert\widetilde{e}^\prime\Vert_{L^2(\widehat{e})} \operatorname*{inf}_{\pi\in\mathcal{P}_p(\widehat{e}) \cap H_0^1(\widehat{e})} \Vert z^\prime-\pi^\prime\Vert_{L^2(\widehat{e})} \\
&\quad \lesssim \Vert \widetilde e^\prime \Vert_{L^2(\widehat{e})} p^{-(t+1)} \Vert z\Vert_{H^{t+2}(\widehat{e})} 
\stackrel{\text{(\ref{eq:lemma:demkowicz-grad-1D-20-a}) with $s = 0$}}{\lesssim} 
p^{-(t+1)} \inf_{v \in {\mathcal P}_p(\widehat e)} \Vert u - v \Vert_{H^1(\widehat{e})} \Vert v\Vert_{H^t(\widehat{e})},
\end{align*}
which implies {\eqref{eq:lemma:demkowicz-grad-1D-20-b}} for $s\ge 1$.
Noting that $\Vert\cdot\Vert_{\widetilde H^0(\widehat e)} = \Vert\cdot\Vert_{L^2(\widehat e)} = \Vert\cdot\Vert_{H^0(\widehat e)}$, the cases  $s \in(0,1)$ follow by interpolation.
Finally, \eqref{eq:lemma:demkowicz-grad-1D-20-c} is shown by similar duality arguments, just use the dual problem
\begin{align*}
-z^{\prime\prime} = v-\overline{v} \text{ in } \widehat{e}, \qquad z^\prime = 0 \text{ on }\partial\widehat{e}.
\end{align*}
\end{proof}

\section{Stability of the projection operators in two space dimensions}

\subsection{Preliminaries}
We recall the following unconstrained approximation results:

\begin{lemma}
\label{lemma:Pgrad1d} 
Let $K$ be a fixed $d$-dimensional simplex in ${\mathbb R}^d$, $d \in \{1,2,3\}$. 
Fix $r \ge 0$. 
Then there are approximation operators $J_p:\mathbf{H}^r(K) \rightarrow ({\mathcal P}_p)^d$
such that 
$$
\|{\mathbf u} - J_p {\mathbf u}\|_{\mathbf{H}^s(K)} \leq C (p+1)^{-(r-s)} \|{\mathbf u}\|_{\mathbf{H}^r(K)}, 
\qquad \forall p \in {\mathbb N}_0, \quad 0 \leq s \leq r. 
$$
\end{lemma}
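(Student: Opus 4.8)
By applying the estimate componentwise it suffices to treat scalar functions, so fix $r\ge 0$ and set $N:=\lceil r\rceil\in{\mathbb N}_0$; I will construct $J_p:H^r(K)\to{\mathcal P}_p(K)$ and recover the $\mathbb R^d$-valued case by acting on each component. The plan is to first produce, for each $p$, a \emph{single} linear operator $J_p$ (depending on $N$, but not on the Sobolev indices) that maps $H^N(K)$ into ${\mathcal P}_p(K)$ and satisfies, with $C=C(N,K)$ independent of $p$,
\begin{align}
\label{eq:Jp-approx}
\|u-J_pu\|_{H^j(K)} &\le C(p+1)^{-(N-j)}\|u\|_{H^N(K)}, \\
\label{eq:Jp-stab}
\|u-J_pu\|_{H^j(K)} &\le C\|u\|_{H^j(K)},
\end{align}
for all integers $0\le j\le N$, and then to fill in all real indices $0\le s\le r$ by operator interpolation.

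For the construction I would compose three classical objects. Let $Q\subset{\mathbb R}^d$ be an open cube with $\overline K\subset Q$; let $E$ be a Stein total extension operator, bounded $H^j(K)\to H^j({\mathbb R}^d)$ for every integer $0\le j\le N$ and satisfying $(Eu)|_K=u$; and let $\Pi^{\otimes}_q$ be the tensor-product spectral projection onto the polynomials of coordinate degree $\le q$ on $Q$, which is known (see, e.g., \cite{babuska-suri94,SchwabhpBook}) to be bounded on $H^j(Q)$ uniformly in $q$ and to satisfy $\|w-\Pi^{\otimes}_qw\|_{H^j(Q)}\le C(q+1)^{-(N-j)}\|w\|_{H^N(Q)}$ for $0\le j\le N$. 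Put $q:=\lfloor p/d\rfloor$ (so that $\Pi^{\otimes}_q(Eu)$, having coordinate degree $\le q$, has total degree $\le p$) and $J_pu:=\bigl(\Pi^{\otimes}_q(Eu)\bigr)\big|_K\in{\mathcal P}_p(K)$. Since $(Eu)|_K=u$ one gets $u-J_pu=\bigl((I-\Pi^{\otimes}_q)Eu\bigr)\big|_K$, so \eqref{eq:Jp-approx}--\eqref{eq:Jp-stab} follow by restriction from the corresponding bounds on $Q$ together with $q+1\gtrsim p+1$; the finitely many degrees $p<d$ are handled separately, e.g.\ by $J_pu:=|K|^{-1}\int_Ku$. (Alternatively, one may quote simplex-specific spectral estimates built from Jacobi-weighted orthogonal polynomials to obtain \eqref{eq:Jp-approx}--\eqref{eq:Jp-stab} directly on $K$.)

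It remains to interpolate the linear operator $T_p:=I-J_p$, which is independent of the Sobolev indices. Recall from the introduction that $\{H^\sigma(K)\}_{\sigma\ge 0}$ is an interpolation scale, $[H^{\sigma_0}(K),H^{\sigma_1}(K)]_{\theta,2}=H^{(1-\theta)\sigma_0+\theta\sigma_1}(K)$. Fixing an integer $j\in\{0,\dots,N\}$ and interpolating $T_p:H^j(K)\to H^j(K)$ (from \eqref{eq:Jp-stab}) with $T_p:H^N(K)\to H^j(K)$ (from \eqref{eq:Jp-approx}, of norm $\lesssim(p+1)^{-(N-j)}$) yields $\|T_pu\|_{H^j(K)}\lesssim(p+1)^{-(\rho-j)}\|u\|_{H^\rho(K)}$ for all real $\rho\in[j,N]$; the same argument with $j$ replaced by $\lfloor r\rfloor$ also shows $T_p$ is bounded on $H^r(K)$ uniformly in $p$. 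Finally, for a given pair $0\le s\le r$ set $j_0:=\lfloor s\rfloor$ and interpolate the \emph{target} space between $H^{j_0}(K)$ and $H^r(K)$, using $\|T_pu\|_{H^{j_0}(K)}\lesssim(p+1)^{-(r-j_0)}\|u\|_{H^r(K)}$ and the $H^r$-stability of $T_p$; since $[H^{j_0}(K),H^r(K)]_{\mu,2}=H^s(K)$ with $\mu(r-j_0)=s-j_0$, this gives $\|u-J_pu\|_{H^s(K)}\lesssim(p+1)^{-(r-s)}\|u\|_{H^r(K)}$, as claimed. I do not expect a genuine obstacle here: the lemma repackages standard $p$-version approximation theory, and the only points needing care are that $J_p$ be one linear operator valid across the whole index range (so that operator interpolation applies) and the innocuous reindexing $p\mapsto\lfloor p/d\rfloor$ converting coordinate degree on the cube into total degree on the simplex.
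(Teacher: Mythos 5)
Your argument is correct, but it takes a different route from the paper: the paper disposes of this lemma in two lines by citing \cite[Thm.~5.1]{melenk_nshpinterpolation_article} for the scalar case on a simplex and applying that result componentwise, whereas you reconstruct the operator from scratch via Stein extension, a tensor-product projection on an enclosing cube, restriction to $K$, and a two-step operator interpolation (first in the source space at fixed integer target index, then in the target space at fixed source $H^r$). The interpolation bookkeeping is sound --- in particular the exponent arithmetic $(r-j_0)(1-\mu)=r-s$ checks out, and the uniform $H^r$-boundedness of $I-J_p$ that you need for the second interpolation does follow from your integer-order stability bounds by interpolating the pair $H^{\lfloor r\rfloor}\to H^{\lfloor r\rfloor}$ and $H^{\lceil r\rceil}\to H^{\lceil r\rceil}$ (your phrasing of this step is slightly off, since interpolating $H^{\lfloor r\rfloor}\to H^{\lfloor r\rfloor}$ against $H^N\to H^{\lfloor r\rfloor}$ only gives boundedness into $H^{\lfloor r\rfloor}$, not into $H^r$). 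The one point that genuinely needs care is your input on the cube: the plain $L^2$-truncation of the Legendre (or tensor-Legendre) series is \emph{not} uniformly $H^j$-stable and is not quasi-optimal in $H^j$ for $j\ge 1$, so ``the tensor-product spectral projection'' must be read as one of the purpose-built operators of \cite{babuska-suri94,SchwabhpBook} (or of the reference the paper cites), whose one-dimensional building blocks are designed to be simultaneously quasi-optimal and uniformly stable in all integer orders $0\le j\le N$; with that reading your construction buys a self-contained proof at the cost of re-deriving what the paper imports wholesale, and it makes explicit why a \emph{single} linear operator across the whole index range is needed for the interpolation argument to apply.
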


\begin{proof}
The scalar case $d = 1$ is well-known, see, e.g., \cite[Thm.~{5.1}]{melenk_nshpinterpolation_article}. 
The case $d > 1$ follows from a componentwise application of the case $d = 1$. 
\end{proof}

We need a shift theorem for the Laplacian: 
\begin{lemma}
\label{lemma:shift-theorem}
Let $\widehat f \subset {\mathbb R}^2$ be a triangle and $\widehat{s}$ be given by 
\eqref{eq:maximal-angle}.
For every $s \in [0,\widehat{s}-1)$ there is $C_s > 0$ such that the following shift theorems are true:
\begin{enumerate}[(i)]
\item For every $v\in H^s(\widehat{f})$ the solution $z$ of the problem
\begin{align*}
-\Delta z=v \text{ in } \widehat{f}, \quad z=0 \text{ on } \partial\widehat{f}, 
\end{align*}
satisfies $z\in H^{s+2}(\widehat{f}) \cap H_0^1(\widehat{f})$ 
with the estimate $\Vert z\Vert_{H^{s+2}(\widehat{f})} \leq C_s \Vert v\Vert_{H^s(\widehat{f})}$.
\item For every $v\in H^s(\widehat{f})$ and 
$g \in L^2(\partial\widehat f)$ with 
$g|_{e} \in H^{s+1/2}(e)$ $\forall e \in {\mathcal E}(\widehat f)$ that satisfies additionally the compatibility
condition $\int_{\widehat f} v + \int_{\partial \widehat f} g = 0$, the solution $z$ of the problem
\begin{align*}
-\Delta z=v \text{ in } \widehat{f}, \quad \partial_n z=g \text{ on } \partial\widehat{f}, 
\qquad \int_{\widehat f} z = 0, 
\end{align*}
satisfies $z \in H^{s+2}(\widehat f)$ and 
$\|z\|_{H^{s+2}(\widehat f)} \leq C_s \left[ \|v\|_{H^s(\widehat f)} + \sum_{e \in {\mathcal E}(\widehat f)} 
\|g\|_{H^{s+1/2}(e)}\right]$. 
\end{enumerate}
\end{lemma}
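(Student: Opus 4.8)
The plan is to reduce both statements to known elliptic regularity results on polygons, due to Grisvard and Dauge, and then to patch in the dependence on the data through a trace lifting. The central analytic fact is that on a convex polygon, or more precisely on a polygon whose maximal interior angle is $\omega_{max}(\widehat f)$, the Dirichlet and Neumann Laplacians gain two orders of Sobolev regularity provided the target index $s+2$ stays below the critical threshold $1 + \pi/\omega_{max}(\widehat f) = 1 + \widehat s$ dictated by the strongest corner singularity $r^{\pi/\omega_{max}}$; this is exactly the restriction $s \in [0,\widehat s-1)$ in the hypothesis. Thus for (i) I would simply cite the shift theorem for the Dirichlet problem on a polygon (e.g.\ \cite{Mclean00} or Grisvard's book): for $v \in H^s(\widehat f)$ with $0 \le s < \widehat s - 1$ the unique solution $z \in H_0^1(\widehat f)$ of $-\Delta z = v$ lies in $H^{s+2}(\widehat f)$ with $\|z\|_{H^{s+2}(\widehat f)} \le C_s \|v\|_{H^s(\widehat f)}$. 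The constant $C_s$ depends only on $s$ and the geometry of $\widehat f$, which is all that is claimed.

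For (ii) I would first dispose of the boundary data by lifting. Since $g \in L^2(\partial\widehat f)$ with $g|_e \in H^{s+1/2}(e)$ for each edge, and $s + 1/2 < \widehat s - 1/2$, one can construct $w \in H^{s+2}(\widehat f)$ with $\partial_n w = g$ on $\partial\widehat f$ and $\|w\|_{H^{s+2}(\widehat f)} \le C_s \sum_{e} \|g\|_{H^{s+1/2}(e)}$: indeed $\partial_n w|_{\partial\widehat f}$ ranges over $H^{s+1/2}$ on each edge, and a standard edgewise trace-lifting construction, using cut-off functions supported away from the vertices combined with the half-space normal-trace lifting, produces such a $w$ (the vertices require a little care but, since $s+1/2 > 0$, the edge pieces can be glued with a partition of unity and corrections by polynomials to make the normal derivative match; no compatibility between adjacent edges is needed because only the normal, not the full gradient, is prescribed). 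Then $\tilde z := z - w + c$, with $c$ chosen so that $\int_{\widehat f}\tilde z = 0$ after we have solved for $\tilde z$, solves the pure Neumann problem $-\Delta \tilde z = v + \Delta w =: \tilde v$ with $\partial_n \tilde z = 0$; note $\tilde v \in H^s(\widehat f)$ and, by the divergence theorem, $\int_{\widehat f}\tilde v = \int_{\widehat f} v + \int_{\partial\widehat f}\partial_n w = \int_{\widehat f} v + \int_{\partial\widehat f} g = 0$, so the compatibility condition for the Neumann problem holds. The Neumann shift theorem on the polygon (again Grisvard/Dauge, same angle condition) then gives $\tilde z \in H^{s+2}(\widehat f)$ with $\|\tilde z\|_{H^{s+2}(\widehat f)} \le C_s \|\tilde v\|_{H^s(\widehat f)} \le C_s(\|v\|_{H^s(\widehat f)} + \|w\|_{H^{s+2}(\widehat f)})$. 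Unwinding, $z = \tilde z + w - c$ satisfies the asserted bound, and the normalization $\int_{\widehat f} z = 0$ fixes $c$ without affecting the seminorms.

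The main obstacle is the Neumann trace lifting near the corners: one must check that prescribing $\partial_n w$ edge by edge in $H^{s+1/2}(e)$, with $s$ possibly larger than $1/2$, is compatible with $w \in H^{s+2}(\widehat f)$ globally. The resolution is that $\partial_n$ is a first-order operator whose edgewise traces are \emph{independent} across edges (unlike the full Dirichlet trace of $w$, whose values on two adjacent edges must agree at the shared vertex), so the naive edgewise lifting plus a smooth partition of unity already works up to the point where the ambient regularity index $s+2$ crosses the corner threshold $1 + \widehat s$ — and that is precisely the excluded case. A cleaner alternative, which I would actually use to keep the write-up short, is to avoid the explicit lifting altogether: extend $v$ and $g$ to data on a slightly larger smooth domain, or invoke directly the polygonal shift theorem for the mixed-data Neumann problem as stated, e.g., in \cite[Thm.~{4.3.1.4}]{Mclean00}-type references for corner domains, which already packages the boundary-data dependence in the form $\|z\|_{H^{s+2}} \lesssim \|v\|_{H^s} + \sum_e \|g\|_{H^{s+1/2}(e)}$.
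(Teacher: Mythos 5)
Part (i) and the first step of part (ii) match the paper: both reduce to the known polygonal shift theorems of Grisvard/Dauge for the \emph{homogeneous} Dirichlet and Neumann problems, with the restriction $s+2 < 1+\widehat s$ coming from the leading corner singularities. The divergence is in how you remove the inhomogeneous Neumann datum $g$, and this is where your argument has a gap. You need a scalar $w \in H^{s+2}(\widehat f)$ with $\partial_n w|_e = g|_e$ and $\|w\|_{H^{s+2}(\widehat f)} \lesssim \sum_e \|g\|_{H^{s+1/2}(e)}$, i.e., surjectivity of the first-order trace map $H^{s+2}(\widehat f) \to \prod_e H^{s+1/2}(e)$, $w \mapsto \partial_n w$. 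Your justification --- that no vertex compatibility is needed because only the normal component of $\nabla w$ is prescribed, and that the edgewise lifting works ``up to the corner threshold $1+\widehat s$'' --- conflates two unrelated thresholds. The range of the Neumann-trace map on a polygon is governed by vertex compatibility conditions on $g$ that depend on the opening angles, not by the singularity exponent of $-\Delta$. Your ``independent components'' argument only disposes of the zeroth-order condition (matching $\nabla w(V)$); for $s>1$, which is reachable here since $\widehat s$ can approach $3$, one must also match $D^2w(V)$ against the edgewise derivatives $g_i'(V)$, and the two functionals $H \mapsto \mathbf{t}_i^{\top} H \mathbf{n}_i$ on symmetric matrices become linearly \emph{dependent} at a right-angle vertex, so a genuine constraint on the data appears there. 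It happens that a right angle forces $\widehat s \le 2$, hence $s<1$, so the problematic case is excluded --- but that is an angle-by-angle, order-by-order coincidence that must be checked, not a consequence of the reasoning you give. The fallback of citing a ready-made inhomogeneous-Neumann shift theorem is legitimate in principle, but such statements in Grisvard come with exactly these corner compatibility conditions built in, so the citation does not discharge the issue either.

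The paper sidesteps this entirely by lifting $g$ one order lower: it constructs a \emph{vector field} $\boldsymbol\sigma \in \mathbf H^{s+1}(\widehat f)$ with $\boldsymbol\sigma \cdot \mathbf n = g$, which near a vertex reduces (after an affine map and a Piola transform) to prescribing each Cartesian component of $\boldsymbol\sigma$ on a single coordinate axis of a quarter plane --- a zeroth-order trace lifting with no cross-edge compatibility whatsoever. Since $\boldsymbol\sigma$ is then only in $\mathbf H^{s+1}$, one cannot subtract a scalar potential and form $v+\Delta w$; instead the paper splits $\nabla z = \boldsymbol\sigma - \operatorname{\mathbf{curl}}\widetilde z_0 + \nabla z_0$, where $z_0$ solves a homogeneous Neumann problem with datum $v+\operatorname{div}\boldsymbol\sigma \in H^s$ and $\widetilde z_0$ a homogeneous Dirichlet problem with datum $\operatorname{curl}\boldsymbol\sigma \in H^s$, and identifies the decomposition by a div--curl uniqueness argument. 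If you want to keep your single-auxiliary-problem structure, you must either prove the $H^{s+2}$ Neumann-trace lifting for the full range $s\in[0,\widehat s-1)$ (checking the vertex conditions at each order), or switch to the vector-field lifting as in the paper.
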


\begin{proof}
\emph{1st~step:} It follows from \cite{Grisvard85,dauge88,rojikdiss} that both regularity assertions
are satisfied for the case of homogeneous Dirichlet and Neumann conditions (i.e., $g = 0$). 
The key observation for that is that the leading corner singularities for both the homogeneous
Dirichlet and Neumann problem are in $H^{\widehat{s}+1-\varepsilon}(\widehat f)$ for every $\varepsilon > 0$ 
as can be seen from their explicit representation, \cite[Sec.~4, 5]{Grisvard85}, 
\cite[p.~{82}]{grisvard92}.

\emph{2nd~step:} For the case of inhomogeneous Neumann conditions $g \ne 0$, 
one constructs a vector field ${\boldsymbol \sigma} \in {\mathbf H}^{s+1}(\widehat f)$ such that 
${\boldsymbol \sigma} \cdot {\mathbf n} = g$ on $\partial \widehat f$. It is easy to construct such a vector field
away from the vertices, and near the vertices, an affine coordinate change together with a Piola transformation
for ${\boldsymbol \sigma}$
reduces the construction to one in a 
quarter plane, where each component of ${\boldsymbol \sigma}$ can be constructed separately by lifting from 
one of the coordinate axes. Next, one solves the two problems 
\begin{align*}
-\Delta z_0 &= v + \operatorname{div} {\boldsymbol \sigma} \quad \mbox{ in $\widehat f$}, 
\qquad \partial_n z_0 = 0 \quad \mbox{ on $\partial \widehat f$}, \\
-\Delta \widetilde z_0 &= \operatorname{curl } {\boldsymbol \sigma} \quad \mbox{ in $\widehat f$}, 
\qquad \widetilde z_0 = 0 \quad \mbox{ on $\partial \widehat f$}. 
\end{align*} 
From the first step, one has that $z_0$, $\widetilde z_0 \in H^{s+2}(\widehat f)$. It remains to see that 
$\nabla z = {\boldsymbol \sigma} - \operatorname*{\mathbf{curl}} \widetilde z_0 + \nabla z_0$. The 
difference ${\boldsymbol \delta}:= \nabla z - 
({\boldsymbol \sigma} - \operatorname*{\mathbf{curl}} \widetilde z_0 + \nabla z_0)$ satisfies 
$\operatorname{div} {\boldsymbol \delta}  = 0 = \operatorname{curl} {\boldsymbol \delta}$ and 
${\boldsymbol \delta} \cdot {\mathbf n} = 0 $ on $\partial \widehat f$. 
From $\operatorname{curl}{\boldsymbol \delta} = 0$ we get 
${\boldsymbol \delta}= \nabla \varphi$ for some $\varphi$, and 
$\operatorname{div} {\boldsymbol \delta}= 0$ produces $-\Delta \varphi = 0$. 
Together with 
${\boldsymbol \delta} \cdot {\mathbf n} = 0 $ we conclude that 
$\nabla \varphi = 0$.
\end{proof}

\begin{lemma}[\!\!\protect{\cite[Thm.~{4.2} with $s = 1$]{demkowicz08}}]
\label{lemma:Pgrad2d}
Let $P^{\operatorname*{grad},2d}u\in W_{p+1}(\widehat{f})$ be defined by
\begin{subequations}
\label{eq:lemma:Pgrad2d}
\begin{align}
\label{eq:lemma:Pgrad2d-a}
(\nabla(u-P^{\operatorname*{grad},2d}u),\nabla v)_{L^2(\widehat{f})} &= 0 \qquad \forall v\in W_{p+1}(\widehat{f}),\\
(u-P^{\operatorname*{grad},2d}u,1)_{L^2(\widehat{f})} &= 0.
\label{eq:lemma:Pgrad2d-b}
\end{align}
\end{subequations}
Then, for $r>1$, there holds 
$\displaystyle 
\Vert u-P^{\operatorname*{grad},2d}u\Vert_{H^1(\widehat{f})} \leq C_r p^{-(r-1)} \Vert u\Vert_{H^r(\widehat{f})}.
$
\end{lemma}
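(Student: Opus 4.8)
The plan is to obtain the estimate exactly as in the one-dimensional case (Lemma~\ref{lemma:demkowicz-grad-1D}): identify $P^{\operatorname*{grad},2d}$ as a projection realizing the best $H^1$-seminorm approximation out of $W_{p+1}(\widehat f)$, upgrade the seminorm bound to a full-norm bound by means of the mean-value normalization \eqref{eq:lemma:Pgrad2d-b} and a Poincar\'e--Wirtinger inequality on the fixed triangle $\widehat f$, and then insert an unconstrained polynomial approximation result. In effect this reproves the cited result \cite[Thm.~4.2]{demkowicz08} for the present formulation, in which the fractional Sobolev inner product used there is replaced by the $L^2$-based gradient inner product in \eqref{eq:lemma:Pgrad2d-a}; the argument is unaffected by this change.

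First I would record that $P^{\operatorname*{grad},2d}$ is a well-defined linear projection onto $W_{p+1}(\widehat f)$: the seminorm Galerkin orthogonality \eqref{eq:lemma:Pgrad2d-a} determines $P^{\operatorname*{grad},2d}u$ uniquely up to an additive constant (testing with $v=P^{\operatorname*{grad},2d}u_1-P^{\operatorname*{grad},2d}u_2$ shows that any two solutions differ by a constant), and \eqref{eq:lemma:Pgrad2d-b} fixes that constant; for $v\in W_{p+1}(\widehat f)$ one trivially has $P^{\operatorname*{grad},2d}v=v$. Moreover, \eqref{eq:lemma:Pgrad2d-a} is precisely the optimality condition for minimizing $|u-v|_{H^1(\widehat f)}$ over $v\in W_{p+1}(\widehat f)$, so
$|u-P^{\operatorname*{grad},2d}u|_{H^1(\widehat f)}=\inf_{v\in W_{p+1}(\widehat f)}|u-v|_{H^1(\widehat f)}\le \inf_{v\in W_{p+1}(\widehat f)}\|u-v\|_{H^1(\widehat f)}$.

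To pass to the full norm, note that \eqref{eq:lemma:Pgrad2d-b} forces $\int_{\widehat f}(u-P^{\operatorname*{grad},2d}u)=0$, so the Poincar\'e--Wirtinger inequality on $\widehat f$ gives $\|u-P^{\operatorname*{grad},2d}u\|_{L^2(\widehat f)}\lesssim |u-P^{\operatorname*{grad},2d}u|_{H^1(\widehat f)}$ with a constant depending only on $\widehat f$; combining, $\|u-P^{\operatorname*{grad},2d}u\|_{H^1(\widehat f)}\lesssim \inf_{v\in W_{p+1}(\widehat f)}\|u-v\|_{H^1(\widehat f)}$. Finally I would apply Lemma~\ref{lemma:Pgrad1d} on the $2$-dimensional simplex $\widehat f$ with polynomial degree $p+1$, $s=1$, and the given $r>1$ to produce $J_{p+1}u\in W_{p+1}(\widehat f)$ with $\|u-J_{p+1}u\|_{H^1(\widehat f)}\le C_r (p+2)^{-(r-1)}\|u\|_{H^r(\widehat f)}\lesssim p^{-(r-1)}\|u\|_{H^r(\widehat f)}$, which yields the claim. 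There is no genuine obstacle here: the only slightly delicate point is handling the $L^2$-part of the norm, and that is taken care of by the zero-mean property together with a Poincar\'e inequality on the fixed reference triangle.
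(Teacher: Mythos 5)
Your proof is correct. Note, however, that the paper does not actually prove this lemma: it is imported verbatim as \cite[Thm.~4.2 with $s=1$]{demkowicz08} (whose $s=1$ inner product coincides with the $L^2$-gradient inner product in \eqref{eq:lemma:Pgrad2d-a}), so there is no in-paper argument to compare against. What you supply is a short self-contained derivation: the Galerkin orthogonality \eqref{eq:lemma:Pgrad2d-a} identifies $\nabla P^{\operatorname*{grad},2d}u$ as the $L^2$-orthogonal projection of $\nabla u$ onto $\nabla W_{p+1}(\widehat f)$, hence gives the best-approximation identity in the $H^1$-seminorm; the zero-mean normalization \eqref{eq:lemma:Pgrad2d-b} plus Poincar\'e--Wirtinger on the fixed triangle upgrades this to the full $H^1$-norm; and Lemma~\ref{lemma:Pgrad1d} with degree $p+1$ and $s=1$ supplies the rate $(p+2)^{-(r-1)}\lesssim p^{-(r-1)}$. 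All three steps are sound (including the well-definedness argument: uniqueness up to a constant from the seminorm orthogonality, the constant fixed by the mean condition), and in fact you prove the slightly stronger quasi-best-approximation bound $\Vert u-P^{\operatorname*{grad},2d}u\Vert_{H^1(\widehat f)}\lesssim\inf_{v\in W_{p+1}(\widehat f)}\Vert u-v\Vert_{H^1(\widehat f)}$, which makes the lemma independent of the external reference. The only caveat is bookkeeping: your argument proves the lemma as stated here, not the full fractional-order family of \cite[Thm.~4.2]{demkowicz08}, but that is all the paper uses.
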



\begin{lemma}[\!\!\protect{\cite[Thm.~{4.2} with $s=1$] {demkowicz08}}]
\label{lemma:Pcurl2d} Let $P^{\operatorname*{curl},2d}{\mathbf{u}}\in\mathbf{Q}_p(\widehat{f})$ be defined by 
\begin{subequations}
\label{eq:lemma:Pcurl2d-a}%
\begin{align}
(\operatorname{curl}({\mathbf{u}}-P^{\operatorname*{curl},2d}{\mathbf{u}%
}),\operatorname{curl}{\mathbf{v}})_{L^{2}(\widehat{f})}  &  =0\qquad\forall
{\mathbf{v}}\in\mathbf{Q}_p(\widehat{f}),\\
({\mathbf{u}}-P^{\operatorname*{curl},2d}{\mathbf{u}},\nabla v)_{L^{2}(\widehat{f})}
&  =0\qquad\forall v\in W_{p+1}(\widehat{f}).
\end{align}
Then, for $r>0$, there holds 
\end{subequations}
$\displaystyle \Vert{\mathbf{u}}-P^{\operatorname*{curl},2d}{\mathbf{u}}\Vert_{{\mathbf{H}%
}(\widehat{f},\operatorname{curl})}\leq C_r p^{-r}\Vert{\mathbf{u}}\Vert_{{\mathbf{H}%
}^{r}(\widehat{f},\operatorname{curl})}.
$
\end{lemma}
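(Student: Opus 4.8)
Since Lemma~\ref{lemma:Pcurl2d} is quoted verbatim from \cite{demkowicz08}, one may simply cite it; for a self-contained argument I would proceed as follows. First I would check that \eqref{eq:lemma:Pcurl2d-a} is a well-posed square system. From the exactness of $W_{p+1}(\widehat f)\xrightarrow{\nabla}\mathbf{Q}_p(\widehat f)\xrightarrow{\operatorname{curl}}W_p(\widehat f)$ one has $\operatorname{curl}\mathbf{Q}_p(\widehat f)=W_p(\widehat f)$ and $\ker\big(\operatorname{curl}|_{\mathbf{Q}_p(\widehat f)}\big)=\nabla W_{p+1}(\widehat f)$, so the number of (independent) conditions is $\dim W_p(\widehat f)+\big(\dim W_{p+1}(\widehat f)-1\big)=\dim\mathbf{Q}_p(\widehat f)$. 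Uniqueness is immediate: for $\mathbf{u}=0$, testing the first equation with $\mathbf{v}=P^{\operatorname{curl},2d}\mathbf{u}$ forces $\operatorname{curl}P^{\operatorname{curl},2d}\mathbf{u}=0$, hence $P^{\operatorname{curl},2d}\mathbf{u}=\nabla v$ for some $v\in W_{p+1}(\widehat f)$, and testing the second with that $v$ gives $P^{\operatorname{curl},2d}\mathbf{u}=0$. Since $\mathbf{u}\in\mathbf{Q}_p(\widehat f)$ trivially satisfies \eqref{eq:lemma:Pcurl2d-a} with $P^{\operatorname{curl},2d}\mathbf{u}=\mathbf{u}$, the operator is a projection.

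The heart of the matter is $p$-uniform stability in $\mathbf{H}(\widehat f,\operatorname{curl})$. Set $\mathbf{q}:=P^{\operatorname{curl},2d}\mathbf{u}$. The first equation in \eqref{eq:lemma:Pcurl2d-a} says $\operatorname{curl}\mathbf{q}$ is the $L^2(\widehat f)$-projection of $\operatorname{curl}\mathbf{u}$ onto $W_p(\widehat f)$, so $\|\operatorname{curl}\mathbf{q}\|_{L^2(\widehat f)}\le\|\operatorname{curl}\mathbf{u}\|_{L^2(\widehat f)}$. I would then split $\mathbf{q}=\nabla\psi+\mathbf{q}^{\perp}$ with $\psi\in W_{p+1}(\widehat f)$ and $\mathbf{q}^{\perp}$ in the $L^2(\widehat f)$-orthogonal complement of $\nabla W_{p+1}(\widehat f)$ in $\mathbf{Q}_p(\widehat f)$. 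The second equation yields $\|\nabla\psi\|_{L^2(\widehat f)}^2=(\mathbf{q},\nabla\psi)_{L^2(\widehat f)}=(\mathbf{u},\nabla\psi)_{L^2(\widehat f)}\le\|\mathbf{u}\|_{L^2(\widehat f)}\|\nabla\psi\|_{L^2(\widehat f)}$, i.e.\ $\|\nabla\psi\|_{L^2(\widehat f)}\le\|\mathbf{u}\|_{L^2(\widehat f)}$. For $\mathbf{q}^{\perp}$ I would invoke a $p$-uniform discrete Poincar\'e inequality: $\|\mathbf{v}\|_{L^2(\widehat f)}\le C\|\operatorname{curl}\mathbf{v}\|_{L^2(\widehat f)}$ for every $\mathbf{v}\in\mathbf{Q}_p(\widehat f)$ with $\mathbf{v}\perp_{L^2}\nabla W_{p+1}(\widehat f)$, with $C$ independent of $p$. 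This follows from the existence of a polynomial right inverse $R:W_p(\widehat f)\to\mathbf{Q}_p(\widehat f)$ of $\operatorname{curl}$ that is $L^2\to L^2$ bounded uniformly in $p$ (provided by the polynomial lifting results for the triangle, cf.\ \cite{ainsworth-demkowicz09}): with $w:=\operatorname{curl}\mathbf{v}$ and $\mathbf{r}:=Rw$ one has $\mathbf{v}-\mathbf{r}\in\nabla W_{p+1}(\widehat f)$, so $\|\mathbf{v}\|_{L^2(\widehat f)}^2=(\mathbf{v},\mathbf{v}-\mathbf{r})_{L^2(\widehat f)}+(\mathbf{v},\mathbf{r})_{L^2(\widehat f)}=(\mathbf{v},\mathbf{r})_{L^2(\widehat f)}\lesssim\|\mathbf{v}\|_{L^2(\widehat f)}\|w\|_{L^2(\widehat f)}$. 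Applied to $\mathbf{q}^{\perp}$ (for which $\operatorname{curl}\mathbf{q}^{\perp}=\operatorname{curl}\mathbf{q}$) this gives $\|\mathbf{q}^{\perp}\|_{L^2(\widehat f)}\lesssim\|\operatorname{curl}\mathbf{u}\|_{L^2(\widehat f)}$, and altogether $\|P^{\operatorname{curl},2d}\mathbf{u}\|_{\mathbf{H}(\widehat f,\operatorname{curl})}\lesssim\|\mathbf{u}\|_{\mathbf{H}(\widehat f,\operatorname{curl})}$ with a $p$-independent constant.

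With stability in hand, the projection property of Step~1 gives, for every $\mathbf{w}\in\mathbf{Q}_p(\widehat f)$, the identity $\mathbf{u}-P^{\operatorname{curl},2d}\mathbf{u}=(\mathbf{u}-\mathbf{w})-P^{\operatorname{curl},2d}(\mathbf{u}-\mathbf{w})$, whence $\|\mathbf{u}-P^{\operatorname{curl},2d}\mathbf{u}\|_{\mathbf{H}(\widehat f,\operatorname{curl})}\lesssim\inf_{\mathbf{w}\in\mathbf{Q}_p(\widehat f)}\|\mathbf{u}-\mathbf{w}\|_{\mathbf{H}(\widehat f,\operatorname{curl})}$. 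To bound this infimum for $\mathbf{u}\in\mathbf{H}^r(\widehat f,\operatorname{curl})$ I would use a regularized Helmholtz-type decomposition $\mathbf{u}=\nabla\varphi+\mathbf{z}$ of Costabel--McIntosh type (\cite{costabel-mcintosh10}, cf.\ Lemma~\ref{lemma:helmholtz-like-decomp-2d}) with $\operatorname{curl}\mathbf{z}=\operatorname{curl}\mathbf{u}$, $\mathbf{z}\in\mathbf{H}^{r+1}(\widehat f)$, $\|\mathbf{z}\|_{\mathbf{H}^{r+1}(\widehat f)}\lesssim\|\operatorname{curl}\mathbf{u}\|_{H^r(\widehat f)}$, and $\varphi\in H^{r+1}(\widehat f)$, $\|\varphi\|_{H^{r+1}(\widehat f)}\lesssim\|\mathbf{u}\|_{\mathbf{H}^r(\widehat f,\operatorname{curl})}$. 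Taking $\varphi_p\in W_{p+1}(\widehat f)$ and $\mathbf{z}_p\in(\mathcal{P}_p(\widehat f))^2\subset\mathbf{Q}_p(\widehat f)$ to be the unconstrained approximations of Lemma~\ref{lemma:Pgrad1d} and setting $\mathbf{w}:=\nabla\varphi_p+\mathbf{z}_p\in\mathbf{Q}_p(\widehat f)$, the relation $\operatorname{curl}\nabla\equiv0$ yields
\begin{align*}
\|\mathbf{u}-\mathbf{w}\|_{\mathbf{H}(\widehat f,\operatorname{curl})}
&\le \|\nabla(\varphi-\varphi_p)\|_{L^2(\widehat f)}+C\|\mathbf{z}-\mathbf{z}_p\|_{\mathbf{H}^1(\widehat f)}\\
&\lesssim p^{-r}\big(\|\varphi\|_{H^{r+1}(\widehat f)}+\|\mathbf{z}\|_{\mathbf{H}^{r+1}(\widehat f)}\big)\lesssim p^{-r}\|\mathbf{u}\|_{\mathbf{H}^r(\widehat f,\operatorname{curl})},
\end{align*}
which, combined with the previous bound, proves the lemma. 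The step I expect to be the real obstacle is the $p$-uniform discrete Poincar\'e inequality of the second paragraph (equivalently, the $p$-bounded polynomial right inverse of $\operatorname{curl}$ on the triangle); granting that, the remainder reduces to the discrete de Rham exactness already recorded in \eqref{eq:commuting-diagram-2d}, elementary orthogonality manipulations, the Costabel--McIntosh decomposition, and Lemma~\ref{lemma:Pgrad1d}.
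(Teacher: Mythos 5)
Your proposal is correct, and its first sentence already coincides with what the paper actually does: the paper offers no proof of this lemma at all, importing it verbatim from \cite[Thm.~4.2]{demkowicz08}, so citing it is exactly the paper's route. Your self-contained reconstruction is sound and is essentially the standard argument underlying the quoted result: exactness-based counting plus uniqueness for well-posedness, the observation that $\operatorname{curl}P^{\operatorname{curl},2d}\mathbf{u}$ is the $L^2$-projection of $\operatorname{curl}\mathbf{u}$ onto $W_p(\widehat f)$, the splitting $P^{\operatorname{curl},2d}\mathbf{u}=\nabla\psi+\mathbf{q}^{\perp}$, quasi-optimality from the projection property, and the regularized decomposition together with unconstrained approximation (Lemma~\ref{lemma:helmholtz-like-decomp-2d} and Lemma~\ref{lemma:Pgrad1d}). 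Two remarks on the one step you flag as the obstacle: the $p$-uniform discrete Poincar\'e/Friedrichs inequality you need is already recorded in the paper as Lemma~\ref{lemma:discrete-friedrichs}, case (\ref{item:lemma:discrete-friedrichs-i}) (cited from \cite{demkowicz-buffa05,demkowicz08}), so you could simply invoke it; and your own derivation of it goes through verbatim provided the right inverse $R$ of $\operatorname{curl}$ is taken to be the Costabel--McIntosh operator $\mathbf{R}^{\operatorname{curl}}$ of Lemma~\ref{lemma:mcintosh-2d}, which maps polynomials of degree $p$ into $\mathbf{Q}_p(\widehat f)$ and is bounded $L^2(\widehat f)\rightarrow\mathbf{H}^{1}(\widehat f)$ with a $p$-independent constant because it is a fixed integral operator (items (\ref{item:lemma:mcintosh-2d-i}), (\ref{item:lemma:mcintosh-2d-iv}), (\ref{item:lemma:mcintosh-2d-v})); the reference \cite{ainsworth-demkowicz09} concerns polynomial trace liftings and is not the right citation for this particular step. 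With that adjustment your argument is complete and consistent with the toolkit the paper itself assembles.
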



The next lemma provides right inverses for the differential operators $\nabla$ and $\operatorname{curl}$. 

\begin{lemma}
[\!\!{\cite{costabel-mcintosh10},{\cite[Sec.~{2.3}]{bespalov-heuer09}}}]\label{lemma:mcintosh-2d} Let $B\subset\widehat{f}$ be a ball. Let $\theta\in C_{0}^{\infty}(B)$ with $\int_{B}%
\theta=1$. Define the operators
\begin{align*}
{R}^{\operatorname*{grad}}{\mathbf{u}}(\mathbf{x})  &  :=\int_{{\mathbf{a}}\in B} \theta(\mathbf{a})%
\int_{t=0}^{1}{\mathbf{u}}(\mathbf{a}+t(\mathbf{x}-{\mathbf{a}}))\,dt\cdot(\mathbf{x}-{\mathbf{a}%
})\,d{\mathbf{a}},\\
{\mathbf{R}}^{\operatorname*{curl}}{u}(\mathbf{x})  &  :=\int_{{\mathbf{a}}\in
B} \theta(\mathbf{a}) \int_{t=0}^{1}t{u}(\mathbf{a}+t(\mathbf{x}-{\mathbf{a}}))\,dt \, \left(\begin{array}{c}-(\mathbf{x}_2-\mathbf{a}_2)\\\mathbf{x}_1-\mathbf{a}_1\end{array}\right)\,d{\mathbf{a}}.
\end{align*}
Then:

\begin{enumerate}
[(i)]

\item 
\label{item:lemma:mcintosh-2d-i}
For $u\in L^2(\widehat{f})$, there holds
$\operatorname*{curl}{\mathbf{R}}^{\operatorname*{curl}}{u}=u$.

\item 
\label{item:lemma:mcintosh-2d-ii}
For ${\mathbf{u}}$ with $\operatorname*{curl}{\mathbf{u}}=0$, there
holds $\nabla{R}^{\operatorname*{grad}}{\mathbf{u}}={\mathbf{u}}$.

\item 
\label{item:lemma:mcintosh-2d-iii}
If ${\mathbf{u}}\in\mathbf{Q}_p(\widehat{f})$,
then ${R}^{\operatorname*{grad}}{\mathbf{u}}\in W_{p+1}(\widehat{f})$.

\item 
\label{item:lemma:mcintosh-2d-iv}
If ${u}\in V_p(\widehat{f})$, then ${\mathbf{R}}^{\operatorname*{curl}%
}{u}\in\mathbf{Q}_p(\widehat{f})$.

\item 
\label{item:lemma:mcintosh-2d-v}
For every $k\geq0$, the operators ${R}^{\operatorname*{grad}}$ and
${\mathbf{R}}^{\operatorname*{curl}}$ are bounded linear operators
$\mathbf{H}^{k}(\widehat{f})\rightarrow H^{k+1}(\widehat{f})$ and $H^{k}(\widehat{f})\rightarrow \mathbf{H}^{k+1}(\widehat{f})$, respectively.
\end{enumerate}
\end{lemma}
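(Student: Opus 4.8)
The plan is to peel off the averaging over the base point: for a fixed \({\mathbf a}\in B\) introduce the unregularized operators
\[
R^{\operatorname*{grad}}_{\mathbf a}{\mathbf u}({\mathbf x}):=\int_{t=0}^1 {\mathbf u}\bigl({\mathbf a}+t({\mathbf x}-{\mathbf a})\bigr)\,dt\cdot({\mathbf x}-{\mathbf a}),
\]
\[
{\mathbf R}^{\operatorname*{curl}}_{\mathbf a}u({\mathbf x}):=\int_{t=0}^1 t\,u\bigl({\mathbf a}+t({\mathbf x}-{\mathbf a})\bigr)\,dt\,\bigl(-(x_2-a_2),\,x_1-a_1\bigr)^\top,
\]
so that \(R^{\operatorname*{grad}}=\int_B\theta({\mathbf a})R^{\operatorname*{grad}}_{\mathbf a}\,d{\mathbf a}\), \({\mathbf R}^{\operatorname*{curl}}=\int_B\theta({\mathbf a}){\mathbf R}^{\operatorname*{curl}}_{\mathbf a}\,d{\mathbf a}\), and \(\int_B\theta=1\). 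Since \(\widehat f\) is convex, it is star-shaped with respect to every point of \(B\subset\widehat f\), so the segments \({\mathbf a}+t({\mathbf x}-{\mathbf a})\) (\({\mathbf a}\in B\), \({\mathbf x}\in\widehat f\), \(t\in[0,1]\)) stay in \(\widehat f\) and the unregularized operators are well defined.

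For items (\ref{item:lemma:mcintosh-2d-i}) and (\ref{item:lemma:mcintosh-2d-ii}) I would invoke the classical Poincar\'e/cone homotopy formulas: for smooth data, differentiation under the integral sign gives \(\operatorname*{curl}{\mathbf R}^{\operatorname*{curl}}_{\mathbf a}u=u\) and, if \(\operatorname*{curl}{\mathbf u}=0\), \(\nabla R^{\operatorname*{grad}}_{\mathbf a}{\mathbf u}={\mathbf u}\) (here \(R^{\operatorname*{grad}}_{\mathbf a}{\mathbf u}({\mathbf x})\) is simply the line integral of \({\mathbf u}\) from \({\mathbf a}\) to \({\mathbf x}\), which is path-independent in the curl-free case); integrating against \(\theta({\mathbf a})\,d{\mathbf a}\) and using \(\int_B\theta=1\) transfers both identities to the regularized operators. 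The extension of (\ref{item:lemma:mcintosh-2d-i}) from smooth \(u\) to \(u\in L^2(\widehat f)\) follows by density once (\ref{item:lemma:mcintosh-2d-v}) is available.

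Items (\ref{item:lemma:mcintosh-2d-iii}) and (\ref{item:lemma:mcintosh-2d-iv}) I would prove by a degree count at fixed \({\mathbf a}\), the key observation being that in both cases the integrand is jointly polynomial in \((t,{\mathbf x})\) (with coefficients depending polynomially on \({\mathbf a}\)), so integrating out \(t\in[0,1]\) leaves a polynomial in \({\mathbf x}\). For (\ref{item:lemma:mcintosh-2d-iii}), write \({\mathbf u}={\mathbf p}+q\,(y,-x)^\top\) with \({\mathbf p}\in(\mathcal P_p(\widehat f))^2\), \(q\in\widetilde{\mathcal P}_p(\widehat f)\); the \({\mathbf p}\)-contribution to \(R^{\operatorname*{grad}}_{\mathbf a}{\mathbf u}\) is manifestly of degree \(\le p+1\), and for the rotational part one computes that at \({\mathbf w}={\mathbf a}+t({\mathbf x}-{\mathbf a})\) the inner product collapses, \(w_2(x_1-a_1)-w_1(x_2-a_2)=a_2x_1-a_1x_2\), so this contribution equals \((a_2x_1-a_1x_2)\int_0^1 q({\mathbf a}+t({\mathbf x}-{\mathbf a}))\,dt\), again of degree \(\le p+1\) in \({\mathbf x}\); hence \(R^{\operatorname*{grad}}_{\mathbf a}{\mathbf u}\in W_{p+1}(\widehat f)\). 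For (\ref{item:lemma:mcintosh-2d-iv}), expand \(u\in V_p(\widehat f)=\mathcal P_p(\widehat f)\) into homogeneous parts about \({\mathbf a}\): the parts of degree \(<p\) produce, after multiplication by the linear factor, fields in \((\mathcal P_p(\widehat f))^2\), while the leading part \(u_p\) (which is \({\mathbf a}\)-independent) produces \(\tfrac1{p+2}u_p({\mathbf x}-{\mathbf a})\bigl(-(x_2-a_2),\,x_1-a_1\bigr)^\top\), which lies in the rotational component of \(\mathbf Q_p\) in the shifted variable \({\mathbf x}-{\mathbf a}\); by translation invariance of the N\'ed\'elec space \(\mathbf Q_p(\widehat f)\) this gives \({\mathbf R}^{\operatorname*{curl}}_{\mathbf a}u\in\mathbf Q_p(\widehat f)\). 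Since \(W_{p+1}(\widehat f)\) and \(\mathbf Q_p(\widehat f)\) are fixed finite-dimensional spaces and \({\mathbf a}\mapsto R^{\operatorname*{grad}}_{\mathbf a}{\mathbf u}\), \({\mathbf a}\mapsto{\mathbf R}^{\operatorname*{curl}}_{\mathbf a}u\) take values in them, so do the averages \(R^{\operatorname*{grad}}{\mathbf u}\), \({\mathbf R}^{\operatorname*{curl}}u\).

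The main obstacle is item (\ref{item:lemma:mcintosh-2d-v}), which is precisely the content of \cite{costabel-mcintosh10}: it is the averaging against the smooth, compactly supported \(\theta\) that turns these Poincar\'e-type maps into operators gaining one derivative. After the substitution \({\mathbf y}={\mathbf a}+t({\mathbf x}-{\mathbf a})\), the operators \(R^{\operatorname*{grad}}\) and \({\mathbf R}^{\operatorname*{curl}}\) become integral operators whose kernels are smooth in \({\mathbf x}\) and Calder\'on--Zygmund of order \(-1\) in \({\mathbf x}-{\mathbf y}\) (equivalently, classical pseudodifferential operators of order \(-1\)), hence bounded between the relevant Sobolev scales with a gain of one order, i.e.\ \(\mathbf H^k(\widehat f)\to H^{k+1}(\widehat f)\) and \(H^k(\widehat f)\to\mathbf H^{k+1}(\widehat f)\) for every integer \(k\ge0\). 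I would simply invoke this, or its explicit two-dimensional form recorded in \cite[Sec.~{2.3}]{bespalov-heuer09}; with (\ref{item:lemma:mcintosh-2d-v}) in hand, the density step needed to complete (\ref{item:lemma:mcintosh-2d-i}) is immediate.
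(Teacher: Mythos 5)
Your proposal is sound, but note that the paper does not prove this lemma at all: it is imported verbatim from Costabel--McIntosh and from Sec.~2.3 of Bespalov--Heuer, and the only genuinely deep assertion, item (v), is exactly the regularization result of those references, which you also simply invoke. What you add on top of the citation is a correct elementary verification of the remaining items, and your computations check out: the homotopy identities in (i)--(ii) follow for smooth data by differentiation under the integral (and extend to $L^2$ by density once (v) is available; for (ii) one should note that curl-free $L^2$ fields on the convex $\widehat f$ can be approximated by smooth curl-free fields, or argue distributionally); for (iii) the collapse $w_2(x_1-a_1)-w_1(x_2-a_2)=a_2x_1-a_1x_2$ along ${\mathbf w}={\mathbf a}+t({\mathbf x}-{\mathbf a})$ is correct and gives degree $\le p+1$; for (iv) the expansion into homogeneous parts about ${\mathbf a}$, the ${\mathbf a}$-independence of the leading part, and the translation invariance of ${\mathbf Q}_p(\widehat f)$ (the shifted rotational term differs from the unshifted one by elements of $({\mathcal P}_p(\widehat f))^2$) are all valid, and averaging in ${\mathbf a}$ stays in these fixed finite-dimensional spaces. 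So your write-up is a correct proof sketch whose hard core coincides with the paper's source; it does not diverge from, but rather fills in elementary details around, the cited result.
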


Lemma~\ref{lemma:mcintosh-2d} can now be used to construct regular Helmholtz-like decompositions.

\begin{lemma}
\label{lemma:helmholtz-like-decomp-2d} Let $s \ge0$. Then each ${\mathbf{u}}
\in{\mathbf{H}}^{s}(\widehat{f},\operatorname*{curl})$ can be written as
${\mathbf{u}} = \nabla\varphi+ {\mathbf{z}}$ with $\varphi\in H^{s+1}%
(\widehat{f})$, ${\mathbf{z}} \in{\mathbf{H}}^{s+1}(\widehat{f})$.
\end{lemma}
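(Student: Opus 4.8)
The plan is to build the decomposition directly from the regularized right inverses $\mathbf{R}^{\operatorname{curl}}$ and $R^{\operatorname{grad}}$ of Lemma~\ref{lemma:mcintosh-2d}. Given $\mathbf{u}\in\mathbf{H}^s(\widehat f,\operatorname{curl})$, set $g:=\operatorname{curl}\mathbf{u}$, which by definition of the space lies in $H^s(\widehat f)$. Define $\mathbf{z}:=\mathbf{R}^{\operatorname{curl}}g$. By Lemma~\ref{lemma:mcintosh-2d}(\ref{item:lemma:mcintosh-2d-v}) we then have $\mathbf{z}\in\mathbf{H}^{s+1}(\widehat f)$ (for non-integer $s$ this follows by interpolating between the two neighbouring integer orders, the spaces in question forming an interpolation scale), and by Lemma~\ref{lemma:mcintosh-2d}(\ref{item:lemma:mcintosh-2d-i}) we have $\operatorname{curl}\mathbf{z}=g=\operatorname{curl}\mathbf{u}$.

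Next I would put $\mathbf{w}:=\mathbf{u}-\mathbf{z}$. Since $\mathbf{z}\in\mathbf{H}^{s+1}(\widehat f)\hookrightarrow\mathbf{H}^s(\widehat f)$ and $\mathbf{u}\in\mathbf{H}^s(\widehat f)$, we get $\mathbf{w}\in\mathbf{H}^s(\widehat f)$, and by construction $\operatorname{curl}\mathbf{w}=0$. Hence Lemma~\ref{lemma:mcintosh-2d}(\ref{item:lemma:mcintosh-2d-ii}) applies and yields $\mathbf{w}=\nabla\varphi$ with $\varphi:=R^{\operatorname{grad}}\mathbf{w}$; applying Lemma~\ref{lemma:mcintosh-2d}(\ref{item:lemma:mcintosh-2d-v}) once more gives $\varphi\in H^{s+1}(\widehat f)$. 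Altogether $\mathbf{u}=\nabla\varphi+\mathbf{z}$ with $\varphi\in H^{s+1}(\widehat f)$ and $\mathbf{z}\in\mathbf{H}^{s+1}(\widehat f)$, which is the claim.

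I do not expect a genuine obstacle here; the argument is just a chaining of the mapping and right-inverse properties collected in Lemma~\ref{lemma:mcintosh-2d}. The only mild point to be careful about is the use of the boundedness statement of Lemma~\ref{lemma:mcintosh-2d}(\ref{item:lemma:mcintosh-2d-v}) at fractional $s$, which is justified by interpolation as noted above, and the (routine) verification that the identities $\operatorname{curl}\mathbf{R}^{\operatorname{curl}}g=g$ and $\nabla R^{\operatorname{grad}}\mathbf{w}=\mathbf{w}$ hold in the distributional sense on $\widehat f$, exactly in the regularity classes produced above.
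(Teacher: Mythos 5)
Your proposal is correct and is essentially identical to the paper's proof: the paper also writes ${\mathbf u}=\nabla R^{\operatorname{grad}}({\mathbf u}-{\mathbf R}^{\operatorname{curl}}(\operatorname{curl}{\mathbf u}))+{\mathbf R}^{\operatorname{curl}}(\operatorname{curl}{\mathbf u})$ and invokes the mapping properties of Lemma~\ref{lemma:mcintosh-2d}. Your extra remark on interpolation for fractional $s$ is a reasonable clarification but not a deviation in approach.
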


\begin{proof}
With the aid of the operators ${\mathbf{R}}^{\operatorname*{curl}}$,
$R^{\operatorname*{grad}}$ of Lemma~\ref{lemma:mcintosh-2d}, we write
$\displaystyle 
{\mathbf{u}}=\nabla R^{\operatorname*{grad}}({\mathbf{u}}-{\mathbf{R}%
}^{\operatorname*{curl}}(\operatorname*{curl}{\mathbf{u}}))+{\mathbf{R}%
}^{\operatorname*{curl}}(\operatorname*{curl}{\mathbf{u}}).
$
The mapping properties of ${\mathbf{R}}^{\operatorname*{curl}}$ and
$R^{\operatorname*{grad}}$ of Lemma~\ref{lemma:mcintosh-2d} then imply the result.
\end{proof}
\begin{lemma}
\label{lemma:helmholtz-like-decomp-2d-v2} Let $\widehat f \subset {\mathbb R}^2$
be a triangle.  Let $s \in [1,\widehat{s})$ with 
$\widehat{s}$ given by \eqref{eq:maximal-angle}.
Then each ${\mathbf{u}}
\in{\mathbf{H}}^{s}(\widehat{f})$ can be written as
${\mathbf{u}} = \nabla\varphi+ \operatorname{\mathbf{curl}}z$ with 
$\varphi\in H^{s+1}(\widehat{f}) \cap H^1_0(\widehat{f})$, 
$z\in H^{s+1}(\widehat{f})$ with $(z,1)_{L^2(\widehat f)} = 0$ and 
satisfying $\|\varphi\|_{H^{s+1}(\widehat{f})} + \|z\|_{H^{s+1}(\widehat{f})} \lesssim \|\mathbf{u}\|_{\mathbf{H}^s(\widehat{f})}$.
\end{lemma}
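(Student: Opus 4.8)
The plan is to argue exactly as for Lemma~\ref{lemma:helmholtz-like-decomp-2d}, but to first peel off a gradient part that carries a homogeneous Dirichlet boundary condition: I would obtain $\varphi$ by solving a Poisson problem for $\operatorname{div}\mathbf{u}$ with $\varphi|_{\partial\widehat f}=0$, and then realize the (divergence-free) remainder as a rotated gradient via the regularized right inverse $R^{\operatorname{grad}}$ of Lemma~\ref{lemma:mcintosh-2d}.

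First I would note that $\operatorname{div}\mathbf{u}\in H^{s-1}(\widehat f)$ with $\|\operatorname{div}\mathbf{u}\|_{H^{s-1}(\widehat f)}\le\|\mathbf{u}\|_{\mathbf{H}^s(\widehat f)}$, and that $s-1\in[0,\widehat s-1)$ by hypothesis. Hence Lemma~\ref{lemma:shift-theorem}(i) (applied with datum $-\operatorname{div}\mathbf{u}$) yields $\varphi\in H^{s+1}(\widehat f)\cap H^1_0(\widehat f)$ with $-\Delta\varphi=-\operatorname{div}\mathbf{u}$, $\varphi|_{\partial\widehat f}=0$, and $\|\varphi\|_{H^{s+1}(\widehat f)}\lesssim\|\mathbf{u}\|_{\mathbf{H}^s(\widehat f)}$. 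Then $\mathbf{w}:=\mathbf{u}-\nabla\varphi\in\mathbf{H}^s(\widehat f)$ satisfies $\operatorname{div}\mathbf{w}=\operatorname{div}\mathbf{u}-\Delta\varphi=0$ and $\|\mathbf{w}\|_{\mathbf{H}^s(\widehat f)}\lesssim\|\mathbf{u}\|_{\mathbf{H}^s(\widehat f)}$. For the rotated field $\mathbf{w}^\perp:=(-\mathbf{w}_2,\mathbf{w}_1)^\top$ one computes $\operatorname{curl}\mathbf{w}^\perp=\partial_{x_1}\mathbf{w}_1+\partial_{x_2}\mathbf{w}_2=\operatorname{div}\mathbf{w}=0$, so by Lemma~\ref{lemma:mcintosh-2d}(\ref{item:lemma:mcintosh-2d-ii}) the function $\widetilde z:=R^{\operatorname{grad}}\mathbf{w}^\perp$ has $\nabla\widetilde z=\mathbf{w}^\perp$, i.e.\ $\partial_{x_1}\widetilde z=-\mathbf{w}_2$, $\partial_{x_2}\widetilde z=\mathbf{w}_1$, which is precisely $\operatorname{\mathbf{curl}}\widetilde z=(\partial_{x_2}\widetilde z,-\partial_{x_1}\widetilde z)^\top=\mathbf{w}$; moreover Lemma~\ref{lemma:mcintosh-2d}(\ref{item:lemma:mcintosh-2d-v}) gives $\widetilde z\in H^{s+1}(\widehat f)$ with $\|\widetilde z\|_{H^{s+1}(\widehat f)}\lesssim\|\mathbf{w}^\perp\|_{\mathbf{H}^s(\widehat f)}=\|\mathbf{w}\|_{\mathbf{H}^s(\widehat f)}\lesssim\|\mathbf{u}\|_{\mathbf{H}^s(\widehat f)}$. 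Finally I would subtract the mean, $z:=\widetilde z-|\widehat f|^{-1}\int_{\widehat f}\widetilde z$, which leaves $\operatorname{\mathbf{curl}}z=\mathbf{w}$ unchanged, enforces $(z,1)_{L^2(\widehat f)}=0$, and preserves the bound $\|z\|_{H^{s+1}(\widehat f)}\lesssim\|\widetilde z\|_{H^{s+1}(\widehat f)}$ (the $H^{s+1}$-seminorm is unaffected and the $L^2$-part is controlled by the $H^1$-seminorm via Poincaré). Collecting these steps gives $\mathbf{u}=\nabla\varphi+\operatorname{\mathbf{curl}}z$ with all the asserted properties.

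The main obstacle is the regularity assertion $\varphi\in H^{s+1}(\widehat f)$, which is limited by the corner singularities of the Dirichlet Laplacian on the triangle and is exactly why one must restrict to $s<\widehat s$; however this is already packaged in Lemma~\ref{lemma:shift-theorem}, so within the present proof it reduces to checking the index ranges. A minor point to verify is that Lemma~\ref{lemma:mcintosh-2d}(\ref{item:lemma:mcintosh-2d-v}) is invoked at the (in general non-integer) order $s\ge1$; this holds for all real $s\ge0$, or may be recovered by interpolation between consecutive integer orders.
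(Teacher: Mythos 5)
Your proposal is correct, and it reaches the same conclusion by a genuinely different construction of the curl--potential $z$. The construction of $\varphi$ (Dirichlet problem for $-\operatorname{div}\mathbf{u}$ via Lemma~\ref{lemma:shift-theorem}) is identical to the paper's. For the remainder, however, the paper solves a \emph{second} elliptic boundary value problem: a Neumann problem $-\Delta z=\operatorname{curl}\mathbf{u}$, $\partial_n z=-\mathbf{t}\cdot(\mathbf{u}-\nabla\varphi)$, $\int_{\widehat f}z=0$, and then identifies $\mathbf{u}-(\nabla\varphi+\operatorname{\mathbf{curl}}z)$ as zero by showing it is divergence-free, curl-free, and has vanishing tangential trace. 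You instead rotate the divergence-free remainder $\mathbf{w}=\mathbf{u}-\nabla\varphi$ into the curl-free field $\mathbf{w}^\perp$ and apply the regularized right inverse $R^{\operatorname{grad}}$ of Lemma~\ref{lemma:mcintosh-2d}; your index computations ($\operatorname{curl}\mathbf{w}^\perp=\operatorname{div}\mathbf{w}=0$ and $\operatorname{\mathbf{curl}}\widetilde z=\mathbf{w}$) are consistent with the paper's sign conventions for the 2D curls. Your route buys two things: the identity $\operatorname{\mathbf{curl}}z=\mathbf{w}$ holds by construction in all of $\widehat f$, so no uniqueness argument via traces is needed, and you only invoke part~(i) of the shift theorem (homogeneous Dirichlet), avoiding the inhomogeneous Neumann lifting of part~(ii); the corner-singularity restriction $s<\widehat s$ then enters only through $\varphi$. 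The one point you rightly flag — applying Lemma~\ref{lemma:mcintosh-2d}~(\ref{item:lemma:mcintosh-2d-v}) at non-integer order $s$ — is unproblematic: the paper itself uses these Costabel--McIntosh operators at arbitrary real order $s\ge 0$ (e.g.\ in Lemma~\ref{lemma:helmholtz-like-decomp-2d}), and the mapping property holds on the whole Sobolev scale (or follows by interpolation). The paper's Neumann-problem route does have the side effect of producing a $z$ whose rotated gradient matches the tangential trace of $\mathbf{w}$ on $\partial\widehat f$ by construction, but since the lemma only asserts $z\in H^{s+1}(\widehat f)$ with zero mean, your $z$ satisfies everything that is claimed and everything the lemma is used for downstream.
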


\begin{proof}
We define $\varphi,z\in H^{s+1}(\widehat{f})$ as the solutions of the equations
\begin{subequations}
\begin{align}
-\Delta\varphi &  =-\operatorname{div}{\mathbf{u}},\quad\varphi
=0\quad\text{on }\partial \widehat{f},
\label{eq:lemma:picurl-negative-I-20a}\\
-\Delta z  &  =\operatorname{curl}{\mathbf{u}},\quad\partial_{n}%
z=-{\mathbf{t}}%
\cdot({\mathbf{u}}-\nabla\varphi)\quad\text{on }\partial \widehat{f},\quad\int_{\widehat{f}}z=0.
\label{eq:lemma:picurl-negative-I-20b}%
\end{align}
\end{subequations}
Here, ${\mathbf{t}}$ denotes the unit tangent vector on $\partial \widehat{f}$ oriented
such that $\widehat{f}$ is \textquotedblleft on the left\textquotedblright. We note that
(\ref{eq:lemma:picurl-negative-I-20b}) is a Neumann problem; integration by
parts (cf.~\eqref{eq:2d-stokes}) 
shows that the solvability condition is satisfied. By Lemma~\ref{lemma:shift-theorem} we have
\begin{align}
&\Vert\varphi\Vert_{H^{s+1}(\widehat{f})}\lesssim\Vert\operatorname{div}{\mathbf{u}}\Vert_{H^{s-1}(\widehat{f})} \lesssim \|\mathbf{u}\|_{\mathbf{H}^s(\widehat{f})},\qquad\Vert z\Vert_{H^{s+1}(\widehat{f})}\lesssim\Vert{\mathbf{u}}%
\Vert_{{\mathbf{H}}^{s}(\widehat{f})}. 
\end{align}
The representation ${\mathbf u} = \nabla \varphi + \operatorname{\mathbf{curl}} z$ follows as 
at the end of the proof of Lemma~\ref{lemma:shift-theorem}: 
${\boldsymbol \delta}:= {\mathbf u} - (\nabla \varphi + \operatorname{\mathbf{curl}} z)$ satisfies 
$\operatorname{div} {\boldsymbol \delta} = 0 = 
\operatorname{curl} {\boldsymbol \delta}$, and ${\mathbf t} \cdot {\boldsymbol \delta} = 0$
so that ${\boldsymbol \delta} = 0$.
\end{proof}
\begin{lemma}
[discrete Friedrichs inequality in 2D]\label{lemma:discrete-friedrichs} 
Let $\widehat f \subset {\mathbb R}^2$ be a triangle. 
There exists $C > 0$ independent of $p$
and ${\mathbf{u}}$ such that
\begin{equation}
\label{eq:lemma:discrete-friedrichs-2d}\|{\mathbf{u}}\|_{L^{2}(\widehat{f})} \leq C
\|\operatorname{curl} {\mathbf{u}}\|_{L^{2}(\widehat{f})}%
\end{equation}
in the following two cases:

\begin{enumerate}
[(i)]

\item \label{item:lemma:discrete-friedrichs-i} ${\mathbf{u}}\in
\mathbf{Q}_{p}(\widehat{f})$ satisfies $({\mathbf{u}%
},\nabla v)_{L^{2}(\widehat{f})}=0$ for all $v\in W_{p+1}(\widehat{f})$.

\item \label{item:lemma:discrete-friedrichs-ii} ${\mathbf{u}} \in
\mathring{\mathbf{Q}}_p(\widehat{f})$ satisfies $({\mathbf{u}}, \nabla v)_{L^{2}(\widehat{f})} = 0$
for all $v \in\mathring{W}_{p+1}(\widehat{f})$.
\end{enumerate}
\end{lemma}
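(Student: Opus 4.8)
The plan is to treat both cases by combining the regularized right inverse $\mathbf R^{\operatorname{curl}}$ of Lemma~\ref{lemma:mcintosh-2d} with the discrete exact sequences on $\widehat f$. Set $w:=\operatorname{curl}\mathbf u$, a polynomial of degree $\leq p$, so $w\in V_p(\widehat f)$ (in case~(\ref{item:lemma:discrete-friedrichs-ii}) one additionally has $\int_{\widehat f}w=\int_{\partial\widehat f}\mathbf u\cdot\mathbf t=0$, but this will not be needed). By Lemma~\ref{lemma:mcintosh-2d}, items~(\ref{item:lemma:mcintosh-2d-i}), (\ref{item:lemma:mcintosh-2d-iv}), (\ref{item:lemma:mcintosh-2d-v}), the field $\mathbf R^{\operatorname{curl}}w$ lies in $\mathbf Q_p(\widehat f)$, satisfies $\operatorname{curl}\mathbf R^{\operatorname{curl}}w=w$, and obeys $\|\mathbf R^{\operatorname{curl}}w\|_{L^2(\widehat f)}\leq\|\mathbf R^{\operatorname{curl}}w\|_{\mathbf H^1(\widehat f)}\lesssim\|w\|_{L^2(\widehat f)}=\|\operatorname{curl}\mathbf u\|_{L^2(\widehat f)}$. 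Hence $\mathbf u-\mathbf R^{\operatorname{curl}}w\in\mathbf Q_p(\widehat f)$ is $\operatorname{curl}$-free, and by exactness of the (full) discrete sequence $W_{p+1}(\widehat f)\xrightarrow{\nabla}\mathbf Q_p(\widehat f)\xrightarrow{\operatorname{curl}}W_p(\widehat f)$ (cf.~\cite[(30)]{demkowicz08}) we may write $\mathbf u=\nabla\varphi+\mathbf R^{\operatorname{curl}}w$ for some $\varphi\in W_{p+1}(\widehat f)$.

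In case~(\ref{item:lemma:discrete-friedrichs-i}) the hypothesis applies to the test function $\varphi\in W_{p+1}(\widehat f)$, so $(\mathbf u,\nabla\varphi)_{L^2(\widehat f)}=0$, and therefore $\|\mathbf u\|_{L^2(\widehat f)}^2=(\mathbf u,\mathbf R^{\operatorname{curl}}w)_{L^2(\widehat f)}\leq\|\mathbf u\|_{L^2(\widehat f)}\,\|\mathbf R^{\operatorname{curl}}w\|_{L^2(\widehat f)}\lesssim\|\mathbf u\|_{L^2(\widehat f)}\,\|\operatorname{curl}\mathbf u\|_{L^2(\widehat f)}$; dividing by $\|\mathbf u\|_{L^2(\widehat f)}$ gives \eqref{eq:lemma:discrete-friedrichs-2d}.

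In case~(\ref{item:lemma:discrete-friedrichs-ii}) the hypothesis only controls test functions in $\mathring W_{p+1}(\widehat f)$, so the term $(\mathbf u,\nabla\varphi)_{L^2(\widehat f)}$ need not vanish and the boundary trace of $\varphi$ must be split off. First I would use $\mathbf u\cdot\mathbf t_e=0$ on each edge $e\in\mathcal E(\widehat f)$, which gives $\partial_{\mathbf t_e}\varphi=\nabla\varphi\cdot\mathbf t_e=-(\mathbf R^{\operatorname{curl}}w)\cdot\mathbf t_e$ on $e$; since $\mathbf R^{\operatorname{curl}}w\in\mathbf H^1(\widehat f)$, the trace theorem yields $\|\partial_{\mathbf t_e}\varphi\|_{L^2(e)}\lesssim\|\mathbf R^{\operatorname{curl}}w\|_{\mathbf H^1(\widehat f)}\lesssim\|\operatorname{curl}\mathbf u\|_{L^2(\widehat f)}$. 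After normalizing $\varphi$ by $\int_{\partial\widehat f}\varphi=0$ (which does not change $\nabla\varphi$), a Poincar\'e inequality on $\partial\widehat f$ then gives $\|\varphi\|_{H^{1/2}(\partial\widehat f)}\lesssim\|\operatorname{curl}\mathbf u\|_{L^2(\widehat f)}$. Next, let $\mathcal E$ be a polynomial extension operator on $\widehat f$ mapping the edgewise trace of a polynomial of degree $\leq p+1$ to a polynomial of degree $\leq p+1$ and bounded $H^{1/2}(\partial\widehat f)\to H^1(\widehat f)$ uniformly in $p$ (such operators are provided by the $p$-robust polynomial liftings of \cite{ainsworth-demkowicz09}, see also \cite{munoz-sola97}). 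Setting $\varphi_b:=\mathcal E(\varphi|_{\partial\widehat f})\in W_{p+1}(\widehat f)$ and $\varphi_0:=\varphi-\varphi_b\in W_{p+1}(\widehat f)\cap H^1_0(\widehat f)=\mathring W_{p+1}(\widehat f)$, the hypothesis applied to $\varphi_0$ gives $(\mathbf u,\nabla\varphi)_{L^2(\widehat f)}=(\mathbf u,\nabla\varphi_b)_{L^2(\widehat f)}\leq\|\mathbf u\|_{L^2(\widehat f)}\|\varphi_b\|_{H^1(\widehat f)}\lesssim\|\mathbf u\|_{L^2(\widehat f)}\|\operatorname{curl}\mathbf u\|_{L^2(\widehat f)}$. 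Combining this with the bound for $(\mathbf u,\mathbf R^{\operatorname{curl}}w)_{L^2(\widehat f)}$ from case~(\ref{item:lemma:discrete-friedrichs-i}) and dividing by $\|\mathbf u\|_{L^2(\widehat f)}$ yields \eqref{eq:lemma:discrete-friedrichs-2d}.

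The main obstacle is the last step of case~(\ref{item:lemma:discrete-friedrichs-ii}): one needs a \emph{polynomial} extension of the trace of $\varphi$ that is bounded $H^{1/2}(\partial\widehat f)\to H^1(\widehat f)$ with a $p$-independent constant. Polynomial-ness is essential --- otherwise $\varphi_0$ would not lie in $\mathring W_{p+1}(\widehat f)$ and the gauge condition could not be invoked on it --- and $p$-uniformity is essential for $C$ in \eqref{eq:lemma:discrete-friedrichs-2d} to be independent of $p$; both are furnished by the available polynomial lifting results on the triangle, so no further difficulty is anticipated. All remaining ingredients --- the discrete exact sequences, the mapping properties of $\mathbf R^{\operatorname{curl}}$, the trace theorem, and the Poincar\'e inequality on $\partial\widehat f$ --- are standard.
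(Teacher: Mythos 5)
Your proposal is correct and follows essentially the paper's own argument: decompose $\mathbf{u}=\nabla\varphi+\mathbf{R}^{\operatorname{curl}}(\operatorname{curl}\mathbf{u})$ with $\varphi\in W_{p+1}(\widehat f)$, bound the boundary trace of $\varphi$ through the vanishing tangential trace of $\mathbf{u}$, and peel off the boundary part with a $p$-uniformly bounded, polynomial-preserving $H^{1/2}(\partial\widehat f)\to H^1(\widehat f)$ lifting so that the orthogonality can be applied to the interior part $\varphi_0\in\mathring W_{p+1}(\widehat f)$. The only differences are cosmetic: the appropriate reference for that 2D lifting is \cite{babuska-craig-mandel-pitkaranta91} (as used in the paper) rather than \cite{munoz-sola97} or \cite{ainsworth-demkowicz09}, and case (\ref{item:lemma:discrete-friedrichs-i}), which you prove directly by the same mechanism, is simply cited in the paper from \cite{demkowicz-buffa05}.
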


\begin{proof}
Statement (\ref{item:lemma:discrete-friedrichs-i}) is proved in \cite[Lemma~6]%
{demkowicz-buffa05} or \cite[Lemma~{4.1}]{demkowicz08}. Statement
(\ref{item:lemma:discrete-friedrichs-ii}) is shown with similar techniques:
Using the operators $R^{\operatorname*{grad}}$ and ${\mathbf{R}}^{\operatorname*{curl}}$
of Lemma~\ref{lemma:mcintosh-2d}, we decompose ${\mathbf{u}}\in\mathring{\mathbf{Q}}_p(\widehat{f})$ as
\[
{\mathbf{u}}=\nabla\psi+{\mathbf{R}}^{\operatorname*{curl}}%
(\operatorname{curl}{\mathbf{u}}),\qquad\psi:=R^{\operatorname*{grad}%
}({\mathbf{u}}-{\mathbf{R}}^{\operatorname*{curl}}(\operatorname{curl}{\mathbf{u}})).
\]
Since ${\mathbf{u}}\in\mathbf{Q}_p(\widehat{f})$ we
have $\psi\in W_{p+1}(\widehat{f})$. The property ${\mathbf{u}}\in\mathring{\mathbf{Q}}_p(\widehat{f})$ 
implies with the tangential vector ${\mathbf{t}}$ on the boundary $\partial \widehat{f}$
\[
{\mathbf{t}}\cdot\nabla\psi=-{\mathbf{t}}\cdot{\mathbf{R}}%
^{\operatorname*{curl}}(\operatorname{curl}{\mathbf{u}}).
\]
Since $\psi$ is continuous at the vertices of $\widehat{f}$, we infer
\begin{align*}
|\psi|_{H^{1/2}(\partial \widehat{f})}  &  \lesssim|\psi|_{H^{1}(\partial \widehat{f})}%
= \|{\mathbf{t}}\cdot{\mathbf{R}}^{\operatorname*{curl}}(\operatorname{curl}{\mathbf{u}})\|_{L^{2}(\partial \widehat{f})}\leq\|{\mathbf{R}}^{\operatorname*{curl}%
}(\operatorname{curl}{\mathbf{u}})\|_{L^{2}(\partial \widehat{f})}\\
&  \lesssim\Vert{\mathbf{R}}^{\operatorname*{curl}}(\operatorname{curl}{\mathbf{u}})\Vert_{H^{1/2}(\partial \widehat{f})}\lesssim\Vert{\mathbf{R}%
}^{\operatorname*{curl}}(\operatorname{curl}{\mathbf{u}})\Vert_{H^{1}%
(\widehat{f})}\lesssim\Vert\operatorname{curl}{\mathbf{u}}\Vert_{L^{2}(\widehat{f})}.%
\end{align*}
Next, we decompose $\psi=\psi_{0}+{\mathcal{L}^{\operatorname{grad},2d}}(\psi|_{\partial \widehat{f}})$, where
${\mathcal{L}^{\operatorname{grad},2d}}:H^{1/2}(\partial \widehat{f})\rightarrow H^{1}(\widehat{f})$ is the lifting
operator of \cite{babuska-craig-mandel-pitkaranta91}. The lifting ${\mathcal L}^{\operatorname{grad},2d}$ of 
\cite{babuska-craig-mandel-pitkaranta91} is such that 
$\psi \in W_{p+1}(\widehat{f})$ 
implies ${\mathcal L}^{\operatorname{grad},2d}(\psi|_{\partial\widehat{f}}) \in W_{p+1}(\widehat f)$ 
so that $\psi_0 \in \mathring{W}_{p+1}(\widehat f)$. Since furthermore ${\mathcal L}^{\operatorname{grad},2d} 1 = 1$, 
we deduce $\|\nabla {\mathcal L}^{\operatorname{grad},2d} \psi\|_{L^2(\widehat f)} \lesssim |\psi|_{H^{1/2}(\partial \widehat f)}$ 
and estimate
\begin{align*}
& \Vert{\mathbf{u}}\Vert_{L^{2}(\widehat{f})}^{2}    =({\mathbf{u}},\nabla\psi_{0}%
+\nabla{\mathcal{L}^{\operatorname{grad},2d}}(\psi|_{\partial \widehat{f}})+{\mathbf{R}}^{\operatorname*{curl}%
}(\operatorname{curl}{\mathbf{u}}))_{L^{2}(\widehat{f})} \\
& \quad =({\mathbf{u}}%
,\nabla{\mathcal{L}^{\operatorname{grad},2d}}(\psi|_{\partial \widehat{f}})+{\mathbf{R}}^{\operatorname*{curl}%
}(\operatorname{curl}{\mathbf{u}}))_{L^{2}(\widehat{f})}\\
&  \quad \leq\Vert{\mathbf{u}}\Vert_{L^{2}(\widehat{f})}\left\{  \Vert\nabla{\mathcal{L}^{\operatorname{grad},2d}}%
(\psi|_{\partial \widehat{f}})\Vert_{L^{2}(\widehat{f})}+\Vert{\mathbf{R}}^{\operatorname*{curl}%
}(\operatorname{curl}{\mathbf{u}})\Vert_{L^{2}(\widehat{f})}\right\} \\
&  \quad \leq\Vert{\mathbf{u}}\Vert_{L^{2}(\widehat{f})}\left\{  |\psi|_{H^{1/2}%
(\partial \widehat{f})}+\Vert{\mathbf{R}}^{\operatorname*{curl}}(\operatorname{curl}{\mathbf{u}})\Vert_{L^{2}(\widehat{f})}\right\}  \lesssim\Vert{\mathbf{u}}%
\Vert_{L^{2}(\widehat{f})}\Vert\operatorname{curl}{\mathbf{u}}\Vert_{L^{2}(\widehat{f})}.
\qedhere
\end{align*}
\end{proof}
\subsection{Stability of the operator $\protect\hatPigradcomtwod$}

\begin{theorem}
\label{lemma:demkowicz-grad-2D} 
Let $\widehat f\subset{\mathbb R}^2$ be a triangle and 
$\widehat{s}$ be given by \eqref{eq:maximal-angle}.
For every $s\in [0,\widehat{s})$
there is $C_{s}>0$ such that
for $u\in H^{3/2}(\widehat{f})$
\begin{subequations}
\label{eq:lemma:demkowicz-grad-2D-10}%
\begin{align}
\label{eq:lemma:demkowicz-grad-2D-10-a}%
\Vert u-\hatPigradcomtwod u\Vert_{H^{1-s}(\widehat{f})}& \leq
C_{s}p^{-(1/2+s)}\!\!\!\inf_{v \in {\mathcal P}_{p+1}(\widehat f)} \Vert u -v \Vert_{H^{3/2}(\widehat{f})} 
\,\,\mbox{ if $s \in [0,1]$}, \\
\label{eq:lemma:demkowicz-grad-2D-10-b}%
\Vert u-\hatPigradcomtwod u\Vert_{\widetilde H^{1-s}(\widehat{f})}& \leq
C_{s}p^{-(1/2+s)}\!\!\!\inf_{v \in {\mathcal P}_{p+1}(\widehat f)} \Vert u -v \Vert_{H^{3/2}(\widehat{f})} 
\,\,\mbox{ if $s \in [1,\widehat{s})$}, \\
\label{eq:lemma:demkowicz-grad-2D-10-c}%
\Vert \nabla(u-\hatPigradcomtwod u)\Vert_{\widetilde{\mathbf{H}}^{-s}(\widehat{f})}& \leq
C_{s}p^{-(1/2+s)}\!\!\!\inf_{v \in {\mathcal P}_{p+1}(\widehat f)} \Vert u -v \Vert_{H^{3/2}(\widehat{f})} 
\,\,\mbox{ if $s \in [0,\widehat{s})$}.
\end{align}
\end{subequations}
\end{theorem}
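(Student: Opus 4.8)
The overall strategy follows the two-norm scheme announced in the introduction: one proves \eqref{eq:lemma:demkowicz-grad-2D-10-a} in the strongest norm ($s=0$, i.e.\ $H^1(\widehat f)$) by a best-approximation argument, proves the $L^2$-endpoint (and more generally the cases $s\in[1,\widehat s)$ of \eqref{eq:lemma:demkowicz-grad-2D-10-b}) by a duality argument, fills in $s\in(0,1)$ by interpolation, and finally deduces the gradient estimate \eqref{eq:lemma:demkowicz-grad-2D-10-c} by a further integration-by-parts/duality step reducing to the first two. Throughout, the projection property of $\hatPigradcomtwod$ (Theorem~\ref{thm:projection-based-interpolation-2d}\,(\ref{item:thm:projection-based-interpolation-i-2d})) lets us replace $u$ by $u-v$ with $v\in W_{p+1}(\widehat f)$ the best $H^{3/2}(\widehat f)$-approximation, and the trace estimate of Lemma~\ref{lemma:traces-2d} gives $u|_e\in H^1(e)$ with $\|u\|_{H^1(e)}\lesssim\|u\|_{H^{3/2}(\widehat f)}$; note that on each edge $\hatPigradcomtwod u|_e=\widehat{\Pi}_{p+1}^{\operatorname{grad},1d}(u|_e)$, so Lemma~\ref{lemma:demkowicz-grad-1D} is available on edges.

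\emph{Step 1 ($H^1$-estimate).} Condition \eqref{eq:Pi_grad-2d-a} characterises $\hatPigradcomtwod u$ as the minimiser of $w\mapsto|u-w|_{H^1(\widehat f)}$ over $\{w\in W_{p+1}(\widehat f): w|_{\partial\widehat f}=\hatPigradcomtwod u|_{\partial\widehat f}\}$, so $|u-\hatPigradcomtwod u|_{H^1(\widehat f)}\le|u-w_p|_{H^1(\widehat f)}$ for any admissible competitor. I choose $w_p:=P^{\operatorname{grad},2d}u-L_p$, where $P^{\operatorname{grad},2d}u$ is the unconstrained projection of Lemma~\ref{lemma:Pgrad2d} (which, via its own projection property and $r=3/2$, satisfies $\|u-P^{\operatorname{grad},2d}u\|_{H^1(\widehat f)}\lesssim p^{-1/2}\|u-v\|_{H^{3/2}(\widehat f)}$), and $L_p\in W_{p+1}(\widehat f)$ is the image under the stable, polynomial-preserving lifting $\mathcal L^{\operatorname{grad},2d}$ of \cite{babuska-craig-mandel-pitkaranta91} of the (piecewise polynomial) boundary discrepancy $(P^{\operatorname{grad},2d}u-\hatPigradcomtwod u)|_{\partial\widehat f}$. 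The crucial point — where the logarithmic factor of \cite{demkowicz08} is avoided — is that this discrepancy is $(P^{\operatorname{grad},2d}u-u)|_{\partial\widehat f}+(u-\hatPigradcomtwod u)|_{\partial\widehat f}$: the first summand is already $\lesssim p^{-1/2}\|u-v\|_{H^{3/2}(\widehat f)}$ in the global $H^{1/2}(\partial\widehat f)$-norm by the trace theorem, while the second vanishes at the vertices and, since $(u-\hatPigradcomtwod u)|_e\in H^1_0(e)$, can be measured in the \emph{localisable} norms $H^{1/2}_{00}(e)$: interpolating the one-dimensional bounds $\|(u-\hatPigradcomtwod u)|_e\|_{L^2(e)}\lesssim p^{-1}\|u-v\|_{H^{3/2}(\widehat f)}$ and $\|(u-\hatPigradcomtwod u)|_e\|_{H^1(e)}\lesssim\|u-v\|_{H^{3/2}(\widehat f)}$ coming from \eqref{eq:lemma:demkowicz-grad-1D-20-a} gives $\|(u-\hatPigradcomtwod u)|_e\|_{H^{1/2}_{00}(e)}\lesssim p^{-1/2}\|u-v\|_{H^{3/2}(\widehat f)}$, which $\mathcal L^{\operatorname{grad},2d}$ lifts edge-by-edge at the same rate. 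Adding up yields $|u-w_p|_{H^1(\widehat f)}\lesssim p^{-1/2}\|u-v\|_{H^{3/2}(\widehat f)}$, and the full $H^1$-norm is recovered because $w_p-\hatPigradcomtwod u\in\mathring W_{p+1}(\widehat f)\subset H^1_0(\widehat f)$, so Poincar\'e controls $\|u-\hatPigradcomtwod u\|_{L^2(\widehat f)}$.

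\emph{Step 2 (duality and interpolation).} Fix $s\in[1,\widehat s)$ and $\phi\in H^{s-1}(\widehat f)$, and let $z$ solve $-\Delta z=\phi$ in $\widehat f$, $z=0$ on $\partial\widehat f$; Lemma~\ref{lemma:shift-theorem}\,(i) (applicable since $s-1\in[0,\widehat s-1)$) gives $z\in H^{s+1}(\widehat f)\cap H^1_0(\widehat f)$ with $\|z\|_{H^{s+1}(\widehat f)}\lesssim\|\phi\|_{H^{s-1}(\widehat f)}$. Integration by parts gives $(u-\hatPigradcomtwod u,\phi)_{L^2(\widehat f)}=(\nabla(u-\hatPigradcomtwod u),\nabla z)_{L^2(\widehat f)}-(u-\hatPigradcomtwod u,\partial_n z)_{L^2(\partial\widehat f)}$. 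Because $z|_{\partial\widehat f}=0$ and $z\in H^{3/2}(\widehat f)$ we have $\hatPigradcomtwod z\in\mathring W_{p+1}(\widehat f)$, so \eqref{eq:Pi_grad-2d-a} turns the volume term into $(\nabla(u-\hatPigradcomtwod u),\nabla(z-\hatPigradcomtwod z))_{L^2(\widehat f)}$, which by Step~1 applied to $z$ together with Lemma~\ref{lemma:Pgrad1d} is $\lesssim|u-\hatPigradcomtwod u|_{H^1(\widehat f)}\,p^{-s}\|z\|_{H^{s+1}(\widehat f)}\lesssim p^{-(1/2+s)}\|u-v\|_{H^{3/2}(\widehat f)}\|\phi\|_{H^{s-1}(\widehat f)}$; the boundary term is split over edges and bounded, on each $e$, by $\|(u-\hatPigradcomtwod u)|_e\|_{\widetilde H^{-(s-1/2)}(e)}\|\partial_n z|_e\|_{H^{s-1/2}(e)}$, where the trace theorem bounds the second factor by $\|z\|_{H^{s+1}(\widehat f)}$ and \eqref{eq:lemma:demkowicz-grad-1D-20-b} the first by $p^{-(s+1/2)}\|u-v\|_{H^{3/2}(\widehat f)}$. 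Taking the supremum over $\phi$ gives \eqref{eq:lemma:demkowicz-grad-2D-10-b} and, at $s=1$, the $L^2$-endpoint of \eqref{eq:lemma:demkowicz-grad-2D-10-a}; interpolating this endpoint with the $H^1$-endpoint of Step~1 (both the $H^{1-s}(\widehat f)$-scale and the target rate $p^{-(1/2+s)}$ interpolate) yields \eqref{eq:lemma:demkowicz-grad-2D-10-a} for all $s\in[0,1]$.

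\emph{Step 3 (gradient estimate), and the main obstacle.} For $s\in(0,\widehat s)$ and smooth $\boldsymbol\psi$ (dense in $\mathbf H^s(\widehat f)$), integration by parts gives $(\nabla(u-\hatPigradcomtwod u),\boldsymbol\psi)_{L^2(\widehat f)}=-(u-\hatPigradcomtwod u,\operatorname{div}\boldsymbol\psi)_{L^2(\widehat f)}+(u-\hatPigradcomtwod u,\boldsymbol\psi\cdot\mathbf n)_{L^2(\partial\widehat f)}$; the volume term is bounded by $\|u-\hatPigradcomtwod u\|_{H^{1-s}(\widehat f)}$ (resp.\ $\|\cdot\|_{\widetilde H^{1-s}(\widehat f)}$ for $s>1$) times $\|\operatorname{div}\boldsymbol\psi\|_{H^{s-1}(\widehat f)}\lesssim\|\boldsymbol\psi\|_{\mathbf H^s(\widehat f)}$, hence by Steps~1--2 by $p^{-(1/2+s)}\|u-v\|_{H^{3/2}(\widehat f)}\|\boldsymbol\psi\|_{\mathbf H^s(\widehat f)}$; the boundary term is split over edges and estimated exactly as in Step~2, pairing the appropriate case of Lemma~\ref{lemma:demkowicz-grad-1D} for $(u-\hatPigradcomtwod u)|_e$ against a trace bound for $(\boldsymbol\psi\cdot\mathbf n)|_e$. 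Taking the supremum over $\boldsymbol\psi$ gives \eqref{eq:lemma:demkowicz-grad-2D-10-c} for $s\in(0,\widehat s)$, the case $s=0$ being contained in Step~1. The delicate part of the whole proof is Step~1: organising the boundary bookkeeping so that the edge interpolation error is measured in the localisable $H^{1/2}_{00}(e)$-norms rather than the global $H^{1/2}(\partial\widehat f)$-norm — which works only because $u|_e\in H^1(e)$ (providing the extra smoothness needed to interpolate the $L^2(e)$- and $H^1(e)$-estimates of Lemma~\ref{lemma:demkowicz-grad-1D}) and because the error vanishes at the vertices. A secondary limitation is that every duality step uses the shift theorem Lemma~\ref{lemma:shift-theorem}, which is precisely what restricts the admissible range to $s\in[0,\widehat s)$.
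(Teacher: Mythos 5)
Your Steps 1 and 2 are sound and follow essentially the same architecture as the paper's proof: the $H^1$-bound via the unconstrained projection of Lemma~\ref{lemma:Pgrad2d}, the polynomial-preserving lifting of \cite{babuska-craig-mandel-pitkaranta91} and the one-dimensional edge results of Lemma~\ref{lemma:demkowicz-grad-1D}; then a duality argument with the Dirichlet shift theorem (Lemma~\ref{lemma:shift-theorem}) for \eqref{eq:lemma:demkowicz-grad-2D-10-b}, with the boundary term paired edgewise in $\widetilde H^{-(s-1/2)}(e)$ via \eqref{eq:lemma:demkowicz-grad-1D-20-b}; then interpolation for $s\in(0,1)$ in \eqref{eq:lemma:demkowicz-grad-2D-10-a}. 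The packaging differences are harmless: in Step~1 you measure the edge errors in $H^{1/2}_{00}(e)$ (element-wise interpolation of the $L^2(e)$- and $H^1(e)$-bounds, exploiting that the error vanishes at the vertices) and lift edge by edge, whereas the paper bounds $u-\hatPigradcomtwod u$ globally in $H^1(\partial\widehat f)$ and $L^2(\partial\widehat f)$ (both localizable, since the error is continuous on $\partial\widehat f$) and interpolates to $H^{1/2}(\partial\widehat f)$ before invoking the lifting via a discrete-harmonicity argument; both implement the same ``interpolate two localizable norms'' idea. Your use in Step~2 of $\hatPigradcomtwod z\in\mathring W_{p+1}(\widehat f)$ together with Step~1 and Lemma~\ref{lemma:Pgrad1d}, in place of a constrained best-approximation estimate for $z$, is a legitimate (arguably cleaner) variant.

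The gap is in Step~3. Writing $\widetilde e:=u-\hatPigradcomtwod u$, your direct integration by parts $(\nabla\widetilde e,\boldsymbol\psi)_{L^2(\widehat f)}=-(\widetilde e,\operatorname{div}\boldsymbol\psi)_{L^2(\widehat f)}+(\widetilde e,\boldsymbol\psi\cdot\mathbf n)_{L^2(\partial\widehat f)}$ proves \eqref{eq:lemma:demkowicz-grad-2D-10-c} only for $s>1/2$ (safely for $s\ge1$), not for all $s\in(0,\widehat s)$ as claimed. For $s<1/2$ a field $\boldsymbol\psi\in\mathbf H^s(\widehat f)$ has no normal trace, so there is no ``trace bound for $(\boldsymbol\psi\cdot\mathbf n)|_e$'' to pair against Lemma~\ref{lemma:demkowicz-grad-1D}, and the density argument does not help because the proposed bound is not uniform; moreover, for $s\le 1/2$ the volume pairing requires $\operatorname{div}\boldsymbol\psi$ to be measured in the dual of $H^{1-s}(\widehat f)$, i.e. in $\widetilde H^{s-1}(\widehat f)$, and the bound $\|\operatorname{div}\boldsymbol\psi\|_{\widetilde H^{s-1}(\widehat f)}\lesssim\|\boldsymbol\psi\|_{\mathbf H^s(\widehat f)}$ is not available at the borderline $s=1/2$ (where $\widetilde H^{1/2}$ and $H^{1/2}$ have non-equivalent norms). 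The repair is the one you already use for \eqref{eq:lemma:demkowicz-grad-2D-10-a} and the one the paper uses for \eqref{eq:lemma:demkowicz-grad-2D-10-c}: run the duality argument only for $s\in[1,\widehat s)$ and obtain $s\in(0,1)$ by interpolating between $s=0$ (Step~1) and $s=1$. With that restriction your Step~3 is a valid and somewhat more elementary alternative to the paper's treatment, which instead decomposes the test field by the regular Helmholtz decomposition $\mathbf v=\nabla\varphi+\operatorname{\mathbf{curl}}z$ of Lemma~\ref{lemma:helmholtz-like-decomp-2d-v2} (requiring the Neumann shift theorem) and pairs the boundary term with the tangential derivative of the error via \eqref{eq:lemma:demkowicz-grad-1D-20-c}, rather than using the already-proved bound \eqref{eq:lemma:demkowicz-grad-2D-10-b} and the normal trace of $\boldsymbol\psi$ as you do.
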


\begin{proof} 
%
By the projection property of $\hatPigradcomtwod$
it suffices to show the estimates 
(\ref{eq:lemma:demkowicz-grad-2D-10-a}), 
(\ref{eq:lemma:demkowicz-grad-2D-10-b}), (\ref{eq:lemma:demkowicz-grad-2D-10-c})
for the special case $v = 0$ in the infimum. 

\emph{1st step:}
We show (\ref{eq:lemma:demkowicz-grad-2D-10-a}) for the case $s = 0$. 
%
The trace theorem gives $u \in H^1(e)$ for each edge $e \in {\mathcal E}(\widehat f)$ 
with $\|u\|_{H^1(e)} \lesssim \|u\|_{H^{3/2}(\widehat f)}$ 
(cf.\ Lemma~\ref{lemma:traces-2d}).
By Lemma~\ref{lemma:demkowicz-grad-1D}, 
we have for every edge $e\in{\mathcal{E}%
}(\widehat{f})$
\begin{equation}
\Vert u-\hatPigradcomtwod u\Vert_{H^{1-s}(e)}\leq Cp^{-s}%
\Vert u\Vert_{H^{3/2}(\widehat{f})},\qquad s\in\lbrack0,1].
\end{equation}
Since $\hatPigradcomtwod u$ is piecewise polynomial and continuous
on $\partial\widehat{f}$, we infer in particular for $s  = 0$ and $s=1$ the bounds
\begin{equation}
\Vert u-\hatPigradcomtwod u\Vert_{H^{1-s}(\partial\widehat{f})}\leq
Cp^{-s}\Vert u\Vert_{H^{3/2}(\widehat{f})},
\label{eq:lemma:demkowicz-grad-2D-15}%
\end{equation}
and then, by interpolation, also for the intermediate
$s\in(0,1)$. 

To show (\ref{eq:lemma:demkowicz-grad-2D-10-a}) for $s=0$, we first observe that 
$P^{\operatorname*{grad},2d} u - \hatPigradcomtwod u$ is discrete harmonic, i.e., 
\begin{align*} 
(\nabla \underbrace{(P^{\operatorname{grad},2d} u - \hatPigradcomtwod u)
                   }_{=:\delta_p \in W_{p+1}(\widehat f)} ,\nabla v)_{L^2(\widehat f)} = 0 
\qquad \forall v \in \mathring{W}_{p+1}(\widehat f).
\end{align*}
With the continuous, polynomial preserving lifting 
${\mathcal{L}}^{\operatorname{grad},2d}:H^{1/2}(\partial\widehat{f}) \rightarrow H^1(\widehat{f})$ 
of \cite{babuska-craig-mandel-pitkaranta91} 
we then get
\begin{align*}
\left(\nabla \delta_p ,\nabla (\delta_p-{\mathcal{L}}^{\operatorname{grad},2d} \delta_p)\right)_{L^2(\widehat f)} = 0
\end{align*}
and thus
\begin{equation}
\label{eq:lemma:demkowicz-grad-2D-17}
|\delta_p|_{H^1(\widehat{f})}^2 \!=\! (\nabla \delta_p,\nabla \delta_p)_{L^2(\widehat{f})} 
\!=\! (\nabla \delta_p ,\nabla({\mathcal{L}}^{\operatorname{grad},2d}\delta_p))_{L^2(\widehat{f})} 
\!\lesssim\! |\delta_p|_{H^1(\widehat{f})} |\delta_p|_{H^{1/2}(\partial\widehat{f})}.
\!\!
\!\!
\!\!
\end{equation}
We infer from Lemma~\ref{lemma:Pgrad2d}, 
\eqref{eq:lemma:demkowicz-grad-2D-15}, and \eqref{eq:lemma:demkowicz-grad-2D-17} for the seminorm 
$|\cdot|_{H^1(\widehat{f})}$
\begin{align}
\label{eq:lemma:demkowicz-2d-105}
& |u-\hatPigradcomtwod u|_{H^1(\widehat{f})} \leq |u-P^{\operatorname*{grad},2d}u|_{H^1(\widehat{f})} + |P^{\operatorname*{grad},2d}u-\hatPigradcomtwod u|_{H^1(\widehat{f})} \\
\nonumber 
&\qquad \stackrel{\text{Lem.~\ref{lemma:Pgrad2d},(\ref{eq:lemma:demkowicz-grad-2D-17}) }}{\lesssim} p^{-1/2} \Vert u\Vert_{H^{3/2}(\widehat{f})} + \Vert P^{\operatorname*{grad},2d}u-\hatPigradcomtwod u\Vert_{H^{1/2}(\partial\widehat{f})} \\
\nonumber 
&\qquad 
\stackrel{\eqref{eq:lemma:demkowicz-grad-2D-15}}{ \lesssim}
 p^{-1/2} \Vert u\Vert_{H^{3/2}(\widehat{f})} + \Vert u-P^{\operatorname*{grad},2d}u\Vert_{H^1(\widehat{f})} 
\lesssim p^{-1/2} \Vert u\Vert_{H^{3/2}(\widehat{f})},
\end{align}
which is (\ref{eq:lemma:demkowicz-grad-2D-10-a}) for $s=0$. 

\emph{2nd step:}
We next show 
the estimate (\ref{eq:lemma:demkowicz-grad-2D-10-b}) for $s \in [1,\widehat{s})$ by a duality argument. 
Let $\widetilde{e}=u-\hatPigradcomtwod u$, and set $t=-(1-s)$. To estimate 
\begin{align}
\label{eq:lemma:demkowicz-2d-15}
\Vert\widetilde{e}\Vert_{\widetilde{H}^{-t}(\widehat{f})} = \operatorname*{sup}_{v\in H^t(\widehat{f})} \frac{(\widetilde{e},v)_{L^2(\widehat{f})}}{\Vert v\Vert_{H^t(\widehat{f})}}
\end{align}
let $v\in H^t(\widehat{f})$ and $z\in H^{t+2}(\widehat{f})\cap H_{0}%
^{1}(\widehat{f})$ solve (cf.~Lemma~\ref{lemma:shift-theorem}) 
\begin{align*}
-\Delta z=v\quad\text{in }\widehat{f},\qquad z|_{\partial
\widehat{f}}=0.
\end{align*}
Note the \textsl{a priori} estimate $\Vert z\Vert_{H^{t+2}(\widehat{f})}\leq
C\Vert v\Vert_{H^t(\widehat{f})}$. Integration by parts
yields
\begin{equation}
(\widetilde{e},v)_{L^{2}(\widehat{f})}=\int_{\widehat{f}}%
\nabla\widetilde{e}\cdot\nabla z-\int_{\partial\widehat{f}}\partial
_{n}z\widetilde{e}. \label{eq:lemma:demkowicz-2d-100}%
\end{equation}
For the first term in (\ref{eq:lemma:demkowicz-2d-100}) we get by the
orthogonality properties satisfied by $\widetilde{e}$, 
Lemma~\ref{lemma:Pgrad1d}, and \eqref{eq:lemma:demkowicz-2d-105} 
\begin{align}
\nonumber 
 \left\vert \int_{\widehat{f}}\nabla z\cdot\nabla\widetilde{e}\right\vert  
& 
\leq\inf_{\pi\in{\mathcal{P}}_{p}\cap H_{0}^{1}(\widehat{f})}\Vert z-\pi
\Vert_{H^{1}(\widehat{f})}\Vert\nabla\widetilde{e}\Vert_{L^{2}(\widehat{f}%
)}
\lesssim p^{-(t+1)}\Vert z\Vert_{H^{t+2}(\widehat{f})}\Vert\nabla
\widetilde{e}\Vert_{L^{2}(\widehat{f})}  \\
\label{eq:lemma:demkowicz-2d-17}
&\lesssim p^{-(t+1)} \Vert\nabla
\widetilde{e}\Vert_{L^{2}(\widehat{f})} \Vert v\Vert_{H^t(\widehat{f})}  
\stackrel{\text{(\ref{eq:lemma:demkowicz-grad-2D-10-a}), $s=0$}}{\lesssim} p^{-(1/2+s)} \Vert u\Vert_{H^{3/2}(\widehat{f})} \Vert v\Vert_{H^t(\widehat{f})}.
\end{align}
For the second term in \eqref{eq:lemma:demkowicz-2d-100} we use 
Lemma~\ref{lemma:demkowicz-grad-1D} to obtain for each edge $e\in\mathcal{E}(\widehat{f})$
\begin{align}
\nonumber 
|(\partial_n z,\widetilde{e})_{L^2(e)}| 
&\lesssim \Vert\widetilde{e}\Vert_{\widetilde{H}^{-(t+1/2)}(e)} \Vert\partial_n z\Vert_{H^{t+1/2}(e)} \lesssim p^{-(3/2+t)} \Vert u\Vert_{H^{1}(e)} \Vert z\Vert_{H^{t+2}(\widehat{f})} \\
\label{eq:lemma:demkowicz-2d-19}
& \lesssim p^{-(1/2+s)} \Vert u\Vert_{H^{3/2}(\widehat{f})} \Vert v\Vert_{H^t(\widehat{f})}.
\end{align}
Inserting 
(\ref{eq:lemma:demkowicz-2d-17}) and (\ref{eq:lemma:demkowicz-2d-19}) 
in 
(\ref{eq:lemma:demkowicz-2d-15}) yields 
(\ref{eq:lemma:demkowicz-grad-2D-10-b}) for $s\in [1,\widehat{s})$. 

\emph{3rd step:}
The estimate
(\ref{eq:lemma:demkowicz-grad-2D-10-a}) for $s \in (0,1)$ follows by interpolation between 
$s = 0$ (cf.~1st step) and $s = 1$, noting that 
\eqref{eq:lemma:demkowicz-grad-2D-10-a} and \eqref{eq:lemma:demkowicz-grad-2D-10-b} coincide for $s = 1$. 

\emph{4th step:}
We show the estimate \eqref{eq:lemma:demkowicz-grad-2D-10-c} for $s\in [1,\widehat{s})$ again by duality. We set $\widetilde{e}:=u-\hatPigradcomtwod u$ and need an estimate for
\begin{align}
\label{eq:lemma:demkowicz-2d-21}
\|\nabla\widetilde{e}\|_{\widetilde{\mathbf{H}}^{-s}(\widehat{f})} = \operatorname*{sup}_{\mathbf{v}\in \mathbf{H}^s(\widehat{f})} \frac{(\nabla\widetilde{e},\mathbf{v})_{L^2(\widehat{f})}}{\|\mathbf{v}\|_{\mathbf{H}^s(\widehat{f})}}.
\end{align}
According to Lemma~\ref{lemma:helmholtz-like-decomp-2d-v2}, any $\mathbf{v}\in \mathbf{H}^s(\widehat{f})$ can be decomposed as $\mathbf{v}=\nabla\varphi+\operatorname{\mathbf{curl}} z$ with 
$\varphi \in H^{s+1}(\widehat{f}) \cap H^1_0(\widehat{f})$,  
$z\in H^{s+1}(\widehat{f})$. Integration by parts then gives
\begin{align*}
(\nabla\widetilde{e},\mathbf{v})_{L^2(\widehat{f})} \!=\! (\nabla\widetilde{e},\nabla\varphi)_{L^2(\widehat{f})} \!+\! (\nabla\widetilde{e},\operatorname{\mathbf{curl}} z)_{L^2(\widehat{f})} \!=\! (\nabla\widetilde{e},\nabla\varphi)_{L^2(\widehat{f})} \!+\! (\mathbf{t}\cdot \nabla\widetilde{e},z)_{L^2(\partial\widehat{f})}.
\end{align*}
For the first term, we use Lemma~\ref{lemma:Pgrad1d} and \eqref{eq:lemma:demkowicz-grad-2D-10-a} 
(applied with $s=0$) to obtain
\begin{align*}
\left| (\nabla\widetilde{e},\nabla\varphi)_{L^2(\widehat{f})}\right|\! \lesssim \|\nabla\widetilde{e}\|_{L^2(\widehat{f})} \!\inf_{\pi\in \mathring{W}_{p+1}(\widehat{f})}\!\! \|\varphi-\pi\|_{H^1(\widehat{f})} \lesssim p^{-(s+1/2)} \|u\|_{H^{3/2}(\widehat{f})} \|\mathbf{v}\|_{\mathbf{H}^s(\widehat{f})},
\end{align*}
analogously to \eqref{eq:lemma:demkowicz-2d-17}. To treat the second term, 
we note that $z\in H^{s+1}(\widehat{f})$ implies $z\in H^{s+1/2}(e)$ 
for each edge $e\in\mathcal{E}(\widehat{f})$ and get
\begin{align*}
&\left| (\mathbf{t}\cdot \nabla\widetilde{e},z)_{L^2(e)}\right| = 
\left| (\nabla_e\widetilde{e},z)_{L^2(e)} \right| 
\lesssim \|\nabla_e \widetilde{e}\|_{\widetilde{H}^{-(s+1/2)}(e)} \|z\|_{H^{s+1/2}(e)} \\
&\quad \stackrel{\text{Lemma~\ref{lemma:demkowicz-grad-1D}}}{\lesssim} p^{-(s+1/2)} \|u\|_{H^1(e)} \|z\|_{H^{s+1/2}(e)} \lesssim p^{-(s+1/2)} \|u\|_{H^{3/2}(\widehat{f})} \|\mathbf{v}\|_{\mathbf{H}^s(\widehat{f})}.
\end{align*}
Inserting the last two estimates in \eqref{eq:lemma:demkowicz-2d-21} yields \eqref{eq:lemma:demkowicz-grad-2D-10-c} for $s\in [1,\widehat{s})$. 

\emph{5th step:}
The estimate \eqref{eq:lemma:demkowicz-grad-2D-10-c} for $s\in (0,1)$ follows by interpolation between 
$s = 0$ and $s = 1$. 
\end{proof}

%
\subsection{Stability of the operator $\protect\hatPicurlcomtwod$}

The following two lemmas present the duality arguments that are needed later on 
to estimate negative Sobolev norms.

\begin{lemma}
\label{lemma:picurl-negative-I} 
Let
${\mathbf{E}}\in \mathbf{H}^{1/2}(\widehat{f},\operatorname*{curl})$ satisfy
the orthogonality conditions%
\begin{subequations}
\label{eq:lemma:picurl-negative-I-orth}
\begin{align}
(\operatorname{curl}{\mathbf{E}},\operatorname{curl}{\mathbf{v}%
})_{L^{2}(\widehat{f})}  &  =0\qquad\forall{\mathbf{v}}\in\mathring{\mathbf{Q}}_p(\widehat{f}),
\label{eq:lemma:picurl-negative-I-orth-a}\\
({\mathbf{E}},\nabla\varphi)_{L^{2}(\widehat{f})}  &  =0\qquad\forall\varphi\in
\mathring{W}_{p+1}(\widehat{f}),
\label{eq:lemma:picurl-negative-I-orth-b}\\
({\mathbf{E}}\cdot{\mathbf{t}}_{e},\nabla_{e}\varphi)_{L^{2}(e)}  &
=0\qquad\forall\varphi\in\mathring{W}_{p+1}(e) \quad\forall e\in{\mathcal{E}%
}(\widehat{f}),
\label{eq:lemma:picurl-negative-I-orth-c}\\
({\mathbf{E}}\cdot{\mathbf{t}_e},1)_{L^{2}(e)}  &  =0\qquad\forall
e\in{\mathcal{E}}(\widehat{f}). 
\label{eq:lemma:picurl-negative-I-orth-d}%
\end{align}
Then, for $s\in [0,\widehat{s})$, there holds 
$\displaystyle 
\Vert{\mathbf{E}}\Vert_{\widetilde{\mathbf{H}}^{-s}(\widehat{f})}
\leq C_s p^{-s}\Vert{\mathbf{E}%
}\Vert_{\mathbf{H}(\widehat{f},\operatorname{curl})}.$
\end{subequations}%
\end{lemma}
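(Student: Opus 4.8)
The plan is to estimate $\|\mathbf{E}\|_{\widetilde{\mathbf H}^{-s}(\widehat f)}$ by duality: for $\mathbf{w}\in\mathbf{H}^s(\widehat f)$ we write $(\mathbf{E},\mathbf{w})_{L^2(\widehat f)}$ and exploit the orthogonality conditions \eqref{eq:lemma:picurl-negative-I-orth}. The natural device is a regular Helmholtz-type decomposition of the test field $\mathbf{w}$. Using Lemma~\ref{lemma:helmholtz-like-decomp-2d} we write $\mathbf{w}=\nabla\psi+\mathbf{z}$ with $\psi\in H^{s+1}(\widehat f)$ and $\mathbf{z}\in\mathbf{H}^{s+1}(\widehat f)$, both bounded by $\|\mathbf{w}\|_{\mathbf{H}^s(\widehat f)}$. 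The term $(\mathbf{E},\mathbf{z})_{L^2(\widehat f)}$ is the easier one: since $\mathbf z$ has one extra derivative, $\operatorname{curl}\mathbf z\in H^s(\widehat f)$; I would further peel off the curl of $\mathbf z$ using the regularized right inverse $\mathbf{R}^{\operatorname{curl}}$ of Lemma~\ref{lemma:mcintosh-2d}, writing $\mathbf z=\nabla\eta+\mathbf{R}^{\operatorname{curl}}(\operatorname{curl}\mathbf z)$ with $\eta\in H^{s+1}(\widehat f)$ and $\mathbf{R}^{\operatorname{curl}}(\operatorname{curl}\mathbf z)\in\mathbf{H}^{s+1}(\widehat f)$. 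After this reduction the only thing left to handle is $(\mathbf{E},\nabla\chi)_{L^2(\widehat f)}$ for various potentials $\chi\in H^{s+1}(\widehat f)$ (namely $\chi\in\{\psi,\eta\}$) plus a single genuinely smooth remainder $(\mathbf{E},\mathbf{g})_{L^2(\widehat f)}$ with $\mathbf{g}\in\mathbf{H}^{s+1}(\widehat f)$, which is controlled simply by $\|\mathbf{E}\|_{L^2(\widehat f)}\|\mathbf{g}\|_{L^2(\widehat f)}$ after noting $\mathbf{H}^{s+1}\hookrightarrow L^2$ with the right scaling — but this last crude bound will not give the $p^{-s}$ factor, so I must be more careful and instead keep the $\mathbf g$-term in the form that exhibits a gradient as well. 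The cleanest route is: use the right inverse just once, $\mathbf z = \nabla\eta + \mathbf g$ with $\mathbf g:=\mathbf{R}^{\operatorname{curl}}(\operatorname{curl}\mathbf z)$, and then observe that \eqref{eq:lemma:picurl-negative-I-orth-a} lets us also subtract from $\mathbf g$ a best polynomial approximant of its curl's potential; alternatively one realizes $(\mathbf E,\mathbf g)$ directly through the curl-orthogonality. I will present it with the single decomposition $\mathbf{w}=\nabla(\psi+\eta)+\mathbf g$, $\mathbf g=\mathbf R^{\operatorname{curl}}(\operatorname{curl}\mathbf z)\in\mathbf H^{s+1}(\widehat f)$.

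For the gradient part, set $\Phi:=\psi+\eta\in H^{s+1}(\widehat f)$ with $\|\Phi\|_{H^{s+1}(\widehat f)}\lesssim\|\mathbf{w}\|_{\mathbf{H}^s(\widehat f)}$. Integrating by parts, $(\mathbf{E},\nabla\Phi)_{L^2(\widehat f)}=-(\operatorname{curl}\mathbf{E},\ast)$... no: rather, since $\mathbf E$ is only in $\mathbf H^{1/2}(\widehat f,\operatorname{curl})$ I keep it as is and use the orthogonality \eqref{eq:lemma:picurl-negative-I-orth-b} together with \eqref{eq:lemma:picurl-negative-I-orth-c}, \eqref{eq:lemma:picurl-negative-I-orth-d}. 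Concretely, I would approximate $\Phi$ by a polynomial $\Phi_p\in W_{p+1}(\widehat f)$ that (i) interpolates $\Phi$ at the vertices, (ii) whose edge restrictions are the one-dimensional projection-based interpolants of $\Phi|_e$ (so that $(\Phi-\Phi_p)|_e\in\mathring{W}_{p+1}(e)^{\perp}$ in the $\nabla_e$-inner product), extended into $\widehat f$ via the polynomial lifting of \cite{babuska-craig-mandel-pitkaranta91}; then $\Phi-\Phi_p$ can be further reduced modulo $\mathring{W}_{p+1}(\widehat f)$ by the 2d projection-based interpolant. Writing $(\mathbf E,\nabla\Phi)_{L^2(\widehat f)}=(\mathbf E,\nabla(\Phi-\Phi_p))_{L^2(\widehat f)}$ (the $\Phi_p$ piece being killed, after a further subtraction of its $\mathring W_{p+1}$-part via \eqref{eq:lemma:picurl-negative-I-orth-b}) leaves a boundary contribution from integration by parts that is handled edge-by-edge using the one-dimensional estimate \eqref{eq:lemma:demkowicz-grad-1D-20-c} of Lemma~\ref{lemma:demkowicz-grad-1D} and the interior part using Lemma~\ref{lemma:Pgrad2d}/Lemma~\ref{lemma:Pgrad1d}; each contributes $p^{-s}\|\mathbf E\|_{\mathbf H(\widehat f,\operatorname{curl})}\|\mathbf w\|_{\mathbf H^s(\widehat f)}$. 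This mirrors exactly the structure of the $\hatPigradcomtwod$ duality argument in Theorem~\ref{lemma:demkowicz-grad-2D}, steps 2 and 4.

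For the genuinely vectorial remainder $\mathbf g\in\mathbf H^{s+1}(\widehat f)$ I use $\operatorname{curl}$-orthogonality: by Lemma~\ref{lemma:Pgrad1d} there is $\mathbf g_p\in\mathbf Q_p(\widehat f)$ (componentwise polynomial approximation, then projected into $\mathbf Q_p$) with $\|\mathbf g-\mathbf g_p\|_{\mathbf H(\widehat f,\operatorname{curl})}\lesssim p^{-(s+1)}\|\mathbf g\|_{\mathbf H^{s+1}(\widehat f)}$; subtracting from $\mathbf g_p$ a suitable gradient $\nabla v_p$ with $v_p\in W_{p+1}(\widehat f)$ and then $\mathring{\mathbf Q}_p(\widehat f)$-adjustments, together with the discrete Friedrichs inequality Lemma~\ref{lemma:discrete-friedrichs}(i), lets me bound $(\mathbf E,\mathbf g)_{L^2(\widehat f)}$ by $\|\mathbf E\|_{L^2(\widehat f)}\|\mathbf g-\mathbf g_p-\nabla v_p\|_{L^2(\widehat f)}+(\text{gradient term, handled as above})$, where the first factor of the product is $\lesssim p^{-(s+1)}\|\mathbf g\|_{\mathbf H^{s+1}(\widehat f)}$ after invoking \eqref{eq:lemma:picurl-negative-I-orth-a} to replace $\operatorname{curl}\mathbf E$ by its orthogonality. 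Collecting the three contributions and taking the supremum over $\mathbf w$ yields the claim for integer-like $s$; the case of general $s\in[0,\widehat s)$ then follows by interpolation between $s=0$ (pure $L^2$ duality, no polynomial approximation needed beyond stability) and the endpoint near $\widehat s$, using that the Sobolev scales $\widetilde{\mathbf H}^{-s}$ and $\mathbf H^s$ are interpolation scales. The main obstacle is bookkeeping: correctly splitting off the gradient parts so that every orthogonality condition in \eqref{eq:lemma:picurl-negative-I-orth} gets used exactly where needed, and ensuring the polynomial approximants of the potentials land in the right boundary-compatible subspaces ($\mathring{W}_{p+1}(e)$, $\mathring{W}_{p+1}(\widehat f)$, $\mathring{\mathbf Q}_p(\widehat f)$) so that the boundary terms produced by repeated integration by parts are exactly the ones killed by \eqref{eq:lemma:picurl-negative-I-orth-c}--\eqref{eq:lemma:picurl-negative-I-orth-d}; the analytic estimates themselves are all already available from Lemmas~\ref{lemma:demkowicz-grad-1D}, \ref{lemma:Pgrad1d}, \ref{lemma:Pgrad2d}, \ref{lemma:mcintosh-2d}, \ref{lemma:helmholtz-like-decomp-2d}, \ref{lemma:discrete-friedrichs} and Theorem~\ref{lemma:demkowicz-grad-2D}.
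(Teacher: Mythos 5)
There is a genuine gap, and it sits exactly where you flagged the ``main obstacle'': the bookkeeping of the gradient part cannot be made to work with the decomposition you chose. Lemma~\ref{lemma:helmholtz-like-decomp-2d} (the regularized Poincar\'e maps) produces a potential $\Phi=\psi+\eta\in H^{s+1}(\widehat f)$ with \emph{no boundary condition}. The only volume orthogonality you have against gradients is \eqref{eq:lemma:picurl-negative-I-orth-b}, which tests only $\nabla\mathring{W}_{p+1}(\widehat f)$. Since $\Phi\notin H^1_0(\widehat f)$, the quantity $\inf_{w\in\mathring{W}_{p+1}(\widehat f)}\|\nabla(\Phi-w)\|_{L^2(\widehat f)}$ does not decay in $p$ at all (the nonzero trace of $\Phi$ is an obstruction), and your boundary-fitted interpolant $\Phi_p\in W_{p+1}(\widehat f)\setminus\mathring{W}_{p+1}(\widehat f)$ leaves a term $({\mathbf E},\nabla\Phi_p)_{L^2(\widehat f)}$ that is \emph{not} killed by any of the hypotheses; converting it into boundary integrals would require integrating by parts against $\operatorname{div}{\mathbf E}$, which is not controlled. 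The paper's proof avoids this by using Lemma~\ref{lemma:helmholtz-like-decomp-2d-v2} instead: ${\mathbf v}=\nabla\varphi+\operatorname{\mathbf{curl}}z$ with $\varphi\in H^{s+1}(\widehat f)\cap H^1_0(\widehat f)$ obtained from a Dirichlet problem, so that \eqref{eq:lemma:picurl-negative-I-orth-b} applies directly with the correct rate. This is precisely where the shift theorem (Lemma~\ref{lemma:shift-theorem}) and hence the restriction $s<\widehat s$ enter; the fact that your argument never uses the maximal angle of $\widehat f$ is a warning sign that the decomposition is the wrong one.

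The second gap is in the vectorial remainder. You bound $({\mathbf E},{\mathbf g})_{L^2(\widehat f)}$ by subtracting ${\mathbf g}_p+\nabla v_p\in{\mathbf Q}_p(\widehat f)$ and invoking \eqref{eq:lemma:picurl-negative-I-orth-a} plus the discrete Friedrichs inequality, but none of the hypotheses give $({\mathbf E},{\mathbf q})_{L^2(\widehat f)}=0$ for ${\mathbf q}\in{\mathbf Q}_p(\widehat f)$ or even $\mathring{\mathbf Q}_p(\widehat f)$: \eqref{eq:lemma:picurl-negative-I-orth-a} is an orthogonality of $\operatorname{curl}{\mathbf E}$, not of ${\mathbf E}$, so the subtracted polynomial piece survives and the claimed $p^{-s}$ is lost. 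To exploit \eqref{eq:lemma:picurl-negative-I-orth-a} you must first move a derivative onto ${\mathbf E}$: write the divergence-free part of the test field as $\operatorname{\mathbf{curl}}z$ with a \emph{scalar} potential $z\in H^{s+1}(\widehat f)$ of zero mean, integrate by parts via \eqref{eq:2d-stokes} to obtain $(\operatorname{curl}{\mathbf E},z)_{L^2(\widehat f)}-\int_{\partial\widehat f}z\,{\mathbf E}\cdot{\mathbf t}$, dispose of the boundary term with \eqref{eq:lemma:picurl-negative-I-orth-c}--\eqref{eq:lemma:picurl-negative-I-orth-d} and the trace bound $\|{\mathbf E}\cdot{\mathbf t}\|_{H^{-1/2}(\partial\widehat f)}\lesssim\|{\mathbf E}\|_{{\mathbf H}(\widehat f,\operatorname{curl})}$, and then realize $z=\operatorname{curl}{\mathbf z}$ with ${\mathbf z}\cdot{\mathbf t}=0$ (a further Neumann problem) so that \eqref{eq:lemma:picurl-negative-I-orth-a} becomes applicable after correcting $P^{\operatorname{curl},2d}{\mathbf z}$ by the lifting $\boldsymbol{\mathcal L}^{\operatorname{curl},2d}$ to land in $\mathring{\mathbf Q}_p(\widehat f)$. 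These are the missing ideas; the edge terms and the interpolation between $s=0$ and $s\ge 1$ in your sketch are fine.
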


\begin{proof}
%
Before proceeding with the proof, we point out that Lemma~\ref{lemma:traces-2d} shows that 
${\mathbf E} \cdot {\mathbf t}_e \in L^2(e)$ so that the 
conditions \eqref{eq:lemma:picurl-negative-I-orth-c}, \eqref{eq:lemma:picurl-negative-I-orth-d} 
 are meaningful. 

\emph{1st~step:} 
We may restrict to the case $s \ge 1$ as the case $s = 0$ is trivial and the 
remaining cases $s \in (0,1)$ follow then by an interpolation argument.

\emph{2nd~step:} By Lemma~\ref{lemma:helmholtz-like-decomp-2d-v2}, 
any ${\mathbf{v}}\in{\mathbf{H}}^{s}(\widehat{f})$ can be decomposed as
\begin{equation}
{\mathbf{v}}=\nabla\varphi+\operatorname{\mathbf{curl}}z,
\label{eq:lemma:picurl-negative-I-10}%
\end{equation}
with $\varphi \in H^{s+1}(\widehat{f}) \cap H^1_0(\widehat{f})$ and 
$z\in H^{s+1}(\widehat{f})$ with $(z,1)_{L^2(\widehat f)} = 0$ 
as well as $\|\varphi\|_{H^{s+1}(\widehat{f})} + 
\|z\|_{H^{s+1}(\widehat{f})} \lesssim \|{\mathbf v}\|_{{\mathbf H}^s(\widehat f)}$. 
Integration by parts (cf.~\eqref{eq:2d-stokes}) yields 
\begin{align}
\label{eq:lemma:picurl-negative-I-100}
({\mathbf{E}},{\mathbf{v}})_{L^{2}(\widehat{f})}&=
({\mathbf{E}},\nabla \varphi)_{L^{2}(\widehat{f})}+({\mathbf{E}},\operatorname{\mathbf{curl}}z)_{L^{2}(\widehat{f})} \\
\nonumber 
 &= 
({\mathbf{E}},\nabla \varphi)_{L^{2}(\widehat{f})}
+ (\operatorname{curl}{\mathbf{E}},z)_{L^{2}(\widehat{f})}-\int_{\partial \widehat{f}}z{\mathbf{E}%
}\cdot{\mathbf{t}}. 
\end{align}
We estimate each of the three terms separately.

\emph{3rd~step:} Using the orthogonalities satisfied by ${\mathbf{E}}$ and
$\varphi\in H_{0}^{1}(\widehat{f})\cap H^{s+1}(\widehat{f})$ we obtain
for the first term in (\ref{eq:lemma:picurl-negative-I-100})
\begin{align*}
\bigl| 
({\mathbf{E}},\nabla\varphi)_{L^{2}(\widehat{f})}\bigr|
&=
\bigl| \inf_{w\in\mathring{W}_{p+1}%
(\widehat{f})}({\mathbf{E}},\nabla(\varphi-w))_{L^{2}(\widehat{f})} \bigr|
\lesssim p^{-s}
\|\varphi\|_{H^{s+1}(\widehat f)} 
\Vert{\mathbf{E}}\Vert
_{L^{2}(\widehat{f})}\\
& \lesssim p^{-s}\Vert
{\mathbf{v}}\Vert_{\mathbf{H}^s(\widehat{f})}\Vert{\mathbf{E}}\Vert
_{\mathbf{H}(\widehat{f},\operatorname*{curl})}.
\end{align*}

\emph{4th~step:} The term $(z,{\mathbf{E}}\cdot{\mathbf{t}})_{L^{2}(\partial
\widehat{f})}$ in (\ref{eq:lemma:picurl-negative-I-100}) 
can be treated using the orthogonalities satisfied by ${\mathbf{E}}$:
Using that $z\in H^{s+1}(\widehat{f})$ so that $z\in C(\partial \widehat{f})$ and $z\in H^{s+1/2}(e)$
for each edge $e\in{\mathcal{E}}(\widehat{f})$ and the orthogonality properties
(\ref{eq:lemma:picurl-negative-I-orth-c}), 
(\ref{eq:lemma:picurl-negative-I-orth-d}), we get
\begin{align*}
\left\vert \int_{\partial \widehat{f}}{\mathbf{E}}\cdot{\mathbf{t}}z\right\vert  &
\!=\!\!\inf_{w\in W_{p}(\widehat{f})}\!\left\vert \int_{\partial\widehat{f}}{\mathbf{E}}%
\cdot{\mathbf{t}}(z-w)\right\vert \lesssim\Vert{\mathbf{E}}\cdot{\mathbf{t}%
}\Vert_{H^{-1/2}(\partial \widehat{f})}\!\inf_{w\in W_{p}(\widehat{f})}\!\Vert z-w\Vert
_{H^{1/2}(\partial \widehat{f})}\\
&  \lesssim p^{-s}\Vert{\mathbf{E}}\cdot{\mathbf{t}}\Vert_{H^{-1/2}(\partial
\widehat{f})}\Vert z\Vert_{H^{s+1}(\widehat{f})}\lesssim p^{-s}\Vert{\mathbf{E}}\Vert
_{\mathbf{H}(\widehat{f},\operatorname{curl})}\Vert \mathbf{v}\Vert_{\mathbf{H}^{s}(\widehat{f})},
\end{align*}
where, in the final step, we used the continuity of the tangential trace map, 
i.e., 
$\Vert{\mathbf{E}}\cdot{\mathbf{t}}\Vert_{H^{-1/2}(\partial \widehat{f})}\lesssim
\Vert{\mathbf{E}}\Vert_{\mathbf{H}(\widehat{f},\operatorname{curl})}$ 
(cf., e.g.,  \cite[{eq. (154)}]{demkowicz08}). 

\emph{5th~step:} For the
second term in (\ref{eq:lemma:picurl-negative-I-100}), we introduce an auxiliary function ${\mathbf{z}}$ with the
following key properties:
\[
\operatorname{curl}{\mathbf{z}}=z,\quad{\mathbf{z}}\cdot{\mathbf{t}}=0. 
\]
Such a function can be obtained as ${\mathbf{z}}%
=\operatorname{\mathbf{curl}}\widetilde{z}$, where $\widetilde{z}$
solves the following Neumann problem (note that the solvability condition is satisfied 
since $\int_{\widehat{f}}z=0$)
\[
-\Delta\widetilde{z}=z\quad\mbox{ in }\widehat{f},\qquad\partial_{n}\widetilde{z}%
=0\quad\mbox{ on }\partial \widehat{f}.
\]
By Lemma~\ref{lemma:shift-theorem}, $\|\widetilde z\|_{H^{s+1}(\widehat f)} 
\lesssim \|z\|_{H^{s-1}(\widehat f)}$. Thus,  
$\|{\mathbf z}\|_{{\mathbf H}^s(\operatorname*{curl},\widehat f)} 
\lesssim \|z\|_{H^{s+1}(\widehat f)}$
and
\begin{align*}
(\operatorname{curl}{\mathbf{E}},z)_{L^{2}(\widehat{f})}  &  =(\operatorname{curl}%
{\mathbf{E}},\operatorname{curl}{\mathbf{z}})_{L^{2}(\widehat{f})}%
\overset{\text{(\ref{eq:lemma:picurl-negative-I-orth-a})}}{=}\inf
_{{\mathbf{w}}\in\mathring{\mathbf{Q}}_{p}(\widehat{f})}(\operatorname{curl}{\mathbf{E}%
},\operatorname{curl}({\mathbf{z}}-{\mathbf{w}}))_{L^{2}(\widehat{f})}.
\end{align*}
Now note that there exists a continuous, polynomial-preserving lifting
\begin{align*}
\boldsymbol{\mathcal{L}}^{\operatorname*{curl},2d}:H^{-1/2}(\partial \widehat{f}) \rightarrow {\mathbf{H}}(\widehat{f},\operatorname*{curl})
\end{align*}
that is in $p$ uniformly bounded, cf. \cite{ainsworth-demkowicz09} and \cite[eq.~(164)]{demkowicz08}. Hence, it follows
\begin{align*}
\boldsymbol{\mathcal{L}}^{\operatorname*{curl},2d}(P^{\operatorname*{curl},2d}{\mathbf{z}} \cdot \mathbf{t}) - P^{\operatorname*{curl},2d}{\mathbf{z}} \in \mathring{\mathbf{Q}}_p(\widehat{f}).
\end{align*}
Since
\begin{align*}
\|\boldsymbol{\mathcal{L}}^{\operatorname*{curl},2d}(P^{\operatorname*{curl},2d}{\mathbf{z}} \cdot \mathbf{t})\|_{\mathbf{H}(\widehat{f},\operatorname{curl})} &\lesssim \|P^{\operatorname*{curl},2d}{\mathbf{z}} \cdot \mathbf{t}\|_{H^{-1/2}(\partial\widehat{f})} = \|(\mathbf{z}-P^{\operatorname*{curl},2d}{\mathbf{z}}) \cdot \mathbf{t}\|_{H^{-1/2}(\partial\widehat{f})} \\
&\lesssim \|\mathbf{z} - P^{\operatorname*{curl},2d}{\mathbf{z}}\|_{\mathbf{H}(\widehat{f},\operatorname{curl})}
\end{align*}
by the properties of the lifting operator and a trace inequality, cf. \cite[eq. (154)]{demkowicz08}, we get with Lemma~\ref{lemma:Pcurl2d} and Lemma~\ref{lemma:helmholtz-like-decomp-2d-v2}
\begin{align}
\label{eq:lemma:picurl-negative-I-IMPORTANT}
&(\operatorname{curl}{\mathbf{E}},z)_{L^{2}(\widehat{f})} \leq \|\operatorname{curl}\mathbf{E}\|_{L^2(\widehat{f})} \|\mathbf{z}-(P^{\operatorname*{curl},2d}{\mathbf{z}} - \boldsymbol{\mathcal{L}}^{\operatorname*{curl},2d}(P^{\operatorname*{curl},2d}{\mathbf{z}} \cdot \mathbf{t}))\|_{\mathbf{H}(\widehat{f},\operatorname{curl})} \\
&\qquad \lesssim \|\mathbf{E}\|_{\mathbf{H}(\widehat{f},\operatorname{curl})} \left(\|\mathbf{z}-P^{\operatorname*{curl},2d}{\mathbf{z}}\|_{\mathbf{H}(\widehat{f},\operatorname{curl})} + \|\boldsymbol{\mathcal{L}}^{\operatorname*{curl},2d}(P^{\operatorname*{curl},2d}{\mathbf{z}} \cdot \mathbf{t})\|_{\mathbf{H}(\widehat{f},\operatorname{curl})}\right) \\
&\qquad \stackrel{\text{Lem.~\ref{lemma:Pcurl2d}}}{\lesssim} \|\mathbf{E}\|_{\mathbf{H}(\widehat{f},\operatorname{curl})} p^{-s} \Vert{\mathbf{z}}\Vert_{\mathbf{H}^{s}(\widehat{f},\operatorname{curl})} \\
&\qquad \stackrel{\text{Lem.~\ref{lemma:helmholtz-like-decomp-2d-v2}}}{\lesssim} p^{-s} \Vert\mathbf{E}\Vert_{\mathbf{H}(\widehat{f},\operatorname{curl})}\Vert{\mathbf{v}} \Vert_{{\mathbf{H}}^{s}(\widehat{f})}.
\end{align}
\end{proof}

\begin{lemma}
\label{lemma:picurl-negative-II} 
Let ${\mathbf{E}}\in\mathbf{H}^{1/2}(\widehat{f},\operatorname*{curl})$ satisfy (\ref{eq:lemma:picurl-negative-I-orth-a}%
), (\ref{eq:lemma:picurl-negative-I-orth-d}). Then, for $s\in [0,\widehat{s})$, there holds 
$\displaystyle 
\Vert\operatorname{curl}{\mathbf{E}}\Vert_{\widetilde{H}^{-s}(\widehat{f})}%
\leq C_s p^{-s}\Vert\operatorname{curl}{\mathbf{E}%
}\Vert_{L^{2}(\widehat{f})}. 
$

\end{lemma}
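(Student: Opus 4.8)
\emph{Proof sketch.} The plan is to reuse the duality machinery developed in the proof of Lemma~\ref{lemma:picurl-negative-I}. Since here we test the \emph{scalar} quantity $\operatorname{curl}{\mathbf E}$ against scalar functions, only the $\operatorname{\mathbf{curl}}$-part of a Helmholtz-type representation of the dual variable is needed, and the orthogonalities \eqref{eq:lemma:picurl-negative-I-orth-b}, \eqref{eq:lemma:picurl-negative-I-orth-c} play no role. First one disposes of the endpoints: for $s=0$ the estimate is the identity $\|\cdot\|_{\widetilde H^0(\widehat f)}=\|\cdot\|_{L^2(\widehat f)}$, and for $s\in(0,1)$ it follows from the cases $s=0$ and $s=1$ by interpolation of the scale $\widetilde H^{-s}(\widehat f)$; hence it suffices to treat $s\in[1,\widehat s)$.

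Fix $s\in[1,\widehat s)$, write $w:=\operatorname{curl}{\mathbf E}\in L^2(\widehat f)$, and recall that, by the Stokes formula \eqref{eq:2d-stokes} applied with test function $1$ and by \eqref{eq:lemma:picurl-negative-I-orth-d}, $\int_{\widehat f}w=\int_{\partial\widehat f}{\mathbf E}\cdot{\mathbf t}=0$. Consequently $(w,v)_{L^2(\widehat f)}=(w,v-\bar v)_{L^2(\widehat f)}$ for every $v\in H^s(\widehat f)$, where $\bar v:=\frac{1}{|\widehat f|}\int_{\widehat f}v$. For such a $v$ I would solve the homogeneous Neumann problem $-\Delta\widetilde z=v-\bar v$ in $\widehat f$, $\partial_n\widetilde z=0$ on $\partial\widehat f$ (the compatibility condition holds because $v-\bar v$ has zero mean), and set ${\mathbf z}:=\operatorname{\mathbf{curl}}\widetilde z$. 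Then $\operatorname{curl}{\mathbf z}=-\Delta\widetilde z=v-\bar v$, while ${\mathbf z}\cdot{\mathbf t}=0$ on $\partial\widehat f$ (it equals $\pm\partial_n\widetilde z$); moreover Lemma~\ref{lemma:shift-theorem}(ii), applied with right-hand side $v-\bar v\in H^{s-1}(\widehat f)$ and $g=0$ (note $s-1\in[0,\widehat s-1)$), gives $\widetilde z\in H^{s+1}(\widehat f)$ with $\|\widetilde z\|_{H^{s+1}(\widehat f)}\lesssim\|v-\bar v\|_{H^{s-1}(\widehat f)}\lesssim\|v\|_{H^s(\widehat f)}$, and therefore $\|{\mathbf z}\|_{{\mathbf H}^s(\widehat f,\operatorname{curl})}\lesssim\|v\|_{H^s(\widehat f)}$.

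Now $(w,v)_{L^2(\widehat f)}=(\operatorname{curl}{\mathbf E},\operatorname{curl}{\mathbf z})_{L^2(\widehat f)}$, and by \eqref{eq:lemma:picurl-negative-I-orth-a} this equals $(\operatorname{curl}{\mathbf E},\operatorname{curl}({\mathbf z}-{\mathbf w}))_{L^2(\widehat f)}$ for every ${\mathbf w}\in\mathring{\mathbf Q}_p(\widehat f)$. Exactly as in the fifth step of the proof of Lemma~\ref{lemma:picurl-negative-I}, one takes ${\mathbf w}:=P^{\operatorname*{curl},2d}{\mathbf z}-\boldsymbol{\mathcal L}^{\operatorname*{curl},2d}(P^{\operatorname*{curl},2d}{\mathbf z}\cdot{\mathbf t})\in\mathring{\mathbf Q}_p(\widehat f)$; using ${\mathbf z}\cdot{\mathbf t}=0$ to write $P^{\operatorname*{curl},2d}{\mathbf z}\cdot{\mathbf t}=-({\mathbf z}-P^{\operatorname*{curl},2d}{\mathbf z})\cdot{\mathbf t}$, together with the continuity of the tangential trace into $H^{-1/2}(\partial\widehat f)$, the $p$-uniform boundedness of $\boldsymbol{\mathcal L}^{\operatorname*{curl},2d}$, and Lemma~\ref{lemma:Pcurl2d}, one obtains $\|{\mathbf z}-{\mathbf w}\|_{{\mathbf H}(\widehat f,\operatorname{curl})}\lesssim p^{-s}\|{\mathbf z}\|_{{\mathbf H}^s(\widehat f,\operatorname{curl})}\lesssim p^{-s}\|v\|_{H^s(\widehat f)}$. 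Hence $|(w,v)_{L^2(\widehat f)}|\le\|\operatorname{curl}{\mathbf E}\|_{L^2(\widehat f)}\,\|\operatorname{curl}({\mathbf z}-{\mathbf w})\|_{L^2(\widehat f)}\lesssim p^{-s}\|\operatorname{curl}{\mathbf E}\|_{L^2(\widehat f)}\|v\|_{H^s(\widehat f)}$, and dividing by $\|v\|_{H^s(\widehat f)}$ and taking the supremum over $v\in H^s(\widehat f)$ yields the claim for $s\in[1,\widehat s)$.

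The only step that requires real care is the construction of a discrete field ${\mathbf w}\in\mathring{\mathbf Q}_p(\widehat f)$ that approximates ${\mathbf z}$ to order $p^{-s}$ in $\|\cdot\|_{{\mathbf H}(\widehat f,\operatorname{curl})}$ while having vanishing tangential trace; this is exactly where the property ${\mathbf z}\cdot{\mathbf t}=0$ --- built in by defining ${\mathbf z}$ as the $\operatorname{\mathbf{curl}}$ of a homogeneous-Neumann solution --- is used, so that the boundary correction $\boldsymbol{\mathcal L}^{\operatorname*{curl},2d}(P^{\operatorname*{curl},2d}{\mathbf z}\cdot{\mathbf t})$ is itself of order $p^{-s}$. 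All remaining ingredients (the shift theorem, the mapping properties of $\operatorname{\mathbf{curl}}$, and the interpolation argument) are routine.
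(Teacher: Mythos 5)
Your proposal is correct and follows essentially the same route as the paper: reduce to $s\in[1,\widehat s)$ by triviality at $s=0$ and interpolation, use \eqref{eq:lemma:picurl-negative-I-orth-d} to pass to mean-zero test functions, take ${\mathbf z}=\operatorname{\mathbf{curl}}\widetilde z$ with $\widetilde z$ a homogeneous Neumann solution so that ${\mathbf z}\cdot{\mathbf t}=0$, and exploit \eqref{eq:lemma:picurl-negative-I-orth-a} together with $P^{\operatorname*{curl},2d}$. Your explicit boundary correction via $\boldsymbol{\mathcal L}^{\operatorname*{curl},2d}(P^{\operatorname*{curl},2d}{\mathbf z}\cdot{\mathbf t})$ to land in $\mathring{\mathbf Q}_p(\widehat f)$ is exactly the step the paper compresses by citing Lemma~\ref{lemma:Pcurl2d} (it is spelled out in the fifth step of the proof of Lemma~\ref{lemma:picurl-negative-I}), so you have merely made the argument more self-contained.
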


\begin{proof} 

As in the proof of Lemma~\ref{lemma:picurl-negative-I}, we restrict to $s \ge 1$ since again
the case $s = 0$ is trivial and we argue by interpolation for $s \in (0,1)$.  
Let $v\in H^{s}(\widehat{f})$ and $\overline{v}:=(\int_{\widehat{f}}v)/|\widehat{f}|\in{\mathbb{R}}$ be its
average. Integration by parts yields
\[
(\operatorname{curl}{\mathbf{E}},v)_{L^{2}(\widehat{f})}=(\operatorname{curl}%
{\mathbf{E}},v-\overline{v})_{L^{2}(\widehat{f})}+\overline{v}({\mathbf{E}}%
\cdot{\mathbf{t}},1)_{L^{2}(\partial \widehat{f})}%
\overset{(\ref{eq:lemma:picurl-negative-I-orth-d})}{=}(\operatorname{curl}%
{\mathbf{E}},v-\overline{v})_{L^{2}(\widehat{f})}.
\]
Next, we define the auxiliary function $\varphi\in H^{1}(\widehat{f})$ as
the solution of 
\[
-\Delta\varphi=v-\overline{v}\quad\mbox{ in }\widehat{f},\qquad\partial_{n}%
\varphi=0\quad\mbox{ on }\partial \widehat{f},
\] 
and set ${\mathbf{v}}:=\operatorname{\mathbf{curl}}\varphi$. 
We note that Lemma~\ref{lemma:shift-theorem} implies 
$\|\varphi\|_{H^{s+1}(\widehat{f})} \lesssim \|v-\overline{v}\|_{H^{s-1}(\widehat f)}
\lesssim \|v\|_{H^s(\widehat f)}$.
We observe
$\operatorname{curl}{\mathbf{v}}=-\Delta
\varphi=v-\overline{v}$ in $\widehat{f}$ and ${\mathbf{t}}\cdot{\mathbf{v}}%
=-\partial_{n}\varphi=0$ on $\partial \widehat{f}$ so that integration by parts gives
\begin{align*}
(\operatorname{curl}{\mathbf{E}},v-\overline{v})_{L^{2}(\widehat{f})}  &
=(\operatorname{curl}{\mathbf{E}},\operatorname{curl}{\mathbf{v}})_{L^{2}%
(\widehat{f})}\overset{(\ref{eq:lemma:picurl-negative-I-orth-a})}{=}\inf_{{\mathbf{w}%
}\in\mathring{\mathbf{Q}}_{p}(\widehat{f})}(\operatorname{curl}{\mathbf{E}}%
,\operatorname{curl}({\mathbf{v}}-{\mathbf{w}}))_{L^{2}(\widehat{f})}\\
&  
\!\!\!
\!\!\!
\!\!\!
\stackrel{\text{Lem.~\ref{lemma:Pcurl2d}}}{\lesssim} 
\!\!\!\!\!
p^{-s}\Vert\operatorname{curl}{\mathbf{E}}\Vert_{L^{2}(\widehat{f})}%
\Vert{\mathbf{v}}\Vert_{\mathbf{H}^{s}(\widehat{f},\operatorname{curl})}\lesssim p^{-s}%
\Vert\operatorname{curl}{\mathbf{E}}\Vert_{L^{2}(\widehat{f})}\Vert v\Vert_{H^{s}(\widehat{f})}.
\qedhere
\end{align*}
\end{proof}

After the next lemma about approximation on edges $e\in\mathcal{E}(\widehat{f})$, we can prove the stability results in 2D stated in Theorem~\ref{thm:projection-based-interpolation-2d}.

\begin{lemma}
\label{lemma:Picurl-edge} For each edge $e\in{\mathcal{E}}(\widehat{f})$ we
have for ${\mathbf{u}}\in{\mathbf{H}}^{1/2}(\widehat{f},\operatorname*{curl})$  and $s \ge 0$ 
\begin{equation}
\Vert({\mathbf{u}}-\hatPicurlcomtwod{\mathbf{u}%
})\cdot{\mathbf{t}}_{e}\Vert_{\widetilde{H}^{-s}(e)}\leq C_{s}  p^{-s}%
\inf_{v \in {\mathcal P}_p(e)} \Vert{\mathbf{u}}\cdot{\mathbf{t}}_{e} - v\Vert_{L^{2}(e)}. 
\label{eq:lemma:Picurl-edge-20}%
\end{equation}
\end{lemma}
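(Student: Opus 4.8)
The plan is to reduce the statement to the one-dimensional projection estimate of Lemma~\ref{lemma:demkowicz-grad-1D} together with the defining equations of $\hatPicurlcomtwod$ on the edge. First I would observe that by the definition of $\hatPicurlcomtwod$, the conditions \eqref{eq:Pi_curl-2d-d} and \eqref{eq:Pi_curl-2d-c} say precisely that, on each edge $e$, the tangential component $(\hatPicurlcomtwod{\mathbf u})\cdot{\mathbf t}_e \in {\mathcal P}_p(e)$ is determined by the $L^2(e)$-orthogonality of ${\mathbf u}\cdot{\mathbf t}_e - (\hatPicurlcomtwod{\mathbf u})\cdot{\mathbf t}_e$ against $1$ and against $\nabla_e v$ for all $v \in \mathring W_{p+1}(e)$; since $\{1\}\oplus\nabla_e\mathring W_{p+1}(e)$ spans all of ${\mathcal P}_p(e)$ (as $\nabla_e: \mathring W_{p+1}(e)\to \mathring Q_p(e)$ is the isomorphism from \eqref{eq:commuting-diagram-bc-2d}), it follows that $(\hatPicurlcomtwod{\mathbf u})\cdot{\mathbf t}_e$ is simply the $L^2(e)$-orthogonal projection of ${\mathbf u}\cdot{\mathbf t}_e$ onto ${\mathcal P}_p(e)$. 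Denote this $L^2$-projection by $\Pi^{L^2,1d}_p$. Note ${\mathbf u}\cdot{\mathbf t}_e \in L^2(e)$ is guaranteed by Lemma~\ref{lemma:traces-2d}, so the right-hand side of \eqref{eq:lemma:Picurl-edge-20} is finite and the statement is meaningful.

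Next I would prove the required bound for the one-dimensional $L^2$-projection: for $w\in L^2(e)$ and $s\ge 0$,
\begin{equation*}
\|w - \Pi^{L^2,1d}_p w\|_{\widetilde H^{-s}(e)} \le C_s\, p^{-s}\, \inf_{v\in{\mathcal P}_p(e)}\|w-v\|_{L^2(e)}.
\end{equation*}
The case $s=0$ is just the best-approximation property of $\Pi^{L^2,1d}_p$ in $L^2(e)$. For $s\ge 1$ I would argue by duality exactly as in the proof of Lemma~\ref{lemma:demkowicz-grad-1D}: given $\psi\in H^s(e)$, write $(w-\Pi^{L^2,1d}_p w,\psi)_{L^2(e)} = (w-\Pi^{L^2,1d}_p w,\psi-\pi)_{L^2(e)}$ for any $\pi\in{\mathcal P}_p(e)$ by Galerkin orthogonality, bound by $\|w-\Pi^{L^2,1d}_p w\|_{L^2(e)}\,\inf_{\pi\in{\mathcal P}_p(e)}\|\psi-\pi\|_{L^2(e)}$, and invoke the unconstrained approximation result Lemma~\ref{lemma:Pgrad1d} (with $d=1$) to get $\inf_{\pi}\|\psi-\pi\|_{L^2(e)}\lesssim p^{-s}\|\psi\|_{H^s(e)}$. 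Combining with the $s=0$ bound for the first factor yields the claim for $s\ge 1$; the intermediate range $s\in(0,1)$ follows by interpolation, using $\widetilde H^0(e) = L^2(e) = H^0(e)$ and the fact that $(\widetilde H^{-s_0}(e),\widetilde H^{-s_1}(e))$ interpolates to $\widetilde H^{-s}(e)$.

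Finally I would combine the two observations: applying the $L^2$-projection estimate to $w = {\mathbf u}\cdot{\mathbf t}_e$ and using that $(\hatPicurlcomtwod{\mathbf u})\cdot{\mathbf t}_e = \Pi^{L^2,1d}_p({\mathbf u}\cdot{\mathbf t}_e)$ gives \eqref{eq:lemma:Picurl-edge-20} directly. I do not expect a genuine obstacle here; the only point requiring a little care is the identification of $(\hatPicurlcomtwod{\mathbf u})\cdot{\mathbf t}_e$ with the $L^2(e)$-projection, which rests on the exactness of the edge sequence in \eqref{eq:commuting-diagram-bc-2d} (so that testing against constants plus tangential gradients of $\mathring W_{p+1}(e)$ is equivalent to testing against all of ${\mathcal P}_p(e)$), and the invocation of Lemma~\ref{lemma:Pgrad1d} rather than a constrained approximation result, which is legitimate since no boundary constraints are imposed on the dual function $\psi$.
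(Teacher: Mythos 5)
Your proposal is correct and follows essentially the same route as the paper: both arguments identify the tangential trace of $\hatPicurlcomtwod{\mathbf u}$ on $e$ as the $L^2(e)$-projection onto ${\mathcal P}_p(e)$ (the paper via the explicit splitting $w=\overline w+\bigl(\int_0^x w(t)\,dt-x\overline w\bigr)'$ with the second part in $\mathring W_{p+1}(e)$, you via exactness of the edge sequence), and then conclude by a duality argument in which the dual test function is approximated by polynomials. The only cosmetic difference is that your duality step already works for every $s>0$, so the restriction to $s\ge 1$ followed by interpolation for $s\in(0,1)$ is unnecessary, though harmless.
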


\begin{proof}
We point out that Lemma~\ref{lemma:traces-2d} shows that ${\mathbf u} \cdot {\mathbf t}_e \in L^2(e)$. 

We recall that on edges, the operator $\hatPicurlcomtwod$ is simply the $L^{2}$-projection. Thus, (\ref{eq:lemma:Picurl-edge-20})
holds for $s=0$. For $s>0$, (\ref{eq:lemma:Picurl-edge-20}) is shown by a
standard duality argument. 
Let $\widetilde{e}:=\left(  {\mathbf{u}%
}-\hatPicurlcomtwod{\mathbf{u}}\right)
\cdot{\mathbf{t}}_{e}$ be the error and $v\in H^{s}(e)$. 
Identifying $e$ with the interval $(0,L)$ of length 
$L = \operatorname{diam} e$, we observe that  
a function $w\in \mathcal{P}_p(e)$ 
can be decomposed into $w(x)=\overline{w}+\left(\int_0^x w(t)\,dt-x \overline{w}\right)^\prime$, 
where $\overline{w}$ denotes the average of $w$ on $e$. Note that 
$\int_0^x w(t)\,dt - x \overline{w} \in \mathring W_{p+1}(e)$. 
Hence, $(\widetilde{e},w)_{L^2(e)} = 0$ 
by \eqref{eq:Pi_curl-f} and \eqref{eq:Pi_curl-e}, and we obtain
\begin{align*}
(\widetilde{e},v)_{L^{2}(e)}&=
\inf_{w\in \mathcal{P}_p(e)} (\widetilde{e},v-w)_{L^{2}(e)} 
\leq \Vert\widetilde{e}\Vert_{L^2(e)} 
 \inf_{w\in{\mathcal{P}}_{p}(e)} \Vert v-w\Vert _{L^{2}(e)} \\
& \lesssim p^{-s} \Vert\widetilde{e}\Vert_{L^{2}(e)} \Vert v\Vert_{H^{s}(e)}.
\qedhere
\end{align*}
\end{proof}

\begin{lemma}
\label{lemma:Picurl-face}
For ${\mathbf{u}}\in{\mathbf{H}}^{1/2}(\widehat{f},\operatorname*{curl})$ there holds 
for $s \in [0,\widehat{s})$
\[
\Vert{\mathbf{u}}-\hatPicurlcomtwod{\mathbf{u}}\Vert_{\widetilde{\mathbf{H}}^{-s}(\widehat{f},\operatorname{curl})}
\leq C_s  p^{-(1/2+s)}%
\inf_{{\mathbf v} \in {\mathbf Q}_p(\widehat f)} 
\Vert{\mathbf{u} - \mathbf{v} }\Vert_{\mathbf{H}^{1/2}(\widehat{f},\operatorname*{curl})}. 
\]

\end{lemma}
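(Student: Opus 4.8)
The plan is to reduce everything to the duality lemmas just established, namely Lemma~\ref{lemma:picurl-negative-I} and Lemma~\ref{lemma:picurl-negative-II}, together with the edge estimate Lemma~\ref{lemma:Picurl-edge}. First I would use the projection property of $\hatPicurlcomtwod$ to reduce to the special case in which the infimum is realized by $\mathbf{v} = 0$: writing $\mathbf{u} - \hatPicurlcomtwod \mathbf{u} = (\mathbf{u}-\mathbf{v}) - \hatPicurlcomtwod(\mathbf{u}-\mathbf{v})$ for arbitrary $\mathbf{v}\in\mathbf{Q}_p(\widehat f)$, it suffices to bound $\|\mathbf{u}-\hatPicurlcomtwod\mathbf{u}\|_{\widetilde{\mathbf{H}}^{-s}(\widehat f,\operatorname{curl})}$ by $C_s p^{-(1/2+s)}\|\mathbf{u}\|_{\mathbf{H}^{1/2}(\widehat f,\operatorname{curl})}$. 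The norm on the left splits as $\|\mathbf{u}-\hatPicurlcomtwod\mathbf{u}\|_{\widetilde{\mathbf{H}}^{-s}(\widehat f)}^2 + \|\operatorname{curl}(\mathbf{u}-\hatPicurlcomtwod\mathbf{u})\|_{\widetilde H^{-s}(\widehat f)}^2$, so I would treat the two summands separately.

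For the $\operatorname{curl}$ part I would set $\mathbf{E}:=\mathbf{u}-\hatPicurlcomtwod\mathbf{u}$ and check that $\mathbf{E}$ satisfies the hypotheses (\ref{eq:lemma:picurl-negative-I-orth-a}), (\ref{eq:lemma:picurl-negative-I-orth-d}) required by Lemma~\ref{lemma:picurl-negative-II}: condition (\ref{eq:lemma:picurl-negative-I-orth-a}) is exactly the defining relation (\ref{eq:Pi_curl-2d-a}), and (\ref{eq:lemma:picurl-negative-I-orth-d}) is (\ref{eq:Pi_curl-2d-d}). Lemma~\ref{lemma:picurl-negative-II} then gives $\|\operatorname{curl}\mathbf{E}\|_{\widetilde H^{-s}(\widehat f)} \lesssim p^{-s}\|\operatorname{curl}\mathbf{E}\|_{L^2(\widehat f)}$, and it remains to estimate $\|\operatorname{curl}\mathbf{E}\|_{L^2(\widehat f)}$. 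Here I would use Lemma~\ref{lemma:Pcurl2d}: since $\hatPicurlcomtwod$ satisfies the same orthogonality relations (\ref{eq:Pi_curl-2d-a}), (\ref{eq:Pi_curl-2d-b}) that define $P^{\operatorname{curl},2d}$, one has $\hatPicurlcomtwod\mathbf{u} = P^{\operatorname{curl},2d}\mathbf{u}$ (the two are characterized by the same conditions once one accounts for the edge conditions, which fix the remaining degrees of freedom), or at least $P^{\operatorname{curl},2d}\mathbf{u}-\hatPicurlcomtwod\mathbf{u}$ lies in the kernel so that the $L^2$-norm of the curl of the error is controlled by Lemma~\ref{lemma:Pcurl2d} applied with $r = 1/2$, yielding $\|\operatorname{curl}\mathbf{E}\|_{L^2(\widehat f)} \lesssim p^{-1/2}\|\mathbf{u}\|_{\mathbf{H}^{1/2}(\widehat f,\operatorname{curl})}$. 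Combining gives the bound $p^{-(1/2+s)}$ for the curl term.

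For the $\widetilde{\mathbf{H}}^{-s}(\widehat f)$ part of $\mathbf{E}=\mathbf{u}-\hatPicurlcomtwod\mathbf{u}$ I would invoke Lemma~\ref{lemma:picurl-negative-I}, whose four orthogonality hypotheses (\ref{eq:lemma:picurl-negative-I-orth-a})--(\ref{eq:lemma:picurl-negative-I-orth-d}) are precisely the defining conditions (\ref{eq:Pi_curl-2d-a}), (\ref{eq:Pi_curl-2d-b}), (\ref{eq:Pi_curl-2d-c}), (\ref{eq:Pi_curl-2d-d}) of $\hatPicurlcomtwod$; this gives $\|\mathbf{E}\|_{\widetilde{\mathbf{H}}^{-s}(\widehat f)} \lesssim p^{-s}\|\mathbf{E}\|_{\mathbf{H}(\widehat f,\operatorname{curl})}$. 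It then remains to control $\|\mathbf{E}\|_{\mathbf{H}(\widehat f,\operatorname{curl})}$ in $L^2$-type norms by $p^{-1/2}\|\mathbf{u}\|_{\mathbf{H}^{1/2}(\widehat f,\operatorname{curl})}$. The $\operatorname{curl}$ component was just handled; for the $L^2(\widehat f)$ component of $\mathbf{E}$ itself I would again compare with $P^{\operatorname{curl},2d}\mathbf{u}$ via Lemma~\ref{lemma:Pcurl2d} (controlling $\|\mathbf{u}-P^{\operatorname{curl},2d}\mathbf{u}\|_{\mathbf{H}(\widehat f,\operatorname{curl})}\lesssim p^{-1/2}\|\mathbf{u}\|_{\mathbf{H}^{1/2}(\widehat f,\operatorname{curl})}$) and then bound the discrete remainder $P^{\operatorname{curl},2d}\mathbf{u}-\hatPicurlcomtwod\mathbf{u}$ using the discrete Friedrichs inequality Lemma~\ref{lemma:discrete-friedrichs} together with the edge estimate Lemma~\ref{lemma:Picurl-edge} applied with $s=0$ (which controls the tangential-trace mismatch on the edges in $L^2(e)$, hence in $H^{-1/2}(\partial\widehat f)$) and the polynomial-preserving lifting $\boldsymbol{\mathcal L}^{\operatorname{curl},2d}$ used in the proof of Lemma~\ref{lemma:picurl-negative-I}. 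Assembling the two parts and the reduction of the first paragraph yields the claimed estimate.

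I expect the main obstacle to be the careful bookkeeping in the second step of the third paragraph: namely, showing that the ``discrete part'' $P^{\operatorname{curl},2d}\mathbf{u}-\hatPicurlcomtwod\mathbf{u}$ is small in $\mathbf{H}(\widehat f,\operatorname{curl})$. The two discrete operators differ only in how they treat edge degrees of freedom, so one must write $P^{\operatorname{curl},2d}\mathbf{u}-\hatPicurlcomtwod\mathbf{u}$ as (something in $\mathring{\mathbf{Q}}_p(\widehat f)$) plus (a lifting of the edge-trace jump), estimate the edge jump via Lemma~\ref{lemma:Picurl-edge} with $s=0$, and then apply the discrete Friedrichs inequality Lemma~\ref{lemma:discrete-friedrichs}(\ref{item:lemma:discrete-friedrichs-i}) to the interior part after subtracting a discrete gradient. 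This is exactly the type of argument appearing in the proof of Lemma~\ref{lemma:picurl-negative-I}, step 5, so it should go through, but it requires matching up the orthogonality conditions of the two operators and handling the gradient part via condition (\ref{eq:Pi_curl-2d-b}) of $\hatPicurlcomtwod$.
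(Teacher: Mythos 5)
Your overall route is essentially the paper's: first prove the $\mathbf{H}(\widehat f,\operatorname{curl})$ estimate with rate $p^{-1/2}$ by comparing $\hatPicurlcomtwod{\mathbf u}$ with the best approximation $P^{\operatorname{curl},2d}{\mathbf u}$, controlling the discrete difference through the polynomial-preserving lifting $\boldsymbol{\mathcal L}^{\operatorname{curl},2d}$, the discrete Friedrichs inequality (Lemma~\ref{lemma:discrete-friedrichs}) and the edge estimates, and then gain the extra factor $p^{-s}$ by applying Lemmas~\ref{lemma:picurl-negative-I} and \ref{lemma:picurl-negative-II} to the error, whose hypotheses are exactly \eqref{eq:Pi_curl-2d-a}--\eqref{eq:Pi_curl-2d-d}. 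One slip is presentational: the parenthetical claim $\hatPicurlcomtwod{\mathbf u}=P^{\operatorname{curl},2d}{\mathbf u}$ is false, since the two operators are defined by orthogonality against different test spaces ($\mathring{\mathbf Q}_p(\widehat f)$, $\mathring W_{p+1}(\widehat f)$ plus edge conditions versus all of $\mathbf Q_p(\widehat f)$, $W_{p+1}(\widehat f)$); only the difference argument you sketch in your last paragraph is viable, and you effectively retract the claim there, so this does not derail the plan.

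The genuine gap is the use of Lemma~\ref{lemma:Picurl-edge} with $s=0$ to control the tangential mismatch on $\partial\widehat f$. With $s=0$ that lemma only gives $\|({\mathbf u}-\hatPicurlcomtwod{\mathbf u})\cdot{\mathbf t}_e\|_{L^2(e)}\le C\inf_{v\in{\mathcal P}_p(e)}\|{\mathbf u}\cdot{\mathbf t}_e-v\|_{L^2(e)}$, which carries no power of $p$: for ${\mathbf u}\in{\mathbf H}^{1/2}(\widehat f,\operatorname{curl})$ the trace ${\mathbf u}\cdot{\mathbf t}_e$ is in general only in $L^2(e)$ (Lemma~\ref{lemma:traces-2d}), so the best-approximation error on the right does not decay as $p\to\infty$. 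Consequently your bound for $\|({\mathbf u}-\hatPicurlcomtwod{\mathbf u})\cdot{\mathbf t}\|_{H^{-1/2}(\partial\widehat f)}$, and hence for $P^{\operatorname{curl},2d}{\mathbf u}-\hatPicurlcomtwod{\mathbf u}$ in ${\mathbf H}(\widehat f,\operatorname{curl})$ via the lifting and Friedrichs argument, would only be $O(1)\,\|{\mathbf u}\|_{{\mathbf H}^{1/2}(\widehat f,\operatorname{curl})}$, and the final rate would degrade from $p^{-(1/2+s)}$ to $p^{-s}$. The factor $p^{-1/2}$ for the boundary contribution is produced precisely by measuring the edge mismatch in the dual norm: apply Lemma~\ref{lemma:Picurl-edge} with $s=1/2$ on each edge, giving $\|({\mathbf u}-\hatPicurlcomtwod{\mathbf u})\cdot{\mathbf t}_e\|_{\widetilde H^{-1/2}(e)}\lesssim p^{-1/2}\|{\mathbf u}\cdot{\mathbf t}_e\|_{L^2(e)}$, and then use that the $H^{-1/2}(\partial\widehat f)$ norm is controlled by the sum of the $\widetilde H^{-1/2}(e)$ norms; this is exactly how the paper's proof proceeds. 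With this replacement your argument matches the paper's.
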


\begin{proof}
By the projection property of $\hatPicurlcomtwod$, it suffices to show the bound 
with ${\mathbf v} = 0$ in the infimum.

\emph{1st~step:} 
As
discussed in \cite[Sec.~{4.2}]{demkowicz08} (which relies on
\cite{ainsworth-demkowicz09}) there is a polynomial-preserving lifting
$\boldsymbol{\mathcal{L}}^{\operatorname*{curl},2d}\!:\!H^{-1/2}(\partial \widehat{f})\!\rightarrow\!
{\mathbf{H}}(\widehat{f},\operatorname*{curl})$ that is uniformly (in $p$) bounded.

\emph{2nd~step:} Let $P^{\operatorname*{curl},2d}{\mathbf{u}}$ be the
polynomial best approximation of Lemma~\ref{lemma:Pcurl2d}. Following the
procedure suggested in \cite{demkowicz08}, we define
\[
{\mathbf{E}}:=P^{\operatorname*{curl},2d}{\mathbf{u}}-\hatPicurlcomtwod{\mathbf{u}}
\in \mathbf{Q}_p(\widehat{f}).
\]
Note ${\mathbf{E}}-\boldsymbol{\mathcal{L}}^{\operatorname*{curl},2d}({\mathbf{E}}\cdot \mathbf{t})\in\mathring{\mathbf{Q}}_p(\widehat{f})$. We get from the
orthogonalities (\ref{eq:Pi_curl-c}) and (\ref{eq:lemma:Pcurl2d-a})
\begin{equation}
(\operatorname{curl}({\mathbf{E}}-\boldsymbol{\mathcal{L}}^{\operatorname*{curl},2d}%
({\mathbf{E}}\cdot\mathbf{t})),\operatorname{curl}{\mathbf{E}})_{L^{2}(\widehat{f})}=0. 
\label{eq:lemma:Picurl-face-10}%
\end{equation}
Hence,
\begin{align*}
\Vert\operatorname{curl}{\mathbf{E}}\Vert_{L^{2}(\widehat{f})}^{2}  
&\stackrel{\eqref{eq:lemma:Picurl-face-10}}{=}
\left(  \operatorname{curl}\boldsymbol{\mathcal{L}}^{\operatorname*{curl},2d%
}({\mathbf{E}}\cdot\mathbf{t}),\operatorname{curl}{\mathbf{E}}\right)
_{L^{2}(\widehat{f})}\\
& \leq  
\Vert
\operatorname{curl}\boldsymbol{\mathcal{L}}^{\operatorname*{curl},2d}({\mathbf{E}}\cdot\mathbf{t})\Vert_{L^{2}(\widehat{f})}\Vert\operatorname{curl}{\mathbf{E}}\Vert_{L^{2}(\widehat{f})},
\end{align*}
from which we obtain with the stability properties of the lifting operator $\boldsymbol{\mathcal{L}}^{\operatorname*{curl},2d}$
\begin{equation}
\Vert\operatorname{curl}{\mathbf{E}}\Vert_{L^{2}(\widehat{f})}\lesssim\Vert{\mathbf{E}}\cdot\mathbf{t}\Vert_{H^{-1/2}(\partial \widehat{f})}.
\label{eq:lemma:Picurl-face-20}%
\end{equation}

\emph{3rd~step:} With the discrete Friedrichs inequality of
Lemma~\ref{lemma:discrete-friedrichs},
(\ref{item:lemma:discrete-friedrichs-ii}) we arrive at
\begin{align}
\label{eq:lemma:Picurl-face-30}
\Vert{\mathbf{E}}\Vert_{L^{2}(\widehat{f})}  
&  \leq\Vert{\mathbf{E}}-\boldsymbol{\mathcal{L}%
}^{\operatorname*{curl},2d}({\mathbf{E}}\cdot\mathbf{t})\Vert_{L^{2}(\widehat{f})}%
+\Vert\boldsymbol{\mathcal{L}}^{\operatorname*{curl},2d}({\mathbf{E}}\cdot\mathbf{t})\Vert
_{L^{2}(\widehat{f})}\\
& \lesssim\Vert\operatorname{curl}({\mathbf{E}}-\boldsymbol{\mathcal{L}%
}^{\operatorname*{curl},2d}({\mathbf{E}}\cdot\mathbf{t}))\Vert_{L^{2}(\widehat{f})}%
+\Vert\boldsymbol{\mathcal{L}}^{\operatorname*{curl},2d}({\mathbf{E}}\cdot\mathbf{t})\Vert
_{L^{2}(\widehat{f})}\nonumber \\
&  \lesssim\Vert\operatorname{curl}{\mathbf{E}}\Vert_{L^{2}(\widehat{f})}%
+\Vert\boldsymbol{\mathcal{L}}^{\operatorname*{curl},2d}({\mathbf{E}}\cdot\mathbf{t})\Vert_{\mathbf{H}(\widehat{f},\operatorname{curl})}\overset{\text{(\ref{eq:lemma:Picurl-face-20}%
)}}{\lesssim}\Vert{\mathbf{E}}\cdot\mathbf{t}\Vert_{H^{-1/2}(\partial \widehat{f})}.
\nonumber 
\end{align}

\emph{4th~step:} 
Lemmas~\ref{lemma:Pcurl2d}, \ref{lemma:Picurl-edge} and $\|{\mathbf u} \cdot {\mathbf t}_e \|_{L^2(e)} 
\lesssim \|{\mathbf u}\|_{{\mathbf H}^{1/2}(\widehat f)}$ (cf.~Lemma~\ref{lemma:traces-2d}) give
\begin{align}
\nonumber 
\Vert{\mathbf{u}}-\hatPicurlcomtwod{\mathbf{u}}%
\Vert_{\mathbf{H}(\widehat{f},\operatorname{curl})}  &  \lesssim\Vert{\mathbf{u}}%
-P^{\operatorname*{curl},2d}{\mathbf{u}}\Vert_{\mathbf{H}(\widehat{f},\operatorname{curl})}%
+\Vert{\mathbf{E}}\Vert_{\mathbf{H}(\widehat{f},\operatorname{curl})} \\
\nonumber 
& \overset{\text{(\ref{eq:lemma:Picurl-face-20}),(\ref{eq:lemma:Picurl-face-30}%
)}}{\lesssim}\Vert{\mathbf{u}}-P^{\operatorname*{curl},2d}{\mathbf{u}}%
\Vert_{\mathbf{H}(\widehat{f},\operatorname{curl})}+\Vert{\mathbf{E}}\cdot\mathbf{t}\Vert
_{H^{-1/2}(\partial \widehat{f})}\\
\nonumber 
&  \lesssim\Vert{\mathbf{u}}-P^{\operatorname*{curl},2d}{\mathbf{u}}%
\Vert_{\mathbf{H}(\widehat{f},\operatorname{curl})}\!+\!\Vert(({\mathbf{u}}-\hatPicurlcomtwod{\mathbf{u}})\cdot\mathbf{t})\Vert_{H^{-1/2}(\partial
\widehat{f})} \\
\label{eq:lemma:Picurl-face-200}%
& \overset{\text{Lem.~\ref{lemma:Pcurl2d},~\ref{lemma:Picurl-edge}%
}}{\lesssim}p^{-1/2}\Vert{\mathbf{u}}\Vert_{{\mathbf{H}}^{1/2}(\widehat{f},\operatorname*{curl})}.
\end{align}
\emph{5th~step:} 
{}From Lemmas~\ref{lemma:picurl-negative-I} and \ref{lemma:picurl-negative-II} 
we get 
\begin{align*}
\Vert{\mathbf{u}}-\hatPicurlcomtwod{\mathbf{u}}\Vert_{\widetilde{\mathbf{H}}^{-s}(\widehat{f},\operatorname{curl})} \!\lesssim\! p^{-s} \Vert{\mathbf{u}}-\hatPicurlcomtwod{\mathbf{u}}%
\Vert_{\mathbf{H}(\widehat{f},\operatorname{curl})} 
\!\!\stackrel{(\ref{eq:lemma:Picurl-face-200})}{\lesssim}\!\!
p^{-(1/2+s)}%
\Vert{\mathbf{u}}\Vert_{\mathbf{H}^{1/2}(\widehat{f},\operatorname*{curl})},
\end{align*}
which concludes the proof.
\end{proof}

If $\operatorname{curl}{\mathbf u}$ is a polynomial, we have the following result.

\begin{lemma}
\label{lemma:better-regularity-2d}
For all $k\geq1$ and
all ${\mathbf{u}}\in{\mathbf{H}}^{k}(\widehat{f})$ with $\operatorname*{curl}{\mathbf{u}}\in{\mathcal{P}}_{p}(\widehat{f})$ there holds
\begin{equation}
\Vert{\mathbf{u}}-\hatPicurlcomtwod{\mathbf{u}}%
\Vert_{\widetilde{\mathbf{H}}^{-s}(\widehat{f},\operatorname{curl})}\leq C_{s,k}p^{-(k+s)}\Vert{\mathbf{u}}\Vert
_{\mathbf{H}^{k}(\widehat{f})}, \qquad s\in [0,\widehat{s}).
\label{eq:proposition:better-regularity-2d}%
\end{equation}
If $p\geq k-1$, then the full norm $\Vert{\mathbf{u}}\Vert_{{\mathbf{H}}%
^{k}(\widehat{f})}$ can be replaced with the seminorm $|{\mathbf{u}%
}|_{{\mathbf{H}}^{k}(\widehat{f})}$.
\end{lemma}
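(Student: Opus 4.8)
The plan is to reduce everything to the scalar gradient estimate of Theorem~\ref{lemma:demkowicz-grad-2D} by using the commuting diagram together with the hypothesis on $\operatorname{curl}\mathbf{u}$. First I would note that $\mathbf{u}\in\mathbf{H}^k(\widehat f)\subset\mathbf{H}^{1/2}(\widehat f,\operatorname{curl})$ (since $k\ge 1$ and $\operatorname{curl}\mathbf{u}$ is a polynomial), so $\hatPicurlcomtwod\mathbf{u}$ is well defined, and that the commuting diagram \eqref{eq:commuting-diagram-2d} gives $\operatorname{curl}\hatPicurlcomtwod\mathbf{u}=\widehat{\Pi}_p^{L^2}\operatorname{curl}\mathbf{u}$. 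Because $\operatorname{curl}\mathbf{u}\in\mathcal{P}_p(\widehat f)=W_p(\widehat f)$ and $\widehat{\Pi}_p^{L^2}$ reproduces $W_p(\widehat f)$, we get $\operatorname{curl}(\mathbf{u}-\hatPicurlcomtwod\mathbf{u})=0$; hence the $\operatorname{curl}$-contribution to $\|\cdot\|_{\widetilde{\mathbf{H}}^{-s}(\widehat f,\operatorname{curl})}$ drops out and it suffices to bound $\|\mathbf{u}-\hatPicurlcomtwod\mathbf{u}\|_{\widetilde{\mathbf{H}}^{-s}(\widehat f)}$.

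Next I would introduce the regular decomposition $\mathbf{u}=\nabla\varphi+\mathbf{z}$ with $\mathbf{z}:=\mathbf{R}^{\operatorname{curl}}(\operatorname{curl}\mathbf{u})$ and $\varphi:=R^{\operatorname{grad}}(\mathbf{u}-\mathbf{z})$, exactly as in the proof of Lemma~\ref{lemma:helmholtz-like-decomp-2d} via the operators of Lemma~\ref{lemma:mcintosh-2d}. Since $\operatorname{curl}\mathbf{u}\in\mathcal{P}_p(\widehat f)=V_p(\widehat f)$, part~(iv) of Lemma~\ref{lemma:mcintosh-2d} yields $\mathbf{z}\in\mathbf{Q}_p(\widehat f)$, while part~(v) (applied first with $\operatorname{curl}\mathbf{u}\in H^{k-1}(\widehat f)$ and then with $\mathbf{u}-\mathbf{z}\in\mathbf{H}^k(\widehat f)$) gives $\varphi\in H^{k+1}(\widehat f)$ with $\|\varphi\|_{H^{k+1}(\widehat f)}\lesssim\|\mathbf{u}\|_{\mathbf{H}^k(\widehat f)}$. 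As $\hatPicurlcomtwod$ is a projection onto $\mathbf{Q}_p(\widehat f)$ we have $\hatPicurlcomtwod\mathbf{z}=\mathbf{z}$, and the commuting diagram gives $\hatPicurlcomtwod\nabla\varphi=\nabla\hatPigradcomtwod\varphi$; hence, by linearity, $\mathbf{u}-\hatPicurlcomtwod\mathbf{u}=\nabla(\varphi-\hatPigradcomtwod\varphi)$.

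It then remains to invoke Theorem~\ref{lemma:demkowicz-grad-2D}, estimate \eqref{eq:lemma:demkowicz-grad-2D-10-c}, to get $\|\nabla(\varphi-\hatPigradcomtwod\varphi)\|_{\widetilde{\mathbf{H}}^{-s}(\widehat f)}\lesssim p^{-(1/2+s)}\inf_{v\in\mathcal{P}_{p+1}(\widehat f)}\|\varphi-v\|_{H^{3/2}(\widehat f)}$ for $s\in[0,\widehat s)$, and to bound the infimum by $\lesssim p^{-(k-1/2)}\|\varphi\|_{H^{k+1}(\widehat f)}$ using the unconstrained approximation result Lemma~\ref{lemma:Pgrad1d} (with $r=k+1$, approximating in $H^{3/2}$, admissible since $k\ge 1$). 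Multiplying the two powers of $p$ and using $\|\varphi\|_{H^{k+1}(\widehat f)}\lesssim\|\mathbf{u}\|_{\mathbf{H}^k(\widehat f)}$ yields \eqref{eq:proposition:better-regularity-2d}. For the seminorm statement when $p\ge k-1$, I would observe that for every $\mathbf{q}\in(\mathcal{P}_{k-1}(\widehat f))^2\subseteq\mathbf{Q}_p(\widehat f)$ one still has $\operatorname{curl}(\mathbf{u}-\mathbf{q})\in\mathcal{P}_p(\widehat f)$ and, by the projection property, $(\mathbf{u}-\mathbf{q})-\hatPicurlcomtwod(\mathbf{u}-\mathbf{q})=\mathbf{u}-\hatPicurlcomtwod\mathbf{u}$; applying the estimate just proved to $\mathbf{u}-\mathbf{q}$ gives $\|\mathbf{u}-\hatPicurlcomtwod\mathbf{u}\|_{\widetilde{\mathbf{H}}^{-s}(\widehat f,\operatorname{curl})}\lesssim p^{-(k+s)}\|\mathbf{u}-\mathbf{q}\|_{\mathbf{H}^k(\widehat f)}$, and choosing $\mathbf{q}$ to be a suitable averaged Taylor polynomial of degree $k-1$, for which $\|\mathbf{u}-\mathbf{q}\|_{\mathbf{H}^k(\widehat f)}\lesssim|\mathbf{u}|_{\mathbf{H}^k(\widehat f)}$ by a Bramble--Hilbert/Deny--Lions argument, finishes the proof.

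I do not expect a genuine obstacle here: the analytic substance is entirely contained in the already-established Theorem~\ref{lemma:demkowicz-grad-2D} and Lemma~\ref{lemma:mcintosh-2d}. The only point needing care is lining up the three structural facts — the commuting diagram, the reproduction of $\mathbf{Q}_p(\widehat f)$ by $\hatPicurlcomtwod$, and the fact that $\mathbf{R}^{\operatorname{curl}}$ applied to a polynomial lands in $\mathbf{Q}_p(\widehat f)$ — so that the interpolation error collapses \emph{exactly} to a scalar $\hatPigradcomtwod$-error, together with keeping track of the admissible ranges $k\ge 1$ and $s\in[0,\widehat s)$ when combining the two powers of $p$.
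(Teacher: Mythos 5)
Your proposal is correct and follows essentially the same route as the paper: the regular decomposition $\mathbf{u}=\nabla\varphi+\mathbf{R}^{\operatorname{curl}}(\operatorname{curl}\mathbf{u})$ via Lemma~\ref{lemma:mcintosh-2d}, the polynomial-preservation property putting the second piece in $\mathbf{Q}_p(\widehat f)$, the projection and commuting-diagram properties collapsing the error to $\nabla(\varphi-\hatPigradcomtwod\varphi)$, and then \eqref{eq:lemma:demkowicz-grad-2D-10-c} combined with the best-approximation bound. Your opening observation that the curl error vanishes and your explicit Bramble--Hilbert argument for the seminorm case are harmless elaborations of what the paper states more tersely.
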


\begin{proof}
We employ the regularized right inverses of the operators $\nabla$ and
$\operatorname*{\mathbf{curl}}$ and proceed as in \cite[Lemma~{5.8}]{hiptmair08}. We
write, using the decomposition of Lemma~\ref{lemma:helmholtz-like-decomp-2d},
\[
{\mathbf{u}}=\nabla R^{\operatorname*{grad}}({\mathbf{u}}-{\mathbf{R}%
}^{\operatorname*{curl}}\operatorname*{curl}{\mathbf{u}})+{\mathbf{R}}^{\operatorname*{curl}}\operatorname*{curl}{\mathbf{u}}=:\nabla
\varphi+{\mathbf{v}}%
\]
with $\varphi\in H^{k+1}(\widehat{f})$ and ${\mathbf{v}}\in{\mathbf{H}}%
^{k}(\widehat{f})$ together with
\begin{equation}
\Vert\varphi\Vert_{H^{k+1}(\widehat{f})}+\Vert{\mathbf{v}}\Vert_{{\mathbf{H}%
}^{k}(\widehat{f})}\leq C\left(  \Vert{\mathbf{u}}\Vert_{{\mathbf{H}}%
^{k}(\widehat{f})}+\Vert\operatorname*{curl}{\mathbf{u}}\Vert_{{H%
}^{k-1}(\widehat{f})}\right)  \leq C\Vert{\mathbf{u}}\Vert_{{\mathbf{H}}%
^{k}(\widehat{f})}.
\label{eq:lemma:projection-based-interpolation-approximation-100-2d}%
\end{equation}
The assumption $\operatorname*{curl}{\mathbf{u}}\in {\mathcal P}_p(\widehat f)$
and 
Lemma~\ref{lemma:mcintosh-2d}, (\ref{item:lemma:mcintosh-2d-iv}) 
imply ${\mathbf{v}}={\mathbf{R}}%
^{\operatorname*{curl}}\operatorname*{curl}{\mathbf{u}}\in
\mathbf{Q}_p(\widehat{f})$; since
$\hatPicurlcomtwod$ is a projection, we conclude
${\mathbf{v}}-\hatPicurlcomtwod{\mathbf{v}}=0$. Thus,
with the commuting diagram property $\nabla\hatPigradcomtwod=\hatPicurlcomtwod\nabla$ 
and the bound (\ref{eq:lemma:demkowicz-grad-2D-10-c}) we get
\begin{align*}
\Vert(\operatorname{I}-\hatPicurlcomtwod){\mathbf{u}}\Vert
_{\widetilde{\mathbf{H}}^{-s}(\widehat{f},\operatorname{curl})} &= \Vert(\operatorname{I}-\hatPicurlcomtwod)\nabla\varphi+\underbrace{(\operatorname{I}-\hatPicurlcomtwod){\mathbf{v}}}_{=0}\Vert_{\widetilde{\mathbf{H}}^{-s}(\widehat{f},\operatorname{curl})} \\
&= \Vert\nabla(\operatorname{I}-\hatPigradcomtwod)\varphi\Vert_{\widetilde{\mathbf{H}}^{-s}(\widehat{f})}
\stackrel{(\ref{eq:lemma:demkowicz-grad-2D-10-c})}{\lesssim} p^{-(k+s)}\Vert\varphi\Vert_{H^{k+1}(\widehat{f})}.
\end{align*}
The proof of (\ref{eq:proposition:better-regularity-2d})
is complete in view of
(\ref{eq:lemma:projection-based-interpolation-approximation-100-2d}). Replacing
$\Vert{\mathbf{u}}\Vert_{{\mathbf{H}}^{k}(\widehat{f})}$ with $|{\mathbf{u}%
}|_{{\mathbf{H}}^{k}(\widehat{f})}$ is possible since the
projector $\hatPicurlcomtwod$ reproduces polynomials
of degree $p$.
\end{proof}


\section{Stability of the projection operators in three space dimensions}

\subsection{Preliminaries}

For the approximation properties of $\hatPigradcom$, 
we need the following approximation results.

\begin{lemma}
[\!\!\protect{\cite[Thm.~{5.2}]{demkowicz08}}]
\label{lemma:Pgrad3d}
Let $P^{\operatorname*{grad},3d}u\in W_{p+1}(\widehat{K})$ be defined by the conditions
\begin{subequations}
\label{eq:lemma:Pgrad3d}
\begin{align}
\label{eq:lemma:Pgrad3d-a}
(\nabla(u-P^{\operatorname*{grad},3d}u),\nabla v)_{L^2(\widehat{K})} &= 0 \qquad \forall v\in W_{p+1}(\widehat{K}),\\
(u-P^{\operatorname*{grad},3d}u,1)_{L^2(\widehat{K})} &= 0.
\label{eq:lemma:Pgrad3d-b}
\end{align}
\end{subequations}
Then, for $r>1$, there holds 
$\displaystyle 
\Vert u-P^{\operatorname*{grad},3d}u\Vert_{H^1(\widehat{K})} \leq C_r p^{-(r-1)} \Vert u\Vert_{H^r(\widehat{K})}.
$
\end{lemma}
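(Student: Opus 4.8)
The starting point is the observation that $P^{\operatorname{grad},3d}$ computes the best approximation of $u$ in the $H^1$-seminorm, the mean-value constraint \eqref{eq:lemma:Pgrad3d-b} serving only to fix the additive constant. Accordingly, the plan is to (i) reduce the error to an unconstrained polynomial approximation problem via the Hilbert-space projection property, (ii) invoke Lemma~\ref{lemma:Pgrad1d} on the reference simplex $\widehat K$, and (iii) pass from the seminorm to the full $H^1$-norm by a Poincar\'e inequality.

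First I would record that \eqref{eq:lemma:Pgrad3d-a}, tested with $v \in W_{p+1}(\widehat K)$, states precisely that $\nabla(u - P^{\operatorname{grad},3d}u)$ is $L^2(\widehat K)$-orthogonal to $\nabla W_{p+1}(\widehat K)$; together with \eqref{eq:lemma:Pgrad3d-b} this is a square linear system whose homogeneous version forces $\nabla P^{\operatorname{grad},3d}u = 0$ and then $P^{\operatorname{grad},3d}u = 0$, so $P^{\operatorname{grad},3d}$ is well-defined and
\[
|u - P^{\operatorname{grad},3d}u|_{H^1(\widehat K)} = \inf_{v \in W_{p+1}(\widehat K)} |u - v|_{H^1(\widehat K)}.
\]
Since $W_{p+1}(\widehat K) = {\mathcal P}_{p+1}(\widehat K)$, I would then apply Lemma~\ref{lemma:Pgrad1d} on the $3$-simplex $\widehat K$ with $s = 1$ and polynomial degree $p+1$ to obtain $J_{p+1}u \in W_{p+1}(\widehat K)$ with $\|u - J_{p+1}u\|_{H^1(\widehat K)} \lesssim p^{-(r-1)}\|u\|_{H^r(\widehat K)}$ for $r > 1$, whence $|u - P^{\operatorname{grad},3d}u|_{H^1(\widehat K)} \lesssim p^{-(r-1)}\|u\|_{H^r(\widehat K)}$.

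Finally, because $(u - P^{\operatorname{grad},3d}u, 1)_{L^2(\widehat K)} = 0$ by \eqref{eq:lemma:Pgrad3d-b}, the Poincar\'e--Wirtinger inequality on the fixed bounded Lipschitz domain $\widehat K$ gives $\|u - P^{\operatorname{grad},3d}u\|_{L^2(\widehat K)} \lesssim |u - P^{\operatorname{grad},3d}u|_{H^1(\widehat K)}$ with a constant depending only on $\widehat K$; adding the two contributions yields the asserted bound. The only substantive ingredient is Lemma~\ref{lemma:Pgrad1d}, which already packages the delicate $p$-version approximation theory on the simplex (tensorized Jacobi/orthogonal polynomial expansions); everything else is elementary Hilbert-space best approximation plus a Poincar\'e inequality, so I do not expect a genuine obstacle here --- only the bookkeeping of the degree shift $p \mapsto p+1$ and of the seminorm-to-norm passage, which is exactly what the normalization \eqref{eq:lemma:Pgrad3d-b} is designed to handle.
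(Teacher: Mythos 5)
Your proof is correct. Note, however, that the paper does not prove this lemma at all: it is imported verbatim as \cite[Thm.~5.2]{demkowicz08}, so there is no internal argument to compare against. What you supply is a short self-contained derivation: \eqref{eq:lemma:Pgrad3d-a} is Galerkin orthogonality for the $H^1$-seminorm inner product, so $|u-P^{\operatorname{grad},3d}u|_{H^1(\widehat K)}$ equals the unconstrained best seminorm approximation error over $W_{p+1}(\widehat K)=\mathcal P_{p+1}(\widehat K)$; Lemma~\ref{lemma:Pgrad1d} with $s=1$, degree $p+1$ (admissible since $r>1\ge s$) bounds this by $C_r p^{-(r-1)}\|u\|_{H^r(\widehat K)}$; and the zero-mean normalization \eqref{eq:lemma:Pgrad3d-b} lets Poincar\'e--Wirtinger convert the seminorm bound into the full $H^1$-norm bound. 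Your well-definedness remark (uniqueness for the square system via $v=P^{\operatorname{grad},3d}u$ in \eqref{eq:lemma:Pgrad3d-a} plus the mean-value condition) is also fine. This is essentially the standard argument underlying the cited result; the only thing your route "buys" is independence from \cite{demkowicz08} for this particular operator, at the price of relying on Lemma~\ref{lemma:Pgrad1d} (itself a nontrivial $p$-version approximation result, but one the paper already assumes). The genuinely delicate parts of \cite[Thm.~5.2]{demkowicz08} concern the analogous curl and div projections, which your argument does not address and does not need to.
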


\begin{lemma}
[\!\!\protect{\cite{demkowicz-buffa05},\cite[Thm.~{5.2}]{demkowicz08}}]
\label{lemma:Pcurl3d} Let $P^{\operatorname*{curl},3d}{\mathbf{u}}%
\in\mathbf{Q}_p(\widehat{K})$ be defined
by 
\begin{subequations}\label{eq:lemma:Pcurl3d}
\begin{align}
(\operatorname{\mathbf{curl}}({\mathbf{u}}-P^{\operatorname*{curl},3d}{\mathbf{u}%
}),\operatorname{\mathbf{curl}}{\mathbf{v}})_{L^{2}(\widehat{K})}  &  =0\qquad
\forall{\mathbf{v}}\in\mathbf{Q}_p(\widehat{K}),
\label{eq:lemma:Pcurl3d-a}\\
({\mathbf{u}}-P^{\operatorname*{curl},3d}{\mathbf{u}},\nabla v)_{L^{2}%
(\widehat{K})}  &  =0\qquad\forall v\in W_{p+1}(\widehat{K}). 
\label{eq:lemma:Pcurl3d-b}%
\end{align}
\end{subequations}
Then, for $r>0$, there holds 
$\displaystyle 
\Vert{\mathbf{u}}-P^{\operatorname*{curl},3d}{\mathbf{u}}\Vert_{{\mathbf{H}%
}(\widehat{K},\operatorname{\mathbf{curl}})}\leq C_{r}p^{-r}\Vert{\mathbf{u}}%
\Vert_{{\mathbf{H}}^{r}(\widehat{K},\operatorname{\mathbf{curl}})}. $
\end{lemma}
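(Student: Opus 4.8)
This lemma is taken from \cite{demkowicz-buffa05} and \cite[Thm.~{5.2}]{demkowicz08}; the plan is to reproduce its structure. First I would verify that \eqref{eq:lemma:Pcurl3d} is a well-posed square system and that $P^{\operatorname*{curl},3d}$ is a projection onto $\mathbf{Q}_p(\widehat K)$. Since $\nabla W_{p+1}(\widehat K)\subset \mathbf{Q}_p(\widehat K)$ by the discrete exact sequence, every $\mathbf{w}\in\mathbf{Q}_p(\widehat K)$ has a discrete Helmholtz splitting $\mathbf{w}=\nabla w+\mathbf{w}^\perp$ with $w\in W_{p+1}(\widehat K)$, $(\mathbf{w}^\perp,\nabla v)_{L^2(\widehat K)}=0$ for all $v\in W_{p+1}(\widehat K)$, and $\operatorname{\mathbf{curl}}\mathbf{w}^\perp=\operatorname{\mathbf{curl}}\mathbf{w}$; the $p$-uniform discrete Friedrichs inequality on the tetrahedron (the 3D analogue of Lemma~\ref{lemma:discrete-friedrichs}, proved in \cite{demkowicz-buffa05}) then gives $\|\mathbf{w}^\perp\|_{L^2(\widehat K)}\lesssim\|\operatorname{\mathbf{curl}}\mathbf{w}\|_{L^2(\widehat K)}$. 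For uniqueness with $\mathbf{u}=0$ one tests \eqref{eq:lemma:Pcurl3d-b} with $v=w$ to obtain $\nabla w=0$, then \eqref{eq:lemma:Pcurl3d-a} with $\mathbf{v}=\mathbf{w}$ together with discrete Friedrichs to obtain $\mathbf{w}=0$; existence follows by a dimension count. Hence $P^{\operatorname*{curl},3d}$ is a projection, and for every $\mathbf{v}\in\mathbf{Q}_p(\widehat K)$ we may write $\mathbf{u}-P^{\operatorname*{curl},3d}\mathbf{u}=(\mathbf{u}-\mathbf{v})-P^{\operatorname*{curl},3d}(\mathbf{u}-\mathbf{v})$.

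The second ingredient is $p$-uniform $\mathbf{H}(\widehat K,\operatorname{\mathbf{curl}})$-stability. Writing $\mathbf{u}_p:=P^{\operatorname*{curl},3d}\mathbf{u}=\nabla\phi_p+\mathbf{z}_p$ with the splitting above, \eqref{eq:lemma:Pcurl3d-a} with $\mathbf{v}=\mathbf{u}_p$ yields $\|\operatorname{\mathbf{curl}}\mathbf{u}_p\|_{L^2(\widehat K)}^2=(\operatorname{\mathbf{curl}}\mathbf{u},\operatorname{\mathbf{curl}}\mathbf{u}_p)_{L^2(\widehat K)}$, hence $\|\operatorname{\mathbf{curl}}\mathbf{u}_p\|_{L^2(\widehat K)}\le\|\operatorname{\mathbf{curl}}\mathbf{u}\|_{L^2(\widehat K)}$ and then $\|\mathbf{z}_p\|_{L^2(\widehat K)}\lesssim\|\operatorname{\mathbf{curl}}\mathbf{u}\|_{L^2(\widehat K)}$ by discrete Friedrichs. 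For the gradient part, \eqref{eq:lemma:Pcurl3d-b} and $(\mathbf{z}_p,\nabla v)_{L^2(\widehat K)}=0$ give $(\nabla\phi_p,\nabla v)_{L^2(\widehat K)}=(\mathbf{u},\nabla v)_{L^2(\widehat K)}$ for all $v\in W_{p+1}(\widehat K)$; choosing $v=\phi_p$ gives $\|\nabla\phi_p\|_{L^2(\widehat K)}\le\|\mathbf{u}\|_{L^2(\widehat K)}$. Altogether $\|\mathbf{u}_p\|_{\mathbf{H}(\widehat K,\operatorname{\mathbf{curl}})}\lesssim\|\mathbf{u}\|_{\mathbf{H}(\widehat K,\operatorname{\mathbf{curl}})}$ uniformly in $p$.

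Combining the projection property with stability reduces the claim to producing, for $\mathbf{u}\in\mathbf{H}^r(\widehat K,\operatorname{\mathbf{curl}})$, some $\mathbf{v}\in\mathbf{Q}_p(\widehat K)$ with $\|\mathbf{u}-\mathbf{v}\|_{\mathbf{H}(\widehat K,\operatorname{\mathbf{curl}})}\lesssim p^{-r}\|\mathbf{u}\|_{\mathbf{H}^r(\widehat K,\operatorname{\mathbf{curl}})}$. For this I would use the regular Helmholtz-type decomposition built from the regularized right inverses of \cite{costabel-mcintosh10} (the 3D counterpart of Lemmas~\ref{lemma:mcintosh-2d} and \ref{lemma:helmholtz-like-decomp-2d}, cf.\ Lemma~\ref{lemma:mcintosh}): $\mathbf{u}=\nabla\varphi+\mathbf{w}$ with $\varphi\in H^{r+1}(\widehat K)$, $\mathbf{w}\in\mathbf{H}^{r+1}(\widehat K)$, $\operatorname{\mathbf{curl}}\mathbf{w}=\operatorname{\mathbf{curl}}\mathbf{u}$, and $\|\varphi\|_{H^{r+1}(\widehat K)}+\|\mathbf{w}\|_{\mathbf{H}^{r+1}(\widehat K)}\lesssim\|\mathbf{u}\|_{\mathbf{H}^r(\widehat K,\operatorname{\mathbf{curl}})}$. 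Approximating $\varphi$ by $\varphi_p\in W_{p+1}(\widehat K)$ and $\mathbf{w}$ componentwise by $\mathbf{w}_p\in(\mathcal{P}_p(\widehat K))^3\subset\mathbf{Q}_p(\widehat K)$ with the unconstrained rate of Lemma~\ref{lemma:Pgrad1d} (used with regularity $r+1$ and target norm $H^1$), so that $\|\mathbf{w}-\mathbf{w}_p\|_{H^1(\widehat K)}\lesssim p^{-r}\|\mathbf{w}\|_{\mathbf{H}^{r+1}(\widehat K)}$, and setting $\mathbf{v}:=\nabla\varphi_p+\mathbf{w}_p$, we obtain the desired bound from $\operatorname{\mathbf{curl}}\nabla(\varphi-\varphi_p)=0$ and $\|\operatorname{\mathbf{curl}}(\mathbf{w}-\mathbf{w}_p)\|_{L^2(\widehat K)}\lesssim\|\mathbf{w}-\mathbf{w}_p\|_{H^1(\widehat K)}$.

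The \emph{main obstacle} is the $p$-uniform discrete Friedrichs inequality on $\widehat K$ underlying the first two paragraphs; its proof is not elementary and relies on the polynomial-degree-preserving regularized right inverses and polynomial extension operators of \cite{costabel-mcintosh10,demkowicz-gopalakrishnan-schoeberl-I,demkowicz-gopalakrishnan-schoeberl-II,demkowicz-gopalakrishnan-schoeberl-III}, which is exactly why the lemma is imported from \cite{demkowicz-buffa05,demkowicz08} rather than reproved here. A secondary point is that the argument must cover all $r>0$, including $r<1$; this causes no difficulty since the Costabel--McIntosh maps act on the full Sobolev scale and Lemma~\ref{lemma:Pgrad1d} holds for every $r\ge0$.
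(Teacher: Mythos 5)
The paper does not prove this lemma at all: it is imported verbatim from \cite{demkowicz-buffa05} and \cite[Thm.~5.2]{demkowicz08}, so there is no internal argument to compare against, and your decision to treat it as a cited result is exactly what the paper does. Your reconstruction of the proof is nonetheless sound and follows the same lines as the cited source: (i) well-posedness and the projection property via the discrete splitting $\mathbf{Q}_p(\widehat K)=\nabla W_{p+1}(\widehat K)\oplus^{L^2}\mathbf{Q}_{p,\perp}(\widehat K)$ and the $p$-uniform discrete Friedrichs inequality (which the paper also only imports, as Lemma~\ref{lemma:discrete-friedrichs-3d}); (ii) $p$-uniform $\mathbf{H}(\widehat K,\operatorname{\mathbf{curl}})$-stability by testing \eqref{eq:lemma:Pcurl3d-a} with $\mathbf{u}_p$ and \eqref{eq:lemma:Pcurl3d-b} with $\phi_p$; (iii) reduction to best approximation via the regular decomposition of Lemma~\ref{lemma:helmholtz-like-decomp} (Costabel--McIntosh) and the unconstrained rates of Lemma~\ref{lemma:Pgrad1d}, which indeed yields the rate $p^{-r}$ for all $r>0$. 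The only step I would tighten is ``existence follows by a dimension count'': as written, \eqref{eq:lemma:Pcurl3d} has $\dim\mathbf{Q}_p+\dim W_{p+1}$ conditions, so the system is not square; you must either discard the redundant conditions (tests $\mathbf{v}\in\nabla W_{p+1}$ in \eqref{eq:lemma:Pcurl3d-a} and constants in \eqref{eq:lemma:Pcurl3d-b}), using exactness of the discrete sequence to see that the effective count equals $\dim\mathbf{Q}_p$, or, more cleanly, pose it as the saddle point problem ``minimize $\Vert\operatorname{\mathbf{curl}}(\mathbf{u}-\mathbf{w})\Vert_{L^2(\widehat K)}$ subject to \eqref{eq:lemma:Pcurl3d-b}'' whose unique solvability follows from coercivity on the kernel (discrete Friedrichs) and the obvious inf-sup condition, exactly as the paper does for the analogous discrete problems in Lemma~\ref{lemma:Hcurl-lifting}. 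With that repair your outline is a complete and correct proof sketch of the cited result.
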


\begin{lemma}
[\!\!\protect{\cite[Thm.~{5.2}]{demkowicz08}}]
\label{lemma:Pdiv3d} Let $P^{\operatorname*{div},3d}{\mathbf{u}}%
\in\mathbf{V}_{p}(\widehat{K})$ be defined
by the conditions
\begin{subequations}
\begin{align}
\label{eq:lemma:Pdiv3d}
(\operatorname{div}({\mathbf{u}}-P^{\operatorname*{div},3d}{\mathbf{u}%
}),\operatorname{div}{\mathbf{v}})_{L^{2}(\widehat{K})}  &  =0\qquad
\forall{\mathbf{v}}\in\mathbf{V}_p(\widehat{K}),\\
({\mathbf{u}}-P^{\operatorname*{div},3d}{\mathbf{u}},\operatorname*{div} \mathbf{v})_{L^{2}(\widehat{K})}  &  =0\qquad\forall \mathbf{v}\in \mathbf{Q}_p(\widehat{K}).
\label{eq:lemma:Pdiv3d-20}
\end{align}
\end{subequations}
Then, for $r > 0$, there holds 
$\displaystyle 
\Vert{\mathbf{u}}-P^{\operatorname*{div},3d}{\mathbf{u}}\Vert_{{\mathbf{H}%
}(\widehat{K},\operatorname{div})}\leq C_{r}p^{-r}\Vert{\mathbf{u}}%
\Vert_{{\mathbf{H}}^{r}(\widehat{K},\operatorname{div})}.
$
\end{lemma}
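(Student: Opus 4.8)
The plan is to exploit the discrete de~Rham structure to reduce the estimate to scalar $L^2$-approximation together with one application of the regularized right inverses. First I would record that the defining conditions form a uniquely solvable square system, so that $P^{\operatorname*{div},3d}$ is a well-defined projection onto $\mathbf V_p(\widehat K)$; here the condition (\ref{eq:lemma:Pdiv3d-20}) is read as $({\mathbf u}-P^{\operatorname*{div},3d}{\mathbf u},\operatorname*{\mathbf{curl}}{\mathbf v})_{L^2(\widehat K)}=0$ for all ${\mathbf v}\in\mathbf Q_p(\widehat K)$, in analogy with (\ref{eq:lemma:Pcurl3d-b}). The dimension count uses exactness of $\mathbf Q_p(\widehat K)\xrightarrow{\operatorname*{\mathbf{curl}}}\mathbf V_p(\widehat K)\xrightarrow{\operatorname*{div}}W_p(\widehat K)$, and uniqueness follows as in Lemma~\ref{lemma:Pi_div-well-defined}: for ${\mathbf u}=0$, testing (\ref{eq:lemma:Pdiv3d}) with ${\mathbf v}=P^{\operatorname*{div},3d}{\mathbf u}$ and using surjectivity of $\operatorname*{div}:\mathbf V_p(\widehat K)\to W_p(\widehat K)$ forces $\operatorname*{div}P^{\operatorname*{div},3d}{\mathbf u}=0$, whence $P^{\operatorname*{div},3d}{\mathbf u}=\operatorname*{\mathbf{curl}}{\mathbf q}$ for some ${\mathbf q}\in\mathbf Q_p(\widehat K)$ by exactness, and testing (\ref{eq:lemma:Pdiv3d-20}) with ${\mathbf v}={\mathbf q}$ gives $P^{\operatorname*{div},3d}{\mathbf u}=0$. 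Running the same computation for general ${\mathbf u}$ yields the commuting relation $\operatorname*{div}P^{\operatorname*{div},3d}{\mathbf u}=\widehat{\Pi}_p^{L^2}\operatorname*{div}{\mathbf u}$, and since any ${\mathbf u}\in\mathbf V_p(\widehat K)$ satisfies the defining conditions trivially, $P^{\operatorname*{div},3d}$ is a projection.

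Set ${\mathbf e}:={\mathbf u}-P^{\operatorname*{div},3d}{\mathbf u}$. The divergence of the error is controlled at once: by the commuting relation and Lemma~\ref{lemma:Pgrad1d} applied to the scalar $L^2$-projection onto $W_p(\widehat K)$, $\|\operatorname*{div}{\mathbf e}\|_{L^2(\widehat K)}=\|\operatorname*{div}{\mathbf u}-\widehat{\Pi}_p^{L^2}\operatorname*{div}{\mathbf u}\|_{L^2(\widehat K)}\lesssim p^{-r}\|\operatorname*{div}{\mathbf u}\|_{H^r(\widehat K)}$. For $\|{\mathbf e}\|_{L^2(\widehat K)}$ I would build a comparison polynomial ${\mathbf u}_p^\ast\in\mathbf V_p(\widehat K)$ with $\operatorname*{div}{\mathbf u}_p^\ast=\widehat{\Pi}_p^{L^2}\operatorname*{div}{\mathbf u}$ and $\|{\mathbf u}-{\mathbf u}_p^\ast\|_{L^2(\widehat K)}\lesssim p^{-r}\|{\mathbf u}\|_{\mathbf H^r(\widehat K,\operatorname*{div})}$. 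Using the three-dimensional regularized right inverses (Lemma~\ref{lemma:mcintosh}), write the regular decomposition ${\mathbf u}={\mathbf R}^{\operatorname*{div}}(\operatorname*{div}{\mathbf u})+\operatorname*{\mathbf{curl}}{\boldsymbol\psi}$ with ${\boldsymbol\psi}:={\mathbf R}^{\operatorname*{curl}}({\mathbf u}-{\mathbf R}^{\operatorname*{div}}\operatorname*{div}{\mathbf u})\in\mathbf H^{r+1}(\widehat K)$ and $\|{\boldsymbol\psi}\|_{\mathbf H^r(\widehat K,\operatorname*{curl})}\lesssim\|{\mathbf u}\|_{\mathbf H^r(\widehat K,\operatorname*{div})}$, and set
\[
{\mathbf u}_p^\ast:={\mathbf R}^{\operatorname*{div}}(\widehat{\Pi}_p^{L^2}\operatorname*{div}{\mathbf u})+\operatorname*{\mathbf{curl}}(P^{\operatorname*{curl},3d}{\boldsymbol\psi}).
\]
The polynomial preservation of ${\mathbf R}^{\operatorname*{div}}$ and Lemma~\ref{lemma:Pcurl3d} give ${\mathbf u}_p^\ast\in\mathbf V_p(\widehat K)$ with $\operatorname*{div}{\mathbf u}_p^\ast=\widehat{\Pi}_p^{L^2}\operatorname*{div}{\mathbf u}$, while the mapping property ${\mathbf R}^{\operatorname*{div}}:L^2(\widehat K)\to\mathbf H^1(\widehat K)$, Lemma~\ref{lemma:Pcurl3d}, and Lemma~\ref{lemma:Pgrad1d} yield the asserted $L^2$-bound for ${\mathbf u}-{\mathbf u}_p^\ast$.

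To finish, note $\operatorname*{div}(P^{\operatorname*{div},3d}{\mathbf u}-{\mathbf u}_p^\ast)=0$, so exactness gives $P^{\operatorname*{div},3d}{\mathbf u}-{\mathbf u}_p^\ast=\operatorname*{\mathbf{curl}}{\mathbf q}_p$ for some ${\mathbf q}_p\in\mathbf Q_p(\widehat K)$; testing (\ref{eq:lemma:Pdiv3d-20}) with ${\mathbf v}={\mathbf q}_p$ then gives $({\mathbf e},P^{\operatorname*{div},3d}{\mathbf u}-{\mathbf u}_p^\ast)_{L^2(\widehat K)}=0$, hence $\|{\mathbf e}\|_{L^2(\widehat K)}^2=({\mathbf e},{\mathbf u}-{\mathbf u}_p^\ast)_{L^2(\widehat K)}\le\|{\mathbf e}\|_{L^2(\widehat K)}\,\|{\mathbf u}-{\mathbf u}_p^\ast\|_{L^2(\widehat K)}$. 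Combining this with the divergence estimate gives $\|{\mathbf u}-P^{\operatorname*{div},3d}{\mathbf u}\|_{\mathbf H(\widehat K,\operatorname*{div})}\lesssim p^{-r}\|{\mathbf u}\|_{\mathbf H^r(\widehat K,\operatorname*{div})}$.

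The hard part is the construction of ${\mathbf u}_p^\ast$: it rests entirely on the three-dimensional analogue of Lemma~\ref{lemma:mcintosh-2d}, namely that the Costabel--McIntosh right inverse ${\mathbf R}^{\operatorname*{div}}$ of the divergence on $\widehat K$ is simultaneously polynomial-preserving, ${\mathbf R}^{\operatorname*{div}}:W_p(\widehat K)\to\mathbf V_p(\widehat K)$, and bounded $H^k(\widehat K)\to\mathbf H^{k+1}(\widehat K)$. Granting this (it is the substance of \cite{costabel-mcintosh10}), every remaining step is bookkeeping. An equally viable route would first establish a $p$-uniform $\mathbf H(\widehat K,\operatorname*{div})$-stability of $P^{\operatorname*{div},3d}$ via a discrete Helmholtz decomposition and a three-dimensional discrete Friedrichs inequality (cf.~Lemma~\ref{lemma:discrete-friedrichs}) and then combine ${\mathbf u}-P^{\operatorname*{div},3d}{\mathbf u}=(\operatorname{I}-P^{\operatorname*{div},3d})({\mathbf u}-{\mathbf u}_p^\ast)$ with the unconstrained best approximation; this merely relocates the difficulty.
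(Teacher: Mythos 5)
Your argument is correct, and it is worth noting that the paper itself offers no proof of this statement: Lemma~\ref{lemma:Pdiv3d} is quoted from \cite[Thm.~5.2]{demkowicz08}, so there is no internal proof to compare against. Your derivation is a legitimate self-contained route within the paper's own toolkit: you reduce the ${\mathbf H}(\operatorname{div})$ estimate to the cited curl-result (Lemma~\ref{lemma:Pcurl3d}) plus the Costabel--McIntosh operators of Lemma~\ref{lemma:mcintosh} and the scalar $L^2$-projection bound from Lemma~\ref{lemma:Pgrad1d}. The key observations all check out: reading \eqref{eq:lemma:Pdiv3d-20} as orthogonality of the error to $\operatorname{\mathbf{curl}}{\mathbf Q}_p(\widehat K)$ is the right interpretation (the printed ``$\operatorname{div}{\mathbf v}$'' does not type-check, and the paper itself uses exactly this reading when it derives \eqref{eq:thm:H1div-approximation-20-b} in the proof of Theorem~\ref{thm:H1div-approximation}); the first condition together with surjectivity of $\operatorname{div}:{\mathbf V}_p(\widehat K)\to W_p(\widehat K)$ gives $\operatorname{div}P^{\operatorname*{div},3d}{\mathbf u}=\widehat\Pi_p^{L^2}\operatorname{div}{\mathbf u}$ and hence the divergence part of the estimate; the comparison field ${\mathbf u}_p^\ast={\mathbf R}^{\operatorname*{div}}(\widehat\Pi_p^{L^2}\operatorname{div}{\mathbf u})+\operatorname{\mathbf{curl}}(P^{\operatorname*{curl},3d}{\boldsymbol\psi})$ lies in ${\mathbf V}_p(\widehat K)$ by Lemma~\ref{lemma:mcintosh}, (\ref{item:lemma:mcintosh-vi}), has the same (projected) divergence, and approximates ${\mathbf u}$ in $L^2$ at rate $p^{-r}$; and since $P^{\operatorname*{div},3d}{\mathbf u}-{\mathbf u}_p^\ast$ is divergence-free, exactness of the discrete sequence makes it a discrete curl, so the orthogonality \eqref{eq:lemma:Pdiv3d-20} turns the $L^2$ error bound into a best-approximation argument against ${\mathbf u}_p^\ast$. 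What your approach buys is transparency: it exhibits the div-estimate as a corollary of the curl-estimate and the regularized Poincar\'e maps, at the (acceptable) price of leaning on Lemma~\ref{lemma:Pcurl3d}, which in this paper is likewise an imported result; your alternative sketch via discrete stability and a discrete Friedrichs inequality would avoid that dependence but, as you say, relocates rather than removes the work.
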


In the next lemma, right inverses for the differential operators are recalled.

\begin{lemma}
[\!\!{{\cite{costabel-mcintosh10}, see also \cite[Sec.~{2}]{hiptmair08}}}]\label{lemma:mcintosh} Let $B\subset\widehat{K}$ be a ball. Let $\theta\in C_{0}^{\infty}(B)$ with $\int_{B}%
\theta=1$. Define the operators
\begin{align*}
{R}^{\operatorname*{grad}}{\mathbf{u}}(\mathbf{x})  &  :=\int_{{\mathbf{a}}\in B} \theta(\mathbf{a})%
\int_{t=0}^{1}{\mathbf{u}}(\mathbf{a}+t(\mathbf{x}-{\mathbf{a}}))\,dt\cdot(\mathbf{x}-{\mathbf{a}%
})\,d{\mathbf{a}},\\
{\mathbf{R}}^{\operatorname*{curl}}{\mathbf{u}}(\mathbf{x})  &  :=\int_{{\mathbf{a}}\in
B} \theta(\mathbf{a}) \int_{t=0}^{1}t{\mathbf{u}}(\mathbf{a}+t(\mathbf{x}-{\mathbf{a}}))\,dt\times(\mathbf{x}-{\mathbf{a}%
})\,d{\mathbf{a}},\\
{\mathbf{R}}^{\operatorname*{div}}{u}(\mathbf{x})  &  :=\int_{{\mathbf{a}}\in B} \theta(\mathbf{a})%
\int_{t=0}^{1}t^2 u(\mathbf{a}+t(\mathbf{x}-{\mathbf{a}}))\,dt (\mathbf{x}-{\mathbf{a}%
})\,d{\mathbf{a}}.
\end{align*}
Then:

\begin{enumerate}
[(i)]

\item 
\label{item:lemma:mcintosh-i}
For ${\mathbf{u}}$ with $\operatorname*{div}{\mathbf{u}}=0$, there holds
$\operatorname*{\mathbf{curl}}{\mathbf{R}}^{\operatorname*{curl}}{\mathbf{u}%
}={\mathbf{u}}$.

\item 
\label{item:lemma:mcintosh-ii}
For ${\mathbf{u}}$ with $\operatorname*{\mathbf{curl}}{\mathbf{u}}=0$, there
holds $\nabla{R}^{\operatorname*{grad}}{\mathbf{u}}={\mathbf{u}}$.

\item 
\label{item:lemma:mcintosh-iii}
For $u\in L^2(\widehat{K})$, there holds $\operatorname*{div}{\mathbf{R}}^{\operatorname*{div}}u=u$.

\item 
\label{item:lemma:mcintosh-iv}
If ${\mathbf{u}}\in\mathbf{Q}_p(\widehat{K})$,
then ${R}^{\operatorname*{grad}}{\mathbf{u}}\in W_{p+1}(\widehat{K})$.

\item 
\label{item:lemma:mcintosh-v}
If ${\mathbf{u}}\in\mathbf{V}_{p}(\widehat{K})$, then ${\mathbf{R}}^{\operatorname*{curl}%
}{\mathbf{u}}\in\mathbf{Q}_p(\widehat{K})$.

\item 
\label{item:lemma:mcintosh-vi}
If $u\in W_p(\widehat{K})$, then $\mathbf{R}^{\operatorname*{div}}u\in \mathbf{V}_p(\widehat{K})$.

\item 
\label{item:lemma:mcintosh-vii}
For every $k\geq0$, the operators ${R}^{\operatorname*{grad}}$, 
${\mathbf{R}}^{\operatorname*{curl}}$ and ${\mathbf{R}}^{\operatorname*{div}}$ are bounded linear operators
$\mathbf{H}^{k}(\widehat{K})\rightarrow H^{k+1}(\widehat{K})$, $\mathbf{H}^{k}(\widehat{K})\rightarrow \mathbf{H}^{k+1}(\widehat{K})$ and $H^{k}(\widehat{K})\rightarrow \mathbf{H}^{k+1}(\widehat{K})$, respectively.
\end{enumerate}
\end{lemma}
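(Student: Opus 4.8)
The plan is to recognize $R^{\operatorname{grad}}$, $\mathbf{R}^{\operatorname{curl}}$, $\mathbf{R}^{\operatorname{div}}$ as $\theta$-averaged Poincar\'e (homotopy) operators for the de Rham complex on the convex set $\widehat K$ and to combine the classical pointwise homotopy identities with the regularity theory of \cite{costabel-mcintosh10}. First I would fix a star-center $\mathbf{a}\in B$ and introduce the unaveraged operators
\begin{align*}
R^{\operatorname{grad}}_{\mathbf{a}}\mathbf{u}(\mathbf{x}) &:= \int_0^1 \mathbf{u}(\mathbf{a}+t(\mathbf{x}-\mathbf{a}))\cdot(\mathbf{x}-\mathbf{a})\,dt, &
\mathbf{R}^{\operatorname{curl}}_{\mathbf{a}}\mathbf{u}(\mathbf{x}) &:= \int_0^1 t\,\mathbf{u}(\mathbf{a}+t(\mathbf{x}-\mathbf{a}))\times(\mathbf{x}-\mathbf{a})\,dt, \\
\mathbf{R}^{\operatorname{div}}_{\mathbf{a}}u(\mathbf{x}) &:= \int_0^1 t^2\,u(\mathbf{a}+t(\mathbf{x}-\mathbf{a}))\,(\mathbf{x}-\mathbf{a})\,dt,
\end{align*}
so that $R^{\operatorname{grad}}\mathbf{u} = \int_B\theta(\mathbf{a})R^{\operatorname{grad}}_{\mathbf{a}}\mathbf{u}\,d\mathbf{a}$ and likewise for the other two. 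Writing the homotopy formula $\operatorname{id}=dK+Kd$ on the de Rham complex in vector proxies gives the pointwise identities $\nabla R^{\operatorname{grad}}_{\mathbf{a}}\mathbf{u}+\mathbf{R}^{\operatorname{curl}}_{\mathbf{a}}(\operatorname{\mathbf{curl}}\mathbf{u})=\mathbf{u}$, $\operatorname{\mathbf{curl}}\mathbf{R}^{\operatorname{curl}}_{\mathbf{a}}\mathbf{u}+\mathbf{R}^{\operatorname{div}}_{\mathbf{a}}(\operatorname{div}\mathbf{u})=\mathbf{u}$ and $\operatorname{div}\mathbf{R}^{\operatorname{div}}_{\mathbf{a}}u=u$; since $\int_B\theta=1$, integrating these in $\mathbf{a}$ against $\theta$ immediately yields assertions (i), (ii), (iii).

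For the Sobolev mapping property (vii) I would invoke \cite{costabel-mcintosh10} (see also \cite[Sec.~2]{hiptmair08}): after the substitution $\mathbf{y}=\mathbf{a}+t(\mathbf{x}-\mathbf{a})$ the $\theta$-average can be identified with a classical pseudodifferential operator of order $-1$ (up to a smoothing remainder), whence $R^{\operatorname{grad}}\colon\mathbf{H}^k(\widehat K)\to H^{k+1}(\widehat K)$, $\mathbf{R}^{\operatorname{curl}}\colon\mathbf{H}^k(\widehat K)\to\mathbf{H}^{k+1}(\widehat K)$, $\mathbf{R}^{\operatorname{div}}\colon H^k(\widehat K)\to\mathbf{H}^{k+1}(\widehat K)$ are bounded for every $k\ge 0$. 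This is the only genuinely nontrivial ingredient, and I would not reprove it here.

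For the polynomial-preservation properties (iv)--(vi) I would argue by degree counting: for a polynomial argument $\mathbf{u}$ (resp.\ $u$), the map $\mathbf{x}\mapsto\mathbf{u}(\mathbf{a}+t(\mathbf{x}-\mathbf{a}))$ is, for each fixed $(\mathbf{a},t)$, a polynomial in $\mathbf{x}$ of the same degree, and integration in $t\in(0,1)$ and against $\theta(\mathbf{a})\,d\mathbf{a}$ only affects the (finitely many) coefficients. For (iv), writing $\mathbf{u}=\mathbf{p}+\mathbf{y}\times\mathbf{q}$ with $\mathbf{p},\mathbf{q}\in(\mathcal P_p(\widehat K))^3$ and using $(\mathbf{x}-\mathbf{a})\times\mathbf{q}\cdot(\mathbf{x}-\mathbf{a})=0$, the integrand of $R^{\operatorname{grad}}\mathbf{u}$ reduces to $\mathbf{p}(\mathbf{a}+t(\mathbf{x}-\mathbf{a}))\cdot(\mathbf{x}-\mathbf{a})+\mathbf{q}(\mathbf{a}+t(\mathbf{x}-\mathbf{a}))\cdot(\mathbf{a}\times\mathbf{x})$, which has degree $\le p+1$ in $\mathbf{x}$, so $R^{\operatorname{grad}}\mathbf{u}\in\mathcal P_{p+1}(\widehat K)=W_{p+1}(\widehat K)$. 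For (vi), I would split $u$ into homogeneous parts $u=\sum_{j=0}^p u_j$; the degree-$(p+1)$-in-$\mathbf{x}$ part of $\mathbf{R}^{\operatorname{div}}u$ arises only from $j=p$ and from taking the $\mathbf{x}$-contribution of every factor, equalling a positive multiple of $\mathbf{x}\,u_p(\mathbf{x})$, while the remaining terms have degree $\le p$, so $\mathbf{R}^{\operatorname{div}}u\in(\mathcal P_p(\widehat K))^3+\mathbf{x}\,\mathcal P_p(\widehat K)=\mathbf{V}_p(\widehat K)$. Assertion (v) follows by the same bookkeeping for $\mathbf{u}=\mathbf{p}+q\,\mathbf{y}$: the term $q(\mathbf{a}+t(\mathbf{x}-\mathbf{a}))\,\mathbf{y}\times(\mathbf{x}-\mathbf{a})$ collapses to $q(\mathbf{a}+t(\mathbf{x}-\mathbf{a}))\,\mathbf{a}\times(\mathbf{x}-\mathbf{a})$, and the top homogeneous part of $\mathbf{R}^{\operatorname{curl}}\mathbf{u}$ has the form $\mathbf{x}\times(\text{homogeneous polynomial of degree }p)$, hence $\mathbf{R}^{\operatorname{curl}}\mathbf{u}\in\mathbf{Q}_p(\widehat K)$. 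The one real obstacle throughout is property (vii): (i)--(iii) are immediate from the homotopy identities together with $\int_B\theta=1$, and (iv)--(vi) are elementary, so I would present the whole proof as a short reduction to \cite{costabel-mcintosh10}.
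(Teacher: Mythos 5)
Your proposal is correct, but note that the paper does not prove Lemma~\ref{lemma:mcintosh} at all: it is quoted verbatim from \cite{costabel-mcintosh10} (see also \cite[Sec.~2]{hiptmair08}), so any comparison is with the literature rather than with an in-paper argument. Your reconstruction is sound and sensibly apportioned: the unaveraged Poincar\'e operators $R^{\operatorname{grad}}_{\mathbf a}$, $\mathbf R^{\operatorname{curl}}_{\mathbf a}$, $\mathbf R^{\operatorname{div}}_{\mathbf a}$ do satisfy the vector-proxy homotopy identities you state (they hold pointwise for smooth fields and extend to the stated classes by density, using convexity of $\widehat K$ so that the segments $[\mathbf a,\mathbf x]$ stay in $\widehat K$; it is worth saying this density step explicitly), and averaging against $\theta$ with $\int_B\theta=1$ gives (i)--(iii). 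The degree-counting arguments for (iv)--(vi) are also correct and are essentially how these polynomial-preservation properties are verified in \cite{costabel-mcintosh10,hiptmair08}; the only blemish is a harmless sign slip in (iv), where $(\mathbf a\times\mathbf q(\mathbf y))\cdot(\mathbf x-\mathbf a)=\mathbf q(\mathbf y)\cdot(\mathbf x\times\mathbf a)$ rather than $\mathbf q(\mathbf y)\cdot(\mathbf a\times\mathbf x)$ --- irrelevant for the degree bound. For (v) and (vi) one can even argue without isolating top homogeneous parts: for fixed $\mathbf a$ the results are of the form $\mathbf P(\mathbf x)\times(\mathbf x-\mathbf a)+s(\mathbf x)\,(\mathbf a\times\mathbf x)$ and $s(\mathbf x)(\mathbf x-\mathbf a)$ with $\mathbf P\in(\mathcal P_p(\widehat K))^3$, $s\in\mathcal P_p(\widehat K)$, which lie in $\mathbf Q_p(\widehat K)$ and $\mathbf V_p(\widehat K)$ directly, and the $\theta$-average stays in these finite-dimensional spaces. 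Finally, you are right that (vii) is the genuinely nontrivial ingredient; delegating it to the pseudodifferential-operator analysis of \cite{costabel-mcintosh10} is exactly what the paper itself does.
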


The right inverses of Lemma~\ref{lemma:mcintosh} can be used to construct regular 
Helmholtz-like decompositions of functions in ${\mathbf{H}}^{s}(\widehat{K},\operatorname*{\mathbf{curl}})$ and ${\mathbf{H}}^{s}(\widehat{K},\operatorname*{div})$.

\begin{lemma}
\label{lemma:helmholtz-like-decomp} Let $s \ge0$. Then each ${\mathbf{u}}
\in{\mathbf{H}}^{s}(\widehat{K},\operatorname*{\mathbf{curl}})$ can be written as
${\mathbf{u}} = \nabla\varphi+ {\mathbf{z}}$ with $\varphi\in H^{s+1}%
(\widehat{K})$, ${\mathbf{z}} \in{\mathbf{H}}^{s+1}(\widehat{K})$ satisfying $\Vert\varphi\Vert_{H^{s+1}(\widehat{K})} \lesssim \Vert\mathbf{u}\Vert_{\mathbf{H}^s(\widehat{K},\operatorname{\mathbf{curl}})}$ and $\Vert\mathbf{z}\Vert_{\mathbf{H}^{s+1}(\widehat{K})} \lesssim \Vert\operatorname{\mathbf{curl}}\mathbf{u}\Vert_{\mathbf{H}^s(\widehat{K})}$.
\end{lemma}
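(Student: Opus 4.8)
The plan is to mimic the proof of Lemma~\ref{lemma:helmholtz-like-decomp-2d}, now using the three–dimensional regularized right inverses $R^{\operatorname{grad}}$ and ${\mathbf R}^{\operatorname{curl}}$ of Lemma~\ref{lemma:mcintosh}. Concretely, I would set
\[
{\mathbf z}:={\mathbf R}^{\operatorname{curl}}(\operatorname{\mathbf{curl}}{\mathbf u}),\qquad
\varphi:=R^{\operatorname{grad}}({\mathbf u}-{\mathbf z}),
\]
and claim that ${\mathbf u}=\nabla\varphi+{\mathbf z}$ is the asserted decomposition.

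First I would verify the splitting identity. Since $\operatorname{div}\operatorname{\mathbf{curl}}{\mathbf u}=0$, Lemma~\ref{lemma:mcintosh}, item~(\ref{item:lemma:mcintosh-i}), gives $\operatorname{\mathbf{curl}}{\mathbf z}=\operatorname{\mathbf{curl}}{\mathbf R}^{\operatorname{curl}}(\operatorname{\mathbf{curl}}{\mathbf u})=\operatorname{\mathbf{curl}}{\mathbf u}$, hence $\operatorname{\mathbf{curl}}({\mathbf u}-{\mathbf z})=0$. Then item~(\ref{item:lemma:mcintosh-ii}) yields $\nabla\varphi=\nabla R^{\operatorname{grad}}({\mathbf u}-{\mathbf z})={\mathbf u}-{\mathbf z}$, i.e.\ ${\mathbf u}=\nabla\varphi+{\mathbf z}$.

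Next I would read off the regularity and the norm bounds from the mapping properties in item~(\ref{item:lemma:mcintosh-vii}). Because $\operatorname{\mathbf{curl}}{\mathbf u}\in{\mathbf H}^s(\widehat K)$, the operator ${\mathbf R}^{\operatorname{curl}}$ (applied componentwise) maps it into ${\mathbf H}^{s+1}(\widehat K)$, so ${\mathbf z}\in{\mathbf H}^{s+1}(\widehat K)$ with $\|{\mathbf z}\|_{{\mathbf H}^{s+1}(\widehat K)}\lesssim\|\operatorname{\mathbf{curl}}{\mathbf u}\|_{{\mathbf H}^s(\widehat K)}$. In particular ${\mathbf u}-{\mathbf z}\in{\mathbf H}^s(\widehat K)$ with $\|{\mathbf u}-{\mathbf z}\|_{{\mathbf H}^s(\widehat K)}\lesssim\|{\mathbf u}\|_{{\mathbf H}^s(\widehat K)}+\|\operatorname{\mathbf{curl}}{\mathbf u}\|_{{\mathbf H}^s(\widehat K)}\lesssim\|{\mathbf u}\|_{{\mathbf H}^s(\widehat K,\operatorname{\mathbf{curl}})}$, and the boundedness of $R^{\operatorname{grad}}\colon{\mathbf H}^s(\widehat K)\to H^{s+1}(\widehat K)$ then gives $\varphi\in H^{s+1}(\widehat K)$ with $\|\varphi\|_{H^{s+1}(\widehat K)}\lesssim\|{\mathbf u}\|_{{\mathbf H}^s(\widehat K,\operatorname{\mathbf{curl}})}$.

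I do not anticipate a genuine obstacle: the argument is a verbatim three–dimensional transcription of Lemma~\ref{lemma:helmholtz-like-decomp-2d}, and everything needed — the two right–inverse identities and the Sobolev–scale mapping properties — is already packaged in Lemma~\ref{lemma:mcintosh}. The only point requiring a line of care is the bookkeeping of the norms, so that the $\varphi$–bound is controlled by the full ${\mathbf H}^s(\widehat K,\operatorname{\mathbf{curl}})$–norm while the ${\mathbf z}$–bound involves $\operatorname{\mathbf{curl}}{\mathbf u}$ only; this is immediate from the triangle inequality applied to ${\mathbf u}-{\mathbf z}$.
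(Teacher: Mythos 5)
Your proposal is correct and is essentially the paper's own argument: the same decomposition ${\mathbf u}=\nabla R^{\operatorname{grad}}({\mathbf u}-{\mathbf R}^{\operatorname{curl}}(\operatorname{\mathbf{curl}}{\mathbf u}))+{\mathbf R}^{\operatorname{curl}}(\operatorname{\mathbf{curl}}{\mathbf u})$ with the norm bounds read off from the mapping properties in Lemma~\ref{lemma:mcintosh}; you even spell out the right-inverse identities justifying ${\mathbf u}=\nabla\varphi+{\mathbf z}$, which the paper leaves implicit. (Only the parenthetical ``applied componentwise'' is inaccurate --- ${\mathbf R}^{\operatorname{curl}}$ is a genuinely vector-valued operator --- but this does not affect the argument, since item~(\ref{item:lemma:mcintosh-vii}) gives its boundedness directly.)
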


\begin{proof}
With the aid of the operators ${\mathbf{R}}^{\operatorname*{curl}}$,
$R^{\operatorname*{grad}}$ of Lemma~\ref{lemma:mcintosh}, we write
$\displaystyle 
{\mathbf{u}}=\nabla R^{\operatorname*{grad}}({\mathbf{u}}-{\mathbf{R}%
}^{\operatorname*{curl}}(\operatorname*{\mathbf{curl}}{\mathbf{u}}))+{\mathbf{R}%
}^{\operatorname*{curl}}(\operatorname*{\mathbf{curl}}{\mathbf{u}})
=:\nabla \varphi + {\mathbf z}.
$
The stability properties of the operators $\mathbf{R}^{\operatorname{curl}}$ and $R^{\operatorname{grad}}$ 
give 
\begin{align*}
\Vert\varphi\Vert_{H^{s+1}(\widehat{K})}^2 &\lesssim \Vert\mathbf{u}-\mathbf{R}^{\operatorname{curl}}(\operatorname{\mathbf{curl}}\mathbf{u})\Vert_{\mathbf{H}^s(\widehat{K})}^2 \lesssim \Vert\mathbf{u}\Vert_{\mathbf{H}^s(\widehat{K})}^2 + \Vert\mathbf{R}^{\operatorname{curl}}(\operatorname{\mathbf{curl}}\mathbf{u})\Vert_{\mathbf{H}^{s+1}(\widehat{K})}^2 \\
&\lesssim \Vert\mathbf{u}\Vert_{\mathbf{H}^s(\widehat{K})}^2 + \Vert\operatorname{\mathbf{curl}}\mathbf{u}\Vert_{\mathbf{H}^s(\widehat{K})}^2 = \Vert\mathbf{u}\Vert_{\mathbf{H}^s(\widehat{K},\operatorname{\mathbf{curl}})}^2,\\
\Vert\mathbf{z}\Vert_{\mathbf{H}^{s+1}(\widehat{K})} &= \Vert\mathbf{R}^{\operatorname{curl}}(\operatorname{\mathbf{curl}}\mathbf{u})\Vert_{\mathbf{H}^{s+1}(\widehat{K})} \lesssim \Vert\operatorname{\mathbf{curl}}\mathbf{u}\Vert_{\mathbf{H}^s(\widehat{K})}.
\qedhere
\end{align*}
\end{proof}
\begin{lemma}
\label{lemma:helmholtz-like-decomp-v2} Let $s \in [0,1]$. Then each ${\mathbf{u}}
\in{\mathbf{H}}^{s}(\widehat{K})$ can be written as
${\mathbf{u}} = \nabla\varphi+ \operatorname{\mathbf{curl}}\mathbf{z}$ with $\varphi\in H^{s+1}%
(\widehat{K}) \cap H^1_0(\widehat{K})$, $\mathbf{z} \in \mathbf{H}^{s+1}(\widehat{K})$ and 
$\|\varphi\|_{H^{s+1}(\widehat{K})} + \|\mathbf{z}\|_{{\mathbf H}^{s+1}(\widehat{K})} \lesssim \|\mathbf{u}\|_{\mathbf{H}^s(\widehat{K})}$.  
\end{lemma}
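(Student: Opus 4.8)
The plan is to follow the strategy of Lemma~\ref{lemma:helmholtz-like-decomp-2d-v2}, but to exploit that here only $s\in[0,1]$ is needed, so that the scalar potential $\varphi$ may be obtained from a plain Dirichlet problem for the Laplacian requiring at most $H^2$-regularity, which is available on the \emph{convex} reference tetrahedron $\widehat K$ without any condition on its angles; the remaining divergence-free field is then written as a curl by means of the regularized right inverse $\mathbf{R}^{\operatorname{curl}}$ of Lemma~\ref{lemma:mcintosh}.

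\textbf{Step 1 (scalar potential).} For $\mathbf{u}\in\mathbf{H}^s(\widehat K)$ I let $\varphi\in H^1_0(\widehat K)$ solve
\[
(\nabla\varphi,\nabla v)_{L^2(\widehat K)}=(\mathbf{u},\nabla v)_{L^2(\widehat K)}\qquad\forall v\in H^1_0(\widehat K),
\]
i.e., $-\Delta\varphi=-\operatorname{div}\mathbf{u}$ with homogeneous Dirichlet data. Testing with $v=\varphi$ and using Poincar\'e's inequality gives $\|\varphi\|_{H^1(\widehat K)}\lesssim\|\mathbf{u}\|_{L^2(\widehat K)}$. For $\mathbf{u}\in\mathbf{H}^1(\widehat K)$ one has $\operatorname{div}\mathbf{u}\in L^2(\widehat K)$, and the $H^2$-regularity of the Dirichlet Laplacian on the convex polyhedron $\widehat K$ (see, e.g., \cite{Grisvard85}) yields $\|\varphi\|_{H^2(\widehat K)}\lesssim\|\operatorname{div}\mathbf{u}\|_{L^2(\widehat K)}\lesssim\|\mathbf{u}\|_{\mathbf{H}^1(\widehat K)}$. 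Interpolating the linear solution operator $\mathbf{u}\mapsto\varphi$ between these two endpoints (the spaces $H^t$ forming an interpolation scale) produces $\varphi\in H^{s+1}(\widehat K)\cap H^1_0(\widehat K)$ with $\|\varphi\|_{H^{s+1}(\widehat K)}\lesssim\|\mathbf{u}\|_{\mathbf{H}^s(\widehat K)}$ for all $s\in[0,1]$.

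\textbf{Step 2 (curl potential).} I set $\mathbf{w}:=\mathbf{u}-\nabla\varphi$, so that $\mathbf{w}\in\mathbf{H}^s(\widehat K)$ with $\|\mathbf{w}\|_{\mathbf{H}^s(\widehat K)}\lesssim\|\mathbf{u}\|_{\mathbf{H}^s(\widehat K)}$, and, since $\operatorname{div}\nabla\varphi=\Delta\varphi=\operatorname{div}\mathbf{u}$, one gets $\operatorname{div}\mathbf{w}=0$. Define $\mathbf{z}:=\mathbf{R}^{\operatorname{curl}}\mathbf{w}$ with $\mathbf{R}^{\operatorname{curl}}$ from Lemma~\ref{lemma:mcintosh}. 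By Lemma~\ref{lemma:mcintosh},~(\ref{item:lemma:mcintosh-vii}) (interpolating between the integer orders $k=0$ and $k=1$ for non-integer $s$), $\mathbf{z}\in\mathbf{H}^{s+1}(\widehat K)$ with $\|\mathbf{z}\|_{\mathbf{H}^{s+1}(\widehat K)}\lesssim\|\mathbf{w}\|_{\mathbf{H}^s(\widehat K)}\lesssim\|\mathbf{u}\|_{\mathbf{H}^s(\widehat K)}$, and by Lemma~\ref{lemma:mcintosh},~(\ref{item:lemma:mcintosh-i}) the divergence-free property of $\mathbf{w}$ gives $\operatorname{\mathbf{curl}}\mathbf{z}=\operatorname{\mathbf{curl}}\mathbf{R}^{\operatorname{curl}}\mathbf{w}=\mathbf{w}$. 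Hence $\mathbf{u}=\nabla\varphi+\mathbf{w}=\nabla\varphi+\operatorname{\mathbf{curl}}\mathbf{z}$, and the asserted norm bound follows by combining Steps~1 and~2.

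\textbf{Main obstacle.} The only point that requires care is the $s$-uniform regularity estimate for $\varphi$ in Step~1: it rests on the $H^2$-shift theorem for the Dirichlet Laplacian, which in three dimensions is available on $\widehat K$ precisely because the reference tetrahedron is convex. This is why, in contrast to the two-dimensional Lemma~\ref{lemma:helmholtz-like-decomp-2d-v2} (where $s$ may exceed $1$ and a maximal-angle condition must be invoked), no geometric restriction on $\widehat K$ is needed here. Everything else reduces to the mapping properties of $\mathbf{R}^{\operatorname{curl}}$ recorded in Lemma~\ref{lemma:mcintosh} together with elementary interpolation.
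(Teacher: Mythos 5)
Your proposal is correct and follows essentially the same route as the paper: the scalar potential is obtained from the homogeneous Dirichlet problem $-\Delta\varphi=-\operatorname{div}\mathbf{u}$, with the $s=0$ and $s=1$ endpoint estimates (the latter via $H^2$-regularity on the convex $\widehat K$) combined by interpolation, and the divergence-free remainder $\mathbf{u}-\nabla\varphi$ is written as $\operatorname{\mathbf{curl}}\mathbf{R}^{\operatorname{curl}}(\mathbf{u}-\nabla\varphi)$ using Lemma~\ref{lemma:mcintosh}. The only cosmetic difference is that you spell out the endpoint energy estimate and the operator-interpolation step slightly more explicitly than the paper does.
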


\begin{proof} 
We define $\varphi\in H^{1}_0(\widehat{K})$ as the solution of the problem
\begin{align*}
-\Delta\varphi=-\operatorname{div}\mathbf{u}, \quad \varphi=0 \text{ on }\partial\widehat{K}.
\end{align*}
Since $\operatorname{div}:H^1(\widehat{K}) \rightarrow L^2(\widehat{K})$ and 
$\operatorname{div}:L^2(\widehat{K}) \rightarrow (H^1_0(\widehat{K}))^\prime =: H^{-1}(\widehat{K})$
the convexity of $\widehat{K}$ gives 
$\varphi \in H^1_0(\widehat{K})$ if $s = 0$ and 
$\varphi \in H^2(\widehat{K}) \cap H^1_0(\widehat{K})$ if $s = 1$. By interpolation 
$\varphi \in H^{s+1}(\widehat{K}) \cap H^1_0(\widehat{K})$ with 
$\|\varphi \|_{H^{s+1}(\widehat{K})} \lesssim \|{\mathbf u}\|_{{\mathbf H}^s(\widehat K)}$. 
With the operator $\mathbf{R}^{\operatorname{curl}}$ of Lemma~\ref{lemma:mcintosh}, we define $\mathbf{z}:=\mathbf{R}^{\operatorname{curl}}(\mathbf{u}-\nabla\varphi)\in\mathbf{H}^{s+1}(\widehat{K})$. 
Noting $\operatorname{div}(\mathbf{u}-\nabla\varphi)=0$, 
we have ${\mathbf u} = \nabla \varphi+ \operatorname{\mathbf{curl}} {\mathbf z}$ 
by Lemma~\ref{lemma:mcintosh}, (\ref{item:lemma:mcintosh-i}). The stability property of $\mathbf{R}^{\operatorname{curl}}$ gives
\begin{align*}
&\|\mathbf{z}\|_{\mathbf{H}^{s+1}(\widehat{K})} = \|\mathbf{R}^{\operatorname{curl}}(\mathbf{u}-\nabla\varphi)\|_{\mathbf{H}^{s+1}(\widehat{K})} \lesssim \|\mathbf{u}-\nabla\varphi\|_{\mathbf{H}^s(\widehat{K})} \lesssim \|\mathbf{u}\|_{\mathbf{H}^s(\widehat{K})}.
\qedhere
\end{align*}
\end{proof}
\begin{lemma}
\label{lemma:helmholtz-decomposition-div}
Let $s \geq 0$. Then each ${\mathbf{u}}
\in{\mathbf{H}}^{s}(\widehat{K},\operatorname*{div})$ can be written as
${\mathbf{u}} = \operatorname*{\mathbf{curl}} \boldsymbol{\varphi}+ {\mathbf{z}}$ with $\boldsymbol{\varphi}\in \mathbf{H}^{s+1}%
(\widehat{K})$, ${\mathbf{z}} \in{\mathbf{H}}^{s+1}(\widehat{K})$ satisfying $\Vert\boldsymbol{\varphi}\Vert_{\mathbf{H}^{s+1}(\widehat{K})} \lesssim \Vert\mathbf{u}\Vert_{\mathbf{H}^s(\widehat{K},\operatorname*{div})}$ and $\Vert\mathbf{z}\Vert_{\mathbf{H}^{s+1}(\widehat{K})} \lesssim \Vert\operatorname*{div}\mathbf{u}\Vert_{H^s(\widehat{K})}$.
\end{lemma}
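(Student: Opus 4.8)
The plan is to follow the proof of Lemma~\ref{lemma:helmholtz-like-decomp} essentially verbatim, only exchanging the roles of the differential operators: here $\operatorname{div}$ plays the role that $\operatorname{\mathbf{curl}}$ played there, and $\operatorname{\mathbf{curl}}$ the role of $\nabla$. Concretely, using the regularized right inverses $\mathbf{R}^{\operatorname{div}}$ and $\mathbf{R}^{\operatorname{curl}}$ of Lemma~\ref{lemma:mcintosh}, I would set
\[
\mathbf{z} := \mathbf{R}^{\operatorname{div}}(\operatorname{div}\mathbf{u}), \qquad
\boldsymbol{\varphi} := \mathbf{R}^{\operatorname{curl}}(\mathbf{u} - \mathbf{z}),
\]
so that
\[
\mathbf{u} = \operatorname{\mathbf{curl}} \boldsymbol{\varphi} + \mathbf{z}
= \operatorname{\mathbf{curl}} \mathbf{R}^{\operatorname{curl}}\bigl(\mathbf{u} - \mathbf{R}^{\operatorname{div}}(\operatorname{div}\mathbf{u})\bigr) + \mathbf{R}^{\operatorname{div}}(\operatorname{div}\mathbf{u}).
\]
That this is a valid decomposition is seen in two steps: Lemma~\ref{lemma:mcintosh}, (\ref{item:lemma:mcintosh-iii}) gives $\operatorname{div}\mathbf{z} = \operatorname{div}\mathbf{u}$, hence $\operatorname{div}(\mathbf{u} - \mathbf{z}) = 0$; then Lemma~\ref{lemma:mcintosh}, (\ref{item:lemma:mcintosh-i}) applies to the divergence-free field $\mathbf{u} - \mathbf{z}$ and yields $\operatorname{\mathbf{curl}} \boldsymbol{\varphi} = \operatorname{\mathbf{curl}} \mathbf{R}^{\operatorname{curl}}(\mathbf{u} - \mathbf{z}) = \mathbf{u} - \mathbf{z}$.

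For the quantitative bounds I would invoke the mapping properties of Lemma~\ref{lemma:mcintosh}, (\ref{item:lemma:mcintosh-vii}). Since $\operatorname{div}\mathbf{u} \in H^s(\widehat K)$ by the very definition of $\mathbf{H}^s(\widehat K, \operatorname{div})$, boundedness of $\mathbf{R}^{\operatorname{div}} : H^s(\widehat K) \to \mathbf{H}^{s+1}(\widehat K)$ gives $\|\mathbf{z}\|_{\mathbf{H}^{s+1}(\widehat K)} \lesssim \|\operatorname{div}\mathbf{u}\|_{H^s(\widehat K)}$, which in particular also controls $\|\mathbf{z}\|_{\mathbf{H}^s(\widehat K)} \lesssim \|\operatorname{div}\mathbf{u}\|_{H^s(\widehat K)}$. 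Hence $\|\mathbf{u} - \mathbf{z}\|_{\mathbf{H}^s(\widehat K)} \lesssim \|\mathbf{u}\|_{\mathbf{H}^s(\widehat K)} + \|\operatorname{div}\mathbf{u}\|_{H^s(\widehat K)} \lesssim \|\mathbf{u}\|_{\mathbf{H}^s(\widehat K, \operatorname{div})}$, and boundedness of $\mathbf{R}^{\operatorname{curl}} : \mathbf{H}^s(\widehat K) \to \mathbf{H}^{s+1}(\widehat K)$ yields $\|\boldsymbol{\varphi}\|_{\mathbf{H}^{s+1}(\widehat K)} \lesssim \|\mathbf{u} - \mathbf{z}\|_{\mathbf{H}^s(\widehat K)} \lesssim \|\mathbf{u}\|_{\mathbf{H}^s(\widehat K, \operatorname{div})}$. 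If only the integer-order mapping properties are taken as given, non-integer $s$ follows by interpolation of the Costabel--McIntosh operators.

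I do not expect any serious obstacle: the statement is a direct transcription of Lemma~\ref{lemma:helmholtz-like-decomp} to the $\operatorname{div}$-setting, and the argument rests entirely on the right-inverse and mapping properties already provided by Lemma~\ref{lemma:mcintosh}. The only point deserving a little care is the bookkeeping of which norm controls which term: the $\mathbf{z}$-contribution must be estimated by $\|\operatorname{div}\mathbf{u}\|_{H^s(\widehat K)}$ alone (so that the decomposition is genuinely ``regular'' in the sense that $\mathbf{z}$ absorbs the divergence of $\mathbf{u}$), whereas $\boldsymbol{\varphi}$ is controlled only by the full graph norm $\|\mathbf{u}\|_{\mathbf{H}^s(\widehat K, \operatorname{div})}$.
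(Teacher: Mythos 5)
Your construction is exactly the one in the paper: the paper also sets ${\mathbf u} = \operatorname{\mathbf{curl}}\,\mathbf{R}^{\operatorname{curl}}({\mathbf u} - \mathbf{R}^{\operatorname{div}}(\operatorname{div}{\mathbf u})) + \mathbf{R}^{\operatorname{div}}(\operatorname{div}{\mathbf u})$ and derives the two bounds from the mapping properties of Lemma~\ref{lemma:mcintosh} in the same way. Your extra justification of the identity via parts (\ref{item:lemma:mcintosh-iii}) and (\ref{item:lemma:mcintosh-i}) of that lemma is correct and only makes explicit what the paper leaves implicit.
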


\begin{proof}
With the operators $\mathbf{R}^{\operatorname*{curl}}$ and $\mathbf{R}^{\operatorname*{div}}$ of Lemma~\ref{lemma:mcintosh}, we write
\begin{align*}
{\mathbf{u}}=\operatorname*{\mathbf{curl}} \mathbf{R}^{\operatorname*{curl}}({\mathbf{u}}-{\mathbf{R}%
}^{\operatorname*{div}}(\operatorname*{div}{\mathbf{u}}))+{\mathbf{R}%
}^{\operatorname*{div}}(\operatorname*{div}{\mathbf{u}})
=:\operatorname*{\mathbf{curl}} \boldsymbol{\varphi} + {\mathbf z}.
\end{align*}
The stability properties of $\mathbf{R}^{\operatorname*{curl}}$ and $\mathbf{R}^{\operatorname*{div}}$ imply
\begin{align*}
\Vert\boldsymbol{\varphi}\Vert_{\mathbf{H}^{s+1}(\widehat{K})} &\lesssim \Vert \mathbf{u}-\mathbf{R}^{\operatorname*{div}}(\operatorname*{div}\mathbf{u})\Vert_{\mathbf{H}^s(\widehat{K})} \lesssim \Vert\mathbf{u}\Vert_{\mathbf{H}^s(\widehat{K})} + \Vert\mathbf{R}^{\operatorname{div}}(\operatorname*{div}\mathbf{u})\Vert_{H^{s+1}(\widehat{K})} \\
&\lesssim \Vert\mathbf{u}\Vert_{\mathbf{H}^s(\widehat{K})} + \Vert\operatorname*{div}\mathbf{u}\Vert_{H^s(\widehat{K})} \lesssim \Vert\mathbf{u}\Vert_{\mathbf{H}^s(\widehat{K},\operatorname*{div})}, \\
\Vert\mathbf{z}\Vert_{\mathbf{H}^{s+1}(\widehat{K})} &= \Vert\mathbf{R}^{\operatorname*{div}}(\operatorname*{div}\mathbf{u})\Vert_{\mathbf{H}^{s+1}(\widehat{K})} \lesssim \Vert\operatorname*{div}\mathbf{u}\Vert_{H^s(\widehat{K})}.
\qedhere
\end{align*}
\end{proof}

We now state the Friedrichs inequalities for the operators $\operatorname{\mathbf{curl}}$ and $\operatorname{div}$.

\begin{lemma}
[discrete Friedrichs inequality for $\mathbf{H}(\operatorname{\mathbf{curl}})$ in 3D, \protect{\cite[Lemma~{5.1}]{demkowicz08}}]
\label{lemma:discrete-friedrichs-3d}
There exists $C > 0$ independent of $p$ and ${\mathbf{u}}$ such that
\begin{equation}
\label{eq:lemma:discrete-friedrichs-3d}\|{\mathbf{u}}\|_{L^{2}(\widehat{K})}
\leq C \|\operatorname{\mathbf{curl}} {\mathbf{u}}\|_{L^{2}(\widehat{K})}%
\end{equation}
in the following two cases:

\begin{enumerate}
[(i)]

\item \label{item:lemma:discrete-friedrichs-3d-i} ${\mathbf{u}}\in
\mathbf{Q}_{p,\perp}(\widehat{K}) := \{ {\mathbf v} \in \mathbf{Q}_p(\widehat K)\colon({\mathbf{v}},\nabla \psi)_{L^{2}(\widehat{K})}=0 \quad \forall \psi\in W_{p+1}(\widehat{K})\}$,

\item \label{item:lemma:discrete-friedrichs-3d-ii} ${\mathbf{u}} \in
\mathring{\mathbf{Q}}_{p,\perp}(\widehat{K}):=\{ {\mathbf v} \in \mathring{\mathbf{Q}}_p(\widehat K)\colon
({\mathbf v},\nabla \psi)_{L^2(\widehat K)} = 0 \quad \forall \psi \in \mathring{W}_{p+1}(\widehat K)\}$.
\end{enumerate}
\end{lemma}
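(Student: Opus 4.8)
The plan is as follows. Statement (\ref{item:lemma:discrete-friedrichs-3d-i}) is precisely \cite[Lemma~5.1]{demkowicz08} and requires no further argument. For statement (\ref{item:lemma:discrete-friedrichs-3d-ii}) I would mimic the proof of the two-dimensional counterpart, Lemma~\ref{lemma:discrete-friedrichs}, (\ref{item:lemma:discrete-friedrichs-ii}), replacing the planar ingredients by their three-dimensional analogues: the regularized right inverses $R^{\operatorname{grad}}$, $\mathbf{R}^{\operatorname{curl}}$ of Lemma~\ref{lemma:mcintosh} and a polynomial-preserving, $p$-robustly stable lifting $\mathcal{L}^{\operatorname{grad},3d}\colon H^{1/2}(\partial\widehat K)\to H^1(\widehat K)$ as furnished by \cite{munoz-sola97,demkowicz-gopalakrishnan-schoeberl-I} (which also satisfies $\mathcal{L}^{\operatorname{grad},3d}1=1$).

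First I would decompose ${\mathbf u}\in\mathring{\mathbf Q}_{p,\perp}(\widehat K)$. Since $\operatorname{\mathbf{curl}}$ maps ${\mathbf Q}_p(\widehat K)$ into ${\mathbf V}_p(\widehat K)$ and $\operatorname{div}\operatorname{\mathbf{curl}}{\mathbf u}=0$, Lemma~\ref{lemma:mcintosh}, (\ref{item:lemma:mcintosh-i}), (\ref{item:lemma:mcintosh-v}) give that ${\mathbf w}:=\mathbf{R}^{\operatorname{curl}}(\operatorname{\mathbf{curl}}{\mathbf u})\in{\mathbf Q}_p(\widehat K)$ satisfies $\operatorname{\mathbf{curl}}{\mathbf w}=\operatorname{\mathbf{curl}}{\mathbf u}$ together with $\|{\mathbf w}\|_{{\mathbf H}^1(\widehat K)}\lesssim\|\operatorname{\mathbf{curl}}{\mathbf u}\|_{L^2(\widehat K)}$. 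Then ${\mathbf u}-{\mathbf w}$ is curl-free, so by Lemma~\ref{lemma:mcintosh}, (\ref{item:lemma:mcintosh-ii}), (\ref{item:lemma:mcintosh-iv}), the function $\psi:=R^{\operatorname{grad}}({\mathbf u}-{\mathbf w})\in W_{p+1}(\widehat K)$ obeys $\nabla\psi={\mathbf u}-{\mathbf w}$, i.e., ${\mathbf u}=\nabla\psi+{\mathbf w}$.

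Next I would exploit the vanishing tangential trace of ${\mathbf u}$. On $\partial\widehat K$ one has, with the (piecewise) surface gradient $\nabla_{\partial\widehat K}$, the identity $\nabla_{\partial\widehat K}(\psi|_{\partial\widehat K})=\Pi_\tau\nabla\psi=-\Pi_\tau{\mathbf w}$; since $\psi$ is a globally continuous piecewise polynomial on $\partial\widehat K$, this identifies $\psi|_{\partial\widehat K}$ as an $H^1(\partial\widehat K)$ function with $|\psi|_{H^1(\partial\widehat K)}\le\|{\mathbf w}\|_{L^2(\partial\widehat K)}\lesssim\|{\mathbf w}\|_{{\mathbf H}^1(\widehat K)}\lesssim\|\operatorname{\mathbf{curl}}{\mathbf u}\|_{L^2(\widehat K)}$. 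Replacing $\psi$ by $\psi$ minus its average over $\partial\widehat K$ (which leaves $\nabla\psi$, hence the decomposition, unchanged) and using a Poincar\'e inequality on the connected compact Lipschitz manifold $\partial\widehat K$ together with the embedding $H^1(\partial\widehat K)\hookrightarrow H^{1/2}(\partial\widehat K)$, I obtain $\|\psi\|_{H^{1/2}(\partial\widehat K)}\lesssim\|\operatorname{\mathbf{curl}}{\mathbf u}\|_{L^2(\widehat K)}$.

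Finally I would split $\psi=\psi_0+\mathcal{L}^{\operatorname{grad},3d}(\psi|_{\partial\widehat K})$ with $\psi_0\in\mathring W_{p+1}(\widehat K)$ (using polynomial preservation of the lifting), test the identity ${\mathbf u}=\nabla\psi_0+\nabla\mathcal{L}^{\operatorname{grad},3d}(\psi|_{\partial\widehat K})+{\mathbf w}$ against ${\mathbf u}$, observe that $({\mathbf u},\nabla\psi_0)_{L^2(\widehat K)}=0$ by the orthogonality assumed in (\ref{item:lemma:discrete-friedrichs-3d-ii}), and bound the remaining terms by $\|{\mathbf u}\|_{L^2(\widehat K)}\bigl(\|\nabla\mathcal{L}^{\operatorname{grad},3d}(\psi|_{\partial\widehat K})\|_{L^2(\widehat K)}+\|{\mathbf w}\|_{L^2(\widehat K)}\bigr)\lesssim\|{\mathbf u}\|_{L^2(\widehat K)}\|\operatorname{\mathbf{curl}}{\mathbf u}\|_{L^2(\widehat K)}$, where $\mathcal{L}^{\operatorname{grad},3d}1=1$ is what lets one pass from $\|\psi|_{\partial\widehat K}\|_{H^{1/2}}$ to the seminorm bound above. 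Dividing by $\|{\mathbf u}\|_{L^2(\widehat K)}$ then gives (\ref{eq:lemma:discrete-friedrichs-3d}). The main obstacle I anticipate is essentially technical: confirming that the three-dimensional $H^1$-lifting from $\partial\widehat K$ simultaneously enjoys polynomial preservation, $p$-robust $H^{1/2}$-stability, and constant preservation, and handling with care the broken-versus-genuine $H^1(\partial\widehat K)$ regularity of $\psi|_{\partial\widehat K}$ and the surface-gradient identity across the edges of $\widehat K$.
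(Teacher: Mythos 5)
Your proposal is correct, but it does more work than the paper, which offers no proof at all for this lemma: both cases (\ref{item:lemma:discrete-friedrichs-3d-i}) and (\ref{item:lemma:discrete-friedrichs-3d-ii}) are attributed wholesale to \cite[Lemma~5.1]{demkowicz08} in the lemma heading, in contrast to the 2D situation (Lemma~\ref{lemma:discrete-friedrichs}), where only item (i) is cited and item (ii) is proved in the paper. What you propose for case (\ref{item:lemma:discrete-friedrichs-3d-ii}) is precisely the 3D transplant of that 2D proof, and it goes through: the decomposition ${\mathbf u}=\nabla\psi+{\mathbf w}$ with ${\mathbf w}={\mathbf R}^{\operatorname{curl}}(\operatorname{\mathbf{curl}}{\mathbf u})\in{\mathbf Q}_p(\widehat K)$ and $\psi=R^{\operatorname{grad}}({\mathbf u}-{\mathbf w})\in W_{p+1}(\widehat K)$ is legitimate by Lemma~\ref{lemma:mcintosh}, (\ref{item:lemma:mcintosh-i}), (\ref{item:lemma:mcintosh-ii}), (\ref{item:lemma:mcintosh-iv}), (\ref{item:lemma:mcintosh-v}), (\ref{item:lemma:mcintosh-vii}), with $p$-independent constants since these are fixed continuous operators; the vanishing tangential trace gives $\nabla_{\partial\widehat K}\psi=-\Pi_\tau{\mathbf w}$, and your worry about broken versus genuine $H^1(\partial\widehat K)$ regularity is moot because $\psi$ is the trace of a single polynomial on $\widehat K$, hence smooth facewise and continuous across edges; the required polynomial-preserving, $p$-uniformly bounded lifting $H^{1/2}(\partial\widehat K)\to H^1(\widehat K)$ is exactly what \cite{munoz-sola97,demkowicz-gopalakrishnan-schoeberl-I} provide (and what the paper itself invokes in the proof of Theorem~\ref{thm:diagram-commutes}); and constant preservation of that lifting is a convenience you do not actually need, since after subtracting the boundary average $\bar\psi$ you may lift $(\psi-\bar\psi)|_{\partial\widehat K}$, whose $H^{1/2}(\partial\widehat K)$ norm is already controlled by $|\psi|_{H^1(\partial\widehat K)}\lesssim\Vert\operatorname{\mathbf{curl}}{\mathbf u}\Vert_{L^2(\widehat K)}$, while $\nabla(\psi-\bar\psi)=\nabla\psi$ leaves the final testing argument against ${\mathbf u}$ unchanged. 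In short, the paper buys brevity by citation; your route buys a self-contained argument consistent with the paper's own 2D technique, at the cost of the (standard) 3D lifting ingredient.
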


\begin{lemma}
[discrete Friedrichs inequality for $\mathbf{H}(\operatorname*{div})$]

\label{lemma:discrete-friedrichs-div} There exists $C > 0$ independent of $p$
and ${\mathbf{u}}$ such that
\begin{equation}
\label{eq:lemma:discrete-friedrichs-div-3d}\|{\mathbf{u}}\|_{L^{2}(\widehat{K})} \leq C
\|\operatorname{div} {\mathbf{u}}\|_{L^{2}(\widehat{K})}%
\end{equation}
in the following two cases:

\begin{enumerate}
[(i)]

\item \label{item:lemma:discrete-friedrichs-div-i} ${\mathbf{u}}\in
\mathbf{V}_{p}(\widehat{K})$ satisfies $({\mathbf{u}%
},\operatorname*{\mathbf{curl}} \mathbf{v})_{L^{2}(\widehat{K})}=0$ for all $\mathbf{v}\in \mathbf{Q}_{p}(\widehat{K})$,

\item \label{item:lemma:discrete-friedrichs-div-ii} ${\mathbf{u}} \in
\mathring{\mathbf{V}}_p(\widehat{K})$ satisfies $({\mathbf{u}}, \operatorname*{\mathbf{curl}} \mathbf{v})_{L^{2}(\widehat{K})} = 0$
for all $\mathbf{v} \in\mathring{\mathbf{Q}}_{p,\perp}(\widehat{K})$.
\end{enumerate}
\end{lemma}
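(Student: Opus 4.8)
The plan is to prove both inequalities by imitating the proof of the $\mathbf{H}(\operatorname{\mathbf{curl}})$-version, Lemma~\ref{lemma:discrete-friedrichs-3d}, and of its two-dimensional analogue Lemma~\ref{lemma:discrete-friedrichs}, with the roles of $\operatorname{\mathbf{curl}}$ and $\operatorname{div}$ interchanged and the regularized right inverses $\mathbf{R}^{\operatorname{div}}$, $\mathbf{R}^{\operatorname{curl}}$ of Lemma~\ref{lemma:mcintosh} in place of the two-dimensional ones. In both cases the idea is to split ${\mathbf u}$ into its regularized divergence part $\mathbf{R}^{\operatorname{div}}(\operatorname{div}{\mathbf u})$ and a polynomial divergence-free remainder, to represent the remainder as $\operatorname{\mathbf{curl}}$ of a polynomial potential in $\mathbf{Q}_p(\widehat K)$, and to annihilate the $\operatorname{\mathbf{curl}}$-term via the orthogonality hypothesis, so that $\|{\mathbf u}\|_{L^2(\widehat K)}^2$ is bounded by $\|{\mathbf u}\|_{L^2(\widehat K)}$ times a quantity controlled by $\|\operatorname{div}{\mathbf u}\|_{L^2(\widehat K)}$.

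For (i), set $\mathbf{w}:=\mathbf{R}^{\operatorname{div}}(\operatorname{div}{\mathbf u})$. By Lemma~\ref{lemma:mcintosh}, (\ref{item:lemma:mcintosh-iii}), (\ref{item:lemma:mcintosh-vi}), (\ref{item:lemma:mcintosh-vii}) we have $\operatorname{div}\mathbf{w}=\operatorname{div}{\mathbf u}$, $\mathbf{w}\in\mathbf{V}_p(\widehat K)$ and $\|\mathbf{w}\|_{L^2(\widehat K)}\lesssim\|\operatorname{div}{\mathbf u}\|_{L^2(\widehat K)}$. Hence ${\mathbf u}-\mathbf{w}\in\mathbf{V}_p(\widehat K)$ is divergence-free, so that $\boldsymbol\psi:=\mathbf{R}^{\operatorname{curl}}({\mathbf u}-\mathbf{w})\in\mathbf{Q}_p(\widehat K)$ and $\operatorname{\mathbf{curl}}\boldsymbol\psi={\mathbf u}-\mathbf{w}$ by Lemma~\ref{lemma:mcintosh}, (\ref{item:lemma:mcintosh-i}), (\ref{item:lemma:mcintosh-v}). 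Testing the assumed orthogonality with $\mathbf{v}=\boldsymbol\psi$ gives $({\mathbf u},\operatorname{\mathbf{curl}}\boldsymbol\psi)_{L^2(\widehat K)}=0$, whence $\|{\mathbf u}\|_{L^2(\widehat K)}^2=({\mathbf u},\operatorname{\mathbf{curl}}\boldsymbol\psi)_{L^2(\widehat K)}+({\mathbf u},\mathbf{w})_{L^2(\widehat K)}=({\mathbf u},\mathbf{w})_{L^2(\widehat K)}\le\|{\mathbf u}\|_{L^2(\widehat K)}\|\mathbf{w}\|_{L^2(\widehat K)}$, which is the claim.

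For (ii) the same decomposition ${\mathbf u}=\operatorname{\mathbf{curl}}\boldsymbol\psi+\mathbf{w}$ with $\mathbf{w}=\mathbf{R}^{\operatorname{div}}(\operatorname{div}{\mathbf u})$ and $\boldsymbol\psi=\mathbf{R}^{\operatorname{curl}}({\mathbf u}-\mathbf{w})\in\mathbf{Q}_p(\widehat K)$ is used, but the boundary contribution must now be handled. Since $\mathbf{n}\cdot{\mathbf u}=0$ on $\partial\widehat K$, the function $g:=\mathbf{n}\cdot\mathbf{w}|_{\partial\widehat K}$ is a face-wise polynomial with $\int_{\partial\widehat K}g=\int_{\widehat K}\operatorname{div}\mathbf{w}=\int_{\widehat K}\operatorname{div}{\mathbf u}=0$ and $\|g\|_{H^{-1/2}(\partial\widehat K)}\lesssim\|\mathbf{w}\|_{\mathbf{H}(\widehat K,\operatorname{div})}\lesssim\|\operatorname{div}{\mathbf u}\|_{L^2(\widehat K)}$, and moreover $\mathbf{n}\cdot\operatorname{\mathbf{curl}}\boldsymbol\psi|_{\partial\widehat K}=-g$. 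The key step is to produce a $p$-uniform, polynomial, \emph{divergence-free} normal-trace lifting $\mathbf{w}_1\in\mathbf{V}_p(\widehat K)$ of $g$, i.e. $\operatorname{div}\mathbf{w}_1=0$, $\mathbf{n}\cdot\mathbf{w}_1|_{\partial\widehat K}=g$, and $\|\mathbf{w}_1\|_{L^2(\widehat K)}\lesssim\|g\|_{H^{-1/2}(\partial\widehat K)}$; I would extract such a lifting from the polynomial extension operators on the tetrahedron of \cite{demkowicz-gopalakrishnan-schoeberl-I,demkowicz-gopalakrishnan-schoeberl-II,demkowicz-gopalakrishnan-schoeberl-III}, by extending a polynomial surface-curl potential of $g$ into $\mathbf{H}(\widehat K,\operatorname{\mathbf{curl}})$ and taking $\operatorname{\mathbf{curl}}$, using that these operators commute with the differential operators of the exact sequence. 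Writing $\mathbf{w}_1=\operatorname{\mathbf{curl}}\boldsymbol\xi$ with $\boldsymbol\xi\in\mathbf{Q}_p(\widehat K)$ (Lemma~\ref{lemma:mcintosh}, (\ref{item:lemma:mcintosh-i}), (\ref{item:lemma:mcintosh-v})), the field $\operatorname{\mathbf{curl}}(\boldsymbol\psi+\boldsymbol\xi)$ lies in $\mathring{\mathbf{V}}_p(\widehat K)$ and is divergence-free, hence equals $\operatorname{\mathbf{curl}}\boldsymbol\eta$ for some $\boldsymbol\eta\in\mathring{\mathbf{Q}}_p(\widehat K)$ by exactness of the first line of (\ref{eq:commuting-diagram-bc}); since $\mathring{\mathbf{Q}}_p(\widehat K)=\nabla\mathring{W}_{p+1}(\widehat K)\oplus^{L^2}\mathring{\mathbf{Q}}_{p,\perp}(\widehat K)$ and the first summand has vanishing curl, we may assume $\boldsymbol\eta\in\mathring{\mathbf{Q}}_{p,\perp}(\widehat K)$. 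Then $\operatorname{\mathbf{curl}}\boldsymbol\psi=\operatorname{\mathbf{curl}}\boldsymbol\eta-\mathbf{w}_1$, so ${\mathbf u}=\operatorname{\mathbf{curl}}\boldsymbol\eta+(\mathbf{w}-\mathbf{w}_1)$, the orthogonality hypothesis annihilates $({\mathbf u},\operatorname{\mathbf{curl}}\boldsymbol\eta)_{L^2(\widehat K)}$, and
\[
\|{\mathbf u}\|_{L^2(\widehat K)}^2=({\mathbf u},\operatorname{\mathbf{curl}}\boldsymbol\eta)_{L^2(\widehat K)}+({\mathbf u},\mathbf{w}-\mathbf{w}_1)_{L^2(\widehat K)}=({\mathbf u},\mathbf{w}-\mathbf{w}_1)_{L^2(\widehat K)}\le\|{\mathbf u}\|_{L^2(\widehat K)}\,\|\mathbf{w}-\mathbf{w}_1\|_{L^2(\widehat K)},
\]
which gives the claim because $\|\mathbf{w}\|_{L^2(\widehat K)}+\|\mathbf{w}_1\|_{L^2(\widehat K)}\lesssim\|\operatorname{div}{\mathbf u}\|_{L^2(\widehat K)}$.

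The main obstacle is the construction of the $p$-uniform polynomial divergence-free normal-trace lifting $\mathbf{w}_1$ used in (ii); everything else is routine bookkeeping with the right inverses of Lemma~\ref{lemma:mcintosh} and the exact sequences \eqref{eq:commuting-diagram-bc}. In the write-up I would isolate this lifting as an auxiliary lemma, citing \cite{demkowicz-gopalakrishnan-schoeberl-I,demkowicz-gopalakrishnan-schoeberl-II,demkowicz-gopalakrishnan-schoeberl-III}, exactly as the two-dimensional proof in Lemma~\ref{lemma:discrete-friedrichs} isolates the scalar polynomial-preserving lifting $\mathcal{L}^{\operatorname{grad},2d}$.
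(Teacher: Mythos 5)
Your part (i) is a complete and correct direct argument: the splitting $\mathbf{u}=\operatorname{\mathbf{curl}}\boldsymbol\psi+\mathbf{R}^{\operatorname{div}}(\operatorname{div}\mathbf{u})$ with $\boldsymbol\psi\in\mathbf{Q}_p(\widehat K)$ via Lemma~\ref{lemma:mcintosh} and the orthogonality hypothesis give the bound immediately. Note, however, that the paper's own proof is much lighter: it quotes \cite[Lemma~5.2]{demkowicz08} both for (i) and for the variant of (ii) in which the orthogonality is imposed against all of $\mathring{\mathbf{Q}}_p(\widehat K)$, and then reduces case (ii) to that variant by splitting an arbitrary test function $\mathbf{v}\in\mathring{\mathbf{Q}}_p(\widehat K)$ into its $L^2$-projection onto $\nabla\mathring{W}_{p+1}(\widehat K)$ (which is curl-free) and a remainder in $\mathring{\mathbf{Q}}_{p,\perp}(\widehat K)$ --- the very same splitting you use at the end of your argument, only applied to the test function rather than to the potential $\boldsymbol\eta$. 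So the whole lemma is a two-line reduction plus a citation, whereas you set out to reprove the substance from scratch.

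The genuine issue is the step you yourself flag in (ii): the $p$-uniform, \emph{divergence-free}, polynomial normal-trace lifting $\mathbf{w}_1\in\mathbf{V}_p(\widehat K)$ with $\|\mathbf{w}_1\|_{L^2(\widehat K)}\lesssim\|g\|_{H^{-1/2}(\partial\widehat K)}$. This is not routine bookkeeping; it is the hard core of the statement. Your sketch (write $g=\operatorname{curl}_{\partial\widehat K}\boldsymbol\tau$ for a polynomial tangential field and take $\operatorname{\mathbf{curl}}$ of an $\mathbf{H}(\widehat K,\operatorname{\mathbf{curl}})$ extension of $\boldsymbol\tau$) only closes if either (a) you invoke the commutativity of the extension operators of \cite{demkowicz-gopalakrishnan-schoeberl-II,demkowicz-gopalakrishnan-schoeberl-III}, i.e.\ $\operatorname{\mathbf{curl}}\boldsymbol{\mathcal{E}}^{\operatorname{curl}}\boldsymbol\tau=\boldsymbol{\mathcal{E}}^{\operatorname{div}}(\operatorname{curl}_{\partial\widehat K}\boldsymbol\tau)$, so that the resulting bound depends on $g$ only through $\|g\|_{H^{-1/2}(\partial\widehat K)}$, together with exactness of the discrete trace complex on $\partial\widehat K$ to guarantee that a polynomial potential $\boldsymbol\tau$ exists at all; or (b) you construct a polynomial $\boldsymbol\tau$ with a $p$-uniform bound $\|\boldsymbol\tau\|_{\mathbf{H}^{-1/2}(\partial\widehat K)}\lesssim\|g\|_{H^{-1/2}(\partial\widehat K)}$, which is itself a discrete surface statement of essentially the same depth as the Friedrichs inequality you are proving. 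Route (a) is viable --- it is in essence what \cite[Lemma~5.2]{demkowicz08} carries out --- but as written your proposal leaves the crux at the level of ``extract from the literature'', i.e.\ asserted rather than proved. Also be aware that you cannot shortcut via Lemma~\ref{lemma:lifting-operator-div}: its proof uses the present lemma, so that would be circular. Since the needed result is precisely what \cite[Lemma~5.2]{demkowicz08} supplies, the efficient repair is the paper's: cite it for the strong form of (ii) and keep only your final projection-onto-gradients argument as the reduction.
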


\begin{proof}
The statement (\ref{item:lemma:discrete-friedrichs-div-i}) is taken from \cite[Lemma~{5.2}]{demkowicz08}. 
It is also shown in \cite[Lemma~{5.2}]{demkowicz08} that the Friedrichs inequality 
(\ref{eq:lemma:discrete-friedrichs-div-3d}) holds for all ${\mathbf u}$ satisfying 
\begin{align}
\label{eq:item:lemma-discrete-friedrichs-div-ii-alternative}
{\mathbf{u}} \in
\mathring{\mathbf{V}}_p(\widehat{K}) 
\text{ satisfies } ({\mathbf{u}}, \operatorname*{\mathbf{curl}} \mathbf{v})_{L^{2}(\widehat{K})} = 0 
\text{ for all } \mathbf{v} \in \mathring{{\mathbf Q}}_{p}(\widehat K).
\end{align}
To see that the condition 
(\ref{item:lemma:discrete-friedrichs-div-ii}) in Lemma~\ref{lemma:discrete-friedrichs-div} 
suffices, assume that ${\mathbf u}$ satisfies the condition  
(\ref{item:lemma:discrete-friedrichs-div-ii}) in Lemma~\ref{lemma:discrete-friedrichs-div} and 
write ${\mathbf v} \in \mathring{\mathbf Q}_p(\widehat{K})$ as 
 $\mathbf{v}=\Pi_{\nabla \mathring{W}_{p+1}}\mathbf{v} + (\mathbf{v}-\Pi_{\nabla \mathring{W}_{p+1}}\mathbf{v})$, where $\Pi_{\nabla \mathring{W}_{p+1}}$ denotes the $L^2$-projection onto $\nabla \mathring{W}_{p+1}(\widehat{K}) \subset \mathring{\mathbf{Q}}_p(\widehat{K})$. Then observe that ${\mathbf v} - \Pi_{\nabla \mathring{W}_{p+1}}\mathbf{v} \in 
\mathring{\mathbf{Q}}_{p,\perp}(\widehat K)$ so that 
\begin{align*}
(\mathbf{u},\operatorname*{\mathbf{curl}}\mathbf{v})_{L^2(\widehat{K})} = 
(\mathbf{u},\underbrace{\operatorname*{\mathbf{curl}}(\Pi_{\nabla \mathring{W}_{p+1}}\mathbf{v}}_
                       {=0}                          )
)_{L^2(\widehat{K})} 
+ \underbrace{ (\mathbf{u},\operatorname*{\mathbf{curl}}(\mathbf{v}-\Pi_{\nabla \mathring{W}_{p+1}}\mathbf{v})
               )_{L^2(\widehat{K})}
             }_{=0
   \text{ since ${\mathbf v} - \Pi_{\nabla \mathring{W}_{p+1}} \mathbf {v} \in \mathring{\mathbf Q}_{p,\perp}(\widehat{K})$}}  = 0; 
\end{align*}
hence, ${\mathbf u}$ satisfies in fact 
(\ref{eq:item:lemma-discrete-friedrichs-div-ii-alternative}). Thus,
it satisfies the Friedrichs inequality  (\ref{eq:lemma:discrete-friedrichs-div-3d}).  
\end{proof}

\subsection{Stability of the operator $\protect\hatPigradcom$}

The three-dimensional analog of Theorem~\ref{lemma:demkowicz-grad-2D} is: 

\begin{theorem}
\label{lemma:demkowicz-grad-3D}
Assume that all interior angles of the 4 faces of $\widehat K$ are smaller than $2\pi/3$. Then, 
for every $s\in [0,1]$ 
there is $C_s > 0$ such that 
for all $u\in H^2(\widehat{K})$
\begin{subequations}
\begin{align}
\label{eq:lemma:demkowicz-grad-3D-10}
\Vert u-\hatPigradcom u\Vert_{H^{1-s}(\widehat{K})}&\leq
C_{s}p^{-(1+s)} \inf_{v \in W_{p+1}(\widehat K)} \Vert u -v\Vert_{H^{2}(\widehat{K})}, \\
\label{eq:lemma:demkowicz-grad-3D-20}
\Vert \nabla(u-\hatPigradcom u)\Vert_{\widetilde{\mathbf{H}}^{-s}(\widehat{K})}&\leq
C_{s}p^{-(1+s)} \inf_{v \in W_{p+1}(\widehat K)} \Vert u -v\Vert_{H^{2}(\widehat{K})}.
\end{align}
\end{subequations}
Additionally, \eqref{eq:lemma:demkowicz-grad-3D-10} holds for  $s = 0$ without the conditions on the 
angles of the faces of $\widehat K$. 
\end{theorem}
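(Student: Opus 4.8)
The plan is to mirror the two-dimensional proof of Theorem~\ref{lemma:demkowicz-grad-2D} step by step, using the 3D counterparts of the tools that were assembled in the preceding sections. By the projection property of $\hatPigradcom$ it suffices to prove both estimates with $v=0$ in the infimum, i.e.\ with $\inf_{v}\|u-v\|_{H^2(\widehat K)}$ replaced by $\|u\|_{H^2(\widehat K)}$. The first task is the case $s=0$ of \eqref{eq:lemma:demkowicz-grad-3D-10}, which is the $p$-uniform $H^1$-stability statement $\|\hatPigradcom u\|_{H^1(\widehat K)}\lesssim\|u\|_{H^2(\widehat K)}$. This is obtained exactly as in the 2D case: Lemma~\ref{lemma:Pi_grad-well-defined} gives $u|_e\in H^1(e)$; Lemma~\ref{lemma:demkowicz-grad-1D} controls the edge errors $\|u-\hatPigradcom u\|_{H^{1-s}(e)}\lesssim p^{-s}\|u\|_{H^2(\widehat K)}$ for $s\in[0,1]$; then a two-dimensional application of Theorem~\ref{lemma:demkowicz-grad-2D} (the face operator $\hatPigradcomtwod$ is the restriction of $\hatPigradcom$ to a face, by the remark on relation of 2D and 3D) bounds the face errors $\|u-\hatPigradcom u\|_{H^{1-s}(f)}\lesssim p^{-s-1/2}\|u\|_{H^2(\widehat K)}$; combining these yields $\|u-\hatPigradcom u\|_{H^{1/2}(\partial\widehat K)}\lesssim p^{-1}\|u\|_{H^2(\widehat K)}$ and $\|u-\hatPigradcom u\|_{L^2(\partial\widehat K)}\lesssim p^{-3/2}\|u\|_{H^2(\widehat K)}$ (so $\|u-\hatPigradcom u\|_{H^{1-s}(\partial\widehat K)}\lesssim p^{-(1/2+s)}\|u\|_{H^2(\widehat K)}$ for $s\in[0,1]$ by interpolation). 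Finally, writing $\delta_p:=P^{\operatorname{grad},3d}u-\hatPigradcom u$, which is discrete harmonic on $\widehat K$, one uses a continuous polynomial-preserving lifting ${\mathcal L}^{\operatorname{grad},3d}:H^{1/2}(\partial\widehat K)\to H^1(\widehat K)$ (from \cite{munoz-sola97,demkowicz-gopalakrishnan-schoeberl-I}, already invoked in the proof of Theorem~\ref{thm:diagram-commutes}) to get $|\delta_p|_{H^1(\widehat K)}\lesssim|\delta_p|_{H^{1/2}(\partial\widehat K)}\lesssim\|u-P^{\operatorname{grad},3d}u\|_{H^1(\widehat K)}+p^{-1}\|u\|_{H^2(\widehat K)}$, and combining with Lemma~\ref{lemma:Pgrad3d} ($r=2$) gives $|u-\hatPigradcom u|_{H^1(\widehat K)}\lesssim p^{-1}\|u\|_{H^2(\widehat K)}$; a Poincar\'e/Friedrichs argument using the vertex-interpolation conditions \eqref{eq:Pi_grad-d} promotes this to the full $H^1$-norm.

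Next I would prove \eqref{eq:lemma:demkowicz-grad-3D-20} (and \eqref{eq:lemma:demkowicz-grad-3D-10} for $s=1$) by a duality argument, again paralleling the 2D proof. Set $\widetilde e:=u-\hatPigradcom u$ and estimate $\|\nabla\widetilde e\|_{\widetilde{\mathbf H}^{-s}(\widehat K)}=\sup_{{\mathbf v}\in{\mathbf H}^s(\widehat K)}(\nabla\widetilde e,{\mathbf v})_{L^2(\widehat K)}/\|{\mathbf v}\|_{{\mathbf H}^s(\widehat K)}$. Using Lemma~\ref{lemma:helmholtz-like-decomp-v2}, decompose ${\mathbf v}=\nabla\varphi+\operatorname{\mathbf{curl}}{\mathbf z}$ with $\varphi\in H^{s+1}(\widehat K)\cap H^1_0(\widehat K)$, ${\mathbf z}\in{\mathbf H}^{s+1}(\widehat K)$ and control of their norms by $\|{\mathbf v}\|_{{\mathbf H}^s(\widehat K)}$; note the restriction $s\in[0,1]$ here, which is exactly why the 3D theorem is limited to $s\le1$. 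Integration by parts gives $(\nabla\widetilde e,{\mathbf v})_{L^2(\widehat K)}=(\nabla\widetilde e,\nabla\varphi)_{L^2(\widehat K)}+(\nabla\widetilde e,\operatorname{\mathbf{curl}}{\mathbf z})_{L^2(\widehat K)}$, and the second term is a boundary term $(\gamma_\tau\nabla\widetilde e,\Pi_\tau{\mathbf z})_{L^2(\partial\widehat K)}$, i.e.\ it lives on the faces and reduces, via a further integration by parts on each face, to surface-gradient contributions on faces and edges. For the first, interior term I would use the orthogonality \eqref{eq:Pi_grad-a}, best approximation of $\varphi$ by $\mathring W_{p+1}(\widehat K)$ from Lemma~\ref{lemma:Pgrad1d}, and the $s=0$ bound just proved, getting a factor $p^{-(1+s)}$. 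For the boundary term I would use \eqref{eq:Pi_grad-b} and the 2D analysis on faces (Theorem~\ref{lemma:demkowicz-grad-2D}, the negative-norm estimate \eqref{eq:lemma:demkowicz-grad-2D-10-c}, applied with the 2D parameter there $\le1$), plus Lemma~\ref{lemma:demkowicz-grad-1D} on the edges together with \eqref{eq:Pi_grad-c}; trace estimates from ${\mathbf z}\in{\mathbf H}^{s+1}(\widehat K)$ to faces and edges provide the needed boundary regularity. Assembling these yields $\|\nabla\widetilde e\|_{\widetilde{\mathbf H}^{-s}(\widehat K)}\lesssim p^{-(1+s)}\|u\|_{H^2(\widehat K)}$ for $s\in[0,1]$. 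Since $\|u-\hatPigradcom u\|_{H^{1-s}(\widehat K)}\sim\|u-\hatPigradcom u\|_{L^2(\widehat K)}+\|\nabla(u-\hatPigradcom u)\|_{H^{-s}(\widehat K)}$-type considerations are not quite direct, I would instead obtain \eqref{eq:lemma:demkowicz-grad-3D-10} for $s=1$ by a separate but completely analogous scalar duality argument (solving $-\Delta z=v$ in $\widehat K$, $z=0$ on $\partial\widehat K$, which needs the convexity/angle condition to have the $H^{s+2}$ shift; this is where the hypothesis that all interior angles of the four faces are $<2\pi/3$ enters), then fill in $s\in(0,1)$ by interpolation between $s=0$ and $s=1$, exactly as in the 3rd and 5th steps of the proof of Theorem~\ref{lemma:demkowicz-grad-2D}.

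Finally, the addendum that \eqref{eq:lemma:demkowicz-grad-3D-10} holds for $s=0$ without any condition on the face angles is immediate, because the $s=0$ argument sketched above uses only the $H^1$-stability of $\hatPigradcom$, the 1D Lemma~\ref{lemma:demkowicz-grad-1D}, the $s=0$ part of the 2D Theorem~\ref{lemma:demkowicz-grad-2D} (whose $s=0$ case holds for arbitrary triangles), the polynomial lifting from $\partial\widehat K$, and Lemma~\ref{lemma:Pgrad3d} — none of which requires a shift theorem on $\widehat K$ or any angle restriction.

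I expect the main obstacle to be the careful bookkeeping in the boundary term $(\gamma_\tau\nabla\widetilde e,\Pi_\tau{\mathbf z})_{L^2(\partial\widehat K)}$ of the duality step: one must peel it face by face, on each face integrate by parts once more to separate a surface-gradient/face-interior piece (handled by \eqref{eq:Pi_grad-b} and the 2D negative-norm estimate) from an edge piece (handled by \eqref{eq:Pi_grad-c} and Lemma~\ref{lemma:demkowicz-grad-1D}), and along the way verify that the traces of ${\mathbf z}\in{\mathbf H}^{s+1}(\widehat K)$ onto faces and edges are regular enough (of the order $s+1/2$ on faces, $s$ on edges) to pair against the respective negative-order norms of $\widetilde e$ — i.e.\ that the exponents add up to give exactly $p^{-(1+s)}$. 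The matching of $p$-powers across the three geometric levels (interior, face, edge), with the half-power gains coming from the reduced dimension, is routine but must be done with care; everything else is a direct transcription of the 2D proof.
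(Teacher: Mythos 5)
Your architecture coincides with the paper's: reduce to $v=0$ by the projection property; prove the $s=0$ case of \eqref{eq:lemma:demkowicz-grad-3D-10} from face estimates (Theorem~\ref{lemma:demkowicz-grad-2D}), continuity on $\partial\widehat K$, discrete harmonicity of $P^{\operatorname*{grad},3d}u-\hatPigradcom u$, the polynomial-preserving lifting of \cite{munoz-sola97}, and Lemma~\ref{lemma:Pgrad3d}; then dualize for $s=1$ using the decomposition of Lemma~\ref{lemma:helmholtz-like-decomp-v2} and interpolate. However, there is a genuine gap in your treatment of the boundary terms in both duality arguments, and it is exactly the point of the theorem. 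After integrating by parts, the face contributions must be paired as $|(\nabla_f\widetilde e,\gamma_\tau{\mathbf z})_{L^2(f)}|\le\|\nabla_f\widetilde e\|_{\widetilde{\mathbf H}^{-3/2}(f)}\|\gamma_\tau{\mathbf z}\|_{{\mathbf H}^{3/2}(f)}$ (and, for the scalar $L^2$ duality, $|(\partial_n z,\widetilde e)_{L^2(f)}|\le\|\partial_n z\|_{H^{1/2}(f)}\|\widetilde e\|_{\widetilde H^{-1/2}(f)}$), so at $s=1$ you need the two-dimensional estimates \eqref{eq:lemma:demkowicz-grad-2D-10-b}, \eqref{eq:lemma:demkowicz-grad-2D-10-c} with the 2D parameter equal to $3/2$, i.e.\ in their extended range $[1,\widehat s)$ — this is precisely where the hypothesis that all face angles are $<2\pi/3$ (equivalently $\widehat s>3/2$) is consumed. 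Your plan explicitly restricts to ``2D parameter $\le 1$'', which only yields $p^{-3/2}$ on the faces and cannot produce the rate $p^{-2}$; and your alternative of integrating by parts once more on each face and invoking Lemma~\ref{lemma:demkowicz-grad-1D} on edges is nothing but re-proving the 2D negative-norm estimate at parameter $3/2$, which again needs the face shift theorem of Lemma~\ref{lemma:shift-theorem} with index $1/2<\widehat s-1$, i.e.\ the same angle condition. In the paper no separate edge-level bookkeeping is needed in 3D at all: the 2D face theorem already encodes it.

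Relatedly, you misplace where the angle hypothesis enters: you attribute it to an ``$H^{s+2}$ shift'' for the 3D dual problem $-\Delta z=v$, $z|_{\partial\widehat K}=0$. The tetrahedron is convex, so the only shift used in 3D is the unconditional $H^2$ regularity; no angle restriction is needed there. The restriction is used solely to apply Theorem~\ref{lemma:demkowicz-grad-2D} with parameter $3/2$ on each face — which is also why \eqref{eq:lemma:demkowicz-grad-3D-10} at $s=0$ (and, in fact, the limitation of the 3D result to $s\le 1$, which comes from Lemma~\ref{lemma:helmholtz-like-decomp-v2}, as you correctly note) is independent of the face angles. With the face pairings corrected to the $\pm 3/2$ and $\pm 1/2$ levels and the angle condition relocated accordingly, your outline becomes the paper's proof.
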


\begin{proof}
The proof proceeds along the same lines as in the 2D case. First, we observe from the 
projection property of $\hatPigradcom$ that it suffices to show 
(\ref{eq:lemma:demkowicz-grad-3D-10}), \eqref{eq:lemma:demkowicz-grad-3D-20} with $v = 0$ in the infimum. 
Next, the trace theorem implies $u|_{f} \in H^{3/2}(f)$ for every face $f \in {\mathcal F}(\widehat K)$. 
{}From Theorem~\ref{lemma:demkowicz-grad-2D} we get, for every face $f\in{\mathcal{F}%
}(\widehat{K})$ and $s\in\lbrack0,1]$,
\begin{equation}
\Vert u-\hatPigradcom u\Vert_{H^{1-s}(f)}\leq Cp^{-(1/2+s)}%
\Vert u\Vert_{H^{2}(\widehat{K})}.
\end{equation}
Since  $u-\hatPigradcom u\in
C(\partial\widehat{K})$, we conclude
\begin{equation}
\Vert u-\hatPigradcom u\Vert_{H^{1-s}(\partial \widehat{K})}\leq
Cp^{-(1/2+s)}\Vert u\Vert_{H^{2}(\widehat{K})}
\label{eq:lemma:demkowicz-grad-3D-120}%
\end{equation}
for $s\in\{0,1\}$ and then, by interpolation for all $s\in\lbrack0,1]$. 
Next, we show \eqref{eq:lemma:demkowicz-grad-3D-10} for $s=0$ (from which \eqref{eq:lemma:demkowicz-grad-3D-20} for $s=0$ follows). As in the 2D case, we use 
Lemma~\ref{lemma:Pgrad3d}, the estimate \eqref{eq:lemma:demkowicz-grad-3D-120}, 
the fact that $P^{\operatorname{grad},3d} u - \hatPigradcom u$ is discrete harmonic, and 
the polynomial preserving lifting of \cite{munoz-sola97}, to arrive at 
\begin{align}
\nonumber |u-\hatPigradcom u|_{H^{1}(\widehat{K})} &\leq |u-P^{\operatorname*{grad},3d}u|_{H^1(\widehat{K})} + |P^{\operatorname*{grad},3d}u-\hatPigradcom u|_{H^1(\widehat{K})} \\
\label{eq:lemma:demkowicz-grad-3D-145} &\lesssim p^{-1} \Vert u\Vert_{H^2(\widehat{K})} + \Vert P^{\operatorname*{grad},3d}u-\hatPigradcom u\Vert_{H^{1/2}(\partial\widehat{K})} \\
\nonumber &\lesssim p^{-1} \Vert u\Vert_{H^2(\widehat{K})} + \Vert u-P^{\operatorname*{grad},3d}u\Vert_{H^1(\widehat{K})} 
\lesssim p^{-1} \Vert u\Vert_{H^2(\widehat{K})}.
\end{align}
The $L^{2}$-estimate, i.e., the case $s = 1$ in (\ref{eq:lemma:demkowicz-grad-3D-10}), 
is obtained by a duality argument: Let $z\in
H^{2}(\widehat{K})\cap H_{0}^{1}(\widehat{K})$ be given by
\[
-\Delta z=\widetilde{e}:=u-\hatPigradcom u\quad
\mbox {on $\widehat K$},\qquad z|_{\partial\widehat{K}}=0.
\]
Integration by parts leads to
\begin{equation}
\Vert\widetilde{e}\Vert_{L^{2}(\widehat{K})}^{2}=\int_{\widehat{K}}\nabla
z\cdot\nabla\widetilde{e}-\int_{\partial\widehat{K}}\partial_{n}%
z\widetilde{e}. \label{eq:lemma:demkowicz-grad-3D-200}%
\end{equation}
For the first term in (\ref{eq:lemma:demkowicz-grad-3D-200}) we use the
orthogonality properties satisfied by $\widetilde{e}$ and \eqref{eq:lemma:demkowicz-grad-3D-145} to get
\begin{equation}
|(\nabla z,\nabla\widetilde{e})_{L^{2}(\widehat{K})}| \leq \operatorname*{inf}_{\pi\in\mathring{W}_{p+1}(\widehat{K})} \Vert z-\pi\Vert_{H^1(\widehat{K})} \Vert\nabla\widetilde{e}\Vert_{L^2(\widehat{K})} \lesssim p^{-1}%
\Vert\widetilde{e}\Vert_{L^{2}(\widehat{K})}\Vert\nabla\widetilde{e}%
\Vert_{L^{2}(\widehat{K})}. \label{eq:lemma:demkowicz-grad-3D-500}%
\end{equation}
For the second term in (\ref{eq:lemma:demkowicz-grad-3D-200}), 
we use Theorem~\ref{lemma:demkowicz-grad-2D} for each face $f \in {\mathcal F}(\widehat K)$. 
The assumptions on the angles of the faces of $\widehat K$ imply that Theorem~\ref{lemma:demkowicz-grad-2D}
is applicable with $s = 3/2$ (since the pertinent $\widehat s > 3/2 = \pi/(2 \pi/3)$) to give
\begin{align}
\label{eq:lemma:demkowicz-grad-3D-510}%
|(\partial_n z,\widetilde{e})_{L^2(f)}| 
&\leq \Vert\partial_n z\Vert_{H^{1/2}(f)} \Vert\widetilde{e}\Vert_{\widetilde{H}^{-1/2}(f)} 
\lesssim p^{-2} \Vert\partial_n z\Vert_{H^{1/2}(f)} \Vert u\Vert_{H^{3/2}(f)}\\
\nonumber 
&\lesssim p^{-2} \Vert\widetilde{e}\Vert_{L^2(\widehat{K})} \Vert u\Vert_{H^2(\widehat{K})}.
\end{align}
Inserting (\ref{eq:lemma:demkowicz-grad-3D-500}), (\ref{eq:lemma:demkowicz-grad-3D-510}) 
in \eqref{eq:lemma:demkowicz-grad-3D-200} gives the desired estimate \eqref{eq:lemma:demkowicz-grad-3D-10} for $s = 1$. 
An interpolation argument completes the proof for the intermediate values $s\in(0,1)$.

We show the estimate \eqref{eq:lemma:demkowicz-grad-3D-20} for $s=1$ by duality. 
Again, we set $\widetilde{e}:=u-\hatPigradcom u$ and need an estimate for
\begin{align}
\label{eq:lemma:demkowicz-grad-3D-700}
\|\nabla\widetilde{e}\|_{\widetilde{\mathbf{H}}^{-1}(\widehat{K})} = \operatorname*{sup}_{\mathbf{v}\in \mathbf{H}^1(\widehat{K})} \frac{(\nabla\widetilde{e},\mathbf{v})_{L^2(\widehat{K})}}{\|\mathbf{v}\|_{\mathbf{H}^1(\widehat{K})}}.
\end{align}
According to Lemma~\ref{lemma:helmholtz-like-decomp-v2}, any $\mathbf{v}\in \mathbf{H}^1(\widehat{K})$ can be decomposed as $\mathbf{v}=\nabla\varphi+\operatorname{\mathbf{curl}} \mathbf{z}$ 
with $\varphi\in H^2(\widehat{K}) \cap H^1_0(\widehat{K})$ and $\mathbf{z}\in \mathbf{H}^2(\widehat{K})$. Integration by parts then gives
\begin{align*}
(\nabla\widetilde{e},\mathbf{v})_{L^2(\widehat{K})} = (\nabla\widetilde{e},\nabla\varphi)_{L^2(\widehat{K})} + (\Pi_\tau \nabla\widetilde{e},\gamma_\tau \mathbf{z})_{L^2(\partial\widehat{K})}.
\end{align*}
For the first term, we use Lemma~\ref{lemma:Pgrad1d} and \eqref{eq:lemma:demkowicz-grad-3D-10} 
(applied with $s=0$) to obtain
\begin{align*}
\bigl| (\nabla\widetilde{e},\nabla\varphi)_{L^2(\widehat{K})}\bigr| \lesssim \|\nabla\widetilde{e}\|_{L^2(\widehat{K})} \operatorname*{inf}_{\pi\in \mathring{W}_{p+1}(\widehat{K})} \|\varphi-\pi\|_{H^1(\widehat{K})} \lesssim p^{-2} \|u\|_{H^2(\widehat{K})} \|\mathbf{v}\|_{\mathbf{H}^1(\widehat{K})},
\end{align*}
imitating \eqref{eq:lemma:demkowicz-grad-3D-500}. To treat the second term, we note that $\mathbf{z}\in \mathbf{H}^2(\widehat{K})$ implies $\mathbf{z}\in \mathbf{H}^{3/2}(f)$ for each face $f\in\mathcal{F}(\widehat{K})$. 
Thus, Theorem~\ref{lemma:demkowicz-grad-2D} is again applicable with $s=3/2$, and we get
\begin{align*}
&\bigl| (\Pi_\tau \nabla\widetilde{e},\gamma_\tau \mathbf{z})_{L^2(f)} \bigr| = 
\bigl| (\nabla_f\widetilde{e},\gamma_\tau \mathbf{z})_{L^2(f)} \bigr| \lesssim \|\nabla_f \widetilde{e}\|_{\widetilde{\mathbf{H}}^{-3/2}(f)} \|\gamma_\tau\mathbf{z}\|_{\mathbf{H}^{3/2}(f)} \\
&\quad \stackrel{\text{Thm.~\ref{lemma:demkowicz-grad-2D}}}{\lesssim} 
p^{-2} \|u\|_{H^{3/2}(f)} \|{\mathbf z}\|_{H^2(\widehat{K})} \lesssim p^{-2} \|u\|_{H^2(\widehat{K})} \|\mathbf{v}\|_{\mathbf{H}^1(\widehat{K})}.
\end{align*}
Inserting the last two estimates in \eqref{eq:lemma:demkowicz-grad-3D-700} yields \eqref{eq:lemma:demkowicz-grad-3D-20} for $s=1$. The estimate \eqref{eq:lemma:demkowicz-grad-3D-20} for $s\in (0,1)$ now follows by interpolation.
\end{proof}

\subsection{Stability of the operator $\protect\hatPicurlcom$}

As in the proof of Lemma~\ref{lemma:Picurl-face}, a key
ingredient is the existence of a polynomial preserving lifting operator from
the boundary to the element with the appropriate
mapping properties and an additional orthogonality property. 
For ${\mathbf{H}}(\widehat{K},\operatorname{\mathbf{curl}})$, 
a lifting operator has been constructed in
\cite{demkowicz-gopalakrishnan-schoeberl-II}. 
We formulate a simplified version
of their results and also explicitly modify that lifting to ensure a convenient orthogonality property.

\begin{lemma}
\label{lemma:Hcurl-lifting} 
Introduce on the trace space $\Pi_{\tau}{\mathbf{H}}(\widehat{K}%
,\operatorname{\mathbf{curl}})$ the norm
\begin{equation}
\Vert{\mathbf{z}}\Vert_{{\mathbf{X}}^{-1/2}}:=\inf\{\Vert{\mathbf{v}}%
\Vert_{{\mathbf{H}}(\widehat{K},\operatorname{\mathbf{curl}})}\,|\,\Pi_{\tau
}{\mathbf{v}}={\mathbf{z}}\}.
\end{equation}
There exists $C >0$ (independent of $p \in {\mathbb N}$) 
and, for each $p \in {\mathbb N}$, 
a lifting operator ${\boldsymbol{\mathcal{L}}}^{\operatorname*{curl}%
,3d}_p:\Pi_\tau {\mathbf Q}_p(\widehat{K}) \rightarrow {\mathbf Q}_p(\widehat K)$
with the following properties:

\begin{enumerate}
[(i)]

\item \label{item:lemma:Hcurl-lifting-i} 
$\Pi_\tau {\boldsymbol{\mathcal{L}}}^{\operatorname*{curl},3d}_p(\Pi_\tau {\mathbf z}) = 
\Pi_\tau {\mathbf z}$ for all ${\mathbf z} \in {\mathbf Q}_p(\widehat K)$. 

\item \label{item:lemma:Hcurl-lifting-ii} There holds $\displaystyle\Vert
{\boldsymbol{\mathcal{L}}}^{\operatorname*{curl},3d}_p{\mathbf{z}}\Vert_{{\mathbf{H}%
}(\widehat{K},\operatorname{\mathbf{curl}})}\leq C\Vert{\mathbf{z}}\Vert_{{\mathbf{X}%
}^{-1/2}}.$

\item \label{item:lemma:Hcurl-lifting-iia} There holds the orthogonality $(\boldsymbol{\mathcal{L}}^{\operatorname*{curl},3d}_p\mathbf{z},\nabla v)_{L^2(\widehat{K})} = 0$ for all $v\in \mathring{W}_{p+1}(\widehat{K})$.
\item \label{item:lemma:Hcurl-lifting-iii} 
Let ${\mathbf T}:= \Pi_\tau {\mathbf H}^{2}(\widehat{K})$. 
A function ${\mathbf{z}}\in{\mathbf{T}}$ is in ${L}^2(\partial\widehat{K})$, 
facewise in ${\mathbf H}^{3/2}_{T}$, and 
$$\displaystyle\Vert{\mathbf{z}}\Vert_{{\mathbf{X}}^{-1/2}}\leq
C\sum_{f\in{\mathcal{F}}(\widehat{K})}\left[  \Vert{\mathbf{z}}\Vert
_{\widetilde{\mathbf{H}}_T^{-1/2}(f)}+\Vert\operatorname{curl}_{f}{\mathbf{z}%
}\Vert_{\widetilde{H}^{-1/2}(f)}\right]  .
$$

Here, we recall from \eqref{eq:def-negative-norm} that $\|\cdot\|_{\widetilde{\mathbf{H}}_T^{-1/2}(f)}$ 
is defined to be dual to $\|\cdot\|_{{\mathbf{H}}_T^{1/2}(f)}$.
\end{enumerate}
\end{lemma}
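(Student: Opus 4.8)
The plan is to take the existing polynomial-preserving $\mathbf{H}(\operatorname{\mathbf{curl}})$-lifting from \cite{demkowicz-gopalakrishnan-schoeberl-II}, which already delivers a bounded operator $\widetilde{\boldsymbol{\mathcal L}}^{\operatorname{curl},3d}_p:\Pi_\tau\mathbf{Q}_p(\widehat K)\to\mathbf{Q}_p(\widehat K)$ with properties (\ref{item:lemma:Hcurl-lifting-i}), (\ref{item:lemma:Hcurl-lifting-ii}), and then \emph{correct} it by a gradient so that the orthogonality (\ref{item:lemma:Hcurl-lifting-iia}) holds. Concretely, given $\mathbf{z}\in\Pi_\tau\mathbf{Q}_p(\widehat K)$, set $\mathbf{w}:=\widetilde{\boldsymbol{\mathcal L}}^{\operatorname{curl},3d}_p\mathbf{z}\in\mathbf{Q}_p(\widehat K)$, let $\psi\in\mathring W_{p+1}(\widehat K)$ solve the discrete problem $(\nabla\psi,\nabla v)_{L^2(\widehat K)}=(\mathbf{w},\nabla v)_{L^2(\widehat K)}$ for all $v\in\mathring W_{p+1}(\widehat K)$ (i.e.\ $\nabla\psi=\Pi_{\nabla\mathring W_{p+1}}\mathbf{w}$, the $L^2$-projection onto $\nabla\mathring W_{p+1}(\widehat K)$), and define $\boldsymbol{\mathcal L}^{\operatorname{curl},3d}_p\mathbf{z}:=\mathbf{w}-\nabla\psi$. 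Since $\nabla\psi\in\nabla\mathring W_{p+1}(\widehat K)\subset\mathring{\mathbf Q}_p(\widehat K)$ and has vanishing tangential trace, (\ref{item:lemma:Hcurl-lifting-i}) is preserved; (\ref{item:lemma:Hcurl-lifting-iia}) holds by construction; and (\ref{item:lemma:Hcurl-lifting-ii}) follows from $\|\boldsymbol{\mathcal L}^{\operatorname{curl},3d}_p\mathbf{z}\|_{\mathbf{H}(\widehat K,\operatorname{\mathbf{curl}})}\le\|\mathbf{w}\|_{\mathbf{H}(\widehat K,\operatorname{\mathbf{curl}})}$ (the $L^2$-part can only decrease, since we subtract an $L^2$-orthogonal projection, and $\operatorname{\mathbf{curl}}\nabla\psi=0$), together with the $p$-uniform bound $\|\mathbf{w}\|_{\mathbf{H}(\widehat K,\operatorname{\mathbf{curl}})}\le C\|\mathbf{z}\|_{\mathbf{X}^{-1/2}}$ of \cite{demkowicz-gopalakrishnan-schoeberl-II}.

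For (\ref{item:lemma:Hcurl-lifting-iii}), the first two assertions (membership in $L^2(\partial\widehat K)$ and facewise $\mathbf{H}^{3/2}_T$-regularity of $\mathbf{z}=\Pi_\tau\mathbf{U}$ for $\mathbf{U}\in\mathbf{H}^2(\widehat K)$) follow from the trace theorem: the tangential trace maps $\mathbf{H}^2(\widehat K)\to\mathbf{H}^{3/2}_T(f)$ for each face $f$, and $\mathbf{H}^{3/2}_T(f)\hookrightarrow L^2(f)$. The substantive claim is the estimate for $\|\mathbf{z}\|_{\mathbf{X}^{-1/2}}$ in terms of the facewise dual norms $\|\mathbf{z}\|_{\widetilde{\mathbf H}^{-1/2}_T(f)}$ and $\|\operatorname{curl}_f\mathbf{z}\|_{\widetilde H^{-1/2}(f)}$. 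The approach is to produce an explicit extension $\mathbf{v}\in\mathbf{H}(\widehat K,\operatorname{\mathbf{curl}})$ with $\Pi_\tau\mathbf{v}=\mathbf{z}$ and $\|\mathbf{v}\|_{\mathbf{H}(\widehat K,\operatorname{\mathbf{curl}})}$ bounded by the right-hand side. Following the philosophy of \cite{demkowicz-buffa05,demkowicz08}, one localizes $\mathbf{z}$ to the faces (using a partition of unity or a face-by-face lifting with compatible edge data, arranged so that the edge contributions cancel), and on each face solves a Helmholtz-type decomposition $\mathbf{z}|_f=\nabla_f\varphi_f+\operatorname{\mathbf{curl}}_f\zeta_f$ with $\operatorname{curl}_f\mathbf{z}|_f=-\Delta_f\zeta_f$; the regularity of $\zeta_f$ is governed by the shift theorem Lemma~\ref{lemma:shift-theorem} (here the face-angle condition enters, guaranteeing $\widehat s>3/2$), which converts the $\widetilde H^{-1/2}(f)$ datum into $H^{3/2}(f)$ regularity for $\zeta_f$, and likewise $\varphi_f\in H^{3/2}(f)$ from $\mathbf{z}\in\widetilde{\mathbf H}^{-1/2}_T(f)$. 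Lifting $\varphi_f,\zeta_f$ into $\widehat K$ (e.g.\ via standard $H^2$-liftings and the identity $\Pi_\tau\nabla=\nabla_f\Pi$, $\Pi_\tau\operatorname{\mathbf{curl}}=\operatorname{curl}_f^\ast\gamma_\tau$), assembling, and using the regularized right inverses of Lemma~\ref{lemma:mcintosh} to stay in $\mathbf{H}^1(\widehat K,\operatorname{\mathbf{curl}})\subset\mathbf{H}(\widehat K,\operatorname{\mathbf{curl}})$, yields the required bound.

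The main obstacle is the last estimate in (\ref{item:lemma:Hcurl-lifting-iii}): realizing a \emph{non-local} trace norm on $\partial\widehat K$ by a single extension whose $\mathbf{H}(\operatorname{\mathbf{curl}})$-norm is controlled \emph{without logarithmic losses} by the facewise \emph{negative}-order norms $\widetilde{\mathbf H}^{-1/2}_T(f)$ and $\widetilde H^{-1/2}(f)$. This is exactly the point where the present paper departs from \cite{demkowicz08}: one cannot simply sum facewise $H^{-1/2}$-contributions of a discrete object without a $\log p$ penalty. The resolution — the conceptual heart of the whole paper — is that these two negative norms are precisely the ones dual to the \emph{localizable} integer-order spaces $\mathbf{H}^{1/2}_T(f)$ and $H^{1/2}(f)$ appearing in the face-wise Helmholtz decomposition, so that the shift theorem Lemma~\ref{lemma:shift-theorem} (valid under the $2\pi/3$ face-angle hypothesis, which forces $\widehat s>3/2$) lifts them stably two orders up to $H^{3/2}(f)$, which \emph{is} a restriction norm and hence trivially localizable. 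Carrying out the partition-of-unity bookkeeping near edges and vertices so that no spurious edge terms appear, and checking that all intermediate liftings are $p$-uniform, is the technical content; everything else is routine once the right spaces have been identified.
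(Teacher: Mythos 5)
Your treatment of (\ref{item:lemma:Hcurl-lifting-i}), (\ref{item:lemma:Hcurl-lifting-ii}), (\ref{item:lemma:Hcurl-lifting-iia}) is correct and in fact simpler than the paper's: correcting the Demkowicz--Gopalakrishnan--Sch\"oberl lifting by its $L^2(\widehat K)$-orthogonal projection onto $\nabla\mathring W_{p+1}(\widehat K)$ preserves the tangential trace (the correction lies in $\mathring{\mathbf Q}_p(\widehat K)$), enforces the orthogonality by construction, leaves the $\operatorname{\mathbf{curl}}$ unchanged and can only decrease the $L^2$-norm; the paper achieves the same three properties through a saddle point problem, which your gradient projection renders unnecessary.

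The gap is in (\ref{item:lemma:Hcurl-lifting-iii}), which is the substantive part. First, your route would prove a weaker statement than the lemma: you invoke the shift theorem (Lemma~\ref{lemma:shift-theorem}) under the $2\pi/3$ face-angle hypothesis, but the lemma carries no angle assumption and the paper's proof needs none --- only the standard $H^{-1/2}(\partial\widehat K)\rightarrow H^{1}(\widehat K)$ a priori bound for a Neumann Laplace problem on $\widehat K$ enters, not an elliptic shift on the faces. Second, Lemma~\ref{lemma:shift-theorem} requires data of nonnegative order $s\in[0,\widehat s-1)$, whereas your facewise Helmholtz decomposition must convert data in $\widetilde H^{-1/2}(f)$ and $\widetilde{\mathbf H}^{-1/2}_T(f)$ into $H^{3/2}(f)$ potentials, i.e., a negative-order shift result that is neither stated nor proved in the paper. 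Third, and most importantly, the step you defer as ``partition-of-unity bookkeeping near edges and vertices'' is exactly the crux: assembling facewise liftings of a trace into a single global $\mathbf H(\widehat K,\operatorname{\mathbf{curl}})$-extension with compatible tangential data along edges, bounded by facewise negative-order norms without loss, is the whole content of the estimate, and your sketch asserts rather than establishes it --- this face-by-face localization of a nonlocal trace norm is precisely the procedure the paper is designed to avoid. The paper's argument is entirely different: it takes the divergence-free, curl-harmonic (minimal-curl) extension $\mathbf Z$ of $\mathbf z$, bounds $\operatorname{\mathbf{curl}}\mathbf Z$ by a scalar Neumann problem (yielding $\|\operatorname{\mathbf{curl}}\mathbf Z\|_{L^2(\widehat K)}\lesssim\|\operatorname{curl}_{\partial\widehat K}\mathbf z\|_{H^{-1/2}(\partial\widehat K)}$), bounds $\mathbf Z$ itself via the regularized Poincar\'e operators of Lemma~\ref{lemma:mcintosh} and an integration by parts that produces the duality pairing $({\mathbf z},\gamma_\tau\mathbf v)_{L^2(\partial\widehat K)}$ against $\mathbf H^1(\widehat K)$ test functions, and only then localizes these two continuous duality pairings face by face --- which is possible without logarithmic loss because $\mathbf z$ and $\operatorname{curl}_f\mathbf z$ are $L^2$-functions and $H^{1/2}(\partial\widehat K)$, $\gamma_\tau\mathbf H^1(\widehat K)$ embed continuously into the facewise products. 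No face liftings and no partition of unity are needed.
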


\begin{proof}
The lifting operator $\boldsymbol{\mathcal{E}}^{\operatorname*{curl}}:\Pi_\tau({\mathbf H}(\widehat K,\operatorname{\mathbf{curl}})) \rightarrow {\mathbf H}(\widehat{K},\operatorname*{\mathbf{curl}})$ constructed in \cite{demkowicz-gopalakrishnan-schoeberl-II} has the desired polynomial preserving property (\ref{item:lemma:Hcurl-lifting-i}) and continuity property
(\ref{item:lemma:Hcurl-lifting-ii}), \cite[Thm.~{7.2}]%
{demkowicz-gopalakrishnan-schoeberl-II}. To ensure 
(\ref{item:lemma:Hcurl-lifting-iia}) we define the desired lifting operator 
by $\boldsymbol{\mathcal{L}}^{\operatorname*{curl},3d}_p\mathbf{z} := \boldsymbol{\mathcal{E}}^{\operatorname*{curl}}\mathbf{z}-\mathbf{w}_0$, 
where $\mathbf{w}_0$ is defined by the following saddle point problem: 
Find $\mathbf{w}_0\in \mathring{\mathbf{Q}}_p(\widehat{K})$ 
and $\varphi\in \mathring{W}_{p+1}(\widehat{K})$ such that
for all 
$\mathbf{q}\in \mathring{\mathbf{Q}}_p(\widehat{K})$ and all 
$\mu\in \mathring{W}_{p+1}(\widehat{K})$
\begin{subequations}
\label{eq:lemma:saddle-point-curl}
\begin{align}
\label{eq:lemma:saddle-point-curl-a}
(\operatorname*{\mathbf{curl}}\mathbf{w}_0,\operatorname*{\mathbf{curl}}\mathbf{q})_{L^2(\widehat{K})}  +  (\mathbf{q},\nabla\varphi)_{L^2(\widehat{K})} & =  (\operatorname*{\mathbf{curl}}(\boldsymbol{\mathcal{E}}^{\operatorname*{curl}}\mathbf{z}),\operatorname*{\mathbf{curl}}\mathbf{q})_{L^2(\widehat{K})} 
\\
\label{eq:lemma:saddle-point-curl-b}
(\mathbf{w}_0,\nabla\mu)_{L^2(\widehat{K})} & =  (\boldsymbol{\mathcal{E}}^{\operatorname*{curl}}\mathbf{z},\nabla\mu)_{L^2(\widehat{K})}. 
\end{align}
\end{subequations}
Problem~\eqref{eq:lemma:saddle-point-curl} is uniquely solvable: 
We define the bilinear forms 
$a(\mathbf{w},\mathbf{q}):=(\operatorname*{\mathbf{curl}}\mathbf{w},\operatorname*{\mathbf{curl}}\mathbf{q})_{L^2(\widehat{K})}$ 
and $b(\mathbf{w},\varphi) := (\mathbf{w},\nabla\varphi)_{L^2(\widehat{K})}$ 
for $\mathbf{w}, \mathbf{q}\in \mathring{\mathbf{Q}}_p(\widehat{K})$ and $\varphi\in \mathring{W}_{p+1}(\widehat{K})$. 
Coercivity of $a$ on the kernel of 
$b$ with
\begin{align*}
\operatorname*{ker}b=
\{\mathbf{q}\in \mathring{\mathbf{Q}}_p(\widehat{K})\colon (\mathbf{q},\nabla\mu)_{L^2(\widehat{K})} = 0 \, \forall\mu\in \mathring{W}_{p+1}\} = \mathring{\mathbf{Q}}_{p,\perp}(\widehat K),
\end{align*}
follows from the Friedrichs inequality (Lemma~\ref{lemma:discrete-friedrichs-3d}) by
\begin{align*}
a(\mathbf{v},\mathbf{v})&=\Vert\operatorname*{\mathbf{curl}}\mathbf{v}\Vert_{L^2(\widehat{K})}^2 \geq \frac{1}{2C^2} \Vert\mathbf{v}\Vert_{L^2(\widehat{K})}^2 + \frac{1}{2}\Vert\operatorname*{\mathbf{curl}}\mathbf{v}\Vert_{L^2(\widehat{K})}^2 \\
&\geq \operatorname*{min}\{\frac{1}{2C^2},\frac{1}{2}\}\Vert\mathbf{v}\Vert_{\mathbf{H}(\widehat{K},\operatorname*{\mathbf{curl}})}^2
\end{align*}
for all $\mathbf{v}\in \operatorname*{ker}b$. Next, we show the inf-sup condition
\begin{align*}
\operatornamewithlimits{inf}_{\varphi\in\mathring{W}_{p+1}(\widehat{K})} \operatornamewithlimits{sup}_{\mathbf{w}\in\mathring{\mathbf{Q}}_p(\widehat{K})} \frac{b(\mathbf{w},\varphi)}{\Vert\mathbf{w}\Vert_{\mathbf{H}(\widehat{K},\operatorname*{\mathbf{curl}})} \Vert\varphi\Vert_{H^1(\widehat{K})}} \geq C.
\end{align*}
Given $\varphi\in\mathring{W}_{p+1}(\widehat{K})$, choose $\mathbf{w}=\nabla\varphi\in\mathring{\mathbf{Q}}_p(\widehat{K})$. Hence,
\begin{align*}
\frac{b(\mathbf{w},\varphi)}{\Vert\mathbf{w}\Vert_{\mathbf{H}(\widehat{K},\operatorname*{\mathbf{curl}})} \Vert\varphi\Vert_{H^1(\widehat{K})}} = \frac{\Vert\nabla\varphi\Vert_{L^2(\widehat{K})}^2}{\Vert\nabla\varphi\Vert_{L^2(\widehat{K})} \Vert\varphi\Vert_{H^1(\widehat{K})}} \geq C
\end{align*}
by Poincar\'e's inequality. Thus, the saddle point problem \eqref{eq:lemma:saddle-point-curl} has a 
unique solution 
$(\mathbf{w}_0,\varphi) \in \mathring{\mathbf{Q}}_p(\widehat{K}) \times \mathring{W}_{p+1}(\widehat K)$. 
In fact, taking ${\mathbf q} = \nabla \varphi$ in (\ref{eq:lemma:saddle-point-curl-a}) reveals 
$\varphi = 0$. 
The lifting operator $\boldsymbol{\mathcal{L}}^{\operatorname*{curl},3d}_p$ now obviously satisfies 
(\ref{item:lemma:Hcurl-lifting-i}) and (\ref{item:lemma:Hcurl-lifting-iia}) by construction. 
For (\ref{item:lemma:Hcurl-lifting-ii}) note that the solution $\mathbf{w}_0$ satisfies the estimate 
$\Vert\mathbf{w}_0\Vert_{\mathbf{H}(\widehat{K},\operatorname*{\mathbf{curl}})} \lesssim \Vert f\Vert + \Vert g\Vert$, 
where $f(\mathbf{v})=(\operatorname*{\mathbf{curl}}(\boldsymbol{\mathcal{E}}^{\operatorname*{curl}}\mathbf{z}),\operatorname*{\mathbf{curl}}\mathbf{v})_{L^2(\widehat{K})}$, $g(v)=(\boldsymbol{\mathcal{E}}^{\operatorname*{curl}}\mathbf{z},\nabla v)_{L^2(\widehat{K})}$, and $\Vert \cdot \Vert$ denotes the operator norm. Thus,
\begin{align*}
\Vert f\Vert =\!\! \operatornamewithlimits{sup}_{\Vert\mathbf{v}\Vert_{\mathbf{H}(\widehat{K},\operatorname*{\mathbf{curl}})} \leq 1} \! |(\operatorname*{\mathbf{curl}}(\boldsymbol{\mathcal{E}}^{\operatorname*{curl}}\mathbf{z}),\operatorname*{\mathbf{curl}}\mathbf{v})_{L^2(\widehat{K})}| \leq \Vert\operatorname*{\mathbf{curl}}(\boldsymbol{\mathcal{E}}^{\operatorname*{curl}}\mathbf{z})\Vert_{L^2(\widehat{K})} \lesssim \Vert\mathbf{z}\Vert_{\mathbf{X}^{-1/2}}.
\end{align*}
The estimate
$\displaystyle \Vert g\Vert \lesssim \Vert\mathbf{z}\Vert_{\mathbf{X}^{-1/2}}
$
is shown in a similar way. Hence, (\ref{item:lemma:Hcurl-lifting-ii}) follows from
\begin{align*}
\Vert\boldsymbol{\mathcal{L}}^{\operatorname*{curl},3d}_p\mathbf{z}\Vert_{\mathbf{H}(\widehat{K},\operatorname*{\mathbf{curl}})} \leq \Vert\boldsymbol{\mathcal{E}}^{\operatorname*{curl}}\mathbf{z}\Vert_{\mathbf{H}(\widehat{K},\operatorname*{\mathbf{curl}})} + \Vert\mathbf{w}_0\Vert_{\mathbf{H}(\widehat{K},\operatorname*{\mathbf{curl}})} \lesssim \Vert\mathbf{z}\Vert_{\mathbf{X}^{-1/2}}.
\end{align*}
We now show (\ref{item:lemma:Hcurl-lifting-iii}), proceeding in several steps. 
\newline 
\emph{1st~step:} 
Clearly, ${\mathbf z}$ is in $L^2(\partial \widehat K)$ and facewise in ${\mathbf H}^{3/2}_T$. 
The surface curl of ${\mathbf z} \in {\mathbf T}$, 
denoted $\operatorname{curl}_{\partial\widehat K} {\mathbf z}$, is defined by 
${\mathbf n} \cdot \operatorname{\mathbf{curl}} \widetilde{\mathbf z} \in H^{-1/2}(\partial\widehat K)$ 
for any lifting $\widetilde{\mathbf  z} \in {\mathbf H}(\widehat K,\operatorname{\mathbf{curl}})$
of ${\mathbf z}$. 
This definition is indeed independent of the lifting since the difference 
${\boldsymbol \delta}$ of two liftings is in ${\mathbf H}_0(\widehat K,\operatorname{\mathbf{curl}})$
and by the deRham diagram (see, e.g., \cite[eqn. (3.60)]{Monkbook}) we then have  
$\operatorname{\mathbf{curl}} {\boldsymbol \delta} \in {\mathbf H}_0(\widehat K,\operatorname{div})$. 
Furthermore, since an ${\mathbf H}^2$-lifting of ${\mathbf z}$ exists, 
$\operatorname{curl}_{\partial\widehat K} {\mathbf z} \in H^{-1/2}(\partial\widehat K)$ is facewise in ${\mathbf H}_T^{1/2}$ and coincides 
facewise with $\operatorname{curl}_f {\mathbf  z}$. 

\emph{2nd~step:} 
We construct a particular lifting ${\mathbf Z} \in {\mathbf H}(\widehat K,\operatorname{\mathbf{curl}})$ 
of ${\mathbf z} \in {\mathbf X}^{-1/2}$ and will use 
$\|{\mathbf z}\|_{{\mathbf X}^{-1/2}} \leq \|{\mathbf Z}\|_{{\mathbf H}(\widehat K,\operatorname{\mathbf{curl}})}$. 
This lifting ${\mathbf Z}$ is taken to be the solution of the following (constrained)
minimization problem:
\begin{align*}
& \mbox{ Minimize } \|\operatorname{\mathbf{curl}} {\mathbf Y}\|_{L^2(\widehat{K})} 
\mbox{ under the constraints} \\
& \mbox{$\Pi_\tau {\mathbf Y} = {\mathbf z}$ \qquad and \qquad $({\mathbf Y} ,\nabla \varphi)_{L^2(\widehat{K})} = 0$
for all $\varphi \in H^1_0(\widehat{K})$.}
\end{align*}
This minimization problem can be solved with the method of Lagrange multipliers as was done 
in (\ref{eq:lemma:saddle-point-curl}). Without repeating the arguments, 
we obtain, in strong form, the problem:
Find $({\mathbf Z},\varphi) \in {\mathbf H}(\widehat{K},\operatorname{\mathbf{curl}}) \times H^1_0(\widehat{K})$ such that
\begin{align*}
\operatorname{\mathbf{curl}}
\operatorname{\mathbf{curl}} {\mathbf Z} + \nabla \varphi = 0 \quad \mbox{ in $\widehat{K}$}, 
\qquad 
\operatorname{div} {\mathbf Z} = 0 \quad \mbox{ in $\widehat{K}$}, 
\qquad \Pi_\tau {\mathbf Z} = {\mathbf z}. 
\end{align*}
As was observed above, the Lagrange multiplier $\varphi$ in fact vanishes so that we conclude that 
the minimizer ${\mathbf Z}$ solves  
\begin{align*}
\operatorname{\mathbf{curl}} \operatorname{\mathbf{curl}} {\mathbf Z}  = 0, \qquad 
\operatorname{div} {\mathbf Z}  = 0, 
\qquad \Pi_\tau {\mathbf Z} = {\mathbf z}. 
\end{align*}
\emph{3rd~step:} We bound ${\mathbf w}:= \operatorname{\mathbf{curl}} {\mathbf Z}$. We have
\begin{align}
\label{eq:lemma:X-1/2-vs-H-1/2-curl-40-vorn}
\operatorname{\mathbf{curl}} {\mathbf w}  = 0 , \qquad 
\operatorname{div} {\mathbf w}  = 0, \qquad  
{\mathbf n} \cdot {\mathbf w}  = \operatorname{curl}_{\partial\widehat{K}} {\mathbf z}. 
\end{align}
{}From $\operatorname{\mathbf{curl}} {\mathbf w} = 0$, we get that ${\mathbf w}$ is a gradient:
${\mathbf w} = \nabla \psi$. The second and third conditions in (\ref{eq:lemma:X-1/2-vs-H-1/2-curl-40-vorn}) show
\begin{align*}
-\Delta \psi = 0 \qquad \partial_n \psi = {\mathbf n} \cdot {\mathbf w} = 
\operatorname{curl}_{\partial\widehat{K}} {\mathbf z}.
\end{align*}
The integrability condition is satisfied since
$( {\mathbf n} \cdot {\mathbf w},1)_{L^2(\partial\widehat{K})} 
= (\operatorname{div} {\mathbf w},1)_{L^2(\widehat{K})} = 0$. 
Thus we conclude by standard {\sl a priori} estimates for the Laplace problem
\begin{equation}
\label{eq:lemma:X-1/2-vs-H-1/2-curl-45-vorn}
\|\operatorname{\mathbf{curl}} {\mathbf Z}\|_{L^2(\widehat{K})} = 
\|{\mathbf w}\|_{L^2(\widehat{K})} = \|\nabla \psi\|_{L^2(\widehat{K})} 
\lesssim \|\operatorname{curl}_{\partial\widehat{K}} {\mathbf z}\|_{H^{-1/2}(\partial\widehat{K})}. 
\end{equation}
\emph{4th~step:} To bound ${\mathbf Z}$, we write it with the operators 
${\mathbf R}^{\operatorname{curl}}$ and $R^{\operatorname{grad}}$ of Lemma~\ref{lemma:mcintosh} as 
\begin{align}
\label{eq:lemma:X-1/2-vs-H-1/2-curl-50-vorn}
& {\mathbf Z} = \nabla \phi + \widetilde {\mathbf z}, 
\qquad \widetilde {\mathbf z}:= {\mathbf R}^{\operatorname{curl}}(\operatorname{\mathbf{curl}}{\mathbf Z}), 
\qquad \phi := R^{\operatorname{grad}} ({\mathbf Z} - \mathbf{R}^{\operatorname{curl}}(\operatorname{\mathbf {curl}} \widetilde {\mathbf z})), \\
\label{eq:lemma:X-1/2-vs-H-1/2-curl-100-vorn}
& \mbox{ with }  
\|\widetilde {\mathbf z}\|_{H^1(\widehat{K})} \lesssim \|\operatorname{\mathbf{curl}} {\mathbf Z}\|_{L^2(\widehat{K})} 
\lesssim \|\operatorname{curl}_{\partial\widehat{K}} {\mathbf z}\|_{H^{-1/2}(\partial\widehat{K})}.
\end{align}
For the control of $\phi$, we proceed by an integration by parts argument.
Noting that $\operatorname{div} {\mathbf Z} = 0$, we have
$$
\nabla \phi + \widetilde {\mathbf z} = {\mathbf Z} 
= \operatorname{\mathbf{curl}} {\mathbf R}^{\operatorname{curl}} ({\mathbf  Z}) 
= \operatorname{\mathbf{curl}} {\mathbf R}^{\operatorname{curl}}  (\nabla \phi) 
  + \operatorname{\mathbf{curl}} {\mathbf R}^{\operatorname{curl}} (\widetilde {\mathbf z}).  
$$
With the integration by parts formula (\ref{eq:integration-by-parts}) (which is actually
valid for functions in ${\mathbf H}(\widehat K,\operatorname{\mathbf{curl}})$ as shown in 
\cite[Thm.~{3.29}]{Monkbook}) we get 
\[
(\operatorname{\mathbf{curl}} {\mathbf Z}, {\mathbf v})_{L^2(\widehat{K})} 
\stackrel{(\ref{eq:integration-by-parts})}{= }
({\mathbf Z}, \operatorname{\mathbf{curl}} {\mathbf v})_{L^2(\widehat{K})} 
- ({\mathbf z}, \gamma_\tau {\mathbf v})_{L^2(\partial \widehat{K})}. 
\]
Selecting ${\mathbf v} = {\mathbf R}^{\operatorname{curl}}(\nabla \phi) \in {\mathbf H}^1(\widehat{K})$, we get 
\begin{align*}
(\operatorname{\mathbf{curl}} {\mathbf Z},{\mathbf R}^{\operatorname{curl}}(\nabla \phi) )_{L^2(\widehat{K})} &= 
(\nabla \phi + \widetilde {\mathbf z}, \nabla \phi + \widetilde {\mathbf z} - \operatorname{\mathbf{curl}} {\mathbf R}^{\operatorname{curl}}(\widetilde {\mathbf z}))_{L^2(\widehat{K})} \\
&\quad - ({\mathbf z}, \gamma_\tau {\mathbf R}^{\operatorname{curl}}(\nabla \phi))_{L^2(\partial \widehat{K})}.
\end{align*}
In view of the mapping property
${\mathbf R}^{\operatorname{curl}}:L^2(\widehat{K}) \rightarrow {\mathbf H}^1(\widehat{K})$ we obtain 
\begin{align}
\label{eq:lemma:X-1/2-vs-H-1/2-curl-200-vorn}
\|\nabla \phi\|^2_{L^2(\widehat{K})} &\lesssim 
\|\operatorname{\mathbf{curl}} {\mathbf Z}\|_{L^2(\widehat{K})} \|\nabla \phi\|_{L^2(\widehat{K})} + 
\|\widetilde{\mathbf z}\|_{L^2(\widehat K)} 
\|\widetilde{\mathbf z} - \operatorname{\mathbf{curl}} {\mathbf R}^{\operatorname{curl}} (\widetilde{\mathbf z})\|_{L^2(\widehat K)} \\
\nonumber 
& \quad \mbox{} + 
\|\widetilde {\mathbf z} - \operatorname*{\mathbf{curl}} {\mathbf R}^{\operatorname{curl}}(\widetilde {\mathbf z})\|_{L^2(\widehat{K})} \|\nabla \phi\|_{L^2(\widehat{K})} \\
\nonumber 
& \quad \mbox{}+ 
\|\widetilde {\mathbf z} \|_{L^2(\widehat{K})} \|\nabla \phi\|_{L^2(\widehat{K})} + 
\left| ({\mathbf z},\gamma_\tau {\mathbf R}^{\operatorname{curl}}(\nabla \phi))_{L^2(\partial\widehat K)} \right|.
\end{align}
Combining
(\ref{eq:lemma:X-1/2-vs-H-1/2-curl-50-vorn}),
(\ref{eq:lemma:X-1/2-vs-H-1/2-curl-100-vorn}),
(\ref{eq:lemma:X-1/2-vs-H-1/2-curl-200-vorn}) shows 
\begin{align}
\label{eq:lemma:X-1/2-vs-H-1/2-curl-500-vorn}
\|{\mathbf Z}\|_{\mathbf{H}(\widehat{K},\operatorname{\mathbf{curl}})} 
&\lesssim \|\widetilde {\mathbf z}\|_{L^2(\widehat K)} + 
\|\nabla \phi\|_{L^2(\widehat K)} + \|\operatorname{\mathbf{curl}}{\mathbf Z}\|_{L^2(\widehat K)} \\
\nonumber 
& \lesssim \sup_{{\mathbf v} \in {\mathbf H}^1(\widehat K)} 
  \frac{ ({\mathbf z},\gamma_\tau {\mathbf v})_{L^2(\partial \widehat K)}}
       {\|{\mathbf v}\|_{{\mathbf H}^1(\widehat K)}} 
 + \|\operatorname{curl}_{\partial \widehat K} {\mathbf z}\|_{H^{-1/2}(\partial \widehat K)}.
\end{align}
\emph{5th~step:}
Since ${\mathbf z}$ and 
$\operatorname{curl}_{\partial\widehat{K}} {\mathbf z}$ are actually $L^2$-functions, the norm 
$\|\cdot\|_{{\mathbf  X}^{-1/2}}$ can be estimated in a localized fashion: 
The continuity of the  inclusions 
$H^{1/2}(\partial\widehat{K}) \subset \prod_{f \in {\mathcal F}(\widehat{K})} H^{1/2}(f)$ 
and $\gamma_\tau {\mathbf H}^{1}(\widehat{K}) \subset 
\prod_{f \in {\mathcal F}(\widehat{K})} {\mathbf H}_T^{1/2}(f)$ implies 
\begin{subequations}
\label{eq:lemma:X-1/2-vs-H-1/2-curl-550-vorn}
\begin{align}
\|\operatorname{curl}_{\partial \widehat{K}} {\mathbf z}\|_{H^{-1/2}(\partial\widehat{K})} & \lesssim 
\sum_{f \in {\mathcal F}(\widehat{K})} \|\operatorname{curl}_f {\mathbf z}\|_{\widetilde{H}^{-1/2}(f)},  \\
\sup_{{\mathbf v} \in {\mathbf H}^1(\widehat K)} 
  \frac{({\mathbf z},\gamma_\tau {\mathbf v})_{L^2(\partial\widehat K)}}
       {\|{\mathbf v}\|_{{\mathbf H}^1(\widehat K)}} & \lesssim 
\sum_{f \in {\mathcal F}(\widehat{K})} \|{\mathbf z}\|_{\widetilde{\mathbf{H}}^{-1/2}_T(f)}. 
\end{align}
\end{subequations}
We finally obtain the desired estimate
$$
\|{\mathbf  z} \|_{{\mathbf X}^{-1/2}} 
\lesssim \|{\mathbf Z}\|_{{\mathbf H}(\widehat K,\operatorname{\mathbf{curl}}) }
\stackrel{\text{(\ref{eq:lemma:X-1/2-vs-H-1/2-curl-500-vorn}), 
(\ref{eq:lemma:X-1/2-vs-H-1/2-curl-550-vorn})}}{\lesssim}\sum_{f \in {\mathcal F}(\widehat K)} 
\|{\mathbf z}\|_{\widetilde{\mathbf{H}}^{-1/2}_T(f)} + 
\|\operatorname{curl}_f {\mathbf z}\|_{\widetilde{H}^{-1/2}(f)}. 
$$
This concludes the proof. We mention that an alternative proof of 
the assertion (\ref{item:lemma:Hcurl-lifting-iii}) could be based on the intrinsic characterization
of the trace spaces of ${\mathbf H}(\widehat K,\operatorname{\mathbf{curl}})$ given in 
\cite{BuffaCiarlet2001,BuffaCiarlet2001b}.
\end{proof}

\begin{theorem}
\label{thm:H1curl-approximation} 
Let $\widehat K$ be a fixed tetrahedron. Then
there exists $C>0$ independent of $p$ such
that for all ${\mathbf{u}}\in{\mathbf{H}}^{1}(\widehat{K},\operatorname{\mathbf{curl}})$
\begin{equation}
\Vert{\mathbf{u}}-\hatPicurlcom{\mathbf{u}}%
\Vert_{{\mathbf{H}}(\widehat{K},\operatorname{\mathbf{curl}})}\leq Cp^{-1}%
\inf_{{\mathbf v} \in {\mathbf Q}_p(\widehat K)}
\Vert{\mathbf{u}} - \mathbf{v}\Vert_{{\mathbf{H}}^{1}(\widehat{K},\operatorname{\mathbf{curl}})}.
\label{eq:thmH1curl-approximation-10}%
\end{equation}
\end{theorem}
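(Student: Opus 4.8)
The plan is to follow the pattern of the two-dimensional argument in Lemma~\ref{lemma:Picurl-face}, with the 2D lifting and discrete Friedrichs inequality replaced by their three-dimensional counterparts from Lemma~\ref{lemma:Hcurl-lifting} and Lemma~\ref{lemma:discrete-friedrichs-3d}, and with the face contributions of a suitably localized trace norm controlled by the already-established 2D result Lemma~\ref{lemma:Picurl-face} (used with $s=1/2$). Since $\hatPicurlcom$ is a projection onto $\mathbf{Q}_p(\widehat K)$, we have $\mathbf{u}-\hatPicurlcom\mathbf{u}=(\mathbf{u}-\mathbf{v})-\hatPicurlcom(\mathbf{u}-\mathbf{v})$ for every $\mathbf{v}\in\mathbf{Q}_p(\widehat K)$, so it suffices to prove $\|\mathbf{u}-\hatPicurlcom\mathbf{u}\|_{\mathbf{H}(\widehat K,\operatorname{\mathbf{curl}})}\lesssim p^{-1}\|\mathbf{u}\|_{\mathbf{H}^1(\widehat K,\operatorname{\mathbf{curl}})}$ and then pass to the infimum. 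Writing $\mathbf{E}:=P^{\operatorname{curl},3d}\mathbf{u}-\hatPicurlcom\mathbf{u}\in\mathbf{Q}_p(\widehat K)$ with $P^{\operatorname{curl},3d}$ the best-approximation operator of Lemma~\ref{lemma:Pcurl3d}, the triangle inequality together with Lemma~\ref{lemma:Pcurl3d} (with $r=1$) reduces matters to bounding $\|\mathbf{E}\|_{\mathbf{H}(\widehat K,\operatorname{\mathbf{curl}})}$. Note that, subtracting the defining relations \eqref{eq:lemma:Pcurl3d-b} and \eqref{eq:lemma:Pcurl3d-a} from \eqref{eq:Pi_curl-b} and \eqref{eq:Pi_curl-a}, $\mathbf{E}$ inherits the orthogonalities $(\mathbf{E},\nabla v)_{L^2(\widehat K)}=0$ for all $v\in\mathring{W}_{p+1}(\widehat K)$ and $(\operatorname{\mathbf{curl}}\mathbf{E},\operatorname{\mathbf{curl}}\mathbf{q})_{L^2(\widehat K)}=0$ for all $\mathbf{q}\in\mathring{\mathbf{Q}}_p(\widehat K)$.

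Set $\mathbf{w}:=\boldsymbol{\mathcal{L}}^{\operatorname{curl},3d}_p(\Pi_\tau\mathbf{E})\in\mathbf{Q}_p(\widehat K)$. By Lemma~\ref{lemma:Hcurl-lifting}\eqref{item:lemma:Hcurl-lifting-i}, $\mathbf{E}-\mathbf{w}\in\mathring{\mathbf{Q}}_p(\widehat K)$, and by Lemma~\ref{lemma:Hcurl-lifting}\eqref{item:lemma:Hcurl-lifting-iia} together with the first orthogonality of $\mathbf{E}$ also $(\mathbf{E}-\mathbf{w},\nabla v)_{L^2(\widehat K)}=0$ for all $v\in\mathring{W}_{p+1}(\widehat K)$, i.e.\ $\mathbf{E}-\mathbf{w}\in\mathring{\mathbf{Q}}_{p,\perp}(\widehat K)$. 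Testing the curl-orthogonality of $\mathbf{E}$ with $\mathbf{q}=\mathbf{E}-\mathbf{w}$ gives $\|\operatorname{\mathbf{curl}}\mathbf{E}\|_{L^2(\widehat K)}^2=(\operatorname{\mathbf{curl}}\mathbf{E},\operatorname{\mathbf{curl}}\mathbf{w})_{L^2(\widehat K)}$, hence $\|\operatorname{\mathbf{curl}}\mathbf{E}\|_{L^2(\widehat K)}\le\|\mathbf{w}\|_{\mathbf{H}(\widehat K,\operatorname{\mathbf{curl}})}\lesssim\|\Pi_\tau\mathbf{E}\|_{\mathbf{X}^{-1/2}}$ by Lemma~\ref{lemma:Hcurl-lifting}\eqref{item:lemma:Hcurl-lifting-ii}. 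Applying the discrete Friedrichs inequality Lemma~\ref{lemma:discrete-friedrichs-3d}\eqref{item:lemma:discrete-friedrichs-3d-ii} to $\mathbf{E}-\mathbf{w}$ and using Lemma~\ref{lemma:Hcurl-lifting}\eqref{item:lemma:Hcurl-lifting-ii} once more, $\|\mathbf{E}\|_{L^2(\widehat K)}\le\|\mathbf{E}-\mathbf{w}\|_{L^2(\widehat K)}+\|\mathbf{w}\|_{L^2(\widehat K)}\lesssim\|\operatorname{\mathbf{curl}}(\mathbf{E}-\mathbf{w})\|_{L^2(\widehat K)}+\|\Pi_\tau\mathbf{E}\|_{\mathbf{X}^{-1/2}}\lesssim\|\Pi_\tau\mathbf{E}\|_{\mathbf{X}^{-1/2}}$; altogether $\|\mathbf{E}\|_{\mathbf{H}(\widehat K,\operatorname{\mathbf{curl}})}\lesssim\|\Pi_\tau\mathbf{E}\|_{\mathbf{X}^{-1/2}}$.

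It remains to show $\|\Pi_\tau\mathbf{E}\|_{\mathbf{X}^{-1/2}}\lesssim p^{-1}\|\mathbf{u}\|_{\mathbf{H}^1(\widehat K,\operatorname{\mathbf{curl}})}$, and this is the step I expect to be the main obstacle: one must \emph{not} try to localize $\Pi_\tau\mathbf{E}$ facewise, because the ``non-polynomial'' piece of it has only $\mathbf{H}(\widehat K,\operatorname{\mathbf{curl}})$-trace regularity. Instead I would use the triangle inequality for the norm $\|\cdot\|_{\mathbf{X}^{-1/2}}$ to write $\|\Pi_\tau\mathbf{E}\|_{\mathbf{X}^{-1/2}}\le\|\Pi_\tau(P^{\operatorname{curl},3d}\mathbf{u}-\mathbf{u})\|_{\mathbf{X}^{-1/2}}+\|\Pi_\tau(\mathbf{u}-\hatPicurlcom\mathbf{u})\|_{\mathbf{X}^{-1/2}}$; the first term is, by the very definition of $\mathbf{X}^{-1/2}$, at most $\|P^{\operatorname{curl},3d}\mathbf{u}-\mathbf{u}\|_{\mathbf{H}(\widehat K,\operatorname{\mathbf{curl}})}\lesssim p^{-1}\|\mathbf{u}\|_{\mathbf{H}^1(\widehat K,\operatorname{\mathbf{curl}})}$ by Lemma~\ref{lemma:Pcurl3d}. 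For the second term I would invoke Lemma~\ref{lemma:Hcurl-lifting}\eqref{item:lemma:Hcurl-lifting-iii}; although it is stated for $\mathbf{z}\in\mathbf{T}=\Pi_\tau\mathbf{H}^2(\widehat K)$, inspection of its proof shows that only the facewise $L^2$-integrability of $\mathbf{z}$ and of $\operatorname{curl}_f\mathbf{z}$ is used, and this holds for $\mathbf{z}=\Pi_\tau(\mathbf{u}-\hatPicurlcom\mathbf{u})$ because $\mathbf{u}\in\mathbf{H}^1(\widehat K,\operatorname{\mathbf{curl}})$ gives $\Pi_\tau\mathbf{u}|_f\in\mathbf{H}^{1/2}(f)$ and $\operatorname{curl}_f\Pi_\tau\mathbf{u}|_f=\mathbf{n}_f\cdot\operatorname{\mathbf{curl}}\mathbf{u}|_f\in H^{1/2}(f)$. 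This yields $\|\Pi_\tau(\mathbf{u}-\hatPicurlcom\mathbf{u})\|_{\mathbf{X}^{-1/2}}\lesssim\sum_{f\in\mathcal{F}(\widehat K)}\big[\|\Pi_\tau(\mathbf{u}-\hatPicurlcom\mathbf{u})\|_{\widetilde{\mathbf{H}}_T^{-1/2}(f)}+\|\operatorname{curl}_f\Pi_\tau(\mathbf{u}-\hatPicurlcom\mathbf{u})\|_{\widetilde{H}^{-1/2}(f)}\big]$. Finally, by the compatibility of the 2D and 3D operators (the remark on the relation of 2D and 3D in Section~\ref{sec:def-operators}), identifying each face $f$ with $\widehat f$ via a congruence map, $\Pi_\tau(\mathbf{u}-\hatPicurlcom\mathbf{u})|_f=\Pi_\tau\mathbf{u}|_f-\hatPicurlcomtwod(\Pi_\tau\mathbf{u}|_f)$, so each face contribution is bounded by $\|\Pi_\tau\mathbf{u}|_f-\hatPicurlcomtwod(\Pi_\tau\mathbf{u}|_f)\|_{\widetilde{\mathbf{H}}^{-1/2}(\widehat f,\operatorname{curl})}$, which by Lemma~\ref{lemma:Picurl-face} with $s=1/2$ (admissible since $\widehat s>1>1/2$ for any triangular face, so no angle condition on $\widehat K$ is needed) is $\lesssim p^{-1}\|\Pi_\tau\mathbf{u}|_f\|_{\mathbf{H}^{1/2}(\widehat f,\operatorname{curl})}\lesssim p^{-1}\|\mathbf{u}\|_{\mathbf{H}^1(\widehat K,\operatorname{\mathbf{curl}})}$, the last bound by the trace theorem (cf.\ Lemma~\ref{lemma:Pi_curl-well-defined}). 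Collecting the estimates and passing to the infimum over $\mathbf{v}\in\mathbf{Q}_p(\widehat K)$ completes the proof.
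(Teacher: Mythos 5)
Your argument is correct in substance and uses the same core machinery as the paper (the best approximation $P^{\operatorname*{curl},3d}$ of Lemma~\ref{lemma:Pcurl3d}, the modified lifting $\boldsymbol{\mathcal{L}}^{\operatorname*{curl},3d}_p$ of Lemma~\ref{lemma:Hcurl-lifting} combined with the discrete Friedrichs inequality of Lemma~\ref{lemma:discrete-friedrichs-3d}, and the facewise 2D duality estimates), but it takes a genuinely different route at the decisive point. The paper first performs the regular decomposition ${\mathbf u}=\nabla\varphi+{\mathbf v}$ with $\varphi\in H^2(\widehat K)$, ${\mathbf v}\in{\mathbf H}^2(\widehat K)$ (Lemma~\ref{lemma:helmholtz-like-decomp}), disposes of the gradient part via the commuting diagram and the $s=0$ case of Theorem~\ref{lemma:demkowicz-grad-3D}, and applies the localization estimate of Lemma~\ref{lemma:Hcurl-lifting}~(\ref{item:lemma:Hcurl-lifting-iii}) only to $\Pi_\tau({\mathbf v}-\hatPicurlcom{\mathbf v})$, which lies in ${\mathbf T}=\Pi_\tau{\mathbf H}^2(\widehat K)$, so that (\ref{item:lemma:Hcurl-lifting-iii}) is used exactly as stated. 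You skip the decomposition and apply (\ref{item:lemma:Hcurl-lifting-iii}) directly to $\Pi_\tau({\mathbf u}-\hatPicurlcom{\mathbf u})$, which is \emph{not} covered by its hypothesis when ${\mathbf u}$ is only in ${\mathbf H}^1(\widehat K,\operatorname{\mathbf{curl}})$; you flag this and argue by inspection of its proof. That extension is indeed legitimate: the only facts about ${\mathbf z}$ used in the proof of (\ref{item:lemma:Hcurl-lifting-iii}) are that ${\mathbf z}$ is facewise in $L^2$ and that the global distributional surface curl $\operatorname{curl}_{\partial\widehat K}{\mathbf z}$ coincides, without edge contributions, with the facewise $L^2$-functions $\operatorname{curl}_f{\mathbf z}$; for ${\mathbf z}=\Pi_\tau({\mathbf u}-\hatPicurlcom{\mathbf u})$ this holds because $\operatorname{curl}_{\partial\widehat K}{\mathbf z}$ is the normal trace of $\operatorname{\mathbf{curl}}({\mathbf u}-\hatPicurlcom{\mathbf u})\in{\mathbf H}^1(\widehat K)$, hence facewise in $H^{1/2}$ -- so your justification should make this ``no edge contribution'' point explicit rather than citing only facewise $L^2$-integrability. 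What each approach buys: the paper's decomposition keeps every cited lemma applicable verbatim at the price of invoking Lemma~\ref{lemma:helmholtz-like-decomp}, the commuting diagram and Theorem~\ref{lemma:demkowicz-grad-3D}; your version is more direct and avoids those ingredients, but requires restating (or re-proving) Lemma~\ref{lemma:Hcurl-lifting}~(\ref{item:lemma:Hcurl-lifting-iii}) under the weaker trace regularity. The remaining steps (the quotient-norm bound $\Vert\Pi_\tau{\mathbf w}\Vert_{{\mathbf X}^{-1/2}}\le\Vert{\mathbf w}\Vert_{{\mathbf H}(\widehat K,\operatorname{\mathbf{curl}})}$, the use of Lemma~\ref{lemma:Picurl-face} with $s=1/2$, which needs no angle condition since $\widehat s>1$, and the 2D--3D compatibility of the operators) are all sound and give the stated rate $p^{-1}$.
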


\begin{proof}
\emph{1st~step:} Since $\hatPicurlcom$ is a projection operator, it suffices to show the bound
with ${\mathbf v} = 0$ in the infimum.

\emph{2nd~step:} Write, with the operators $R^{\operatorname*{grad}}$,
${\mathbf{R}}^{\operatorname*{curl}}$ of Lemma~\ref{lemma:mcintosh}, the
function ${\mathbf{u}}\in{\mathbf{H}}^{1}(\widehat{K},\operatorname{\mathbf{curl}})$ as
${\mathbf{u}}=\nabla\varphi+{\mathbf{v}}$ with $\varphi\in H^{2}(\widehat{K})$
and ${\mathbf{v}}\in{\mathbf{H}}^{2}(\widehat{K})$. We have $\Vert\varphi
\Vert_{H^{2}(\widehat{K})}\lesssim\Vert{\mathbf{u}}\Vert_{{\mathbf{H}}%
^{1}(\widehat{K},\operatorname{\mathbf{curl}})}$ and $\Vert{\mathbf{v}}\Vert
_{{\mathbf{H}}^{2}(\widehat{K})}\lesssim\Vert\operatorname{\mathbf{curl}}{\mathbf{u}%
}\Vert_{{\mathbf{H}}^{1}(\widehat{K})}$. From the commuting diagram property,
we readily get
\begin{align*}
\Vert\nabla\varphi-\hatPicurlcom\nabla
\varphi\Vert_{{\mathbf{H}}(\widehat{K},\operatorname{\mathbf{curl}})}\!&=\Vert
\nabla(\varphi-\hatPigradcom\varphi)\Vert
_{{\mathbf{H}}(\widehat{K},\operatorname{\mathbf{curl}})}\!=|\varphi-\hatPigradcom\varphi|_{H^{1}(\widehat{K})} \\
& \stackrel{\text{Thm.~\ref{lemma:demkowicz-grad-3D}}}{\lesssim} p^{-1}%
\Vert\varphi\Vert_{H^{2}(\widehat{K})}.
\end{align*}
\emph{3rd~step:} We claim
\begin{equation}
\Vert\Pi_{\tau}({\mathbf{v}}-\hatPicurlcom{\mathbf{v}})\Vert_{{\mathbf{X}}^{-1/2}}\leq Cp^{-1}\Vert{\mathbf{v}}%
\Vert_{{\mathbf{H}}^{2}(\widehat{K})}. \label{eq:thm:H1curl-approximation-30}%
\end{equation}
To see this, we note ${\mathbf{v}}\in{\mathbf{H}}^{2}(\widehat{K})$ and
estimate with Lemma~\ref{lemma:Hcurl-lifting}
\begin{align*}
&\Vert\Pi_{\tau}({\mathbf{v}}-\hatPicurlcom{\mathbf{v}})\Vert_{{\mathbf{X}}^{-1/2}}\lesssim\\
& \sum_{f\in{\mathcal{F}%
}(\widehat{K})}\Vert\Pi_{\tau}({\mathbf{v}}-\hatPicurlcom{\mathbf{v}})\Vert_{\widetilde{\mathbf{H}}_T^{-1/2}%
(f)}+\Vert\operatorname{curl}_{f}(\Pi_{\tau}({\mathbf{v}}-\hatPicurlcom{\mathbf{v}}))\Vert_{\widetilde{H}^{-1/2}(f)}.
\end{align*}
We consider each face $f\in{\mathcal{F}}(\widehat{K})$ separately.
Lemmas~\ref{lemma:picurl-negative-I}, \ref{lemma:picurl-negative-II}, \ref{lemma:Picurl-face} imply with 
the aid of the continuity of the trace 
$\Pi_\tau: {\mathbf H}^2(\widehat K) \rightarrow {\mathbf H}^{3/2}_T(f) \subset {\mathbf H}^{1/2}(f,\operatorname{curl})$ 
\begin{align*}
\Vert\Pi_{\tau}({\mathbf{v}}-\hatPicurlcom{\mathbf{v})%
}\Vert_{\widetilde{\mathbf{H}}_T^{-1/2}(f)}  &\!\!\!\! 
 \stackrel{\text{Lem.~\ref{lemma:picurl-negative-I}}}{\lesssim} \!\!p^{-1/2}\Vert\Pi_{\tau
}({\mathbf{v}}-\hatPicurlcom{\mathbf{v}})\Vert
_{{\mathbf{H}}(f,\operatorname{curl})}\\
& \!\!\!\! \stackrel{\text{Lem.~\ref{lemma:Picurl-face}}}{\lesssim} 
\!\!
p^{-1/2-1/2}\Vert\Pi_{\tau}{\mathbf{v}}\Vert_{{\mathbf{H}}%
^{1/2}(f,\operatorname{curl})}
\lesssim p^{-1}\Vert{\mathbf{v}}\Vert
_{{\mathbf{H}}^{2}(\widehat{K})},\\
\Vert\operatorname{curl}(\Pi_{\tau}({\mathbf{v}}-\hatPicurlcom%
{\mathbf{v}}))\Vert_{\widetilde{H}^{-1/2}(f)}  &  
\!\!\!\!
\stackrel{\text{Lem.~\ref{lemma:picurl-negative-II}}}{\lesssim}
\!\!
p^{-1/2}\Vert\operatorname{curl}(\Pi_{\tau}({\mathbf{v}}%
-\hatPicurlcom{\mathbf{v}}))\Vert_{L^{2}(f)}\\
&  \lesssim p^{-1/2-1/2} \Vert\Pi_{\tau}{\mathbf{v}}\Vert_{{\mathbf{H}}%
^{1/2}(f,\operatorname{curl})} 
\lesssim p^{-1}\Vert{\mathbf{v}}\Vert
_{{\mathbf{H}}^{2}(\widehat{K})}.
\end{align*}

\emph{4th~step:} Since ${\mathbf{v}}\in{\mathbf{H}}^{2}(\widehat{K})$, the
approximation $P^{\operatorname*{curl},3d}{\mathbf{v}}\in
\mathbf{Q}_p(\widehat{K})$ given by
Lemma~\ref{lemma:Pcurl3d} satisfies
\begin{equation}
\Vert{\mathbf{v}}-P^{\operatorname*{curl},3d}{\mathbf{v}}\Vert_{{\mathbf{H}%
}(\widehat{K},\operatorname{\mathbf{curl}})}\leq Cp^{-1}\Vert{\mathbf{v}}%
\Vert_{{\mathbf{H}}^{2}(\widehat{K})}. \label{eq:thm:H1curl-approximation-10c}%
\end{equation}
We note
\begin{align*}
\Vert{\mathbf{v}}-\hatPicurlcom{\mathbf{v}}%
\Vert_{{\mathbf{H}}(\widehat{K},\operatorname{\mathbf{curl}})}  &  \leq\Vert
{\mathbf{v}}-P^{\operatorname*{curl},3d}{\mathbf{v}}\Vert_{{\mathbf{H}%
}(\widehat{K},\operatorname{\mathbf{curl}})}
+\Vert\hatPicurlcom{\mathbf{v}}-P^{\operatorname*{curl},3d}{\mathbf{v}%
}\Vert_{{\mathbf{H}}(\widehat{K},\operatorname{\mathbf{curl}})}\\
&  \leq p^{-1}\Vert{\mathbf{v}}\Vert_{{\mathbf{H}}^{2}(\widehat{K})}%
+\Vert\hatPicurlcom{\mathbf{v}}%
-P^{\operatorname*{curl},3d}{\mathbf{v}}\Vert_{{\mathbf{H}}(\widehat{K}%
,\operatorname{\mathbf{curl}})}.
\end{align*}
For the term $\Vert\hatPicurlcom{\mathbf{v}%
}-P^{\operatorname*{curl},3d}{\mathbf{v}}\Vert_{{\mathbf{H}}(\widehat{K}%
,\operatorname{\mathbf{curl}})}$, we introduce the abbreviation ${\mathbf{E}%
}:=\hatPicurlcom{\mathbf{v}}%
-P^{\operatorname*{curl},3d}{\mathbf{v}}\in\mathbf{Q}_p(\widehat{K})$ and observe that the orthogonality
conditions (\ref{eq:Pi_curl-b}), (\ref{eq:Pi_curl-a}) satisfied by
$\hatPicurlcom{\mathbf{v}}$ and the conditions
(\ref{eq:lemma:Pcurl3d-a}), (\ref{eq:lemma:Pcurl3d-b}) satisfied by
$P^{\operatorname*{curl},3d}{\mathbf{v}}$, lead to two orthogonalities:
\begin{subequations}
\label{eq:orth-3d}%
\begin{align}
\label{eq:orth-3d-a}%
(\operatorname{\mathbf{curl}}{\mathbf{E}},\operatorname{\mathbf{curl}}{\mathbf{w}}%
)_{L^{2}(\widehat{K})}& =0\quad\forall{\mathbf{w}}\in\mathring{\mathbf{Q}}_{p}(\widehat{K}),\\
\label{eq:orth-3d-b}%
({\mathbf{E}},\nabla w)_{L^{2}(\widehat{K})}%
&=0\quad\forall w\in\mathring{W}_{p+1}(\widehat{K}). 
\end{align}
\end{subequations}
By Lemma~\ref{lemma:Hcurl-lifting}, the orthogonality condition
\begin{align*}
(\boldsymbol{\mathcal{L}}^{\operatorname*{curl},3d}_p\Pi_\tau \mathbf{E},\nabla w)_{L^2(\widehat{K})} = 0 \quad \forall w\in\mathring{W}_{p+1}(\widehat{K})
\end{align*}
holds. Hence, the discrete Friedrichs inequality of
Lemma~\ref{lemma:discrete-friedrichs-3d} is applicable to 
$\mathbf{E}-\boldsymbol{\mathcal{L}}^{\operatorname*{curl},3d}_p\Pi_\tau \mathbf{E}$, and we get 
\begin{align}
\Vert{\mathbf{E}}\Vert_{L^{2}(\widehat{K})}  &  \leq\Vert{\boldsymbol{\mathcal{L}}%
}^{\operatorname*{curl},3d}_p\Pi_{\tau}{\mathbf{E}}\Vert_{L^{2}(\widehat{K}%
)}+\Vert{\mathbf{E}}-{\boldsymbol{\mathcal{L}}}^{\operatorname*{curl},3d}_p\Pi_{\tau
}{\mathbf{E}}\Vert_{L^{2}(\widehat{K})}%
\label{eq:thm:H1curl-approximation-100}\\
\nonumber 
&  \lesssim\Vert{\boldsymbol{\mathcal{L}}}^{\operatorname*{curl},3d}_p\Pi_{\tau}{\mathbf{E}%
}\Vert_{L^{2}(\widehat{K})}+\Vert\operatorname{\mathbf{curl}}({\mathbf{E}}%
-{\boldsymbol{\mathcal{L}}}^{\operatorname*{curl},3d}_p\Pi_{\tau}{\mathbf{E}})\Vert
_{L^{2}(\widehat{K})}\\
\nonumber 
&\lesssim\Vert{\boldsymbol{\mathcal{L}}}^{\operatorname*{curl},3d}%
\Pi_{\tau}{\mathbf{E}}\Vert_{\mathbf{H}(\widehat{K},\operatorname{\mathbf{curl}})}%
+\Vert\operatorname{\mathbf{curl}}{\mathbf{E}}\Vert_{L^{2}(\widehat{K})}
\\
\nonumber 
&  \lesssim\Vert\Pi_{\tau}{\mathbf{E}}\Vert_{{\mathbf{X}}^{-1/2}}%
+\Vert\operatorname{\mathbf{curl}}{\mathbf{E}}\Vert_{L^{2}(\widehat{K})}.
\end{align}
Using again the lifting $\boldsymbol{\mathcal{L}}^{\operatorname*{curl},3d}_p$ of
Lemma~\ref{lemma:Hcurl-lifting} and \eqref{eq:orth-3d-a}, we get 
\begin{equation}
\Vert\operatorname{\mathbf{curl}}{\mathbf{E}}\Vert_{L^{2}(\widehat{K})}\leq
\Vert\operatorname{\mathbf{curl}}{\boldsymbol{\mathcal{L}}}^{\operatorname*{curl},3d}_p\Pi_{\tau
}{\mathbf{E}}\Vert_{L^{2}(\widehat{K})}\lesssim\Vert\Pi_{\tau}{\mathbf{E}%
}\Vert_{{\mathbf{X}}^{-1/2}}. \label{eq:thm:H1curl-approximation-200}%
\end{equation}
We conclude the proof by observing
\begin{align*}
& \Vert{\mathbf{v}}-\hatPicurlcom{\mathbf{v}}%
\Vert_{\mathbf{H}(\widehat{K},\operatorname{\mathbf{curl}})}    \leq\Vert{\mathbf{v}%
}-P^{\operatorname*{curl},3d}{\mathbf{v}}\Vert_{\mathbf{H}(\widehat{K}%
,\operatorname{\mathbf{curl}})}+\Vert{\mathbf{E}}\Vert_{\mathbf{H}(\widehat{K}%
,\operatorname{\mathbf{curl}})}
\\
&\quad    \overset{(\ref{eq:thm:H1curl-approximation-100}),(\ref{eq:thm:H1curl-approximation-200})}
            {\lesssim}
\Vert {\mathbf v}-P^{\operatorname*{curl},3d}{\mathbf{v}}\Vert_{\mathbf{H}(\widehat{K}%
,\operatorname{\mathbf{curl}})}
+\Vert\Pi_{\tau}{\mathbf{E}}%
\Vert_{{\mathbf{X}}^{-1/2}}\\
& \quad \lesssim 
\Vert{\mathbf{v}}-P^{\operatorname*{curl},3d}{\mathbf{v}}\Vert_{\mathbf{H}(\widehat{K}%
,\operatorname{\mathbf{curl}})}+\Vert\Pi_{\tau}({\mathbf{v}}-\hatPicurlcom{\mathbf{v}})\Vert_{{\mathbf{X}}^{-1/2}}
  \!\!\!\! 
\overset{(\ref{eq:thm:H1curl-approximation-30}%
),(\ref{eq:thm:H1curl-approximation-10c})}{\lesssim}
\! \!\!\!
p^{-1}\Vert{\mathbf{v}%
}\Vert_{{\mathbf{H}}^{2}(\widehat{K})}.
\qedhere
\end{align*}
\end{proof}

For negative norm estimates $\|{\mathbf u} - \hatPicurlcom {\mathbf u}\|_{\widetilde {\mathbf H}^{-s}(\widehat K,
\operatorname{\mathbf{curl}})}$ with $s \ge 0$ we need the following Helmholtz decompositions: 
\begin{lemma}[Helmholtz decomposition] 
\label{lemma:helmholtz-3d}
Any ${\mathbf v} \in {\mathbf H}^1(\widehat K)$ can be written as 
\begin{align}
\label{eq:lemma:helmholtz-3d-10}
{\mathbf v} & = \nabla \varphi_0 + \operatorname{\mathbf{ curl}} \operatorname{\mathbf{curl}} {\mathbf z}_0, \\ 
\label{eq:lemma:helmholtz-3d-20}
{\mathbf v} & = \nabla \varphi_1 + \operatorname{\mathbf{curl}} {\mathbf z}_1,
\end{align}
where $\varphi_0 \in H^2(\widehat K) \cap H^1_0(\widehat K)$ and 
${\mathbf z}_0 \in {\mathbf H}^1(\widehat K,\operatorname{\mathbf{curl}}) \cap {\mathbf H}_0(\widehat K,\operatorname{\mathbf{curl}})$ and 
where $\varphi_1 \in H^2(\widehat K)$ and 
${\mathbf z}_1 \in {\mathbf H}^1(\widehat K,\operatorname{\mathbf{curl}}) \cap {\mathbf H}_0(\widehat K,\operatorname{\mathbf{curl}})$ together with the estimates 
\begin{align*}
\|\varphi_0\|_{H^2(\widehat K)} + 
\|{\mathbf z}_0\|_{{\mathbf H}^1(\widehat K,\operatorname{\mathbf{curl}})} 
& \leq C \|{\mathbf v}\|_{{\mathbf H}^1(\widehat K)}, \\
\|\varphi_1\|_{H^2(\widehat K)} + 
\|{\mathbf z}_1\|_{{\mathbf H}^1(\widehat K,\operatorname{\mathbf{curl}})} 
& \leq C \|{\mathbf v}\|_{{\mathbf H}^1(\widehat K)}. 
\end{align*}
\end{lemma}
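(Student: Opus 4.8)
The plan is to prove both identities by a common scheme: peel off the gradient part as the solution of an elliptic boundary value problem on the convex tetrahedron $\widehat K$, represent the remaining solenoidal field by the regularized right inverses of Lemma~\ref{lemma:mcintosh}, and then correct by a gradient so that the potential acquires a homogeneous tangential boundary condition. The key reusable ingredient I would isolate is the following claim: \emph{if ${\mathbf w}\in{\mathbf H}^1(\widehat K)$ satisfies $\operatorname{div}{\mathbf w}=0$ and ${\mathbf w}\cdot{\mathbf n}=0$ on $\partial\widehat K$, then there is ${\mathbf z}\in{\mathbf H}^1(\widehat K,\operatorname{\mathbf{curl}})\cap{\mathbf H}_0(\widehat K,\operatorname{\mathbf{curl}})$ with $\operatorname{\mathbf{curl}}{\mathbf z}={\mathbf w}$ and $\|{\mathbf z}\|_{{\mathbf H}^1(\widehat K)}\lesssim\|{\mathbf w}\|_{{\mathbf H}^1(\widehat K)}$.}

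To prove this claim I would set ${\mathbf z}_\ast:={\mathbf R}^{\operatorname*{curl}}{\mathbf w}\in{\mathbf H}^2(\widehat K)$, which by Lemma~\ref{lemma:mcintosh}, (\ref{item:lemma:mcintosh-i}),(\ref{item:lemma:mcintosh-vii}) satisfies $\operatorname{\mathbf{curl}}{\mathbf z}_\ast={\mathbf w}$. The surface curl of the tangential trace of ${\mathbf z}_\ast$ equals ${\mathbf n}\cdot\operatorname{\mathbf{curl}}{\mathbf z}_\ast={\mathbf w}\cdot{\mathbf n}=0$ on $\partial\widehat K$ (the identity ${\mathbf n}\cdot\operatorname{\mathbf{curl}}=\operatorname{curl}_{\partial\widehat K}\Pi_\tau$ being the one already used in the proofs of Theorem~\ref{thm:diagram-commutes} and Lemma~\ref{lemma:Hcurl-lifting}); since $\partial\widehat K$ is simply connected, the surface Hodge calculus (cf.\ \cite{BuffaCiarlet2001}) then gives $\Pi_\tau{\mathbf z}_\ast=\nabla_{\partial\widehat K}\mu$ for a facewise smooth potential $\mu$ on $\partial\widehat K$ controlled by $\|\Pi_\tau{\mathbf z}_\ast\|_{{\mathbf H}^{1/2}(\partial\widehat K)}\lesssim\|{\mathbf z}_\ast\|_{{\mathbf H}^1(\widehat K)}$. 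Extending $\mu$ to $\tilde\mu\in H^2(\widehat K)$ with $\|\tilde\mu\|_{H^2(\widehat K)}\lesssim\|{\mathbf z}_\ast\|_{{\mathbf H}^1(\widehat K)}$ and putting ${\mathbf z}:={\mathbf z}_\ast-\nabla\tilde\mu$ yields $\operatorname{\mathbf{curl}}{\mathbf z}={\mathbf w}$ (hence ${\mathbf z}\in{\mathbf H}^1(\widehat K,\operatorname{\mathbf{curl}})$), $\Pi_\tau{\mathbf z}=\nabla_{\partial\widehat K}\mu-\nabla_{\partial\widehat K}(\tilde\mu|_{\partial\widehat K})=0$ (so ${\mathbf z}\in{\mathbf H}_0(\widehat K,\operatorname{\mathbf{curl}})$), and the stated norm bound.

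With the claim in hand, \eqref{eq:lemma:helmholtz-3d-20} and \eqref{eq:lemma:helmholtz-3d-10} follow. For \eqref{eq:lemma:helmholtz-3d-20} I would let $\varphi_1$ be the mean--zero solution of the Neumann problem $-\Delta\varphi_1=-\operatorname{div}{\mathbf v}$, $\partial_n\varphi_1={\mathbf v}\cdot{\mathbf n}$ (the compatibility condition holding by the divergence theorem); since the volume datum lies in $L^2(\widehat K)$ and the boundary datum is the normal trace of the ${\mathbf H}^1$--field ${\mathbf v}$, the three--dimensional analogue of Lemma~\ref{lemma:shift-theorem}(ii) (shift theorem for the Neumann Laplacian on a convex polyhedron, cf.\ \cite{Grisvard85,dauge88}, proved by subtracting a suitable ${\mathbf H}^2$--lifting of the datum as in the second step of the proof of Lemma~\ref{lemma:shift-theorem}) gives $\varphi_1\in H^2(\widehat K)$ with $\|\varphi_1\|_{H^2(\widehat K)}\lesssim\|{\mathbf v}\|_{{\mathbf H}^1(\widehat K)}$; then ${\mathbf w}:={\mathbf v}-\nabla\varphi_1$ is solenoidal with vanishing normal trace, and the claim produces ${\mathbf z}_1$ with $\operatorname{\mathbf{curl}}{\mathbf z}_1={\mathbf w}={\mathbf v}-\nabla\varphi_1$. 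For \eqref{eq:lemma:helmholtz-3d-10} I would instead let $\varphi_0$ solve the homogeneous Dirichlet problem $-\Delta\varphi_0=-\operatorname{div}{\mathbf v}$, $\varphi_0|_{\partial\widehat K}=0$ (convexity of $\widehat K$ giving $\varphi_0\in H^2(\widehat K)\cap H^1_0(\widehat K)$ with the expected bound), set ${\mathbf y}:={\mathbf R}^{\operatorname*{curl}}({\mathbf v}-\nabla\varphi_0)\in{\mathbf H}^2(\widehat K)$ (so $\operatorname{\mathbf{curl}}{\mathbf y}={\mathbf v}-\nabla\varphi_0$ by Lemma~\ref{lemma:mcintosh}, (\ref{item:lemma:mcintosh-i})), and remove its divergence and normal trace by subtracting $\nabla q$, where $q$ is the mean--zero Neumann solution of $\Delta q=\operatorname{div}{\mathbf y}$, $\partial_n q={\mathbf y}\cdot{\mathbf n}$ (again $q\in H^2(\widehat K)$ by the Neumann shift theorem, the datum now being facewise in $H^{3/2}$); then ${\mathbf y}':={\mathbf y}-\nabla q$ is solenoidal with vanishing normal trace and $\operatorname{\mathbf{curl}}{\mathbf y}'={\mathbf v}-\nabla\varphi_0$, so the claim applied to ${\mathbf y}'$ gives ${\mathbf z}_0$ with $\operatorname{\mathbf{curl}}{\mathbf z}_0={\mathbf y}'$, i.e.\ $\operatorname{\mathbf{curl}}\operatorname{\mathbf{curl}}{\mathbf z}_0={\mathbf v}-\nabla\varphi_0$. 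In both cases the norm bounds follow by chaining the estimates of Lemma~\ref{lemma:mcintosh}, (\ref{item:lemma:mcintosh-vii}), the shift theorems, and the claim.

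I expect the main obstacle to be the regularity input at the non--smooth boundary: the $H^2$--shift theorem for the \emph{inhomogeneous} Neumann problem on the convex tetrahedron (volume data in $L^2$, Neumann data the normal trace of an ${\mathbf H}^1$--field) and the companion statement that a tangential field on $\partial\widehat K$ with vanishing surface curl is a surface gradient with controlled potential. Both are known — the former is the three--dimensional counterpart of Lemma~\ref{lemma:shift-theorem}(ii), the latter follows from simple connectedness of $\partial\widehat K$ via the surface Hodge calculus already invoked in Lemma~\ref{lemma:Hcurl-lifting} — but the treatment near edges and vertices needs care, and one should check that the datum ${\mathbf v}\cdot{\mathbf n}$, which is only \emph{facewise} $H^{1/2}$, is still admissible for the shift theorem precisely because it is the normal trace of an ${\mathbf H}^1$--field; the trace calculus and the integration-by-parts identity \eqref{eq:integration-by-parts} already used in Lemma~\ref{lemma:Hcurl-lifting} supply the remaining algebraic identities.
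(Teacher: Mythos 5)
Your strategy (strip off a gradient by a scalar elliptic solve, then build a tangentially vanishing vector potential for the remaining solenoidal field via the Costabel--McIntosh operator plus a boundary correction) differs from the paper's proof, which never manipulates boundary potentials at all: there, ${\mathbf z}_0,{\mathbf z}_1$ are defined directly as solutions of continuous curl--curl saddle point problems in ${\mathbf H}_0(\widehat K,\operatorname{\mathbf{curl}})\times H^1_0(\widehat K)$, and all regularity is harvested from the convexity embeddings \eqref{eq:saranen} together with the uniqueness statement \cite[Cor.~{3.51}]{Monkbook}. This matters because the two steps you leave unproven are exactly the ones that route is designed to avoid. First, in the proof of your key claim you assert that the surface potential $\mu$ with $\nabla_{\partial\widehat K}\mu=\Pi_\tau{\mathbf z}_\ast$ extends to $\tilde\mu\in H^2(\widehat K)$ with norm control. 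On a polyhedron the trace space of $H^2(\widehat K)$ is \emph{not} characterized by ``facewise $H^{3/2}$ plus continuity at edges'': there are additional Lions--Magenes type compatibility conditions at edges (and vertices) relating the tangential derivatives seen from the two adjacent faces, and you neither state nor verify them. They should ultimately follow from the fact that $\nabla_{\partial\widehat K}\mu$ is the tangential trace of the globally ${\mathbf H}^2$ field ${\mathbf z}_\ast$, but that verification is precisely the technical content that is missing; as written, ``extend $\mu$'' is an unsupported appeal to a trace theorem that does not hold for arbitrary continuous facewise-$H^{3/2}$ data. (Also, $\Pi_\tau{\mathbf z}_\ast$ is only facewise in ${\mathbf H}^{1/2}$, not in ${\mathbf H}^{1/2}(\partial\widehat K)$ globally, so the intermediate bound you quote needs to be read facewise.)

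Second, you invoke an $H^2$ shift theorem for the \emph{inhomogeneous} Neumann problem on the convex tetrahedron with datum ${\mathbf v}\cdot{\mathbf n}$, which you yourself flag as the main obstacle; in the form you need (volume datum in $L^2$, boundary datum only facewise $H^{1/2}$, uniform constant) this is not an off-the-shelf citation, and the 2D argument of Lemma~\ref{lemma:shift-theorem}(ii) does not transfer verbatim. Both gaps can be closed with tools already in the paper, and this is how the paper proceeds: for the Neumann regularity, note that ${\mathbf w}:={\mathbf v}-\nabla\varphi_1$ satisfies $\operatorname{\mathbf{curl}}{\mathbf w}=\operatorname{\mathbf{curl}}{\mathbf v}\in{\mathbf L}^2(\widehat K)$, $\operatorname{div}{\mathbf w}=0$ and ${\mathbf n}\cdot{\mathbf w}=0$, so ${\mathbf w}\in{\mathbf H}^1(\widehat K)$ by the second embedding in \eqref{eq:saranen}, whence $\nabla\varphi_1={\mathbf v}-{\mathbf w}\in{\mathbf H}^1(\widehat K)$, i.e.\ $\varphi_1\in H^2(\widehat K)$ without any bespoke polyhedral shift theorem; for your claim, replace the trace-extension construction by the continuous analogue of the saddle point problem of Lemma~\ref{lemma:Hcurl-lifting} (divergence-free constraint, tangential boundary condition), after which ${\mathbf z}\in{\mathbf H}_0(\widehat K,\operatorname{\mathbf{curl}})$ with $\operatorname{div}{\mathbf z}=0$ lies in ${\mathbf H}^1(\widehat K)$ again by \eqref{eq:saranen}, and the identification $\operatorname{\mathbf{curl}}{\mathbf z}={\mathbf w}$ (or the curl--curl identity) follows from the uniqueness argument via \cite[Cor.~{3.51}]{Monkbook}. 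With those repairs your decomposition scheme does yield the lemma, but as submitted the extension step and the Neumann regularity are genuine gaps.
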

\begin{proof}

Before proving these decompositions, we recall the continuous embeddings 
\begin{equation}
\label{eq:saranen}
{\mathbf H}_0(\widehat K,\operatorname{\mathbf{curl}}) \cap {\mathbf H}(\widehat K,\operatorname{div}) \subset {\mathbf H}^1(\widehat K)
\quad \mbox{ and } \quad 
{\mathbf H}(\widehat K,\operatorname{\mathbf{curl}}) \cap {\mathbf H}_0(\widehat K,\operatorname{div}) 
\subset {\mathbf H}^1(\widehat K), 
\end{equation}
which hinge on the convexity of $\widehat K$ (see \cite{birman-solomyak87,saranen82} and the discussion
in \cite[Rem.~{3.48}]{Monkbook}).

We construct the decomposition (\ref{eq:lemma:helmholtz-3d-20}): 
We define $\varphi_1 \in H^1(\widehat K)$ as the solution of 
$$
-\Delta \varphi_1 = -\operatorname{div} {\mathbf v} \quad \mbox{ in $\widehat K$}, 
\qquad \partial_n \varphi_1 = {\mathbf n} \cdot {\mathbf v} \quad \mbox{ on $\partial \widehat K$.}
$$
The contribution ${\mathbf z}_1$ is defined by the following saddle point problem: 
Find $({\mathbf z}_1, \psi) \in {\mathbf H}_0(\widehat K,\operatorname{\mathbf{curl}}) \times H^1_0(\widehat K)$ 
such that 
\begin{align*}
(\operatorname{\mathbf{curl}} {\mathbf z}_1,
\operatorname{\mathbf{curl}} {\mathbf w})_{L^2(\widehat K)} - (\nabla \psi,{\mathbf w})_{L^2(\widehat K)} & = 
(\operatorname{\mathbf{curl}} {\mathbf v} ,{\mathbf w})_{L^2(\widehat K)} \qquad \forall {\mathbf w} \in {\mathbf H}_0(\widehat K,\operatorname{\mathbf{curl}}),
\\
( {\mathbf z}_1,\nabla q)_{L^2(\widehat K)} &=0 \qquad \forall q \in H^1_0(\widehat K). 
\end{align*}
This problem is uniquely solvable, 
we have $\psi = 0$ (since $\operatorname{div} \operatorname{\mathbf{curl}} {\mathbf v} =0$)
and the {\sl a priori} estimate
\begin{align*}
\|{\mathbf z}_1\|_{{\mathbf H}(\operatorname{\mathbf{curl}},\widehat K)} 
\lesssim \|\operatorname{\mathbf{curl}} {\mathbf v}\|_{L^2(\widehat K)} 
\lesssim \|{\mathbf v}\|_{{\mathbf H}^1(\widehat K)}.
\end{align*}
(In the proof of Lemma~\ref{lemma:Hcurl-lifting}, we considered a similar problem in a discrete setting; here, 
the appeal to the discrete Friedrichs inequality of Lemma~\ref{lemma:discrete-friedrichs-3d} needs to replaced with that to the continuous
one, \cite[Cor.~{3.51}]{Monkbook}.)
{}From $\operatorname{div} {\mathbf z}_1 = 0$ and (\ref{eq:saranen}), we furthermore infer 
$\|{\mathbf z}_1\|_{{\mathbf H}^1(\widehat K)} \lesssim \|{\mathbf v}\|_{{\mathbf H}^1(\widehat K)}$. 
The representation (\ref{eq:lemma:helmholtz-3d-20}) is obtained from the observation that the difference 
${\boldsymbol\delta}:= {\mathbf v} - \nabla \varphi_1  - \operatorname{\mathbf{curl}} {\mathbf z}_1$ satisfies, 
by construction, $\operatorname{div} {\boldsymbol \delta} = 0$, $\operatorname{\mathbf{curl}}{\boldsymbol\delta} = 0$, 
${\mathbf n} \cdot {\boldsymbol\delta} = 
({\mathbf n} \cdot {\mathbf v} - \partial_n\varphi_1) - {\mathbf n} \cdot \operatorname{\mathbf{curl}}{\mathbf z}_1 = 
0 - \operatorname{curl}_{\partial\widehat K} \Pi_\tau {\mathbf z}_1  = 0 - 0 = 0$ so that again
(\ref{eq:saranen}) (specifically, in the form \cite[Cor.~{3.51}]{Monkbook}) implies ${\boldsymbol \delta} = 0$. 
Finally, from ${\mathbf v} \in {\mathbf H}^1(\widehat K)$, $\varphi_1 \in H^2(\widehat K)$ and the representation
(\ref{eq:lemma:helmholtz-3d-20}), we infer $\operatorname{\mathbf{curl}} {\mathbf z}_1 \in {\mathbf H}^1(\widehat K)$.

We construct the decomposition (\ref{eq:lemma:helmholtz-3d-10}): 
We define $\varphi_0 \in H^1_0(\widehat K)$ as the solution of 
$$
-\Delta \varphi_0 = -\operatorname{div} {\mathbf v} \quad \mbox{ in $\widehat K$}, 
\qquad \varphi_0 = 0 \quad \mbox{ on $\partial \widehat K$.}
$$
Next, we define $({\mathbf z}_0,\psi) \in {\mathbf H}_0(\widehat K,\operatorname{\mathbf{curl}}) \times H^1_0(\widehat K)$ as the solution 
of the saddle point problem 
\begin{align*}
(\operatorname{\mathbf{curl}} {\mathbf z}_0,
\operatorname{\mathbf{curl}} {\mathbf w})_{L^2(\widehat K)} - (\nabla \psi,{\mathbf w})_{L^2(\widehat K)} & = 
({\mathbf v} - \nabla \varphi_0,{\mathbf w})_{L^2(\widehat K)} \quad \forall {\mathbf w} \in {\mathbf H}_0(\widehat K,\operatorname{\mathbf{curl}}),
\\
( {\mathbf z}_0,\nabla q)_{L^2(\widehat K)} &=0 \qquad \forall q \in H^1_0(\widehat K). 
\end{align*}
Again, this problem is uniquely solvable and, in fact $\psi = 0$ 
(since $\operatorname{div} ({\mathbf v} - \nabla \varphi_0) = 0$).
We have 
$\|{\mathbf z}_0\|_{{\mathbf H}(\operatorname{\mathbf{curl}},\widehat K)} 
\lesssim \|{\mathbf v} - \nabla \varphi_0\|_{L^2(\widehat K)} \lesssim \|{\mathbf v}\|_{L^2(\widehat K)}$. 
Since $\operatorname{div} {\mathbf z}_0 = 0$, we get from (\ref{eq:saranen}) that 
$\|{\mathbf z}_0\|_{{\mathbf H}^1(\widehat K)} \lesssim \|{\mathbf v}\|_{L^2(\widehat K)}$. Finally, 
an integration by parts reveals 
$$
\operatorname{\mathbf{curl}} 
\operatorname{\mathbf{curl}} {\mathbf z_0} = {\mathbf v} - \nabla \varphi_0, 
$$
which is the representation (\ref{eq:lemma:helmholtz-3d-10}). 
\end{proof}

We control the approximation error in negative Sobolev norms.

\begin{theorem}
\label{thm:duality-again}
Assume that all interior angles of the 4 faces of $\widehat K$ are smaller than $2\pi/3$. Then 
for $s \in [0,1]$ and all $\mathbf{u}\in \mathbf{H}^1(\widehat{K},\operatorname{\mathbf{curl}})$ 
there holds the estimate
\begin{align*}
\Vert\mathbf{u}-\hatPicurlcom\mathbf{u}\Vert_{\widetilde{\mathbf{H}}^{-s}(\widehat{K},\operatorname{\mathbf{curl}})} 
\leq C_s p^{-(1+s)} \inf_{{\mathbf v} \in {\mathbf Q}_p(\widehat{K})} \Vert\mathbf{u} - \mathbf{v}\Vert_{\mathbf{H}^1(\widehat{K},\operatorname{\mathbf{curl}})}.
\end{align*}
\end{theorem}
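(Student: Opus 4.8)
The plan is to prove the bound for $s=0$ and $s=1$ and then fill in $s\in(0,1)$ by interpolating the target space, using that $\widetilde{\mathbf H}^{-s}(\widehat K,\operatorname{\mathbf{curl}})$ interpolates between $\mathbf L^2(\widehat K,\operatorname{\mathbf{curl}})$ and $\widetilde{\mathbf H}^{-1}(\widehat K,\operatorname{\mathbf{curl}})$. By the projection property of $\hatPicurlcom$, writing $\widetilde{\mathbf e}:=\mathbf u-\hatPicurlcom\mathbf u=(\mathbf u-\mathbf v)-\hatPicurlcom(\mathbf u-\mathbf v)$ for $\mathbf v\in\mathbf Q_p(\widehat K)$, it suffices to prove $\|\widetilde{\mathbf e}\|_{\widetilde{\mathbf H}^{-s}(\widehat K,\operatorname{\mathbf{curl}})}\lesssim p^{-(1+s)}\|\mathbf u\|_{\mathbf H^1(\widehat K,\operatorname{\mathbf{curl}})}$ for $s\in\{0,1\}$; $s=0$ is exactly Theorem~\ref{thm:H1curl-approximation}. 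For $s=1$ I would estimate the two ingredients of the graph norm, $\|\operatorname{\mathbf{curl}}\widetilde{\mathbf e}\|_{\widetilde{\mathbf H}^{-1}(\widehat K)}$ and $\|\widetilde{\mathbf e}\|_{\widetilde{\mathbf H}^{-1}(\widehat K)}$, by duality, the curl-component first. Throughout I use $\|\widetilde{\mathbf e}\|_{\mathbf H(\widehat K,\operatorname{\mathbf{curl}})}\lesssim p^{-1}\|\mathbf u\|_{\mathbf H^1(\widehat K,\operatorname{\mathbf{curl}})}$ (Theorem~\ref{thm:H1curl-approximation}) and that, on every face $f\in\mathcal F(\widehat K)$, the tangential trace of $\widetilde{\mathbf e}$ on $f$ is the $\hatPicurlcomtwod$-interpolation error of the tangential trace of $\mathbf u$ (Remark on 2D/3D), hence satisfies the orthogonality hypotheses of Lemmas~\ref{lemma:picurl-negative-I}, \ref{lemma:picurl-negative-II} — these are precisely \eqref{eq:Pi_curl-c}–\eqref{eq:Pi_curl-f} — while $\mathbf n_f\cdot\operatorname{\mathbf{curl}}\widetilde{\mathbf e}|_f=\operatorname{curl}_f\Pi_\tau\widetilde{\mathbf e}|_f$ has zero $f$-average (by \eqref{eq:Pi_curl-f} and Stokes) and is $L^2(f)$-orthogonal to $V_p(f)$.

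For $\|\operatorname{\mathbf{curl}}\widetilde{\mathbf e}\|_{\widetilde{\mathbf H}^{-1}(\widehat K)}$ I take a test field $\mathbf g\in\mathbf H^1(\widehat K)$ and decompose it by Lemma~\ref{lemma:helmholtz-3d}, \eqref{eq:lemma:helmholtz-3d-20}: $\mathbf g=\nabla\alpha+\operatorname{\mathbf{curl}}\boldsymbol\beta$ with $\alpha\in H^2(\widehat K)$, $\boldsymbol\beta\in\mathbf H^1(\widehat K,\operatorname{\mathbf{curl}})\cap\mathbf H_0(\widehat K,\operatorname{\mathbf{curl}})$, and $\|\alpha\|_{H^2(\widehat K)}+\|\boldsymbol\beta\|_{\mathbf H^1(\widehat K,\operatorname{\mathbf{curl}})}\lesssim\|\mathbf g\|_{\mathbf H^1(\widehat K)}$. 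Since $\operatorname{div}\operatorname{\mathbf{curl}}\widetilde{\mathbf e}=0$, integration by parts turns $(\operatorname{\mathbf{curl}}\widetilde{\mathbf e},\nabla\alpha)_{L^2(\widehat K)}$ into $\sum_f(\mathbf n_f\cdot\operatorname{\mathbf{curl}}\widetilde{\mathbf e},\alpha)_{L^2(f)}$; bounding each term by $\|\mathbf n_f\cdot\operatorname{\mathbf{curl}}\widetilde{\mathbf e}\|_{\widetilde H^{-3/2}(f)}\,\|\alpha\|_{H^{3/2}(f)}$, Lemma~\ref{lemma:picurl-negative-II} (applicable because $3/2<\widehat s$, which is exactly where the $2\pi/3$ restriction on the face angles is used) together with Lemma~\ref{lemma:Picurl-face} gives $\|\mathbf n_f\cdot\operatorname{\mathbf{curl}}\widetilde{\mathbf e}\|_{\widetilde H^{-3/2}(f)}\lesssim p^{-3/2}\|\operatorname{curl}_f\Pi_\tau\widetilde{\mathbf e}\|_{L^2(f)}\lesssim p^{-2}\|\mathbf u\|_{\mathbf H^1(\widehat K,\operatorname{\mathbf{curl}})}$. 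For $(\operatorname{\mathbf{curl}}\widetilde{\mathbf e},\operatorname{\mathbf{curl}}\boldsymbol\beta)_{L^2(\widehat K)}$ I use that $\boldsymbol\beta\in\mathbf H_0(\widehat K,\operatorname{\mathbf{curl}})$ forces $\hatPicurlcom\boldsymbol\beta\in\mathring{\mathbf Q}_p(\widehat K)$, so by \eqref{eq:Pi_curl-a} this equals $(\operatorname{\mathbf{curl}}\widetilde{\mathbf e},\operatorname{\mathbf{curl}}(\boldsymbol\beta-\hatPicurlcom\boldsymbol\beta))_{L^2(\widehat K)}\le\|\operatorname{\mathbf{curl}}\widetilde{\mathbf e}\|_{L^2(\widehat K)}\,\|\operatorname{\mathbf{curl}}(\boldsymbol\beta-\hatPicurlcom\boldsymbol\beta)\|_{L^2(\widehat K)}$, which is $\lesssim p^{-1}\cdot p^{-1}\|\mathbf u\|_{\mathbf H^1(\widehat K,\operatorname{\mathbf{curl}})}\|\mathbf g\|_{\mathbf H^1(\widehat K)}$ by Theorem~\ref{thm:H1curl-approximation} applied to $\mathbf u$ and to $\boldsymbol\beta$. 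Taking the supremum over $\mathbf g$ gives $\|\operatorname{\mathbf{curl}}\widetilde{\mathbf e}\|_{\widetilde{\mathbf H}^{-1}(\widehat K)}\lesssim p^{-2}\|\mathbf u\|_{\mathbf H^1(\widehat K,\operatorname{\mathbf{curl}})}$. (Equivalently one could note $\operatorname{\mathbf{curl}}\widetilde{\mathbf e}=\operatorname{\mathbf{curl}}\mathbf u-\hatPidivcom\operatorname{\mathbf{curl}}\mathbf u$ via \eqref{eq:commuting-diagram-hinten} and invoke the $\hatPidivcom$-estimate for the divergence-free field $\operatorname{\mathbf{curl}}\mathbf u\in\mathbf H^1$.)

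For $\|\widetilde{\mathbf e}\|_{\widetilde{\mathbf H}^{-1}(\widehat K)}$ I take $\mathbf v\in\mathbf H^1(\widehat K)$ and now decompose it by Lemma~\ref{lemma:helmholtz-like-decomp-v2} with $s=1$: $\mathbf v=\nabla\varphi+\operatorname{\mathbf{curl}}\mathbf z$ with $\varphi\in H^2(\widehat K)\cap H^1_0(\widehat K)$, $\mathbf z\in\mathbf H^2(\widehat K)$, $\|\varphi\|_{H^2(\widehat K)}+\|\mathbf z\|_{\mathbf H^2(\widehat K)}\lesssim\|\mathbf v\|_{\mathbf H^1(\widehat K)}$. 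Because $\varphi\in H^1_0(\widehat K)$ one has $\hatPigradcom\varphi\in\mathring W_{p+1}(\widehat K)$, so \eqref{eq:Pi_curl-b} gives $(\widetilde{\mathbf e},\nabla\varphi)_{L^2(\widehat K)}=(\widetilde{\mathbf e},\nabla(\varphi-\hatPigradcom\varphi))_{L^2(\widehat K)}$, bounded via Theorem~\ref{lemma:demkowicz-grad-3D} (with $s=0$) and Theorem~\ref{thm:H1curl-approximation} by $p^{-1}\|\widetilde{\mathbf e}\|_{L^2(\widehat K)}\|\varphi\|_{H^2(\widehat K)}\lesssim p^{-2}\|\mathbf u\|_{\mathbf H^1(\widehat K,\operatorname{\mathbf{curl}})}\|\mathbf v\|_{\mathbf H^1(\widehat K)}$. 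For $(\widetilde{\mathbf e},\operatorname{\mathbf{curl}}\mathbf z)_{L^2(\widehat K)}$ the integration-by-parts formula \eqref{eq:integration-by-parts} produces $(\operatorname{\mathbf{curl}}\widetilde{\mathbf e},\mathbf z)_{L^2(\widehat K)}-(\Pi_\tau\mathbf z,\gamma_\tau\widetilde{\mathbf e})_{L^2(\partial\widehat K)}$; the first term is $\le\|\operatorname{\mathbf{curl}}\widetilde{\mathbf e}\|_{\widetilde{\mathbf H}^{-1}(\widehat K)}\|\mathbf z\|_{\mathbf H^1(\widehat K)}\lesssim p^{-2}\|\mathbf u\|_{\mathbf H^1(\widehat K,\operatorname{\mathbf{curl}})}\|\mathbf v\|_{\mathbf H^1(\widehat K)}$ by the curl-estimate of the previous paragraph, and the boundary contribution over $f$ is $\le\|\gamma_\tau\widetilde{\mathbf e}\|_{\widetilde{\mathbf H}^{-3/2}(f)}\|\Pi_\tau\mathbf z\|_{\mathbf H^{3/2}(f)}$, where $\|\gamma_\tau\widetilde{\mathbf e}\|_{\widetilde{\mathbf H}^{-3/2}(f)}\lesssim p^{-3/2}\|\gamma_\tau\widetilde{\mathbf e}\|_{\mathbf H(f,\operatorname{curl})}\lesssim p^{-2}\|\mathbf u\|_{\mathbf H^1(\widehat K,\operatorname{\mathbf{curl}})}$ by Lemma~\ref{lemma:picurl-negative-I} (again with $3/2<\widehat s$) and Lemma~\ref{lemma:Picurl-face}, while $\|\Pi_\tau\mathbf z\|_{\mathbf H^{3/2}(f)}\lesssim\|\mathbf z\|_{\mathbf H^2(\widehat K)}\lesssim\|\mathbf v\|_{\mathbf H^1(\widehat K)}$. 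Collecting the two graph-norm ingredients yields the $s=1$ bound, and interpolation between $s=0$ and $s=1$ completes the proof.

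The hard part is not any single estimate but the bookkeeping: one must choose, for each of the two parts of the graph norm, a Helmholtz-type splitting of the test field that makes every boundary term a pairing of a $\hatPicurlcomtwod$-interpolation error against a trace of half-integer-higher Sobolev order. This is why two different decompositions are needed — the $H^1_0$-gradient-potential one of Lemma~\ref{lemma:helmholtz-like-decomp-v2} for $\|\widetilde{\mathbf e}\|_{\widetilde{\mathbf H}^{-1}}$ (so that \eqref{eq:Pi_curl-b} applies and $\Pi_\tau\mathbf z|_f\in\mathbf H^{3/2}(f)$) and the $\mathbf H_0(\operatorname{\mathbf{curl}})$-vector-potential one of Lemma~\ref{lemma:helmholtz-3d} for $\|\operatorname{\mathbf{curl}}\widetilde{\mathbf e}\|_{\widetilde{\mathbf H}^{-1}}$ (so that $\hatPicurlcom\boldsymbol\beta\in\mathring{\mathbf Q}_p$) — and matching the regularities forces Lemmas~\ref{lemma:picurl-negative-I}, \ref{lemma:picurl-negative-II} to be used at the exponent $3/2$, which is exactly why the face angles must be smaller than $2\pi/3$; dropping that hypothesis restricts the estimate to $s$ below an $s'$ fixed by the face angles.
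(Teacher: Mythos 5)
Your proposal is correct and follows the paper's overall strategy: reduce to $\mathbf v=0$ via the projection property, take $s=0$ from Theorem~\ref{thm:H1curl-approximation}, prove $s=1$ by duality with the facewise Lemmas~\ref{lemma:picurl-negative-I}, \ref{lemma:picurl-negative-II}, \ref{lemma:Picurl-face} used at exponent $3/2$ (which is indeed exactly where the $2\pi/3$ angle hypothesis enters), and interpolate for $s\in(0,1)$. Your treatment of $\Vert\operatorname{\mathbf{curl}}\widetilde{\mathbf e}\Vert_{\widetilde{\mathbf H}^{-1}(\widehat K)}$ coincides with the paper's, but your treatment of $\Vert\widetilde{\mathbf e}\Vert_{\widetilde{\mathbf H}^{-1}(\widehat K)}$ is genuinely different and leaner: the paper decomposes the test field as $\mathbf v=\nabla\varphi_0+\operatorname{\mathbf{curl}}\operatorname{\mathbf{curl}}\mathbf z_0$ via \eqref{eq:lemma:helmholtz-3d-10} and then splits $\operatorname{\mathbf{curl}}\mathbf z_0=\nabla\varphi_2+\mathbf z_2$ by Lemma~\ref{lemma:helmholtz-like-decomp}, handling the resulting volume term $(\operatorname{\mathbf{curl}}\mathbf E,\operatorname{\mathbf{curl}}\mathbf z_0)$ through \eqref{eq:Pi_curl-a}, whereas you use Lemma~\ref{lemma:helmholtz-like-decomp-v2} with $s=1$ ($\mathbf v=\nabla\varphi+\operatorname{\mathbf{curl}}\mathbf z$, $\mathbf z\in\mathbf H^2$), integrate by parts once, and feed the volume term $(\operatorname{\mathbf{curl}}\widetilde{\mathbf e},\mathbf z)$ back into the already-established bound for $\Vert\operatorname{\mathbf{curl}}\widetilde{\mathbf e}\Vert_{\widetilde{\mathbf H}^{-1}(\widehat K)}$; this buys you one fewer Helmholtz decomposition at the cost of fixing the order of the two estimates. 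A second difference: where the paper realizes the infima over $\mathring{\mathbf Q}_p(\widehat K)$ and $\mathring W_{p+1}(\widehat K)$ by correcting $P^{\operatorname*{curl},3d}$-type best approximations with the lifting of Lemma~\ref{lemma:Hcurl-lifting}, you take the projection-based interpolant of the potential itself as competitor, using that $\Pi_\tau\boldsymbol\beta=0$ forces $\hatPicurlcom\boldsymbol\beta\in\mathring{\mathbf Q}_p(\widehat K)$ and $\varphi\in H^1_0(\widehat K)$ forces $\hatPigradcom\varphi\in\mathring W_{p+1}(\widehat K)$; this is fine, but add the one-line justification (the uniqueness/trace-locality argument of Lemmas~\ref{lemma:Pi_grad-well-defined} and \ref{lemma:Pi_curl-well-defined}: the edge and face conditions involve only the traces of the data, which vanish). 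Two small caveats: in the boundary pairings you should pass from $\gamma_\tau\widetilde{\mathbf e}$ to $\Pi_\tau\widetilde{\mathbf e}$ (an in-plane rotation) before invoking Lemma~\ref{lemma:picurl-negative-I}, since that lemma is formulated for the tangential component satisfying \eqref{eq:Pi_curl-c}--\eqref{eq:Pi_curl-f}; and your parenthetical alternative for the curl component via the commuting diagram and a $\hatPidivcom$ bound would be circular if it appeals to Lemma~\ref{lemma:better-regularity-div}, whose proof uses the present theorem, so it should be dropped or justified independently.
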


\begin{proof}
By the familiar argument that $\hatPicurlcom$ is a projection, we may restrict the proof to the case 
${\mathbf v} = 0$ in the infimum. The case $s = 0$ is covered by 
Theorem~\ref{thm:H1curl-approximation}. In the remainder of the proof, we will show the case $s = 1$ as the 
case $s \in (0,1)$ then follows by an interpolation argument.

We write $\mathbf{E}:=\mathbf{u}-\hatPicurlcom\mathbf{u}$ for simplicity. 
By definition we have
\begin{align}
\label{eq:lemma:duality-again-100} 
\Vert\mathbf{E}\Vert_{\widetilde{\mathbf{H}}^{-1}(\widehat{K},\operatorname{\mathbf{curl}})} 
& \sim  \Vert\mathbf{E}\Vert_{\widetilde{\mathbf{H}}^{-1}(\widehat{K})} + \Vert\operatorname{\mathbf{curl}}\mathbf{E}\Vert_{\widetilde{\mathbf{H}}^{-1}(\widehat{K})} \\
\nonumber 
&= 
\operatorname*{sup}_{\mathbf{v}\in\mathbf{H}^1(\widehat{K})} \frac{(\mathbf{E},\mathbf{v})_{L^2(\widehat{K})}}{\Vert\mathbf{v}\Vert_{\mathbf{H}^1(\widehat{K})}} + 
\operatorname*{sup}_{\mathbf{v}\in\mathbf{H}^1(\widehat{K})} \frac{(\operatorname{\mathbf{curl}} \mathbf{E},\mathbf{v})_{L^2(\widehat{K})}}{\Vert\mathbf{v}\Vert_{\mathbf{H}^1(\widehat{K})}}. 
\end{align}
We start with estimating the first supremum  in (\ref{eq:lemma:duality-again-100}). 
According to Lemma~\ref{lemma:helmholtz-3d}, any $\mathbf{v}\in\mathbf{H}^1(\widehat{K})$ can be decomposed as
\begin{align*}
\mathbf{v}=\nabla\varphi + \operatorname{\mathbf{curl}}\operatorname{\mathbf{curl}}\mathbf{z}
\end{align*}
with $\varphi\in H^2(\widehat{K}) \cap H_0^1(\widehat{K})$ and $\mathbf{z}\in\mathbf{H}^1(\widehat{K},\operatorname{\mathbf{curl}}) \cap \mathbf{H}_0(\widehat{K},\operatorname{\mathbf{curl}})$. We also observe 
$\operatorname{\mathbf{curl}}\mathbf{z}\in \mathbf{H}^1(\widehat{K},\operatorname{\mathbf{curl}})$. Thus by Lemma~\ref{lemma:helmholtz-like-decomp} we can further decompose $\operatorname{\mathbf{curl}}\mathbf{z}$ as
\begin{align}
\label{eq:lemma:duality-again-25}
\operatorname{\mathbf{curl}}\mathbf{z} = \nabla\varphi_2 + \mathbf{z}_2
\end{align}
with $\varphi_2\in H^2(\widehat{K})$ and $\mathbf{z}_2 \in \mathbf{H}^2(\widehat{K})$. We estimate each
term in the decomposition 
$(\mathbf{E},\mathbf{v})_{L^2(\widehat{K})} = 
(\mathbf{E},\nabla\varphi)_{L^2(\widehat{K})} + 
(\mathbf{E},\operatorname{\mathbf{curl}}\operatorname{\mathbf{curl}}\mathbf{z})_{L^2(\widehat{K})}$ 
separately. Using the orthogonality condition \eqref{eq:Pi_curl-b} and Theorem~\ref{thm:H1curl-approximation}, we get
\begin{align}
\nonumber 
\bigl|(\mathbf{E},\nabla\varphi)_{L^2(\widehat{K})}\bigr| &= 
\bigl| \operatorname*{inf}_{w\in\mathring{W}_{p+1}(\widehat{K})} (\mathbf{E},\nabla(\varphi-w))_{L^2(\widehat{K})} \bigr|\lesssim p^{-1} \Vert\varphi\Vert_{H^2(\widehat{K})} \Vert\mathbf{E}\Vert_{L^2(\widehat{K})} \\
\label{eq:lemma:duality-again-30}
&\lesssim p^{-1} \Vert\mathbf{v}\Vert_{H^1(\widehat{K})} \Vert\mathbf{E}\Vert_{\mathbf{H}(\widehat{K},\operatorname{\mathbf{curl}})} \lesssim p^{-2} \Vert\mathbf{v}\Vert_{H^1(\widehat{K})} \Vert\mathbf{u}\Vert_{\mathbf{H}^1(\widehat{K},\operatorname{\mathbf{curl}})}.
\end{align}
Integration by parts and (\ref{eq:lemma:duality-again-25}) give 
\begin{align}
\nonumber 
&(\mathbf{E},\operatorname{\mathbf{curl}}\operatorname{\mathbf{curl}}\mathbf{z})_{L^2(\widehat{K})} 
 = 
(\mathbf{E},\operatorname{\mathbf{curl}}\mathbf{z}_2)_{L^2(\widehat{K})}  
\\ \nonumber & \quad 
= 
(\operatorname{\mathbf{curl}} \mathbf{E},\mathbf{z}_2)_{L^2(\widehat{K})}  + 
(\Pi_\tau \mathbf{E},\gamma_\tau \mathbf{z}_2)_{L^2(\partial \widehat{K})}   \\
\nonumber
& \quad = 
(\operatorname{\mathbf{curl}} \mathbf{E},\operatorname{\mathbf{curl}} \mathbf{z})_{L^2(\widehat{K})}  - 
(\operatorname{\mathbf{curl}} \mathbf{E},\nabla \varphi_2)_{L^2(\widehat{K})}  + 
(\Pi_\tau \mathbf{E},\gamma_\tau \mathbf{z}_2)_{L^2(\partial \widehat{K})}   \\
\label{eq:lemma:duality-again-50}
& \quad = 
(\operatorname{\mathbf{curl}} \mathbf{E},\operatorname{\mathbf{curl}} \mathbf{z})_{L^2(\widehat{K})}  - 
({\mathbf n} \cdot \operatorname*{\mathbf{curl}}  \mathbf{E},\varphi_2)_{L^2(\partial \widehat{K})}  + 
(\Pi_\tau \mathbf{E},\gamma_\tau \mathbf{z}_2)_{L^2(\partial \widehat{K})}. 
\end{align}
We estimate these three terms separately. For the first term in (\ref{eq:lemma:duality-again-50}), we 
use the orthogonality \eqref{eq:Pi_curl-a} and Theorem~\ref{thm:H1curl-approximation} to get 
\begin{align}
\label{eq:lemma:duality-again-52}
& \bigl| (\operatorname{\mathbf{curl}}\mathbf{E},\operatorname{\mathbf{curl}}\mathbf{z})_{L^2(\widehat{K})} 
\bigr| = \bigl| \operatorname*{inf}_{\mathbf{w}\in\mathring{\mathbf{Q}}_p(\widehat{K})} (\operatorname{\mathbf{curl}}\mathbf{E},\operatorname{\mathbf{curl}}(\mathbf{z}-\mathbf{w}))_{L^2(\widehat{K})} 
\bigr|\\
\nonumber 
& \qquad \qquad \lesssim p^{-1} \Vert\operatorname{\mathbf{curl}}\mathbf{E}\Vert_{L^2(\widehat{K})} \Vert\mathbf{z}\Vert_{\mathbf{H}^1(\widehat{K},\operatorname{\mathbf{curl}})} \\ 
\nonumber 
&
\qquad 
\qquad 
\lesssim p^{-1} \Vert\mathbf{v}\Vert_{H^1(\widehat{K})} \Vert\mathbf{E}\Vert_{\mathbf{H}(\widehat{K},\operatorname{\mathbf{curl}})}
\lesssim p^{-2} \Vert\mathbf{v}\Vert_{H^1(\widehat{K})} \Vert\mathbf{u}\Vert_{\mathbf{H}^1(\widehat{K},\operatorname{\mathbf{curl}})},
\end{align}
cf. also the proof of Lemma~\ref{lemma:picurl-negative-I} for the approximation arguments (use the lifting of Lemma~\ref{lemma:Hcurl-lifting}). For the second term in (\ref{eq:lemma:duality-again-50}), we note that 
$\operatorname*{\mathbf{curl}} {\mathbf E} \in {\mathbf H}^1(\widehat K)$ so that 
the integral over $\partial\widehat K$ can be split into a sum of face 
contributions and $({\mathbf n} \cdot 
\operatorname*{\mathbf{curl}} {\mathbf E})|_f  = \operatorname*{curl}_f \Pi_\tau {\mathbf E}$.
We also observe that our assumption on the angles of the faces of $\widehat K$ 
allows us to select $s = 3/2$ in Lemmas~\ref{lemma:picurl-negative-II} and \ref{lemma:Picurl-face}
since the pertinent $\widehat s$ satisfies $\widehat s > 3/2 = \pi/(2 \pi/3)$. 
We get for each face contribution
\begin{align}
\label{eq:lemma:duality-again-61}
 \bigl| (\operatorname{curl}_f\Pi_\tau\mathbf{E},\varphi_2)_{L^2(f)}\bigr|
& \stackrel{\text{Lem.~\ref{lemma:picurl-negative-II}}}{\lesssim} p^{-3/2} \Vert\operatorname{curl}_f\Pi_\tau\mathbf{E}\Vert_{L^2(f)} \Vert\varphi_2\Vert_{H^{3/2}(f)} \\
&\stackrel{\text{Lem.~\ref{lemma:Picurl-face}}}{\lesssim} p^{-2} \Vert\Pi_\tau\mathbf{u}\Vert_{\mathbf{H}^{1/2}(\operatorname{curl},f)} \Vert\varphi_2\Vert_{H^2(\widehat{K})} 
\\ \nonumber & 
\lesssim p^{-2} \Vert\mathbf{u}\Vert_{\mathbf{H}^1(\operatorname{\mathbf{curl}},\widehat{K})} \Vert\mathbf{v}\Vert_{\mathbf{H}^1(\widehat{K})}.
\end{align}
Finally, for the third term in (\ref{eq:lemma:duality-again-50}) we infer with 
Lemmas~\ref{lemma:picurl-negative-I}, \ref{lemma:Picurl-face} 
\begin{align}
\label{eq:lemma:duality-again-54} 
\left| (\Pi_\tau\mathbf{E},\gamma_\tau\mathbf{z}_2)_{L^2(f)}\right|
&\stackrel{\text{Lem.~\ref{lemma:picurl-negative-I}}}{\lesssim} p^{-3/2} \Vert\Pi_\tau\mathbf{E}\Vert_{\mathbf{H}(f,\operatorname{curl})} \Vert\gamma_\tau\mathbf{z}_2\Vert_{\mathbf{H}^{3/2}(f)}\\
\nonumber 
&\stackrel{\text{Lem.~\ref{lemma:Picurl-face}}}{\lesssim} p^{-2}\Vert\Pi_\tau\mathbf{u}\Vert_{\mathbf{H}^{1/2}(f,\operatorname{curl})} \Vert\mathbf{z_2}\Vert_{\mathbf{H}^2(\widehat{K})} \\
\nonumber 
& \lesssim p^{-2} \Vert\mathbf{u}\Vert_{\mathbf{H}^1(\widehat{K},\operatorname{\mathbf{curl}})} \Vert\mathbf{v}\Vert_{\mathbf{H}^1(\widehat{K})}.
\end{align}
Adding \eqref{eq:lemma:duality-again-61} and \eqref{eq:lemma:duality-again-54} over all faces 
and taking note of (\ref{eq:lemma:duality-again-52}) shows that we estimate the first supremum 
(\ref{eq:lemma:duality-again-100}) in the desired fashion. 

To estimate the second supremum in (\ref{eq:lemma:duality-again-100}),
we decompose $\mathbf{v}\in\mathbf{H}^1(\widehat{K})$ as
\begin{align*}
\mathbf{v}=\nabla\varphi+\operatorname{\mathbf{curl}}\mathbf{z}
\end{align*}
with $\varphi\in H^2(\widehat{K})$ and $\mathbf{z} \in \mathbf{H}^1(\widehat{K},\operatorname{\mathbf{curl}}) \cap \mathbf{H}_0(\widehat{K},\operatorname{\mathbf{curl}})$ according to Lemma~\ref{lemma:helmholtz-3d}. Thus we have 
to control the expression $(\operatorname{\mathbf{curl}}\mathbf{E},\mathbf{v})_{L^2(\widehat{K})} = (\operatorname{\mathbf{curl}}\mathbf{E},\operatorname{\mathbf{curl}}\mathbf{z})_{L^2(\widehat{K})} + (\operatorname{\mathbf{curl}}\mathbf{E},\nabla\varphi)_{L^2(\widehat{K})}$. Using the orthogonality condition \eqref{eq:Pi_curl-a} and Theorem~\ref{thm:H1curl-approximation}, the first term is estimated by
\begin{align*}
\bigr| (\operatorname{\mathbf{curl}}\mathbf{E}&,\operatorname{\mathbf{curl}}\mathbf{z})_{L^2(\widehat{K})} 
\bigr| =\bigl| \operatorname*{inf}_{\mathbf{w}\in\mathring{\mathbf{Q}}_p(\widehat{K})} (\operatorname{\mathbf{curl}}\mathbf{E},\operatorname{\mathbf{curl}}(\mathbf{z}-\mathbf{w}))_{L^2(\widehat{K})} \bigr| \\
& \lesssim p^{-1} \Vert\mathbf{E}\Vert_{\mathbf{H}(\widehat{K},\operatorname{\mathbf{curl}})} \Vert\mathbf{z}\Vert_{\mathbf{H}^1(\widehat{K},\operatorname{\mathbf{curl}})} 
\lesssim p^{-2} \Vert\mathbf{u}\Vert_{\mathbf{H}^1(\widehat{K},\operatorname{\mathbf{curl}})} \Vert\mathbf{v}\Vert_{\mathbf{H}^1(\widehat{K},\operatorname{\mathbf{curl}})}.
\end{align*}
For the second term, an integration by parts yields in view of 
$\operatorname{curl}_f \Pi_\tau {\mathbf E} = {\mathbf n} \cdot \operatorname{\mathbf{curl}} {\mathbf E}$ 
\begin{align*}
(\operatorname{\mathbf{curl}}\mathbf{E},\nabla\varphi)_{L^2(\widehat{K})} = \sum_{f\in\mathcal{F}(\widehat{K})}(\operatorname{curl}_f\Pi_\tau\mathbf{E},\varphi)_{L^2(f)},
\end{align*}
where the decomposition into face contributions is again permitted by the regularity of ${\mathbf E}$ and $\varphi$. 
We obtain
\begin{align*}
\bigl| (\operatorname{curl}_f\Pi_\tau\mathbf{E},\varphi)_{L^2(f)}\bigr|\! \lesssim p^{-3/2} \Vert\Pi_\tau\mathbf{E}\Vert_{\mathbf{H}(f,\operatorname{curl})} \Vert\varphi\Vert_{H^{3/2}(f)}\! \lesssim p^{-2} \Vert\mathbf{u}\Vert_{\mathbf{H}^1(\widehat{K},\operatorname{\mathbf{curl}})} \Vert\mathbf{v}\Vert_{\mathbf{H}^1(\widehat{K})}
\end{align*}
by Lemmas~\ref{lemma:picurl-negative-II} and \ref{lemma:Picurl-face}, which finishes the proof.
\end{proof}

For functions ${\mathbf u}$ with discrete $\operatorname{\mathbf{curl}}$, we have the following result.

\begin{lemma}
\label{lemma:better-regularity}
Assume that all interior angles of the $4$ faces of $\widehat K$ are smaller than $2 \pi/3$.
Then for all $k\geq1$ and
all ${\mathbf{u}}\in{\mathbf{H}}^{k}(\widehat{K})$ with $\operatorname*{\mathbf{curl}}%
{\mathbf{u}}\in {\mathbf V}_p(\widehat K)  \supset ({\mathcal{P}}_{p}(\widehat{K}))^{3}$ 
\begin{equation}
\Vert{\mathbf{u}}-\hatPicurlcom{\mathbf{u}}%
\Vert_{\widetilde{\mathbf{H}}^{-s}(\widehat{K},\operatorname{\mathbf{curl}})}\leq C_{s,k}p^{-(k+s)}\Vert{\mathbf{u}}\Vert
_{\mathbf{H}^{k}(\widehat{K})}, \qquad s\in [0,1].
\label{eq:proposition:better-regularity}%
\end{equation}
If $p\geq k-1$, then $\Vert{\mathbf{u}}\Vert_{{\mathbf{H}}%
^{k}(\widehat{K})}$ can be replaced with the seminorm $|{\mathbf{u}%
}|_{{\mathbf{H}}^{k}(\widehat{K})}$. 
Moreover, 
\eqref{eq:proposition:better-regularity} holds for $s =0$ without the conditions on the angles of the
faces of $\widehat K$.
\end{lemma}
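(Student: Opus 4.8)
The plan is to mirror the proof of Lemma~\ref{lemma:better-regularity-2d}, replacing the two-dimensional ingredients by their three-dimensional analogs: the regularized right inverses of Lemma~\ref{lemma:mcintosh}, the Helmholtz-like decomposition of Lemma~\ref{lemma:helmholtz-like-decomp}, the stability estimate \eqref{eq:lemma:demkowicz-grad-3D-20} of Theorem~\ref{lemma:demkowicz-grad-3D}, and the commuting diagram property $\hatPicurlcom \nabla = \nabla \hatPigradcom$ of Theorem~\ref{thm:diagram-commutes}. Since $\operatorname{\mathbf{curl}} {\mathbf u} \in {\mathbf V}_p(\widehat K)$ is a polynomial, we have ${\mathbf u} \in {\mathbf H}^k(\widehat K,\operatorname{\mathbf{curl}})$, and I would decompose ${\mathbf u} = \nabla \varphi + {\mathbf z}$ with ${\mathbf z} := {\mathbf R}^{\operatorname{curl}}(\operatorname{\mathbf{curl}} {\mathbf u})$ and $\varphi := R^{\operatorname{grad}}({\mathbf u} - {\mathbf z})$, the operators being those of Lemma~\ref{lemma:mcintosh}. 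Noting $\operatorname{div}\operatorname{\mathbf{curl}} {\mathbf u} = 0$, Lemma~\ref{lemma:mcintosh}, (\ref{item:lemma:mcintosh-i}), (\ref{item:lemma:mcintosh-ii}) show that this is indeed an identity, and the mapping properties (\ref{item:lemma:mcintosh-vii}) together with the elementary $p$-uniform bound $\|\operatorname{\mathbf{curl}} {\mathbf u}\|_{{\mathbf H}^{k-1}(\widehat K)} \lesssim \|{\mathbf u}\|_{{\mathbf H}^k(\widehat K)}$ give ${\mathbf z} \in {\mathbf H}^k(\widehat K)$ and $\varphi \in H^{k+1}(\widehat K)$ with $\|\varphi\|_{H^{k+1}(\widehat K)} \lesssim \|{\mathbf u}\|_{{\mathbf H}^k(\widehat K)}$.

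The key point is that the hypothesis $\operatorname{\mathbf{curl}} {\mathbf u} \in {\mathbf V}_p(\widehat K)$ together with Lemma~\ref{lemma:mcintosh}, (\ref{item:lemma:mcintosh-v}) forces ${\mathbf z} = {\mathbf R}^{\operatorname{curl}}(\operatorname{\mathbf{curl}} {\mathbf u}) \in {\mathbf Q}_p(\widehat K)$; since $\hatPicurlcom$ is a projection onto ${\mathbf Q}_p(\widehat K)$, it follows $(\operatorname{I} - \hatPicurlcom){\mathbf z} = 0$. Hence, by the commuting diagram property,
\begin{align*}
(\operatorname{I} - \hatPicurlcom){\mathbf u} = (\operatorname{I} - \hatPicurlcom)\nabla \varphi = \nabla(\operatorname{I} - \hatPigradcom)\varphi,
\end{align*}
and, since $\operatorname{\mathbf{curl}} \nabla = 0$, the $\widetilde{\mathbf H}^{-s}(\widehat K,\operatorname{\mathbf{curl}})$-norm of the left-hand side equals $\|\nabla(\operatorname{I} - \hatPigradcom)\varphi\|_{\widetilde{\mathbf H}^{-s}(\widehat K)}$. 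Applying estimate \eqref{eq:lemma:demkowicz-grad-3D-20} of Theorem~\ref{lemma:demkowicz-grad-3D} together with the unconstrained best-approximation result Lemma~\ref{lemma:Pgrad1d} (used with $r = k+1 \ge 2$, which is where $k \ge 1$ enters) gives
\begin{align*}
\|\nabla(\operatorname{I} - \hatPigradcom)\varphi\|_{\widetilde{\mathbf H}^{-s}(\widehat K)}
\lesssim p^{-(1+s)} \inf_{v \in W_{p+1}(\widehat K)} \|\varphi - v\|_{H^2(\widehat K)}
\lesssim p^{-(k+s)} \|\varphi\|_{H^{k+1}(\widehat K)}
\lesssim p^{-(k+s)} \|{\mathbf u}\|_{{\mathbf H}^k(\widehat K)},
\end{align*}
which is \eqref{eq:proposition:better-regularity}. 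For the case $s = 0$ one only needs \eqref{eq:lemma:demkowicz-grad-3D-10} with $s = 0$ (since $\|\nabla(\operatorname{I} - \hatPigradcom)\varphi\|_{\widetilde{\mathbf H}^0(\widehat K)} = |(\operatorname{I} - \hatPigradcom)\varphi|_{H^1(\widehat K)}$), and by Theorem~\ref{lemma:demkowicz-grad-3D} this estimate holds without any restriction on the face angles; this proves the final assertion of the lemma.

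For the seminorm statement I would argue as follows: if $p \ge k - 1$, then any polynomial ${\mathbf w}$ of degree at most $k - 1$ satisfies ${\mathbf w} \in {\mathbf Q}_p(\widehat K)$ and $\operatorname{\mathbf{curl}} {\mathbf w} \in ({\mathcal P}_p(\widehat K))^3 \subset {\mathbf V}_p(\widehat K)$, hence $\hatPicurlcom {\mathbf w} = {\mathbf w}$ by the projection property; since also $\operatorname{\mathbf{curl}}({\mathbf u} - {\mathbf w}) \in {\mathbf V}_p(\widehat K)$, the estimate just established, applied to ${\mathbf u} - {\mathbf w}$, yields $\|{\mathbf u} - \hatPicurlcom {\mathbf u}\|_{\widetilde{\mathbf H}^{-s}(\widehat K,\operatorname{\mathbf{curl}})} \lesssim p^{-(k+s)} \|{\mathbf u} - {\mathbf w}\|_{{\mathbf H}^k(\widehat K)}$, and taking the infimum over ${\mathbf w}$ together with a Bramble--Hilbert (Deny--Lions) argument replaces $\|{\mathbf u}\|_{{\mathbf H}^k(\widehat K)}$ by $|{\mathbf u}|_{{\mathbf H}^k(\widehat K)}$. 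The proof contains no genuine obstacle beyond a bookkeeping point: one must use only the $p$-uniform bound $\|\operatorname{\mathbf{curl}} {\mathbf u}\|_{{\mathbf H}^{k-1}(\widehat K)} \lesssim \|{\mathbf u}\|_{{\mathbf H}^k(\widehat K)}$, and never a norm equivalence on the finite-dimensional space ${\mathbf V}_p(\widehat K)$ (which would introduce a $p$-dependent constant), so that all constants remain independent of $p$.
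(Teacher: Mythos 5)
Your proposal is correct and follows essentially the same route as the paper's proof: the decomposition ${\mathbf u}=\nabla\varphi+{\mathbf R}^{\operatorname{curl}}(\operatorname{\mathbf{curl}}{\mathbf u})$ via Lemma~\ref{lemma:mcintosh}/Lemma~\ref{lemma:helmholtz-like-decomp}, the observation that ${\mathbf R}^{\operatorname{curl}}(\operatorname{\mathbf{curl}}{\mathbf u})\in{\mathbf Q}_p(\widehat K)$ is reproduced by the projection, the commuting diagram property, and then \eqref{eq:lemma:demkowicz-grad-3D-20} combined with Lemma~\ref{lemma:Pgrad1d}. You merely spell out two points the paper leaves terse (the best-approximation step and the Deny--Lions argument behind the seminorm statement), and both are handled correctly.
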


\begin{proof}
We employ the regularized right inverses of the operators $\nabla$ and
$\operatorname*{\mathbf{curl}}$ and proceed as in Lemma~\ref{lemma:better-regularity-2d}. We
write, using the decomposition of Lemma~\ref{lemma:helmholtz-like-decomp},
$\displaystyle 
{\mathbf{u}}=\nabla R^{\operatorname*{grad}}({\mathbf{u}}-{\mathbf{R}%
}^{\operatorname*{curl}}\operatorname*{\mathbf{curl}}{\mathbf{u}})+{\mathbf{R}}^{\operatorname*{curl}}\operatorname*{\mathbf{curl}}{\mathbf{u}}=:\nabla
\varphi+{\mathbf{v}}%
$
with $\varphi\in H^{k+1}(\widehat{K})$ and ${\mathbf{v}}\in{\mathbf{H}}%
^{k}(\widehat{K})$ together with
\begin{equation}
\Vert\varphi\Vert_{H^{k+1}(\widehat{K})}+\Vert{\mathbf{v}}\Vert_{{\mathbf{H}%
}^{k}(\widehat{K})}\lesssim  \Vert{\mathbf{u}}\Vert_{{\mathbf{H}}%
^{k}(\widehat{K})}+\Vert\operatorname*{\mathbf{curl}}{\mathbf{u}}\Vert_{{\mathbf{H}%
}^{k-1}(\widehat{K})}  \lesssim \Vert{\mathbf{u}}\Vert_{{\mathbf{H}}%
^{k}(\widehat{K})}.
\label{eq:lemma:projection-based-interpolation-approximation-100}%
\end{equation}
The assumption $\operatorname*{\mathbf{curl}}{\mathbf{u}}\in {\mathbf V}_p(\widehat K)$ 
and 
Lemma~\ref{lemma:mcintosh}, (\ref{item:lemma:mcintosh-v}) 
imply ${\mathbf{v}}={\mathbf{R}}%
^{\operatorname*{curl}}\operatorname*{\mathbf{curl}}{\mathbf{u}}\in
\mathbf{Q}_p(\widehat{K})$; furthermore, since
$\hatPicurlcom$ is a projection, we have 
${\mathbf{v}}-\hatPicurlcom{\mathbf{v}}=0$. 
With the commuting diagram property $\nabla\hatPigradcom=\hatPicurlcom\nabla$ and \eqref{eq:lemma:demkowicz-grad-3D-20} we get
\begin{align*}
\Vert(\operatorname{I}-\hatPicurlcom){\mathbf{u}}\Vert
_{\widetilde{\mathbf{H}}^{-s}(\widehat{K},\operatorname{\mathbf{curl}})} &= \Vert(\operatorname{I}-\hatPicurlcom)\nabla\varphi+\underbrace{(\operatorname{I}-\hatPicurlcom){\mathbf{v}}}_{=0}\Vert_{\widetilde{\mathbf{H}}^{-s}(\widehat{K},\operatorname{\mathbf{curl}})} \\
&= \Vert\nabla(\operatorname{I}-\hatPigradcom)\varphi\Vert_{\widetilde{\mathbf{H}}^{-s}(\widehat{K})}\lesssim p^{-(k+s)}\Vert\varphi\Vert_{H^{k+1}(\widehat{K})}.
\end{align*}
The proof of (\ref{eq:proposition:better-regularity})
is complete in view of
(\ref{eq:lemma:projection-based-interpolation-approximation-100}). Replacing
$\Vert{\mathbf{u}}\Vert_{{\mathbf{H}}^{k}(\widehat{K})}$ with $|{\mathbf{u}%
}|_{{\mathbf{H}}^{k}(\widehat{K})}$ is possible since the
projector $\hatPicurlcom$ reproduces polynomials
of degree $p$.
\end{proof}


\subsection{Stability of the operator $\protect\hatPidivcom$}

Similar to Lemma~\ref{lemma:Picurl-edge}, we have:

\begin{lemma}
\label{lemma:Pidiv-face} For 
${\mathbf{u}}\in{\mathbf{H}}^{1/2}(\widehat{K},\operatorname*{div})$ and $s \ge 0$ 
we have for each face $f\in{\mathcal{F}}(\widehat{K})$ 
\begin{equation}
\Vert({\mathbf{u}}-\hatPidivcom{\mathbf{u}%
})\cdot{\mathbf{n}}_{f}\Vert_{\widetilde{H}^{-s}(f)}\leq C_s p^{-s}%
\inf_{v \in V_p(f)}
\Vert{\mathbf{u}}\cdot{\mathbf{n}}_{f} - v\Vert_{L^{2}(f)}. 
\label{eq:lemma:Pidiv-face-20}%
\end{equation}
\end{lemma}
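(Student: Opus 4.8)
The statement is the $\hatPidivcom$-analog of Lemma~\ref{lemma:Picurl-edge}, so the proof should follow the same template: observe that on a face the operator $\hatPidivcom$ acts (in the relevant component) as an $L^2$-projection, and then use a duality argument together with a one-dimensional / face-dimensional polynomial approximation estimate. First I would recall that Lemma~\ref{lemma:Pi_div-well-defined} already gives ${\mathbf u}\cdot{\mathbf n}_f\in L^2(f)$ for ${\mathbf u}\in{\mathbf H}^{1/2}(\widehat K,\operatorname{div})$, so that the left-hand side is meaningful and so that the case $s=0$ is immediate from the defining relations. Indeed, conditions \eqref{eq:Pi_div-d} and \eqref{eq:Pi_div-c} say precisely that
$$
({\mathbf n}_f\cdot({\mathbf u}-\hatPidivcom{\mathbf u}),v)_{L^2(f)}=0\qquad\forall v\in V_p(f),
$$
since $V_p(f)=\operatorname{span}\{1\}\oplus\mathring V_p(f)$ (the decomposition into constants and mean-zero polynomials). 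Thus $\|({\mathbf u}-\hatPidivcom{\mathbf u})\cdot{\mathbf n}_f\|_{L^2(f)}=\inf_{v\in V_p(f)}\|{\mathbf u}\cdot{\mathbf n}_f-v\|_{L^2(f)}$, which is \eqref{eq:lemma:Pidiv-face-20} for $s=0$.

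For $s>0$ I would run the standard duality argument. Writing $\widetilde e:=({\mathbf u}-\hatPidivcom{\mathbf u})\cdot{\mathbf n}_f$ and using the definition of the negative norm,
$$
\|\widetilde e\|_{\widetilde H^{-s}(f)}=\sup_{v\in H^s(f)}\frac{(\widetilde e,v)_{L^2(f)}}{\|v\|_{H^s(f)}}.
$$
For fixed $v\in H^s(f)$, use the orthogonality just established to subtract an arbitrary $w\in V_p(f)$: $(\widetilde e,v)_{L^2(f)}=(\widetilde e,v-w)_{L^2(f)}$, hence
$$
|(\widetilde e,v)_{L^2(f)}|\le\|\widetilde e\|_{L^2(f)}\inf_{w\in V_p(f)}\|v-w\|_{L^2(f)}\lesssim p^{-s}\|\widetilde e\|_{L^2(f)}\|v\|_{H^s(f)},
$$
where the last step is the unconstrained best-approximation estimate on the (two-dimensional) simplex $f$ from Lemma~\ref{lemma:Pgrad1d} (with $r=s$, target smoothness $0$), applied after identifying $f$ with $\widehat f$. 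Combining this with the $s=0$ bound $\|\widetilde e\|_{L^2(f)}=\inf_{v\in V_p(f)}\|{\mathbf u}\cdot{\mathbf n}_f-v\|_{L^2(f)}$ yields \eqref{eq:lemma:Pidiv-face-20} for all $s\ge0$.

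The only mild subtlety — and the analog of the decomposition $w=\overline w+(\int_0^x w-x\overline w)'$ used in Lemma~\ref{lemma:Picurl-edge} — is to check that \emph{every} polynomial test function relevant to the orthogonality is admissible, i.e.\ that $V_p(f)$ is exactly spanned by the constant (tested in \eqref{eq:Pi_div-d}) together with $\mathring V_p(f)$ (tested in \eqref{eq:Pi_div-c}); this is immediate since $V_p(f)\cong{\mathcal P}_p({\mathbb R}^2)$ and $\mathring V_p(f)$ is its mean-zero subspace of codimension one. No discrete Friedrichs inequality or lifting operator is needed here, because we only estimate the scalar normal trace, not the full field; the harder face-based estimates (Lemmas~\ref{lemma:picurl-negative-I}, \ref{lemma:picurl-negative-II}, \ref{lemma:Picurl-face} and their $\operatorname{div}$-counterparts) are invoked elsewhere. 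So I expect no real obstacle: the proof is a direct transcription of the Lemma~\ref{lemma:Picurl-edge} argument with edges replaced by faces and $Q_p(e)$ replaced by $V_p(f)$.
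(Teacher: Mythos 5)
Your proposal is correct and follows essentially the same route as the paper: identify $\hatPidivcom$ on a face as the $L^2(f)$-projection onto $V_p(f)$ via the splitting of a polynomial into its mean and a mean-zero part (conditions \eqref{eq:Pi_div-d}, \eqref{eq:Pi_div-c}), then conclude for $s>0$ by the standard duality argument with the unconstrained best-approximation estimate on $f$. The only cosmetic difference is that you cite Lemma~\ref{lemma:Pi_div-well-defined} for ${\mathbf u}\cdot{\mathbf n}_f\in L^2(f)$, whereas the paper repeats that short Helmholtz-decomposition argument inside the proof.
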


\begin{proof}
We first show that for ${\mathbf u} \in {\mathbf H}^{1/2}(\widehat K,\operatorname{div})$
the normal trace ${\mathbf n}_f \cdot {\mathbf u} \in L^2(f)$ for each face $f$. To that end,
we write with the aid of Lemma~\ref{lemma:helmholtz-decomposition-div}
${\mathbf u} = \operatorname{\mathbf{curl}} {\boldsymbol \varphi} + {\mathbf z}$ with
${\boldsymbol \varphi}$, ${\mathbf z} \in {\mathbf H}^{3/2}(\widehat K)$. We have
${\mathbf n}_f \cdot {\mathbf z} \in {\mathbf H}^1(f)$. Noting
${\boldsymbol \varphi}|_f  \in {\mathbf H}^1(f)$ and
$({\mathbf n}_f \cdot \operatorname{\mathbf{curl}} {\boldsymbol \varphi})|_f  
= \operatorname{curl}_f (\Pi_\tau {\boldsymbol \varphi})|_f$, we conclude that
$({\mathbf n}_f \cdot \operatorname{\mathbf{curl}} {\boldsymbol \varphi})|_f \in L^2(f)$.

Note that \eqref{eq:Pi_div-d} and \eqref{eq:Pi_div-c} imply that on faces the operator $\hatPidivcom$ 
is the $L^2$-projection onto $V_p(f)$. Thus, \eqref{eq:lemma:Pidiv-face-20} holds for $s=0$. 
The case $s>0$ follows by a duality argument. 
To that end define $\tilde{e}:=\left(\mathbf{u}-\hatPidivcom\mathbf{u}\right)\cdot \mathbf{n}_f$. 
We observe that each $w\in \mathcal{P}_p(f)$ can be written as $w=\overline{w}+(w-\overline{w})$ 
with $\overline{w}$ being the average of $w$ on $f$. Since $w-\overline{w} \in \mathring{V_p}(f)$, \eqref{eq:Pi_div-d} and \eqref{eq:Pi_div-c} imply $(\tilde{e},w)_{L^2(f)} =0$ for any 
$w \in \mathcal{P}_p(f)$. Thus we have for arbitrary $v \in H^s(f)$ 
\begin{align*}
\bigl|(\tilde{e},v)_{L^2(f)}\bigr| &= \bigl|\inf_{w\in \mathcal{P}_p(f)} (\tilde{e},v-w)_{L^2(f)} \bigr| 
\leq \Vert\tilde{e}\Vert_{L^2(f)} \inf_{w\in\mathcal{P}_p(f)} \Vert v-w\Vert_{L^2(f)} \\
& \lesssim p^{-s} \Vert\tilde{e}\Vert_{L^2(f)} \Vert v\Vert_{H^s(f)}.
\qedhere
\end{align*}
\end{proof}

As in the analysis of the operators in the previous sections, the existence of 
a polynomial preserving lifting operator from the boundary $\partial\widehat{K}$ to $\widehat{K}$ 
with appropriate properties plays an important role. Such a lifting operator has been constructed in 
\cite{demkowicz-gopalakrishnan-schoeberl-III}. Paralleling Lemma~\ref{lemma:Hcurl-lifting} 
we modify that lifting slightly to explicitly ensure an additional orthogonality property.

\begin{lemma}
\label{lemma:lifting-operator-div}
Denote the (normal) trace space of ${\mathbf V}_p(\widehat K)$ by 
\begin{equation*} 
V_p(\partial \widehat K):= \{v \in L^2(\partial \widehat K)\,|\, 
\exists {\mathbf v} \in {\mathbf V}_p(\widehat K) \text{ such that } {\mathbf n}_f \cdot {\mathbf v}|_f  = v|_f
\quad 
\forall f \in {\mathcal F}(\widehat K)\}. 
\end{equation*}
There exist $C > 0$ (independent of $p$) and, 
for each $p \in {\mathbb N}_0$ a lifting operator  
$\boldsymbol{\mathcal{L}}^{\operatorname*{div},3d}_p: 
V_p(\partial\widehat K) \rightarrow {\mathbf V}_p(\widehat K)$ 
with the following properties:
\begin{enumerate}[(i)]
\item \label{item:lemma:Hdiv-lifting-i} 
${\mathbf n}_f \cdot \boldsymbol{\mathcal{L}}^{\operatorname*{div},3d}_pz  = z|_f$ for each 
$f \in {\mathcal F}(\widehat K)$ and $z  \in V_p(\partial \widehat K)$. 
\item \label{item:lemma:Hdiv-lifting-iii} There holds 
$\Vert\boldsymbol{\mathcal{L}}^{\operatorname*{div},3d}_p z
\Vert_{\mathbf{H}(\widehat{K},\operatorname*{div})} 
\leq C\Vert z\Vert_{\widetilde{H}^{-1/2}(\partial\widehat{K})}$.
\item \label{item:lemma:Hdiv-lifting-iv} There holds the 
orthogonality $(\boldsymbol{\mathcal{L}}^{\operatorname*{div},3d}_pz,\operatorname*{\mathbf{curl}}\mathbf{v})_{L^2(\widehat{K})} = 0$ for all $\mathbf{v}\in \mathring{\mathbf{Q}}_p(\widehat{K})$.
\end{enumerate}
\end{lemma}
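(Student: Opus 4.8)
The plan is to follow closely the construction in the proof of Lemma~\ref{lemma:Hcurl-lifting}, adapting it to the $\operatorname{div}$-conforming setting. The starting point is the polynomial-preserving lifting operator $\boldsymbol{\mathcal{E}}^{\operatorname*{div}}:V_p(\partial\widehat K)\rightarrow{\mathbf V}_p(\widehat K)$ constructed in \cite{demkowicz-gopalakrishnan-schoeberl-III}, which already provides properties (\ref{item:lemma:Hdiv-lifting-i}) and (\ref{item:lemma:Hdiv-lifting-iii}) (possibly with the $\widetilde H^{-1/2}(\partial\widehat K)$-norm replaced by an equivalent trace norm on $V_p(\partial\widehat K)$). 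To enforce the extra orthogonality (\ref{item:lemma:Hdiv-lifting-iv}), I would correct $\boldsymbol{\mathcal{E}}^{\operatorname*{div}}z$ by subtracting a discrete divergence-free field: set $\boldsymbol{\mathcal{L}}^{\operatorname*{div},3d}_p z := \boldsymbol{\mathcal{E}}^{\operatorname*{div}}z-\mathbf{w}_0$, where $\mathbf{w}_0\in\mathring{\mathbf V}_p(\widehat K)$ solves a saddle-point problem analogous to \eqref{eq:lemma:saddle-point-curl}, using the bilinear forms $a(\mathbf{w},\mathbf{q}):=(\operatorname{div}\mathbf{w},\operatorname{div}\mathbf{q})_{L^2(\widehat K)}$ on $\mathring{\mathbf V}_p(\widehat K)$ and $b(\mathbf{w},\boldsymbol\mu):=(\mathbf{w},\operatorname*{\mathbf{curl}}\boldsymbol\mu)_{L^2(\widehat K)}$ with multiplier $\boldsymbol\mu$ ranging over (a suitable quotient of) $\mathring{\mathbf Q}_p(\widehat K)$, with right-hand sides built from $\boldsymbol{\mathcal{E}}^{\operatorname*{div}}z$. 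Since ${\mathbf n}_f\cdot\mathbf{w}_0=0$ on every face, property (\ref{item:lemma:Hdiv-lifting-i}) is preserved, and property (\ref{item:lemma:Hdiv-lifting-iv}) holds by construction.

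The main steps are then: (1) well-posedness of the saddle-point problem. Here coercivity of $a$ on $\ker b$ follows from the discrete Friedrichs inequality for $\mathbf{H}(\operatorname{div})$, Lemma~\ref{lemma:discrete-friedrichs-div}, once one identifies $\ker b=\{\mathbf{v}\in\mathring{\mathbf V}_p(\widehat K)\colon(\mathbf{v},\operatorname*{\mathbf{curl}}\boldsymbol\mu)_{L^2(\widehat K)}=0\ \forall\boldsymbol\mu\in\mathring{\mathbf Q}_{p,\perp}(\widehat K)\}$ — this is precisely the hypothesis (\ref{item:lemma:discrete-friedrichs-div-ii}) of that lemma. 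The inf-sup condition for $b$ is obtained exactly as in Lemma~\ref{lemma:Hcurl-lifting}: given a multiplier represented by $\operatorname*{\mathbf{curl}}\boldsymbol\mu$, test with $\mathbf{w}=\operatorname*{\mathbf{curl}}\boldsymbol\mu\in\mathring{\mathbf V}_p(\widehat K)$ and use a Poincar\'e/Friedrichs-type bound; one must be slightly careful to factor out $\nabla\mathring W_{p+1}(\widehat K)$ from the multiplier space so that the pairing is nondegenerate, which parallels the passage from $\mathring{\mathbf Q}_p$ to $\mathring{\mathbf Q}_{p,\perp}$ in the proof of Lemma~\ref{lemma:discrete-friedrichs-div}. (2) The {\sl a priori} estimate $\Vert\mathbf{w}_0\Vert_{\mathbf{H}(\widehat K,\operatorname*{div})}\lesssim\Vert\operatorname{div}\boldsymbol{\mathcal{E}}^{\operatorname*{div}}z\Vert_{L^2(\widehat K)}+\Vert\boldsymbol{\mathcal{E}}^{\operatorname*{div}}z\Vert_{L^2(\widehat K)}\lesssim\Vert z\Vert_{\widetilde H^{-1/2}(\partial\widehat K)}$, using the continuity of $\boldsymbol{\mathcal{E}}^{\operatorname*{div}}$; combined with a triangle inequality this yields (\ref{item:lemma:Hdiv-lifting-iii}) for $\boldsymbol{\mathcal{L}}^{\operatorname*{div},3d}_p$.

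I expect the main obstacle to be the correct identification of the multiplier space and the verification of the inf-sup condition, since in the $\operatorname{div}$ setting the natural multiplier lives in $\operatorname*{\mathbf{curl}}\mathring{\mathbf Q}_p(\widehat K)$, which is not the whole of $\mathring{\mathbf V}_p(\widehat K)$ but only its divergence-free part; one has to quotient $\mathring{\mathbf Q}_p(\widehat K)$ by $\ker\operatorname*{\mathbf{curl}}=\nabla\mathring W_{p+1}(\widehat K)$ (equivalently restrict to $\mathring{\mathbf Q}_{p,\perp}(\widehat K)$) to get a well-defined, nondegenerate form $b$, and then check that the resulting $\ker b$ is exactly the set covered by Lemma~\ref{lemma:discrete-friedrichs-div}(\ref{item:lemma:discrete-friedrichs-div-ii}). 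The exact-sequence structure in \eqref{eq:commuting-diagram-bc} together with the already-established discrete Friedrichs inequalities makes this bookkeeping routine but it is the place where care is needed. Everything else is a direct transcription of the $\operatorname{\mathbf{curl}}$-argument, with $\operatorname{div}$ in place of $\operatorname*{\mathbf{curl}}$ and $\operatorname*{\mathbf{curl}}$ in place of $\nabla$.
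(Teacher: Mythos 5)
Your proposal is correct and follows essentially the same route as the paper: the lifting $\boldsymbol{\mathcal{E}}^{\operatorname*{div}}$ of \cite{demkowicz-gopalakrishnan-schoeberl-III} is corrected by a field $\mathbf{w}_0\in\mathring{\mathbf{V}}_p(\widehat K)$ from a saddle-point problem with $a(\mathbf{w},\mathbf{q})=(\operatorname{div}\mathbf{w},\operatorname{div}\mathbf{q})_{L^2(\widehat K)}$, multiplier space $\mathring{\mathbf{Q}}_{p,\perp}(\widehat K)$, coercivity on $\ker b$ from Lemma~\ref{lemma:discrete-friedrichs-div}, and the inf-sup condition via $\mathbf{w}=\operatorname*{\mathbf{curl}}\boldsymbol{\mu}$ together with the discrete Friedrichs inequality of Lemma~\ref{lemma:discrete-friedrichs-3d}. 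Your remark about factoring out $\nabla\mathring{W}_{p+1}(\widehat K)$ is exactly the paper's choice of $\mathring{\mathbf{Q}}_{p,\perp}(\widehat K)$, and it also justifies that the orthogonality extends from $\operatorname*{\mathbf{curl}}\mathring{\mathbf{Q}}_{p,\perp}(\widehat K)$ to all of $\operatorname*{\mathbf{curl}}\mathring{\mathbf{Q}}_{p}(\widehat K)$.
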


\begin{proof}
Recall the space 
$\mathring{\mathbf{Q}}_{p,\perp}(\widehat K) =\{\mathbf{q}\in\mathring{\mathbf{Q}}_p(\widehat{K})
\colon\!(\mathbf{q},\nabla\psi)_{L^2(\widehat{K})} \!= 0 \, \forall \psi\in \mathring{W}_{p+1}(\widehat{K})\}$ 
defined in Lemma~\ref{lemma:discrete-friedrichs-3d}. 
Let $z\in \widetilde{H}^{-1/2}(\partial\widehat{K})$ be a function with the property $z|_f \in V_p(f)$ for all faces $f\in\mathcal{F}(\widehat{K})$. We define the lifting operator $\boldsymbol{\mathcal{L}}^{\operatorname*{div},3d}_pz:=\boldsymbol{\mathcal{E}}^{\operatorname*{div}}z-\mathbf{w}_0$, 
where $\boldsymbol{\mathcal{E}}^{\operatorname*{div}}: 
H^{-1/2}(\partial\widehat K) \rightarrow {\mathbf H}(\widehat K,\operatorname*{div})$ 
denotes the lifting operator 
from \cite{demkowicz-gopalakrishnan-schoeberl-III} and $\mathbf{w}_0$ is determined by the 
following saddle point problem: Find $\mathbf{w}_0\in \mathring{\mathbf{V}}_p(\widehat{K})$ 
and $\boldsymbol{\varphi}\in \mathring{\mathbf{Q}}_{p,\perp}(\widehat{K})$ 
such that
\begin{subequations}
\label{eq:lemma:saddle-point-div}
\begin{align}
\label{eq:lemma:saddle-point-div-a}
(\operatorname*{div}\mathbf{w}_0,\operatorname*{div}\mathbf{v})_{L^2(\widehat{K})}  +  (\mathbf{v},\operatorname*{\mathbf{curl}}\boldsymbol{\varphi})_{L^2(\widehat{K})} & =  (\operatorname*{div}(\boldsymbol{\mathcal{E}}^{\operatorname*{div}}z),\operatorname*{div}\mathbf{v})_{L^2(\widehat{K})} \quad  \forall\mathbf{v}\in \mathring{\mathbf{V}}_p(\widehat{K}) \\
\label{eq:lemma:saddle-point-div-b}
(\mathbf{w}_0,\operatorname*{\mathbf{curl}}\boldsymbol{\mu})_{L^2(\widehat{K})}  & =  (\boldsymbol{\mathcal{E}}^{\operatorname*{div}}z,\operatorname*{\mathbf{curl}}\boldsymbol{\mu})_{L^2(\widehat{K})} \qquad \forall\boldsymbol{\mu}\in \mathring{\mathbf{Q}}_{p,\perp}(\widehat{K}).
\end{align}
\end{subequations}
Unique solvability of Problem~\eqref{eq:lemma:saddle-point-div} is seen as follows: 
Define the bilinear forms $a(\mathbf{w},\mathbf{q}):=(\operatorname*{div}\mathbf{w},\operatorname*{div}\mathbf{q})_{L^2(\widehat{K})}$ and $b(\mathbf{w},\boldsymbol{\varphi}):=(\mathbf{w},\operatorname*{\mathbf{curl}}\boldsymbol{\varphi})_{L^2(\widehat{K})}$ for $\mathbf{w},\mathbf{q}\in\mathring{\mathbf{V}}_p(\widehat{K})$ and $\boldsymbol{\varphi}\in\mathring{\mathbf{Q}}_{p,\perp}(\widehat{K})$.
Coercivity of $a$ on the kernel of $b$, $\operatorname*{ker}b=\{\mathbf{v}\in\mathring{\mathbf{V}}_p(\widehat{K}): (\mathbf{v},\operatorname*{\mathbf{curl}} \boldsymbol{\mu})_{L^2(\widehat{K})} = 0 \, \forall \boldsymbol{\mu}\in \mathring{\mathbf{Q}}_{p,\perp}(\widehat{K})\}$, follows from the Friedrichs inequality 
for the divergence operator (cf.~Lemma~\ref{lemma:discrete-friedrichs-div}) 
since for 
$ {\mathbf v} \in \operatorname*{ker} b$ one has 
\begin{align*}
a(\mathbf{v},\mathbf{v})&=\Vert\operatorname*{div}\mathbf{v}\Vert_{L^2(\widehat{K})}^2 \geq \frac{1}{2C^2} \Vert\mathbf{v}\Vert_{L^2(\widehat{K})}^2 + \frac{1}{2}\Vert\operatorname*{div}\mathbf{v}\Vert_{L^2(\widehat{K})}^2 \\
&\geq \operatorname*{min}\{\frac{1}{2C^2},\frac{1}{2}\}\Vert\mathbf{v}\Vert_{\mathbf{H}(\widehat{K},\operatorname*{div})}^2.
\end{align*}
Next, the inf-sup condition for $b$ follows easily by considering, 
for given 
$\boldsymbol{\varphi}\in\mathring{\mathbf{Q}}_{p,\perp}(\widehat{K})$, the function 
$\mathbf{w}=\operatorname*{\mathbf{curl}}\boldsymbol{\varphi}\in\mathring{\mathbf{V}}_p(\widehat{K})$ 
in $b({\mathbf w},{\boldsymbol{\varphi}})$ and using 
the Friedrichs inequality for the $\operatorname*{\mathbf{curl}}$ (Lemma~\ref{lemma:discrete-friedrichs-3d}),
\begin{align*}
\frac{b(\mathbf{w},\boldsymbol{\varphi})}{\Vert\mathbf{w}\Vert_{\mathbf{H}(\widehat{K},\operatorname*{div})} \Vert\boldsymbol{\varphi}\Vert_{\mathbf{H}(\widehat{K},\operatorname*{\mathbf{curl}})}} = \frac{\Vert\operatorname*{\mathbf{curl}}\boldsymbol{\varphi}\Vert_{L^2(\widehat{K})}^2}{\Vert\operatorname*{\mathbf{curl}}\boldsymbol{\varphi}\Vert_{L^2(\widehat{K})} \Vert\boldsymbol{\varphi}\Vert_{\mathbf{H}(\widehat{K},\operatorname*{\mathbf{curl}})}} 
\stackrel{\text{Lem.~\ref{lemma:discrete-friedrichs-3d}}}{\geq} C. 
\end{align*}
Thus, the saddle point problem \eqref{eq:lemma:saddle-point-div} has a unique solution 
$(\mathbf{w}_0, {\boldsymbol\varphi}) \in 
\mathring{\mathbf{V}}_p(\widehat{K}) \times \mathring{\mathbf{Q}}_{p,\perp}(\widehat K)$. 
In fact, selecting ${\mathbf v} = \operatorname{\mathbf{curl}} {\boldsymbol \varphi}$ in 
(\ref{eq:lemma:saddle-point-div-a}) shows ${\boldsymbol \varphi} = 0$.
The lifting operator $\boldsymbol{\mathcal{L}}^{\operatorname*{div},3d}_p$ 
now obviously satisfies (\ref{item:lemma:Hdiv-lifting-i}) 
and (\ref{item:lemma:Hdiv-lifting-iv}) 
by construction, cf. \cite[Theorem~7.1]{demkowicz-gopalakrishnan-schoeberl-III} for the properties of 
the operator $\boldsymbol{\mathcal{E}}^{\operatorname*{div}}$. 
For (\ref{item:lemma:Hdiv-lifting-iii}) note that the solution $\mathbf{w}_0$ satisfies 
the estimate $\Vert\mathbf{w}_0\Vert_{\mathbf{H}(\widehat{K},\operatorname*{div})} \lesssim \Vert f\Vert + \Vert g\Vert$, where $f(\mathbf{v})=(\operatorname*{div}(\boldsymbol{\mathcal{E}}^{\operatorname*{div}}z),\operatorname*{div}\mathbf{v})_{L^2(\widehat{K})}$, $g(v)=(\boldsymbol{\mathcal{E}}^{\operatorname*{div}}z,\operatorname*{\mathbf{curl}} \mathbf{v})_{L^2(\widehat{K})}$, and $\Vert \cdot \Vert$ denotes the operator norm. Thus,
\begin{align*}
\Vert f\Vert = \operatorname*{sup}_{\Vert\mathbf{v}\Vert_{\mathbf{H}(\widehat{K},\operatorname*{div})} \leq 1} |(\operatorname*{div}(\boldsymbol{\mathcal{E}}^{\operatorname*{div}}z),\operatorname*{div}\mathbf{v})_{L^2(\widehat{K})}| \leq \Vert\operatorname*{div}(\boldsymbol{\mathcal{E}}^{\operatorname*{div}}z)\Vert_{L^2(\widehat{K})} \lesssim \Vert z\Vert_{\widetilde{H}^{-1/2}(\partial\widehat{K})}.
\end{align*}
The estimate
$\displaystyle 
\Vert g\Vert \lesssim \Vert z\Vert_{\widetilde{H}^{-1/2}(\partial\widehat{K})}
$
is shown similarly. Hence, (\ref{item:lemma:Hdiv-lifting-iii}) follows from
\begin{align*}
\Vert\boldsymbol{\mathcal{L}}^{\operatorname*{div},3d}_p z\Vert_{\mathbf{H}(\widehat{K},\operatorname*{div})} \leq \Vert\boldsymbol{\mathcal{E}}^{\operatorname*{div}}z\Vert_{\mathbf{H}(\widehat{K},\operatorname*{div})} + \Vert\mathbf{w}_0\Vert_{\mathbf{H}(\widehat{K},\operatorname*{div})} \lesssim \Vert z\Vert_{\widetilde{H}^{-1/2}(\partial\widehat{K})}.
\qquad 
\qedhere
\end{align*}
\end{proof}

\begin{theorem}
\label{thm:H1div-approximation} 
Let $\widehat K$ be a fixed tetrahedron.
There exists $C>0$ independent of $p$ such
that for all ${\mathbf{u}}\in{\mathbf{H}}^{1/2}(\widehat{K},\operatorname{div}%
)$
\begin{equation}
\Vert{\mathbf{u}}-\hatPidivcom{\mathbf{u}}%
\Vert_{{\mathbf{H}}(\widehat{K},\operatorname{div})}\leq Cp^{-1/2}%
\inf_{{\mathbf v} \in {\mathbf V}_p(\widehat K)} 
\Vert{\mathbf{u} - \mathbf{v}}\Vert_{{\mathbf{H}}^{1/2}(\widehat{K},\operatorname{div})}.
\label{eq:thmH1div-approximation-10}%
\end{equation}
\end{theorem}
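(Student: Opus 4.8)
The plan is to follow the strategy of the proof of Theorem~\ref{thm:H1curl-approximation}, with the pair $(\hatPicurlcom,\boldsymbol{\mathcal{L}}^{\operatorname*{curl},3d}_p)$ replaced by $(\hatPidivcom,\boldsymbol{\mathcal{L}}^{\operatorname*{div},3d}_p)$. An important simplification is that no regular Helmholtz-like decomposition of $\mathbf{u}$ is needed: a function $\mathbf{u}\in\mathbf{H}^{1/2}(\widehat{K},\operatorname{div})$ already has facewise $L^2$ normal traces by Lemma~\ref{lemma:Pi_div-well-defined}, so the boundary terms can be localized directly. First, since $\hatPidivcom$ is a projection, it suffices to prove the bound with $\mathbf{v}=0$ in the infimum. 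Let $P^{\operatorname*{div},3d}\mathbf{u}\in\mathbf{V}_p(\widehat{K})$ be the best-approximation operator of Lemma~\ref{lemma:Pdiv3d}; with $r=1/2$ it satisfies $\Vert\mathbf{u}-P^{\operatorname*{div},3d}\mathbf{u}\Vert_{\mathbf{H}(\widehat{K},\operatorname{div})}\lesssim p^{-1/2}\Vert\mathbf{u}\Vert_{\mathbf{H}^{1/2}(\widehat{K},\operatorname{div})}$. Setting $\mathbf{E}:=\hatPidivcom\mathbf{u}-P^{\operatorname*{div},3d}\mathbf{u}\in\mathbf{V}_p(\widehat{K})$, by the triangle inequality it remains to estimate $\Vert\mathbf{E}\Vert_{\mathbf{H}(\widehat{K},\operatorname{div})}$.

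Second, I would record the orthogonalities inherited by $\mathbf{E}$. Subtracting the defining conditions \eqref{eq:Pi_div-b}, \eqref{eq:Pi_div-a} of $\hatPidivcom\mathbf{u}$ from those \eqref{eq:lemma:Pdiv3d}, \eqref{eq:lemma:Pdiv3d-20} of $P^{\operatorname*{div},3d}\mathbf{u}$ yields $(\mathbf{E},\operatorname{\mathbf{curl}}\mathbf{v})_{L^2(\widehat{K})}=0$ for all $\mathbf{v}\in\mathring{\mathbf{Q}}_p(\widehat{K})$ and $(\operatorname{div}\mathbf{E},\operatorname{div}\mathbf{v})_{L^2(\widehat{K})}=0$ for all $\mathbf{v}\in\mathring{\mathbf{V}}_p(\widehat{K})$. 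Next, set $z:=\mathbf{n}\cdot\mathbf{E}|_{\partial\widehat{K}}\in V_p(\partial\widehat{K})$ and consider $\mathbf{E}-\boldsymbol{\mathcal{L}}^{\operatorname*{div},3d}_pz$: by Lemma~\ref{lemma:lifting-operator-div}(\ref{item:lemma:Hdiv-lifting-i}) it lies in $\mathring{\mathbf{V}}_p(\widehat{K})$, and by the first orthogonality above together with Lemma~\ref{lemma:lifting-operator-div}(\ref{item:lemma:Hdiv-lifting-iv}) it is $L^2(\widehat{K})$-orthogonal to $\operatorname{\mathbf{curl}}\mathring{\mathbf{Q}}_{p,\perp}(\widehat{K})$. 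Testing the second orthogonality with $\mathbf{v}=\mathbf{E}-\boldsymbol{\mathcal{L}}^{\operatorname*{div},3d}_pz$ gives $\Vert\operatorname{div}\mathbf{E}\Vert_{L^2(\widehat{K})}\leq\Vert\boldsymbol{\mathcal{L}}^{\operatorname*{div},3d}_pz\Vert_{\mathbf{H}(\widehat{K},\operatorname{div})}$, while the discrete Friedrichs inequality Lemma~\ref{lemma:discrete-friedrichs-div}(\ref{item:lemma:discrete-friedrichs-div-ii}) bounds $\Vert\mathbf{E}-\boldsymbol{\mathcal{L}}^{\operatorname*{div},3d}_pz\Vert_{L^2(\widehat{K})}\lesssim\Vert\operatorname{div}(\mathbf{E}-\boldsymbol{\mathcal{L}}^{\operatorname*{div},3d}_pz)\Vert_{L^2(\widehat{K})}$; combining these with Lemma~\ref{lemma:lifting-operator-div}(\ref{item:lemma:Hdiv-lifting-iii}) yields $\Vert\mathbf{E}\Vert_{\mathbf{H}(\widehat{K},\operatorname{div})}\lesssim\Vert z\Vert_{\widetilde H^{-1/2}(\partial\widehat{K})}=\Vert\mathbf{n}\cdot\mathbf{E}\Vert_{\widetilde H^{-1/2}(\partial\widehat{K})}$.

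Third, I would localize and conclude. Writing $\mathbf{n}\cdot\mathbf{E}=-\mathbf{n}\cdot(\mathbf{u}-\hatPidivcom\mathbf{u})+\mathbf{n}\cdot(\mathbf{u}-P^{\operatorname*{div},3d}\mathbf{u})$, the second summand is controlled by the continuity of the normal trace $\mathbf{H}(\widehat{K},\operatorname{div})\to\widetilde H^{-1/2}(\partial\widehat{K})$ (recall $\widetilde H^{-1/2}(\partial\widehat{K})$ is the dual of $H^{1/2}(\partial\widehat{K})$, i.e.\ coincides with $H^{-1/2}(\partial\widehat{K})$), giving the bound $p^{-1/2}\Vert\mathbf{u}\Vert_{\mathbf{H}^{1/2}(\widehat{K},\operatorname{div})}$. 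For the first summand, which is facewise in $L^2$ by Lemma~\ref{lemma:Pi_div-well-defined}, I would split $\Vert\mathbf{n}\cdot(\mathbf{u}-\hatPidivcom\mathbf{u})\Vert_{\widetilde H^{-1/2}(\partial\widehat{K})}\lesssim\sum_{f\in\mathcal{F}(\widehat{K})}\Vert\mathbf{n}_f\cdot(\mathbf{u}-\hatPidivcom\mathbf{u})\Vert_{\widetilde H^{-1/2}(f)}$ by dualizing the continuous inclusion $H^{1/2}(\partial\widehat{K})\subset\prod_{f\in\mathcal{F}(\widehat{K})}H^{1/2}(f)$ (exactly as in the fifth step of the proof of Lemma~\ref{lemma:Hcurl-lifting}), and then apply Lemma~\ref{lemma:Pidiv-face} with $s=1/2$ on each face (bounding the infimum there by the choice $v=0$) together with $\Vert\mathbf{u}\cdot\mathbf{n}_f\Vert_{L^2(f)}\lesssim\Vert\mathbf{u}\Vert_{\mathbf{H}^{1/2}(\widehat{K},\operatorname{div})}$ from Lemma~\ref{lemma:Pi_div-well-defined}. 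This gives $\Vert\mathbf{n}\cdot\mathbf{E}\Vert_{\widetilde H^{-1/2}(\partial\widehat{K})}\lesssim p^{-1/2}\Vert\mathbf{u}\Vert_{\mathbf{H}^{1/2}(\widehat{K},\operatorname{div})}$, and combining with the estimate for $\Vert\mathbf{u}-P^{\operatorname*{div},3d}\mathbf{u}\Vert_{\mathbf{H}(\widehat{K},\operatorname{div})}$ proves \eqref{eq:thmH1div-approximation-10}. Note that, unlike several neighbouring results, no condition on the angles of the faces of $\widehat{K}$ enters, because every ingredient above ($P^{\operatorname*{div},3d}$, the lifting of Lemma~\ref{lemma:lifting-operator-div}, the discrete Friedrichs inequality, Lemma~\ref{lemma:Pidiv-face}) is angle-free.

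The genuinely technical work — the construction of the lifting operator $\boldsymbol{\mathcal{L}}^{\operatorname*{div},3d}_p$ with the extra orthogonality property (Lemma~\ref{lemma:lifting-operator-div}) and the localization of the nonlocal trace norm — is already in place, so I expect the main point requiring care to be purely a matter of bookkeeping: verifying that the orthogonalities of $\hatPidivcom\mathbf{u}$ and of $P^{\operatorname*{div},3d}\mathbf{u}$ cancel against exactly the right test spaces (in particular that orthogonality against all of $\mathring{\mathbf{Q}}_p(\widehat{K})$ is inherited, so that the weaker hypothesis of Lemma~\ref{lemma:discrete-friedrichs-div}(\ref{item:lemma:discrete-friedrichs-div-ii}) is met), and that all boundary quantities appearing are facewise $L^2$ so that the split of the $\widetilde H^{-1/2}(\partial\widehat{K})$-norm into face contributions is legitimate.
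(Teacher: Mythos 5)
Your proposal is correct and follows essentially the same route as the paper's proof: reduction to $\mathbf v=0$ via the projection property, comparison with $P^{\operatorname*{div},3d}\mathbf u$, the two inherited orthogonalities for $\mathbf E$, the lifting $\boldsymbol{\mathcal L}^{\operatorname*{div},3d}_p$ with its extra orthogonality combined with the discrete Friedrichs inequality to get $\Vert\mathbf E\Vert_{\mathbf H(\widehat K,\operatorname{div})}\lesssim\Vert\mathbf E\cdot\mathbf n\Vert_{H^{-1/2}(\partial\widehat K)}$, and finally the facewise localization plus Lemma~\ref{lemma:Pidiv-face} with $s=1/2$. The bookkeeping points you flag (test function $\mathbf E-\boldsymbol{\mathcal L}^{\operatorname*{div},3d}_p(\mathbf E\cdot\mathbf n)\in\mathring{\mathbf V}_p(\widehat K)$, orthogonality against $\operatorname{\mathbf{curl}}\mathring{\mathbf Q}_{p,\perp}(\widehat K)$, facewise $L^2$ traces) are exactly the ones the paper verifies, so no gap remains.
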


\begin{proof}
\emph{1st~step:} By the projection property of $\hatPidivcom$, it suffices to show 
(\ref{eq:thmH1div-approximation-10}) for ${\mathbf v} = 0$. 

\emph{2nd~step:} As shown in Lemma~\ref{lemma:Pidiv-face}, $\mathbf{u}\cdot \mathbf{n}_f \in L^2(f)$ on each face $f\in \mathcal{F}(\widehat{K})$. Thus we get 
from Lemma~\ref{lemma:Pidiv-face}
\begin{align}
\label{eq:thm:H1div-approximation-12}
\Vert({\mathbf{u}}-\hatPidivcom{\mathbf{u}})\cdot{\mathbf{n}}_{f}\Vert_{\widetilde{H}^{-1/2}(f)}\lesssim p^{-1/2}\Vert{\mathbf{u}}\cdot{\mathbf{n}}_{f}\Vert_{L^2(f)} 
\lesssim p^{-1/2} \|{\mathbf u}\|_{{\mathbf H}^{1/2}(\widehat K,\operatorname{div})}. 
\end{align}

\emph{3rd~step:} 
The difference ${\mathbf u} - \hatPidivcom {\mathbf u}$ is estimated using 
the approximation $P^{\operatorname{div},3d} {\mathbf u}$ of Lemma~\ref{lemma:Pdiv3d}. 
We abbreviate 
$\mathbf{E}:=\hatPidivcom\mathbf{u}-P^{\operatorname*{div},3d}\mathbf{u}\in \mathbf{V}_p(\widehat{K})$. 
Since $\hatPidivcom\mathbf{u}$ satisfies the orthogonality conditions \eqref{eq:Pi_div-b} 
and \eqref{eq:Pi_div-a}, and $P^{\operatorname*{div},3d}\mathbf{u}$ satisfies 
the conditions \eqref{eq:lemma:Pdiv3d} and \eqref{eq:lemma:Pdiv3d-20}, we  have the two orthogonality conditions 
\begin{subequations}
\label{eq:thm:H1div-approximation-20}
\begin{align}
\label{eq:thm:H1div-approximation-20-a}
(\operatorname*{div}\mathbf{E},\operatorname*{div}\mathbf{v})_{L^2(\widehat{K})} &= 0 \qquad \forall \mathbf{v}\in \mathring{\mathbf{V}}_p(\widehat{K}),
\\ 
\label{eq:thm:H1div-approximation-20-b}
(\mathbf{E},\operatorname*{\mathbf{curl}}\mathbf{v})_{L^2(\widehat{K})} 
&= 0 \qquad \forall \mathbf{v}\in \mathring{\mathbf{Q}}_p(\widehat{K}).
\end{align}
\end{subequations}
By Lemma~\ref{lemma:lifting-operator-div}, the orthogonality condition
\[
(\boldsymbol{\mathcal{L}}^{\operatorname*{div},3d}_p(\mathbf{E}\cdot\mathbf{n}),\operatorname*{\mathbf{curl}}\mathbf{v})_{L^2(\widehat{K})} = 0 \qquad \forall \mathbf{v}\in \mathring{\mathbf{Q}}_p(\widehat{K})
\]
holds; hence the discrete Friedrichs inequality 
(Lemma~\ref{lemma:discrete-friedrichs-div}, (\ref{item:lemma:discrete-friedrichs-div-ii})) 
can be applied to $\mathbf{E}-\boldsymbol{\mathcal{L}}^{\operatorname*{div},3d}_p(\mathbf{E}\cdot\mathbf{n}) \in \mathring{\mathbf{V}}_p(\widehat{K})$. Thus, we obtain 
\begin{align}
\label{eq:thm:H1div-approximation-30}
\begin{split}
\Vert\mathbf{E}\Vert_{L^2(\widehat{K})} &\leq \Vert\boldsymbol{\mathcal{L}}^{\operatorname*{div},3d}_p(\mathbf{E}\cdot\mathbf{n})\Vert_{L^2(\widehat{K})} + \Vert\mathbf{E}-\boldsymbol{\mathcal{L}}^{\operatorname*{div},3d}_p
(\mathbf{E}\cdot\mathbf{n})\Vert_{L^2(\widehat{K})} \\
&\lesssim \Vert\mathbf{E}\cdot\mathbf{n}\Vert_{H^{-1/2}(\partial\widehat{K})} + \Vert\operatorname*{div}(\mathbf{E}-\boldsymbol{\mathcal{L}}^{\operatorname*{div},3d}_p(\mathbf{E}\cdot\mathbf{n}))\Vert_{L^2(\widehat{K})} \\
&\lesssim \Vert\mathbf{E}\cdot\mathbf{n}\Vert_{H^{-1/2}(\partial\widehat{K})} + \Vert\operatorname*{div}\mathbf{E}\Vert_{L^2(\widehat{K})}.
\end{split}
\end{align}
\emph{4th~step:} 
Using \eqref{eq:thm:H1div-approximation-20-a}, we get 
\begin{align}
\label{eq:thm:H1div-approximation-40}
\Vert\operatorname*{div}\mathbf{E}\Vert_{L^2(\widehat{K})}^2 = (\operatorname*{div}\mathbf{E},\operatorname*{div}\boldsymbol{\mathcal{L}}^{\operatorname*{div},3d}_p(\mathbf{E}\cdot\mathbf{n}))_{L^2(\widehat{K})} \lesssim \Vert\operatorname*{div}\mathbf{E}\Vert_{L^2(\widehat{K})} \Vert\mathbf{E}\cdot\mathbf{n}\Vert_{H^{-1/2}(\partial\widehat{K})}.
\end{align}
Combining 
(\ref{eq:thm:H1div-approximation-30}),
(\ref{eq:thm:H1div-approximation-40}) 
we arrive at 
\begin{equation}
\label{eq:thm:H1div-approximation-100}
\|{\mathbf E}\|_{{\mathbf H}(\widehat K ,\operatorname{div})} 
\lesssim \|{\mathbf E} \cdot {\mathbf n}\|_{H^{-1/2}(\partial \widehat K)}. 
\end{equation}
\emph{5th~step:} The triangle inequality and the continuity of the normal trace operator give 
\begin{align*}
\Vert \mathbf{u}-\hatPidivcom\mathbf{u}&\Vert_{\mathbf{H}(\widehat{K},\operatorname*{div})} 
\leq \Vert\mathbf{u} - P^{\operatorname{div,3d}}{\mathbf u}\Vert_{\mathbf{H}(\widehat{K},\operatorname*{div})} 
+ \Vert\mathbf{E}\Vert_{\mathbf{H}(\widehat{K},\operatorname*{div})} \\
&\stackrel{(\ref{eq:thm:H1div-approximation-100})}{\lesssim} 
\Vert\mathbf{u} - P^{\operatorname{div,3d}}{\mathbf u}\Vert_{\mathbf{H}(\widehat{K},\operatorname*{div})} 
+ \Vert\mathbf{E}\cdot{\mathbf n}\Vert_{H^{-1/2}(\partial \widehat{K})} \\
&\lesssim \Vert\mathbf{u} - P^{\operatorname{div},3d} \mathbf{u} \Vert_{\mathbf{H}(\widehat{K},\operatorname*{div})} 
+ \sum_{f\in\mathcal{F}(\widehat{K})}\Vert(\mathbf{u}-\hatPidivcom\mathbf{u})\cdot\mathbf{n}_f\Vert_{\widetilde{H}^{-1/2}(f)} \\
&\overset{\eqref{eq:thm:H1div-approximation-12},\text{Lem.~\ref{lemma:Pdiv3d}}}{\lesssim} p^{-1/2}\Vert\mathbf{u}\Vert_{\mathbf{H}^{1/2}(\widehat{K},\operatorname*{div})}.
\qedhere
\end{align*}
\end{proof}

Considering the approximation error in negative Sobolev norms is the next step.

\begin{theorem}
\label{thm:duality-again-div}
Let $\widehat K$ be a fixed tetrahedron.
For $s \in [0,1]$ and for all $\mathbf{u}\in \mathbf{H}^{1/2}(\widehat{K},\operatorname{div})$ 
there holds the estimate
\begin{align*}
\Vert\mathbf{u}-\hatPidivcom\mathbf{u}\Vert_{\widetilde{\mathbf{H}}^{-s}(\widehat{K},\operatorname{div})} 
\leq C_s  p^{-1/2-s} \inf_{{\mathbf v} \in {\mathbf V}_p(\widehat K)} \Vert\mathbf{u} - \mathbf{v}\Vert_{\mathbf{H}^{1/2}(\widehat{K},\operatorname{div})}.
\end{align*}
\end{theorem}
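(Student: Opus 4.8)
The plan is to follow the blueprint of the proof of Theorem~\ref{thm:duality-again}, but the ${\mathbf H}(\operatorname{div})$-setting makes the argument noticeably shorter and, in particular, dispenses with any restriction on the angles of the faces of $\widehat K$: the only boundary estimate that enters is Lemma~\ref{lemma:Pidiv-face}, which holds for all $s\ge 0$, in contrast to the two-dimensional Theorem~\ref{lemma:demkowicz-grad-2D} that was responsible for the angle condition in Theorem~\ref{thm:duality-again}. As always, the projection property of $\hatPidivcom$ reduces matters to the case ${\mathbf v}=0$ in the infimum. The case $s=0$ is Theorem~\ref{thm:H1div-approximation}, and the cases $s\in(0,1)$ follow by interpolation, so it suffices to treat $s=1$. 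Writing ${\mathbf E}:={\mathbf u}-\hatPidivcom{\mathbf u}\in{\mathbf H}^{1/2}(\widehat K,\operatorname{div})$, one has
\[
\|{\mathbf E}\|_{\widetilde{\mathbf H}^{-1}(\widehat K,\operatorname{div})}^2
= \|{\mathbf E}\|_{\widetilde{\mathbf H}^{-1}(\widehat K)}^2 + \|\operatorname{div}{\mathbf E}\|_{\widetilde H^{-1}(\widehat K)}^2 ,
\]
with $\|{\mathbf E}\|_{\widetilde{\mathbf H}^{-1}(\widehat K)}=\sup_{{\mathbf v}\in{\mathbf H}^1(\widehat K)}({\mathbf E},{\mathbf v})_{L^2(\widehat K)}/\|{\mathbf v}\|_{{\mathbf H}^1(\widehat K)}$ and similarly for $\|\operatorname{div}{\mathbf E}\|_{\widetilde H^{-1}(\widehat K)}$ with scalar test functions $v\in H^1(\widehat K)$.

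The divergence part is immediate: the commuting diagram property gives $\operatorname{div}\hatPidivcom{\mathbf u}=\widehat\Pi_p^{L^2}\operatorname{div}{\mathbf u}$, hence $\operatorname{div}{\mathbf E}$ is $L^2(\widehat K)$-orthogonal to $W_p(\widehat K)$; combining this with the $L^2$-approximation bound $\inf_{w\in W_p(\widehat K)}\|v-w\|_{L^2(\widehat K)}\lesssim p^{-1}\|v\|_{H^1(\widehat K)}$ of Lemma~\ref{lemma:Pgrad1d} and the $s=0$ estimate $\|\operatorname{div}{\mathbf E}\|_{L^2(\widehat K)}\lesssim p^{-1/2}\|{\mathbf u}\|_{{\mathbf H}^{1/2}(\widehat K,\operatorname{div})}$ of Theorem~\ref{thm:H1div-approximation} yields $|(\operatorname{div}{\mathbf E},v)_{L^2(\widehat K)}|\lesssim p^{-3/2}\|{\mathbf u}\|_{{\mathbf H}^{1/2}(\widehat K,\operatorname{div})}\|v\|_{H^1(\widehat K)}$, which is the desired $p^{-1/2-1}$ rate.

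For the ${\mathbf H}^{-1}$-part I would decompose the test function ${\mathbf v}\in{\mathbf H}^1(\widehat K)$ by Lemma~\ref{lemma:helmholtz-3d} as ${\mathbf v}=\nabla\varphi+\operatorname{\mathbf{curl}}{\mathbf z}$ with $\varphi\in H^2(\widehat K)$ and ${\mathbf z}\in{\mathbf H}^1(\widehat K,\operatorname{\mathbf{curl}})\cap{\mathbf H}_0(\widehat K,\operatorname{\mathbf{curl}})$, both controlled by $\|{\mathbf v}\|_{{\mathbf H}^1(\widehat K)}$, and treat the two terms of $({\mathbf E},{\mathbf v})_{L^2(\widehat K)}$ separately. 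For the curl term, the orthogonality (\ref{eq:Pi_div-b}) gives $({\mathbf E},\operatorname{\mathbf{curl}}{\mathbf z})_{L^2(\widehat K)}=({\mathbf E},\operatorname{\mathbf{curl}}({\mathbf z}-{\mathbf q}))_{L^2(\widehat K)}$ for all ${\mathbf q}\in\mathring{\mathbf Q}_p(\widehat K)$; taking ${\mathbf q}:=P^{\operatorname*{curl},3d}{\mathbf z}-\boldsymbol{\mathcal{L}}^{\operatorname*{curl},3d}_p(\Pi_\tau P^{\operatorname*{curl},3d}{\mathbf z})$, which lies in $\mathring{\mathbf Q}_p(\widehat K)$ since $\Pi_\tau{\mathbf z}=0$ and by property (i) of Lemma~\ref{lemma:Hcurl-lifting}, and combining Lemma~\ref{lemma:Pcurl3d} with the stability of $\boldsymbol{\mathcal{L}}^{\operatorname*{curl},3d}_p$ and the trivial bound $\|\Pi_\tau{\mathbf w}\|_{\mathbf{X}^{-1/2}}\le\|{\mathbf w}\|_{{\mathbf H}(\widehat K,\operatorname{\mathbf{curl}})}$ (exactly as in the fifth step of the proof of Lemma~\ref{lemma:picurl-negative-I}) gives $\inf_{{\mathbf q}\in\mathring{\mathbf Q}_p(\widehat K)}\|\operatorname{\mathbf{curl}}({\mathbf z}-{\mathbf q})\|_{L^2(\widehat K)}\lesssim p^{-1}\|{\mathbf z}\|_{{\mathbf H}^1(\widehat K,\operatorname{\mathbf{curl}})}$; together with $\|{\mathbf E}\|_{L^2(\widehat K)}\lesssim p^{-1/2}\|{\mathbf u}\|_{{\mathbf H}^{1/2}(\widehat K,\operatorname{div})}$ this bounds the curl term by $p^{-3/2}\|{\mathbf u}\|_{{\mathbf H}^{1/2}(\widehat K,\operatorname{div})}\|{\mathbf v}\|_{{\mathbf H}^1(\widehat K)}$. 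For the gradient term, Green's formula yields $({\mathbf E},\nabla\varphi)_{L^2(\widehat K)}=-(\operatorname{div}{\mathbf E},\varphi)_{L^2(\widehat K)}+\sum_{f\in{\mathcal F}(\widehat K)}({\mathbf n}_f\cdot{\mathbf E},\varphi)_{L^2(f)}$ (legitimate since ${\mathbf n}_f\cdot{\mathbf E}\in L^2(f)$); the volume part is estimated as the divergence part above (and is in fact $O(p^{-5/2})$, using $\inf_{w\in W_p(\widehat K)}\|\varphi-w\|_{L^2(\widehat K)}\lesssim p^{-2}\|\varphi\|_{H^2(\widehat K)}$), and each face contribution is bounded by $\|({\mathbf n}_f\cdot{\mathbf E})\|_{\widetilde H^{-3/2}(f)}\|\varphi\|_{H^{3/2}(f)}$, to which Lemma~\ref{lemma:Pidiv-face} with $s=3/2$ and the trace bound $\|\varphi\|_{H^{3/2}(f)}\lesssim\|\varphi\|_{H^2(\widehat K)}$ apply, again giving the rate $p^{-3/2}\|{\mathbf u}\|_{{\mathbf H}^{1/2}(\widehat K,\operatorname{div})}\|{\mathbf v}\|_{{\mathbf H}^1(\widehat K)}$. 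Collecting the three estimates proves the case $s=1$, and interpolation between $s=0$ and $s=1$ completes the proof.

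The step I expect to be the main obstacle is the construction, in the curl term, of a discrete approximant ${\mathbf q}$ of ${\mathbf z}$ that simultaneously lies in $\mathring{\mathbf Q}_p(\widehat K)$ and approximates ${\mathbf z}$ optimally in the $\operatorname{\mathbf{curl}}$-seminorm: the unconstrained best approximation $P^{\operatorname*{curl},3d}{\mathbf z}$ does not have vanishing tangential trace, so one must subtract a lifting of that trace, and since ${\mathbf z}$ itself has vanishing tangential trace this trace is of the size of the approximation error. Making this quantitative is precisely the role of the polynomial-preserving, $\mathbf{X}^{-1/2}$-bounded lifting $\boldsymbol{\mathcal{L}}^{\operatorname*{curl},3d}_p$ of Lemma~\ref{lemma:Hcurl-lifting}; the remaining parts are a routine transcription of arguments already used in the analysis of $\hatPicurlcom$.
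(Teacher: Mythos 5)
Your proposal is correct and follows essentially the paper's own proof: reduction to ${\mathbf v}=0$ by the projection property, the $s=0$ case from Theorem~\ref{thm:H1div-approximation}, a duality argument for $s=1$ based on the Helmholtz decomposition of Lemma~\ref{lemma:helmholtz-3d}, the orthogonalities \eqref{eq:Pi_div-d}--\eqref{eq:Pi_div-a}, the constrained curl-approximation of ${\mathbf z}$ via $P^{\operatorname*{curl},3d}$ corrected by the lifting of Lemma~\ref{lemma:Hcurl-lifting} (which you make explicit where the paper only points to the argument of Lemma~\ref{lemma:picurl-negative-I}), Lemma~\ref{lemma:Pidiv-face} for the face terms, and interpolation for intermediate $s$. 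Your only deviations are harmless simplifications: you exploit the $L^2(\widehat K)$-orthogonality of $\operatorname{div}({\mathbf u}-\hatPidivcom{\mathbf u})$ to $W_p(\widehat K)$ (from the commuting diagram) where the paper runs a Neumann auxiliary-problem argument with ${\boldsymbol\Phi}=\nabla\psi$ and \eqref{eq:Pi_div-a}, and you apply Lemma~\ref{lemma:Pidiv-face} directly with $s=3/2$ on each face instead of the paper's splitting into $s=1/2$ plus a boundary best-approximation of $\varphi$.
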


\begin{proof}
In view of the projection property of $\hatPidivcom$, we restrict to showing the estimate with ${\mathbf v} = 0$. 
The case $s = 0$ is shown in Theorem~\ref{thm:H1div-approximation}. We will therefore merely focus 
on the case $s = 1$ as the case $s \in (0,1)$ follows by interpolation. 

We write $\mathbf{E}:=\mathbf{u}-\hatPidivcom\mathbf{u}$ for simplicity. 
By definition we have
\begin{align}
\label{eq:lemma:duality-again-div-10}
\Vert\mathbf{E}\Vert_{\widetilde{\mathbf{H}}^{-1}(\widehat{K},\operatorname{div})} &\sim 
\Vert\mathbf{E}\Vert_{\widetilde{\mathbf{H}}^{-1}(\widehat{K})} + \Vert\operatorname{div}\mathbf{E}\Vert_{\widetilde{H}^{-1}(\widehat{K})} 
\\ 
\nonumber 
&= 
\operatorname*{sup}_{\mathbf{v}\in\mathbf{H}^1(\widehat{K})} \frac{(\mathbf{E},\mathbf{v})_{L^2(\widehat{K})}}{\Vert\mathbf{v}\Vert_{\mathbf{H}^1(\widehat{K})}} + 
\operatorname*{sup}_{{v}\in{H}^1(\widehat{K})} \frac{(\operatorname{div} \mathbf{E},{v})_{L^2(\widehat{K})}}{\Vert{v}\Vert_{{H}^1(\widehat{K})}}.
\end{align}
We start with estimating the first supremum in (\ref{eq:lemma:duality-again-div-10}). 
We write $\mathbf{v}\in\mathbf{H}^1(\widehat{K})$ as
\begin{align*}
\mathbf{v}=\nabla\varphi + \operatorname{\mathbf{curl}}\mathbf{z}
\end{align*}
with $\varphi\in H^2(\widehat{K})$ and $\mathbf{z}\in\mathbf{H}^1(\widehat{K},\operatorname{\mathbf{curl}}) \cap \mathbf{H}_0(\widehat{K},\operatorname{\mathbf{curl}})$ according to Lemma~\ref{lemma:helmholtz-3d}
and have to bound the two terms in $(\mathbf{E},\mathbf{v})_{L^2(\widehat{K})} = (\mathbf{E},\operatorname{\mathbf{curl}}\mathbf{z})_{L^2(\widehat{K})} + (\mathbf{E},\nabla\varphi)_{L^2(\widehat{K})}$. For the first term, 
Theorem~\ref{thm:H1div-approximation} yields 
\begin{align*}
\bigl|(\mathbf{E},\operatorname{\mathbf{curl}}\mathbf{z})_{L^2(\widehat{K})}\bigr| 
&= \bigl|\operatorname*{inf}_{\mathbf{w}\in\mathring{\mathbf{Q}}_p(\widehat{K})} (\mathbf{E},\operatorname{\mathbf{curl}}(\mathbf{z}-\mathbf{w}))_{L^2(\widehat{K})} \bigr| \lesssim p^{-1} \Vert\mathbf{E}\Vert_{L^2(\widehat{K})} \Vert\mathbf{z}\Vert_{\mathbf{H}^1(\widehat{K},\operatorname{\mathbf{curl}})} \\
&\lesssim p^{-3/2} \Vert\mathbf{u}\Vert_{\mathbf{H}^{1/2}(\widehat{K},\operatorname{div})} \Vert\mathbf{v}\Vert_{\mathbf{H}^1(\widehat{K})},
\end{align*}
where the infimum is estimated as in Lemma~\ref{lemma:picurl-negative-I}, without repeating the arguments here. For the second term, we employ integration by parts to get
\begin{align}
\label{eq:lemma:duality-again-div-40}
(\mathbf{E},\nabla\varphi)_{L^2(\widehat{K})} = -(\operatorname{div}\mathbf{E},\varphi)_{L^2(\widehat{K})} + \sum_{f\in\mathcal{F}(\widehat{K})} (\mathbf{E}\cdot\mathbf{n}_f,\varphi)_{L^2(f)}
\end{align}
Denote by $\overline{\varphi}:=(\int_{\widehat{K}}\varphi)/|\widehat{K}|$ the average of $\varphi$. Integration by parts gives
\begin{align}
\label{eq:lemma:duality-again-div-60}
(\operatorname{div}\mathbf{E},\varphi)_{L^2(\widehat{K})} = (\operatorname{div}\mathbf{E},\varphi-\overline{\varphi})_{L^2(\widehat{K})} + \overline{\varphi}(\mathbf{E}\cdot\mathbf{n},1)_{L^2(\partial\widehat{K})} \!\overset{\eqref{eq:Pi_div-d}}{=}\! (\operatorname{div}\mathbf{E},\varphi-\overline{\varphi})_{L^2(\widehat{K})}.
\end{align}
We then define the auxiliary function $\psi$ by
\begin{align*}
\Delta\psi=\varphi-\overline{\varphi}, \qquad \partial_n\psi=0 \text{ on }\partial\widehat{K}
\end{align*}
and set $\boldsymbol{\Phi}:=\nabla\psi$. Since $\operatorname{div}\boldsymbol{\Phi}=\Delta\psi=\varphi-\overline{\varphi}$, we get
\begin{align}
\label{eq:lemma:duality-again-div-80}
\big| (\operatorname{div}\mathbf{E}&,\varphi-\overline{\varphi})_{L^2(\widehat{K})} \big| 
= \big| (\operatorname{div}\mathbf{E},\operatorname{div}\boldsymbol{\Phi})_{L^2(\widehat{K})} \big| \\
&\overset{\eqref{eq:Pi_div-a}}{=} 
\bigl| 
\operatorname*{inf}_{\mathbf{w}\in\mathring{\mathbf{V}}_p(\widehat{K})} (\operatorname{div}\mathbf{E},\operatorname{div}(\boldsymbol{\Phi}-\mathbf{w}))_{L^2(\widehat{K})} \bigr|
\label{eq:lemma:duality-again-div-82}
\lesssim p^{-1} \Vert\mathbf{E}\Vert_{\mathbf{H}(\widehat{K},\operatorname{div})} 
                 \Vert\boldsymbol{\Phi}\Vert_{\mathbf{H}^1(\widehat{K},\operatorname{div})} \\
&\lesssim p^{-3/2} \Vert\mathbf{u}\Vert_{\mathbf{H}^{1/2}(\widehat{K},\operatorname{div})} 
                \Vert\varphi \Vert_{H^1(\widehat{K})}
\nonumber 
\lesssim p^{-3/2} \Vert\mathbf{u}\Vert_{\mathbf{H}^{1/2}(\widehat{K},\operatorname{div})} 
                \Vert{\mathbf v}\Vert_{{\mathbf H}^1(\widehat{K})}.
\end{align}
Thus, only estimates for the boundary terms in \eqref{eq:lemma:duality-again-div-40} are missing. 
The orthogonality properties \eqref{eq:Pi_div-d} and \eqref{eq:Pi_div-c} as well as Lemma~\ref{lemma:Pidiv-face} lead to
\begin{align*}
\bigl|(\mathbf{E}\cdot\mathbf{n},\varphi)_{L^2(f)}\bigr| 
&= \bigl| \operatorname*{inf}_{w\in V_p(f)} (\mathbf{E}\cdot\mathbf{n},\varphi-w)_{L^2(f)} \bigr|
\lesssim p^{-1} \Vert\mathbf{E}\cdot\mathbf{n}\Vert_{\widetilde{H}^{-1/2}(f)} \Vert\varphi\Vert_{H^{3/2}(f)} \\
&\!\!\!\!\!\!\!\!\stackrel{\text{Lem.~\ref{lemma:Pidiv-face}}}{\lesssim}\!\! p^{-3/2} \Vert\mathbf{u}\cdot\mathbf{n}\Vert_{L^2(f)} \Vert\varphi\Vert_{H^2(\widehat{K})} \lesssim p^{-3/2} \Vert\mathbf{u}\Vert_{\mathbf{H}^{1/2}(\widehat{K},\operatorname{div})} \Vert\mathbf{v}\Vert_{\mathbf{H}^1(\widehat{K})}.
\end{align*}
Thus, we have estimated the first term of \eqref{eq:lemma:duality-again-div-10}.

We now handle the second supremum in (\ref{eq:lemma:duality-again-div-10}). 
Such estimates have already been derived  in \eqref{eq:lemma:duality-again-div-60} 
and \eqref{eq:lemma:duality-again-div-80}; we merely have to note  
that the function $\varphi$ in these lines satisfied $\varphi\in H^2(\widehat{K})$, 
but $H^1(\widehat{K})$-regularity is indeed sufficient as is visible in (\ref{eq:lemma:duality-again-div-82}). 
\end{proof}

For functions whose divergence is a polynomial, we get the following result similar to Lemma~\ref{lemma:better-regularity}.

\begin{lemma}
\label{lemma:better-regularity-div}
Assume that all interior angles of the $4$ faces of $\widehat  K$ are smaller than $2\pi/3$.
For all $k\geq1$, $s \in [0,1]$, and all
${\mathbf{u}}\in{\mathbf{H}}^{k}(\widehat{K})$ with $\operatorname*{div}%
{\mathbf{u}}\in {\mathcal{P}}_{p}(\widehat{K})$ there holds
\begin{equation}
\Vert{\mathbf{u}}-\hatPidivcom{\mathbf{u}}%
\Vert_{\widetilde{\mathbf{H}}^{-s}(\widehat{K},\operatorname{div})}\leq C_{s,k}p^{-(k+s)}\Vert{\mathbf{u}}\Vert
_{\mathbf{H}^{k}(\widehat{K})}.
\label{eq:proposition-better-regularity-div}%
\end{equation}
If $p\geq k-1$, then $\Vert{\mathbf{u}}\Vert_{{\mathbf{H}}%
^{k}(\widehat{K})}$ can be replaced with the seminorm $|{\mathbf{u}%
}|_{{\mathbf{H}}^{k}(\widehat{K})}$.
Moreover, \eqref{eq:proposition-better-regularity-div} holds for  $s = 0$ without the conditions on the 
angles of the faces of $\widehat K$. 
\end{lemma}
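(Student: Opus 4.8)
The plan is to mirror the proof of Lemma~\ref{lemma:better-regularity} almost verbatim, replacing the roles of $\operatorname{\mathbf{curl}}$ and $\hatPicurlcom$ by $\operatorname{div}$ and $\hatPidivcom$, and using the divergence version of the regular Helmholtz decomposition together with the already-established estimate \eqref{eq:proposition:better-regularity} for $\hatPigradcom$ in the guise of Theorem~\ref{lemma:demkowicz-grad-3D}. More precisely, given ${\mathbf u}\in{\mathbf H}^k(\widehat K)$ with $\operatorname{div}{\mathbf u}\in{\mathcal P}_p(\widehat K)$, I would first invoke Lemma~\ref{lemma:helmholtz-decomposition-div} to write
\[
{\mathbf u}=\operatorname{\mathbf{curl}}{\boldsymbol\varphi}+{\mathbf z},
\qquad {\mathbf z}:={\mathbf R}^{\operatorname{div}}(\operatorname{div}{\mathbf u}),
\]
with ${\boldsymbol\varphi}\in{\mathbf H}^{k+1}(\widehat K)$, ${\mathbf z}\in{\mathbf H}^{k+1}(\widehat K)$ and the stability bound $\|{\boldsymbol\varphi}\|_{{\mathbf H}^{k+1}(\widehat K)}+\|{\mathbf z}\|_{{\mathbf H}^{k+1}(\widehat K)}\lesssim \|{\mathbf u}\|_{{\mathbf H}^k(\widehat K)}+\|\operatorname{div}{\mathbf u}\|_{H^{k-1}(\widehat K)}\lesssim\|{\mathbf u}\|_{{\mathbf H}^k(\widehat K)}$.

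Next I would exploit the polynomiality of $\operatorname{div}{\mathbf u}$: by Lemma~\ref{lemma:mcintosh}, \eqref{item:lemma:mcintosh-vi}, the assumption $\operatorname{div}{\mathbf u}\in{\mathcal P}_p(\widehat K)\subset W_p(\widehat K)$ gives ${\mathbf z}={\mathbf R}^{\operatorname{div}}(\operatorname{div}{\mathbf u})\in{\mathbf V}_p(\widehat K)$, so that the projection property of $\hatPidivcom$ yields ${\mathbf z}-\hatPidivcom{\mathbf z}=0$. Then, using the commuting diagram property $\operatorname{\mathbf{curl}}\hatPicurlcom=\hatPidivcom\operatorname{\mathbf{curl}}$ from Theorem~\ref{thm:diagram-commutes} (equivalently \eqref{eq:commuting-diagram-hinten}) together with linearity,
\[
({\operatorname I}-\hatPidivcom){\mathbf u}
=({\operatorname I}-\hatPidivcom)\operatorname{\mathbf{curl}}{\boldsymbol\varphi}
+\underbrace{({\operatorname I}-\hatPidivcom){\mathbf z}}_{=0}
=\operatorname{\mathbf{curl}}({\operatorname I}-\hatPicurlcom){\boldsymbol\varphi}.
\]
Since $\operatorname{div}\operatorname{\mathbf{curl}}({\operatorname I}-\hatPicurlcom){\boldsymbol\varphi}=0$, the $\widetilde{\mathbf H}^{-s}(\widehat K,\operatorname{div})$-norm of this quantity reduces to $\|\operatorname{\mathbf{curl}}({\operatorname I}-\hatPicurlcom){\boldsymbol\varphi}\|_{\widetilde{\mathbf H}^{-s}(\widehat K)}$, which is bounded by $\|({\operatorname I}-\hatPicurlcom){\boldsymbol\varphi}\|_{\widetilde{\mathbf H}^{-s}(\widehat K,\operatorname{\mathbf{curl}})}$. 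Now ${\boldsymbol\varphi}\in{\mathbf H}^{k+1}(\widehat K)$ trivially satisfies $\operatorname{\mathbf{curl}}{\boldsymbol\varphi}\in{\mathbf H}^k(\widehat K)$, but to apply Lemma~\ref{lemma:better-regularity} directly I would instead simply invoke Corollary~\ref{cor:thm:projection-based-interpolation}, estimate \eqref{eq:cor:thm:projection-based-interpolation-2}, with the regularity index $k+1$, giving $\|({\operatorname I}-\hatPicurlcom){\boldsymbol\varphi}\|_{\widetilde{\mathbf H}^{-s}(\widehat K,\operatorname{\mathbf{curl}})}\lesssim p^{-(k+1+s-1)}\|{\boldsymbol\varphi}\|_{{\mathbf H}^{k+1}(\widehat K,\operatorname{\mathbf{curl}})}$; more cleanly, since ${\boldsymbol\varphi}\in{\mathbf H}^{k+1}(\widehat K)$ has $\operatorname{\mathbf{curl}}{\boldsymbol\varphi}\in{\mathbf H}^k(\widehat K)$, one can also argue via Lemma~\ref{lemma:helmholtz-like-decomp} and Theorem~\ref{lemma:demkowicz-grad-3D} exactly as in Lemma~\ref{lemma:better-regularity}, obtaining the rate $p^{-(k+s)}$. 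Combining with the stability bound for ${\boldsymbol\varphi}$ gives \eqref{eq:proposition-better-regularity-div}.

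The seminorm statement follows because $\hatPidivcom$ reproduces ${\mathbf V}_p(\widehat K)\supset({\mathcal P}_p(\widehat K))^3$ (for $p\ge k-1$, every degree-$(k-1)$ component of a Taylor-type correction is reproduced, and the argument is standard, cf.\ the end of the proof of Lemma~\ref{lemma:better-regularity}). The claim that \eqref{eq:proposition-better-regularity-div} holds for $s=0$ without the angle condition is inherited from the same property of the $\hatPicurlcom$ estimate (or of Theorem~\ref{lemma:demkowicz-grad-3D}), since only the $s=0$ case is used there. I do not anticipate a genuine obstacle; the only point requiring a little care is matching the regularity bookkeeping so that the gradient/curl estimate used in the reduction step is applied with the correct Sobolev index, and confirming that the negative-norm estimate for $\operatorname{\mathbf{curl}}({\operatorname I}-\hatPicurlcom){\boldsymbol\varphi}$ in $\widetilde{\mathbf H}^{-s}(\widehat K)$ is indeed controlled by the ${\mathbf H}(\operatorname{\mathbf{curl}})$-negative norm of $({\operatorname I}-\hatPicurlcom){\boldsymbol\varphi}$ — which is immediate from the definition of $\|\cdot\|_{\widetilde{\mathbf H}^{-s}(\widehat K,\operatorname{\mathbf{curl}})}$.
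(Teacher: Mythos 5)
Your proposal is correct and follows essentially the same route as the paper: decompose ${\mathbf u}=\operatorname{\mathbf{curl}}{\boldsymbol\varphi}+{\mathbf z}$ via Lemma~\ref{lemma:helmholtz-decomposition-div}, use Lemma~\ref{lemma:mcintosh}~(\ref{item:lemma:mcintosh-vi}) and the projection property to kill ${\mathbf z}$, pass through the commuting diagram to $\operatorname{\mathbf{curl}}(\operatorname{I}-\hatPicurlcom){\boldsymbol\varphi}$, and bound its $\widetilde{\mathbf H}^{-s}(\widehat K,\operatorname{div})$-norm by the $\widetilde{\mathbf H}^{-s}(\widehat K,\operatorname{\mathbf{curl}})$-error of $\hatPicurlcom$ — the paper then simply cites Theorem~\ref{thm:duality-again} together with Lemma~\ref{lemma:Pgrad1d} rather than Corollary~\ref{cor:thm:projection-based-interpolation}, which packages the same ingredients, and the $s=0$/seminorm remarks are handled exactly as you indicate. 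The only correction is the bookkeeping you yourself flag: estimate (\ref{eq:cor:thm:projection-based-interpolation-2}) must be invoked with index $k$ (not $k+1$), since only $\operatorname{\mathbf{curl}}{\boldsymbol\varphi}\in{\mathbf H}^{k}(\widehat K)$ is controlled, and this already yields the stated rate $p^{-(k+s)}\Vert{\boldsymbol\varphi}\Vert_{{\mathbf H}^{k}(\widehat K,\operatorname{\mathbf{curl}})}\lesssim p^{-(k+s)}\Vert{\mathbf u}\Vert_{{\mathbf H}^{k}(\widehat K)}$.
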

\begin{proof}
We write, using the decomposition of Lemma~\ref{lemma:helmholtz-decomposition-div},
$\displaystyle 
{\mathbf{u}}=\operatorname*{\mathbf{curl}} \mathbf{R}^{\operatorname*{curl}}({\mathbf{u}}-{\mathbf{R}%
}^{\operatorname*{div}}\operatorname*{div}{\mathbf{u}})+{\mathbf{R}}^{\operatorname*{div}}\operatorname*{div}{\mathbf{u}}=:\operatorname*{\mathbf{curl}}
\boldsymbol{\varphi}+{\mathbf{z}}%
$
with $\boldsymbol{\varphi}\in \mathbf{H}^{k+1}(\widehat{K})$ and ${\mathbf{z}}\in{\mathbf{H}}%
^{k}(\widehat{K})$ together with
\begin{equation}
\Vert\boldsymbol{\varphi}\Vert_{\mathbf{H}^{k+1}(\widehat{K})}+\Vert{\mathbf{z}}\Vert_{{\mathbf{H}}^{k}(\widehat{K})} \lesssim \Vert{\mathbf{u}}\Vert_{{\mathbf{H}}^{k}(\widehat{K})}+\Vert\operatorname*{div}{\mathbf{u}}\Vert_{H^{k-1}(\widehat{K})} \leq C\Vert{\mathbf{u}}\Vert_{{\mathbf{H}}^{k}(\widehat{K})}.
\label{eq:lemma:projection-based-interpolation-approximation-200}%
\end{equation}
The assumption $\operatorname*{div}{\mathbf{u}}\in {\mathcal{P}}_{p}(\widehat K)$
and 
Lemma~\ref{lemma:mcintosh}, (\ref{item:lemma:mcintosh-vi})
imply ${\mathbf{z}}={\mathbf{R}}%
^{\operatorname*{div}}\operatorname*{div}{\mathbf{u}}\in
\mathbf{V}_p(\widehat{K})$; furthermore, since
$\hatPidivcom$ is a projection, we conclude
${\mathbf{z}}-\hatPidivcom{\mathbf{z}}=0$. Thus,
we get from the commuting diagram
\begin{align*}
& \Vert(\operatorname{I}-\hatPidivcom){\mathbf{u}}\Vert
_{\widetilde{\mathbf{H}}^{-s}(\widehat{K},\operatorname{div})}=\Vert(\operatorname{I}-\hatPidivcom)\operatorname*{\mathbf{curl}}\boldsymbol{\varphi}+\underbrace{(\operatorname{I}-\hatPidivcom){\mathbf{z}}}_{=0}\Vert_{\widetilde{\mathbf{H}}^{-s}(\widehat{K},\operatorname{div})}\\
&\qquad = \Vert\operatorname{\mathbf{curl}}(\operatorname{I}-\hatPicurlcom)\boldsymbol{\varphi}\Vert_{\widetilde{\mathbf{H}}^{-s}(\widehat{K},\operatorname{div})} 
\leq 
\Vert(\operatorname{I}-\hatPicurlcom)\boldsymbol{\varphi}\Vert_{\widetilde{\mathbf{H}}^{-s}(\widehat{K},\operatorname{\mathbf{curl}})}  \\
& \qquad 
\stackrel{\text{Thm.~{\ref{thm:duality-again}}}}{\lesssim}
p^{-(1+s)}
\inf_{{\mathbf v} \in {\mathbf Q}_p(\widehat K)} \Vert\boldsymbol{\varphi} -{\mathbf v}\Vert_{\mathbf{H}^{1}(\widehat{K},\operatorname{\mathbf{curl}})} 
\stackrel{\text{Lem.~\ref{lemma:Pgrad1d}},\eqref{eq:lemma:projection-based-interpolation-approximation-200}}
{\lesssim} p^{-(k+s)} \Vert\mathbf{u}\Vert_{\mathbf{H}^k(\widehat{K})}.
\end{align*} 
Replacing
$\Vert{\mathbf{u}}\Vert_{{\mathbf{H}}^{k}(\widehat{K})}$ with $|{\mathbf{u}%
}|_{{\mathbf{H}}^{k}(\widehat{K})}$ follows from the observation that the
projector $\hatPidivcom$ reproduces polynomials
of degree $p$.
\end{proof}
\subsection*{Acknowledgement} JMM is grateful to his colleague 
Joachim Sch\"oberl (TU Wien) for inspiring discussions on the topic 
of the paper and, in particular, for pointing out the arguments of 
Theorem~\ref{lemma:demkowicz-grad-2D}. CR acknowledges the support 
of the Austrian Science Fund (FWF) under grant P 28367-N35. 
\bibliographystyle{plain}
\bibliography{maxwell}
\end{document}